\def\R{\mathbb{R}}
\def\N{\mathbb{N}}
\newtheorem{thm}{Theorem}
\newtheorem{lem}[thm]{Lemma}
\newtheorem{rmk}[thm]{Remark}
\newtheorem{deff}[thm]{Definition}
\newtheorem{prop}[thm]{Proposition}
\numberwithin{equation}{section}
\numberwithin{thm}{section}
\DeclareMathOperator*{\esssup}{ess\,sup}
\begin{document}

\title{Derivation of a Non-autonomous  Linear Boltzmann Equation from a Heterogeneous Rayleigh Gas}

\author{Karsten Matthies}
\address{Department of Mathematical Sciences, University of Bath, Bath BA2
7AY, United Kingdom} \email{K.Matthies@bath.ac.uk}

\author{George Stone}
\address{Department of Mathematical Sciences, University of Bath, Bath BA2
7AY, United Kingdom} \email{G.Stone@bath.ac.uk}

\begin{abstract}
A linear Boltzmann equation with non-autonomous collision operator is rigorously derived in the Boltzmann-Grad limit
for the deterministic dynamics of  a Rayleigh gas where a tagged particle is undergoing hard-sphere collisions with heterogeneously distributed background particles, which do not interact among each other. The validity of the linear Boltzmann equation holds for arbitrary long times under moderate assumptions on spatial  continuity and higher moments of the initial distributions of the tagged particle and the  heterogeneous, non-equilibrium distribution of the background. The empiric particle dynamics are compared to the Boltzmann dynamics using evolution systems  for Kolmogorov equations of associated probability measures on collision histories.
\end{abstract}

\maketitle

\section{Introduction}

Particles undergoing hard sphere collisions are a classical topic in dynamical systems theory, e.g. the long term behaviour are of high interest in ergodic theory. Kinetic equations provide a different kind of limit description. The times remain finite, but  the number $N$ of discrete particles undergoing collisions tends to infinity as the diameter $\varepsilon$ of individual spheres tends to zero. We are interested in a global continuum  description of the particle gas in the Boltzmann-Grad scaling of $N$ and $\varepsilon$, where the overall volume of the $N$ particles tends to zero, but expected number of collisions per particle remains finite in a time of order one. The primary example
is the Boltzmann equation  given by,
\begin{equation*}
\begin{cases}
 \partial_t f  + v\cdot \nabla _x  f  &  = Q(f,f), \\
\hfill f_{t=0} & = f_0,
\end{cases}
\end{equation*}
where $f=f_t(x,v)$ represents the distribution of the gas at position $x$ and velocity $v$ at time $t$, the operator $Q$ represents the effect of self-interaction amongst the particles and $f_0$ is some given initial distribution.
Instead of considering a system of a large number of identical hard spheres evolving via elastic collisions one can consider a single tagged or tracer particle evolving among a system of fluid scatterers or background particles. With such a model one then considers the linear Boltzmann equation, where the operator $Q$ now encodes the effect of the tagged particle interacting with the scatterers, rather than self interactions amongst the particles.

 We will deal with a variant of the hard sphere flow and show the validity of  a linear Boltzmann equation, i.e. we show that the  distribution of a particle of interest in the many particle flow is well approximated by solutions  to the appropriate Boltzmann equation. This is what we mean by the derivation of a continuum description. The first major work for the full Boltzmann equation was by Lanford \cite{lanford75} giving convergence of the one particle distribution function of a hard-sphere particle model to solutions of the Boltzmann equation for short times by using the Bogoliubov-Born-Green-Kirkwood-Yvon (BBGKY) hierarchy for the evolution of $k$-particle distribution functions for all $k \in \N$, see e.g. \cite{born46,cercignani94,uchiyama1988}. The convergence was valid for short times, a fraction of the average free flight time. This proof was simplified in \cite{ukai2001boltzmann} by employing Cauchy-Kowalevski arguments. Global in time convergence results were proved in \cite{illner86,pulv87,illner89} with the assumption of sufficiently large mean free paths. Gallagher, Saint-Raymond and Texier \cite{saintraymond13} continued this development by giving detailed convergence results for short times.  Major work on short-range potentials includes \cite{PSS14}. The main challenge remains to provide convergence for arbitrarily long times.

 The first derivation of a linear Boltzmann equation for a Rayleigh gas was given in \cite{bei80}.
   In \cite{bod15brown} Bodineau, Gallagher and Saint-Raymond were able to utilise the tools from \cite{saintraymond13} to prove convergence from a hard-sphere particle model to the linear Boltzmann equation for arbitrary long times in the case that the initial distribution of the background is near equilibrium with an explicit rate of convergence. They  used the linear Boltzmann equation as an intermediary step to prove convergence to Brownian motion of a tagged particle. In \cite{bod15}, a variant gave the  derivation of the Stokes-Fourier equation again using linear Boltzmann equations  as an intermediary  step. The books \cite{cerci88,cercignani94,spohn91} give an introduction to the BBGKY hierarchy  and its link to  the Boltzmann equation. The derivation of the Boltzmann equation from a system of particles interacting with long range potentials, where each particle effects every other particle regardless of their distance, has proved more difficult. A first result was given in \cite{DePu99}. One recent result was proved in the linear case via the BBGKY hierarchy with strong decay assumptions on the potential and for arbitrarily long times in \cite{ayi17}.

A different variant to show the validity of Boltzmann-type equations has been developed in a series of papers \cite{matthies10,matt12,matt16}. There we employ infinite dimensional dynamical system and semigroup techniques to study the evolution of the probability to see collision trees. This relates the distribution of the history of the particles up to a certain time to the distribution of the particles at a specific time.  While terms in Duhamel formulas providing solutions to the BBGKY hierarchy  can be interpreted as pseudo histories, this approach uses other solution techniques and has a clearer connection to typical particle behaviour.
So far we have been able to prove convergence for arbitrary times for various  simplified particle models based on a many particle hard-sphere flows, e.g. kinetic annihilation and the  dynamics for a tagged particle interacting with moving  background  particles, which do not change. The aims  of  this paper are to develop the semigroup approach  and provide an example for the rigorous derivation of a non-autonomous linear Boltzmann equation.

A tagged particle is undergoing collisions with $n$ background particles.
If the background particles are fixed and of infinite mass  then this is the Lorentz gas first introduced  in \cite{lor05}. An  autonomous  linear Boltzmann equation can be derived as a scaling limit from a Lorentz gas with randomly placed scatterers, see for example \cite{bold83,gall99,spohn78} and a large number of references found in \cite[Chap. 8]{spohn91}. The linear Boltzmann equation can however fail as a valid approximation if there are (non-random) periodic scatterers, see  e.g. \cite{golse08,marklof10}. A different limiting stochastic process for the periodic Lorentz gas  was derived in  \cite{mark11} from the Boltzmann-Grad limit.

We consider the closely related Rayleigh gas, where the background particles move and are no longer of infinite mass.  In \cite{lebo82} Lebowitz and Spohn proved the convergence of the distribution of the tagged particle to the linear Boltzmann equation for background  data at equilibrium for arbitrarily long times, see also  \cite{lebo78,lebo82b,bei80}. In their case the distribution of the background particles does not change in time, the equilibrium distribution remains invariant under the pure transport.

In this paper we derive a non-autonomous linear Boltzmann equation via the Boltzmann-Grad limit of a Rayleigh gas particle model, where one tagged particle evolves amongst a large number of  background particles, which do not interact with each other. In contrast to \cite{matt16} the initial distribution of the background particles is now spatially heterogeneous and away from the equilibrium distribution. The background $g$ then satisfies a transport equation
\[\partial_t g(x,v,t)+ v \cdot \partial_x g(x,v,t)=0 \quad g(x,v,0)=g_0(x,v),\]
with explicit solution $g(x,v,t)=g_0(x-vt,v)$, this $g$ will introduce the non-autonomous background in  the linear Boltzmann equation. We assume that at a collision between the tagged particle and a background particle there is a full hard sphere collision in which both particles change direction, we will show that this change in the background is not relevant for the limit.

The main result is theorem~\ref{na-thm-main}, which states that the distribution of the tagged particle evolving among $N$ background particles converges as $N$ tends to infinity to the solution of the non-autonomous linear Boltzmann equation. The convergence holds for arbitrarily large times and with moderate moment assumptions on the initial data.

We extend the methods used in \cite{matt12,matt16}. The idealised equation on collision trees is stated and semigroup methods are used to show that there exists a solution. Then it is shown that the distribution on collision trees  induced by the many particle dynamics solves an empirical equation, which is of similar form as the equation for  the idealised distribution. This leads in section~\ref{na-sec-conv} to the convergence of solutions of the empirical and idealised equations, which then implies the main theorem.

A major technical difference to our paper~\cite{matt16}  is in section~\ref{na-sec-id} on the idealised equation. The introduction of a spatial dependence on the initial distribution of the background creates non-autonomous equations, which require us to study evolution system results. There is no specific evolution semigroup result for us to refer to for positive solutions of the non-autonomous equations, so our problem is viewed in the framework of general evolution system theory, which creates a number of more technical requirements.
As the question of honesty of the semigroup solution of the non-autonomous linear Boltzmann equation is also more difficult than in the autonomous case, we were unable to directly verify honesty from existing results. Instead honesty of the solution is proven indirectly via the connection to the idealised equation.
The change in collisions, where a collision between the tagged particle and a background particle is now a full hard sphere collision, makes only a minimal difference on our proof.

\section{Model and Main Result} \label{na-sec-model}
We now give our Rayleigh gas particle model in detail. The model differs from the model in~\cite{matt16} in two ways: i) we no longer assume that the initial distribution of the background particles is spatially homogeneous and ii) now when the tagged particle collides with a background particle the collision is treated as a full hard sphere collision and so the background particle changes velocity rather than continuing with the same pre-collision velocity.

Let $U = [0,1]^3 \subset \mathbb{R}^3$, with periodic boundary conditions. Let $N \in \mathbb{N}$. One tagged particle evolves amongst $N$ background particles. The tagged particle has random initial position and velocity given by $f_0 \in L^1(U \times \mathbb{R}^3)$ and the $N$ background particles have random and independent initial position and velocity given by $g_0 \in L^1(U \times \mathbb{R}^3)$. The tagged particle and background particles are modelled as spheres with unit mass and diameter $\varepsilon >0$ given by the Boltzmann-Grad scaling, $N\varepsilon^2 =1$.

The tagged particle travels with constant velocity while it remains at least $\varepsilon$ away from all background particles. Each background particles travels with constant velocity while it remains at least $\varepsilon$ away from the tagged particle. Background particles do not effect each other and freely pass through each other.  When the position of the tagged particle comes within $\varepsilon$ of the position of a background particle both particles instantaneously change velocity as described by Newtonian hard-sphere collisions. We describe this process explicitly.

Let the position and velocity of the tagged particle at time $t\geq 0$ be denoted $(x(t),v(t))$ and for $1\leq j \leq N$, let the position and velocity of background particle $j$ at time $t$ be given  by $(x_j(t),v_j(t))$. Then for all $t \geq 0$
\[ \frac{\mathrm{d}x(t)}{\mathrm{d}t} = v(t) \textrm{ and } \frac{\mathrm{d}x_j(t)}{\mathrm{d}t} = v_j(t).   \]
If there exists a $1 \leq j \leq N$ such that $|x(0) - x_j(0)| \leq \varepsilon$ then we assume that the two particles pass through each other unaffected (indeed this is well defined since the velocities are only equal with probability zero).  That is, any initial overlap is ignored and not treated as a collision. Now let $t > 0$. If for all $1\leq j \leq N$, $|x(t)-x_j(t)| > \varepsilon$ then
\[ \frac{\mathrm{d}v(t)}{\mathrm{d}t} = 0 \textrm{ and } \frac{\mathrm{d}v_j(t)}{\mathrm{d}t} = 0.   \]
Else there exists a $1\leq j \leq N$ such that $|x(t)-x_j(t)| = \varepsilon$ and both particles experience an instantaneous collision at time $t$. We denote by $v(t^-)$ and $v_j(t^-)$ the velocity of the tagged particle and background particle $j$ instantaneously before the collision and define $v(t)$ and $v_j(t)$ to the velocity of the tagged particle and background particle $j$ instantaneously after the collision.  Define the collision parameter $\nu \in \mathbb{S}^2$ by
\[ \nu : = \frac{x(t)-x_j(t)}{|x(t)-x_j(t)|}. \]

\begin{figure}[h!]
\centering
\includegraphics[scale=1]{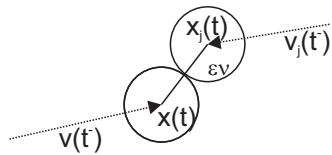}
\caption{The parameters of a collision between the tagged particle and particle  $j$.}
\label{na-fig-nu}
\end{figure}

Then $v(t)$ and $v_j(t)$ are given by
\[
v(t)  := v(t^-) - \nu \cdot (v(t^-) - v_j(t^-)) \cdot \nu;  \quad
v_j(t) := v_j(t^-) + \nu \cdot (v(t^-) - v_j(t^-)) \cdot \nu.
\]

\begin{prop}
For $N \in \mathbb{N}$ and $T>0$ fixed these dynamics are well defined up to time $T$ for all initial configurations apart from a set of zero measure.
\end{prop}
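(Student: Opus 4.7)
The plan is to exhibit a Lebesgue-null exceptional set $\mathcal{E} \subset (U \times \mathbb{R}^3)^{N+1}$ of initial configurations outside of which the iterative collision construction produces a unique trajectory on $[0,T]$; by absolute continuity of $f_0 \otimes g_0^{\otimes N}$ with respect to Lebesgue measure, this yields zero probability of a bad configuration. A preliminary observation is that, every collision being elastic, the total kinetic energy $E = \tfrac12|v(t)|^2 + \tfrac12\sum_j|v_j(t)|^2$ is conserved in $t$, so every particle's speed stays bounded on $[0,T]$ by $M := \sqrt{2E}$, a quantity depending only on the initial velocities.

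The dynamics can fail to be well-defined only through one of three pathologies: (i) a grazing collision where $\nu \cdot (v(t^-) - v_j(t^-)) = 0$, which makes the collision map degenerate; (ii) simultaneous contact of the tagged particle with two distinct background particles, leaving the pairwise update rule ambiguous; (iii) an accumulation of collision times in $[0,T]$, which would prevent the iterative construction from exhausting the interval. Conditional on a finite collision history of fixed length and a choice of particle labels, conditions (i) and (ii) each translate to a nontrivial polynomial equation in the initial positions and velocities (which enter in a piecewise-linear way along the flow between successive events), and hence cut out a subset of positive algebraic codimension in $(U \times \mathbb{R}^3)^{N+1}$; taking the countable union over histories compatible with the exclusion of (iii) still produces a Lebesgue-null set.

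The main obstacle is the exclusion of Zeno, event (iii). I would argue by contradiction: if the collision times accumulated at some $\tau^* \le T$, pigeonhole over the finite label set $\{1,\ldots,N\}$ would produce some $j^*$ colliding with the tagged particle infinitely often near $\tau^*$. Because background--background collisions are absent, $v_{j^*}$ is constant between two consecutive (tagged, $j^*$) collisions, and exclusion of (i) ensures that right after each such collision the pair separates with strictly positive normal relative velocity. The technical difficulty is that the tagged particle may, between two (tagged, $j^*$) collisions, bounce off several other backgrounds and be redirected toward $j^*$; one must show that even with these intervening redirections the distance $|x(t) - x_{j^*}(t)|$ cannot return to $\varepsilon$ arbitrarily quickly. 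A quantitative argument using the uniform speed bound $M$ together with the preserved kinetic energy of the pair (tagged, $j^*$) yields a uniform positive lower bound on the time between consecutive (tagged, $j^*$) collisions, contradicting accumulation. Outside $\mathcal{E}$ the dynamics are then defined recursively: between collisions the particles move with constant velocities, the next collision time is the first $t$ at which $|x(t) - x_j(t)| = \varepsilon$ for some (unique, by exclusion of (ii)) $j$, and the pairwise collision rule from the problem statement updates the velocities; by exclusion of (iii) the process terminates in finitely many steps on $[0,T]$.
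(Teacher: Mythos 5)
The paper does not actually give an argument here: it defers entirely to \cite[Prop.~1]{matt16}, which in turn invokes \cite[Prop.~4.1.1]{saintraymond13}, so a self-contained sketch is welcome, and your overall framework — energy conservation for a velocity bound, three pathologies, codimension for (i)--(ii), separate exclusion of accumulation — is the standard one. Your exclusion of (iii), however, contains a genuine gap. You invoke ``the preserved kinetic energy of the pair $(\mathrm{tagged},j^*)$'', but this quantity is not conserved: while the total energy $\tfrac12|v|^2+\tfrac12\sum_j|v_j|^2$ is invariant under the dynamics, the sub-sum $\tfrac12|v|^2+\tfrac12|v_{j^*}|^2$ changes every time the tagged particle collides with some $k\ne j^*$ — and such intervening collisions are exactly what your argument must control. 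More seriously, the claimed ``uniform positive lower bound on the time between consecutive $(\mathrm{tagged},j^*)$ collisions'' does not follow from the speed bound $M$. Let $D(t)=|x(t)-x_{j^*}(t)|$, let $s_m<s_{m+1}$ be consecutive $(\mathrm{tagged},j^*)$ collision times, and let $r\in(s_m,s_{m+1})$ be the time of the first intervening collision with another background particle. Then $D(r)-\varepsilon=\int_{s_m}^{r}D'(t)\,\mathrm{d}t$ is of order $(r-s_m)\,D'(s_m^+)$, and the only lower bound on the return leg is $s_{m+1}-r\ge (D(r)-\varepsilon)/(2M)$, which vanishes as $r-s_m\to 0$. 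The separation rate $D'(s_m^+)=(v(s_m^+)-v_{j^*}(s_m^+))\cdot\nu_m$ is strictly positive at each non-grazing collision, but nothing forbids it from tending to zero along a subsequence. So $s_{m+1}-s_m$ admits no uniform positive lower bound, and the Zeno contradiction does not materialise from these ingredients alone.

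The argument that the paper implicitly relies on (Alexander's thesis, used in \cite[Prop.~4.1.1]{saintraymond13}) is measure-theoretic rather than pathwise: one uses that the piecewise-defined $(N+1)$-particle flow preserves Lebesgue measure on $(U\times\mathbb{R}^3)^{N+1}$, and estimates, summing over collision histories, the measure of initial data that admit at least $k$ collisions in $[0,T]$ or encounter a grazing/simultaneous event at the $k$th step, rather than trying to bound inter-collision times along any individual trajectory. Your treatment of (i)--(ii) would also need a little more care than ``nontrivial polynomial equation'': the collision times are implicitly defined algebraic functions of the initial data along each branch, so the bad sets are semi-algebraic; the codimension-$\ge 1$ conclusion does hold, but one should say why the defining relation is non-degenerate on each branch.
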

\begin{proof}
The proof of this is unchanged from~\cite[prop.1]{matt16}, which is based upon \cite[prop. 4.1.1]{saintraymond13}.
\end{proof}
\begin{deff}
For $t \geq 0$ and $N \in \mathbb{N}$ let $\hat{f}_t^N$ denote the distribution of the tagged particle at time $t$ evolving via the Rayleigh gas dynamics described above amongst $N$ background particles.
\end{deff}
We are interested in the behaviour of $\hat{f}_t^N$ as $N$ increases to infinity, or equivalently as $\varepsilon$ converges to zero. In the main theorem~\ref{na-thm-main} we show that for any fixed $T>0$ and under some assumptions on $f_0$ and $g_0$, $\hat{f}_t^N$ converges to $f_t^0$, the solution of the non-autonomous linear Boltzmann equation, in $L^1$ as $N$ tends to infinity uniformly for any $t \in [0,T]$.
\begin{deff} \label{na-deff-admiss} Let $f_0,g_0 \in L^1(U\times \mathbb{R}^3)$ be probability densities.
Then $f_0$ is said to be tagged-admissible if
\begin{equation}
\label{na-eq-f0l2assmp}
	\int_{ U \times \mathbb{R}^3} f_0 (x,v)(1+|v|^2) \, \mathrm{d}x \, \mathrm{d}v =:M_f  < \infty.
\end{equation}
Define $\bar{g}:\mathbb{R}^3 \to \mathbb{R}$ by
\begin{equation} \label{na-eq-gbar}
\bar{g}(v):= \esssup_{x \in U} g_0(x,v).
\end{equation}
Then $g_0$ is background-admissible if all of the following hold
	\begin{eqnarray}
    \label{na-eq-gl1assmp}
	\int_{ \mathbb{R}^3} \bar{g} (v)(1+|v|^2) \, \, \mathrm{d}v =:M_g < \infty,\\
	\label{na-eq-glinf}
	\esssup_{v\in\mathbb{R}^3} \bar{g}(v)(1+|v|) =:M_\infty< \infty,
	\end{eqnarray}
for almost all $v \in \mathbb{R}^3, \, g_0(\cdot,v) \in W^{1,1}(U)$ and
\begin{equation} \label{na-eq-g0w1}
\esssup_{x\in U} \int_{ \mathbb{R}^3} |\partial_x g_0(x,v)|(1+|v|) \, \, \mathrm{d}v  =:M_1 < \infty.
\end{equation}
and there exists a $M>0$ and an $0<\alpha \leq 1$ such that for almost all $v\in\mathbb{R}^3$ and for any $x,y \in U$
\begin{equation}
\label{na-eq-ghldassmp}
	|g_0(x,v)-g_0(y,v)| < M|x-y|^\alpha.
\end{equation}
\end{deff}

We now state the relevant non-autonomous linear Boltzmann equation. Firstly for $t \geq 0 $ define the operators $Q_t^{0,+}$ and $Q_t^{0,-} : L^1(U \times \mathbb{R}^3) \to L^1(U \times \mathbb{R}^3)$ by
\begin{align} \label{na-eq-Q+deff}
Q_t^{0,+}[f](x,v): =& \int_{\mathbb{S}^2} \int_{\mathbb{R}^3} f(x,v')g_t(x,\bar{v}')[(v-\bar{v})\cdot \nu]_+ \, \mathrm{d}\bar{v} \, \mathrm{d}\nu,\\
\label{na-eq-Q-deff}
Q_t^{0,-}[f](x,v): = &f(x,v)\int_{\mathbb{S}^2} \int_{\mathbb{R}^3}  g_t(x,\bar{v})[(v-\bar{v})\cdot \nu]_+ \, \mathrm{d}\bar{v} \, \mathrm{d}\nu,
\end{align}
where we use the notation $g_t(x,v):=g_0(x-tv,v)$ and where the pre-collision velocities, $v'$ and $\bar{v}'$, are given by $v' = v+ \nu \cdot (\bar{v}-v) \nu$ and $\bar{v}'=\bar{v} - \nu \cdot (\bar{v}-v)\nu$. Further define $Q_t^{0}:=Q_t^{0,+}-Q_t^{0,-}$. The non-autonomous linear Boltzmann equation is given by
\begin{equation} \label{na-eq-linboltz}
\begin{cases}
\partial_t f_t^0 (x,v) & = -v \cdot \partial_x f_t^0(x,v) + Q_t^0[f_t^0](x,v), \\
f_{t=0}^0(x,v)  & = f_0(x,v).
\end{cases}
\end{equation}

We now state the main theorem.
\begin{thm} \label{na-thm-main}
Let $0<T<\infty$ and suppose that $f_0$ and $g_0$ are tagged and background admissible probability densities respectively. Then, uniformly for $t \in [0,T]$,  $\hat{f}_t^N$, the distribution of the tagged particle at time $t$ among $N$ background particles under the above particle dynamics, converges in $L^1(U \times \mathbb{R}^3)$ as $N$ tends to infinity to $f_t^0$, a solution of the non-autonomous linear Boltzmann equation \eqref{na-eq-linboltz}.
\end{thm}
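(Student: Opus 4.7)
The plan is to follow the general strategy established in \cite{matt12,matt16}, lifting the dynamics from the tagged particle distribution to a distribution on collision histories (trees) of the tagged particle, and then comparing an idealised evolution of such histories to the empirical evolution coming from the $N$-particle system. The adaptations required here are (i) to accommodate the spatial heterogeneity of $g_0$, which makes the gain/loss operators explicitly time-dependent through $g_t(x,v)=g_0(x-tv,v)$, and (ii) to handle the modified collision rule in which the background particle also changes velocity at a collision.

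First, I would fix a suitable space of collision trees $\mathcal{MT}$: each element records a finite sequence $(t_1,\nu_1,\bar v_1),(t_2,\nu_2,\bar v_2),\ldots$ of collision times, impact parameters and background velocities together with the initial $(x,v)$ of the tagged particle, and introduce on it the idealised evolution equation whose generator is free transport plus a non-autonomous gain/loss $Q_t^{0,+}-Q_t^{0,-}$ acting on the tree (adding one collision node). Using the admissibility bounds \eqref{na-eq-gl1assmp}--\eqref{na-eq-glinf} I would prove that, for each $t$, the loss rate is bounded by a constant times $(1+|v|)$ and that the gain operator is bounded on the natural weighted $L^1$ space. Then I would invoke a Kato-type result for evolution systems (rather than a $C_0$-semigroup, since $Q_t^0$ depends on $t$) to construct a positive evolution system $(U(t,s))_{0\le s\le t\le T}$ solving the idealised equation, and define $P_t$ as its action on the initial tree measure concentrated on $f_0$. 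Marginalising out the collision data then yields a candidate $f_t^0$ solving \eqref{na-eq-linboltz}, the conditions \eqref{na-eq-g0w1}--\eqref{na-eq-ghldassmp} providing the regularity needed to differentiate $g_t$ along transport and to make sense of the non-autonomous Kolmogorov equation.

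Next, I would set up the empirical counterpart: the distribution $\hat P_t^N$ on collision trees induced by the $N$-particle Rayleigh flow. By conditioning on the first collision of the tagged particle and applying the exchangeability of the background particles together with the independence of their initial data (from $g_0^{\otimes N}$), I would derive a Duhamel-type equation for $\hat P_t^N$ of the same form as the idealised equation, but with an $N$-dependent error $R_t^N$ accounting for (a) the excluded volume / overlap corrections coming from the factor $(1-\varepsilon)^{\text{something}}$ in the joint density, (b) re-collisions of the tagged particle with a previously collided background particle, and (c) the contribution of background particles whose post-collision trajectory returns into a future collision with the tagged particle. Using $N\varepsilon^2=1$, the moment bounds on $\bar g$, and the Hölder continuity \eqref{na-eq-ghldassmp}, I would show $\|R_t^N\|\to 0$ uniformly in $t\in[0,T]$; this is the technical heart of the argument and the place where the heterogeneity of $g_0$ enters in a non-trivial way, since one has to replace spatial averages of $g_0$ at the collision point by $g_t$ along straight transport and bound the discrepancy.

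With both equations in hand, convergence follows from a Gronwall estimate applied to $P_t-\hat P_t^N$, using the contractivity of the idealised evolution system on the weighted $L^1$ norm, and projecting back to the tagged-particle marginal yields $\hat f_t^N \to f_t^0$ in $L^1(U\times\mathbb{R}^3)$ uniformly on $[0,T]$. The main obstacle I anticipate is the honesty of the idealised evolution system: a priori the candidate $f_t^0$ obtained from semigroup theory may leak mass, and the authors explicitly note that the standard non-autonomous honesty criteria do not apply directly. I would handle this as indicated in section~\ref{na-sec-id} by the indirect route: since the empirical $\hat P_t^N$ is honest by construction for every $N$ and $\|P_t-\hat P_t^N\|\to 0$, the limit $P_t$ is automatically a probability measure, and in particular the resulting $f_t^0$ is a bona fide distributional solution of \eqref{na-eq-linboltz} with $\int f_t^0\,dx\,dv=1$, completing the proof of Theorem~\ref{na-thm-main}.
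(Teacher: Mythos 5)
Your overall architecture (trees, idealised vs.\ empirical Kolmogorov equations, projection back to the marginal) matches the paper, but the final convergence step as you describe it has a genuine gap. A Gronwall estimate on $\|P_t-\hat P_t^N\|_{L^1(\mathcal{MT})}$ cannot be closed, because the empirical equation is only established on the set of good trees $\mathcal{G}(\varepsilon)$ (no re-collisions, no initial overlap, bounded velocities and collision numbers); on the complement you have no evolution equation for $\hat P_t^\varepsilon$ at all, only non-negativity and total mass one, so there is no differential inequality for the full $L^1$ difference. What the paper actually proves is a \emph{one-sided, pointwise} estimate $\hat P_t^\varepsilon(\Phi)\ge R_t^\varepsilon(\Phi)-\hat\rho_t^\varepsilon(\Phi)R_t^\varepsilon(\Phi)$ on good trees (proposition~\ref{na-prop-phatrt}), obtained by induction on the number of collisions with a variation-of-constants argument for each fixed tree; the matching upper bound is never proved pointwise. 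Instead, the two-sided total-variation bound in theorem~\ref{na-thm-ptphatcomp} comes from combining this one-sided bound with proposition~\ref{na-prop-goodfull} (bad trees carry vanishing $P_t^0$-mass) and the fact that \emph{both} $P_t^0$ and $\hat P_t^\varepsilon$ are probability measures, so that $\int_{S}(\hat P_t^\varepsilon-P_t^0)=\int_{\mathcal{MT}\setminus S}(P_t^0-\hat P_t^\varepsilon)$. Without that normalisation trick your argument does not yield convergence.

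This also makes your treatment of honesty circular: you propose to deduce that $P_t$ is a probability measure from the convergence $\|P_t-\hat P_t^N\|\to 0$, but the convergence proof itself requires $\int_{\mathcal{MT}}P_t^\varepsilon\,\mathrm{d}\Phi=1$ as an input (both in the normalisation step above and in the Scheff\'e argument giving the $\varepsilon\to 0$ limit \eqref{na-eq-ptepuni}). The paper resolves this by proving directly that $\partial_t\int_{\mathcal{MT}}P_t^\varepsilon(\Phi)\,\mathrm{d}\Phi=0$ (lemmas~\ref{na-lem-prob+} and \ref{na-lem-prob-}), splitting $\mathcal{MT}$ according to the final collision time and computing left and right derivatives separately; the honesty of the linear Boltzmann evolution family $V^\varepsilon(t,0)$ is then a \emph{consequence} of this, not the other way round. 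You should also note the intermediate device of the $\varepsilon$-dependent idealised equation $P_t^\varepsilon$ and $\varepsilon$-dependent Boltzmann operator $Q_t^\varepsilon$ (with $g_t(x+\varepsilon\nu,\cdot)$), which is what allows the exact matching of gain terms between the idealised and empirical equations before passing to $\varepsilon\to 0$; your sketch compares $\hat P_t^N$ directly with the $\varepsilon=0$ object, which would force you to absorb the $O(\varepsilon^\alpha)$ spatial discrepancy into the error term $R_t^N$ rather than handling it separately as in lemma~\ref{na-lem-ptptw}.
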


\subsection{Remarks}
\begin{enumerate}
\item We prove the result in dimension $3$. The result should also hold in the case $d=2$ or $d\geq 4$ up to a change in moment assumptions.
\item With stronger moment assumptions on the initial distributions $f_0$ and $g_0$ it may be possible to calculate explicit convergence rates and convergence in $L^1$-spaces involving moments. In particular to show \eqref{na-eq-ptepuni} we use the dominated convergence theorem which proves convergence without any explicit rate.  With further assumptions on our initial data it may be possible to prove this with a more quantitative method. Even under our  mild assumptions we can quantify corrections terms in Theorem \ref{na-thm-emp} explicitly. We will not express the explicit dependence on $T$ in the estimates, but all error estimates will grow in $T$, e.g. linearly in Lemma \ref{na-lem-rhobound}.
\item Our main extensions to~\cite{matt16} are that  methods developed here can deal with an evolving background. Hence  the methods will be relevant for more involved particle models where the background particles evolve,  such as the addition of an external force acting on the particles. In such a situation the relevant linear Boltzmann equation would include the additional force term. Also the distribution of the background particles at time $t$ would include the effects of this force. This would add additional complications to the various bounds computed throughout.
\item We hope that evolving backgrounds can be a route to approximate the behaviour of a full many particle flow, where the background is regularised by introducing appropriate counters. A collision happens between background particle $i$ and $j$ if both particles have experienced less than $k$ collisions. It remains open if the methods will be stable under letting $k$ tend to infinity and if eventually this leads to improvements of the time interval of validity of the nonlinear Boltzmann equation compared to \cite{lanford75}.
\item The conditions on $f_0$ and $g_0$ in \eqref{na-eq-f0l2assmp} and \eqref{na-eq-glinf} considerably relax the moment conditions compared to the exponential moments in the function spaces in \cite{bod15brown}, while the transport effects due the spatial heterogeneity can be well controlled using \eqref{na-eq-gl1assmp}. This allows us to use e.g. stretched exponentials or other non-Maxwellian  distributions in the background, which should be helpful, when attempting extensions as in the previous remark.
    \end{enumerate}

\subsection{Method of Proof and Propagation of Chaos}

 We consider two Kolmogorov equations on the set of all possible collision histories or `trees'. Section \ref{na-sec-id} is mostly devoted to proving theorem~\ref{na-thm-id}, where we prove that there exists a solution to the idealised equation by an iterative construction process and then prove that a number of properties hold, including the connection to the solution of the linear Boltzmann equation. In this section we introduce an $\varepsilon$ dependence in both the idealised equation and the linear Boltzmann equation to enable convergence proofs that follow later.

In section~\ref{na-sec-emp} we prove that the distribution of all possible collision histories from our particle dynamics solves the empirical equation, at least for well controlled situations, which resembles the idealised equation. We do this by explicitly calculating the rate of change of the distribution on all possible collision histories.

Finally in section~\ref{na-sec-conv} we prove the main  theorem~\ref{na-thm-main}, by proving the convergence between the solutions of the idealised and empirical equations.

A detailed comparison with the classical BBGKY approach of \cite{lanford75} adapted by \cite{saintraymond13} has been given in \cite{matt16}. A major challenge in that approach is proving the propagation of chaos. The tree history approach allows us to avoid the issue of proving the propagation of
chaos explicitly. This approach was developed in [35] to circumvent the issues around
the propagation of chaos by focusing on good histories or trees.
The idealised distribution $P^\varepsilon_t$ considers that the particles are chaotic so the probability
of seeing a background particle at $(x, v)$ at time $t$ is given exactly by $g_0(x-tv, v)$.
On the other hand for the empirical distribution no assumption of chaos is made and
the particles evolve as described by the particle dynamics. Therefore the probability of
seeing a background particle at $(x, v)$ at time $t$ is more involved than just $g_0(x-tv, v)$
since we need to consider the effect of a background particle colliding, changing velocity
and then arriving at $(x, v)$ at time $t$.
This issue is resolved by introducing  good collision trees. Good trees,
defined precisely in definition \ref{na-deff-goodtrees} below, require, among other properties, that each background
particle that the tagged particle collides with will not re-collide with the tagged
particle up to time $T$. This means that if we restrict our attention to good trees
then we know that there cannot be any re-collisions and so the distribution of the background
particles is much clearer. For this reason we only investigate the properties of
the empirical distribution $\hat P^\varepsilon_t$ on this set of good histories.
It is then shown in proposition \ref{na-prop-goodfull} below that good histories have full measure, in the
sense that the contribution of histories that are not good is vanishing as $\varepsilon$ tends to
zero.

Therefore to prove convergence between the idealised distribution and the empirical
distribution, which is the key step to proving the main theorem, we only need to
compare the idealised and empirical distributions on good histories and remark that
the effect of histories that are not good is vanishing in the limit.
Hence the propagation of chaos is proved implicitly with this collision history
method. The idealised distribution assumes chaos whereas the empirical distribution
does not. By proving the convergence from the empirical distribution to the idealised
distribution we prove the propagation of chaos implicitly.
We emphasise that good histories, due to their lack of re-collisions, mean that the
propagation of chaos holds for the particles relevant for the tagged particle.

\subsection{Tree Set Up}

Trees are defined in the same way as in~\cite{matt12,matt16}, where more detailed explanations can be found. We consider non-cyclic rooted trees of height at most one.
A tree represents a specific history of collisions. The nodes of the tree represent particles and are marked with information about that particle, while the edges of the trees represent collisions. The root of the tree represents the tagged particle and is marked with the tagged particle's initial position $(x_0,v_0) \in U \times \mathbb{R}^3$. The child nodes of the tree represent background particles that the tagged particle collides with and are marked with $(t_j,\nu_j,v_j) \in [0,T] \times \mathbb{S}^2 \times \mathbb{R}^3$ the time of collision, the collision parameter and the incoming velocity of the background particle before the collision respectively. The graph structure of the tree is of little significance in the current paper.
\begin{deff}\label{def:tree}

\begin{figure}[ht]
\includegraphics[scale=1]{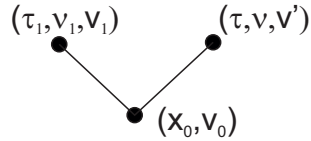}
\caption{A tree with two collisions, the time of the final collision is $\tau$.}
\end{figure}

The set of collision trees is defined by,
\[ \mathcal{MT} : = \{ (x_0,v_0),(t_1,\nu_1,v_1), \dots, (t_n,\nu_n,v_n) : n \in \mathbb{N}\cup \{0\}, t_1 < \dots < t_n \}. \]
For a tree $\Phi \in \mathcal{MT}$, $n$ denotes the number of collisions. The final collision plays an important role in this theory. We define $\tau = \tau(\Phi)$
\begin{equation}
\tau := \begin{cases} 0 & \textrm{ if } n=0, \\
t_n & \textrm{ if } n \geq 1,
\end{cases}
\end{equation}
and for $n \geq 1$ we use the notation $(\tau, \nu, v')= (t_n,\nu_n,v_n)$.
Finally, for $n \geq 1$, we define $\bar{\Phi}$ as the pruned tree of $\Phi$ with the final collision removed. For example if $\Phi=((x_0,v_0),(\tau,\nu,v'))$ then $\bar{\Phi} = ((x_0,v_0))$.

We define a metric, $d$, on $\mathcal{MT}$ as follows. For any  $\Phi,\Psi \in \mathcal{MT}$,
\begin{equation*}
d(\Phi,\Psi): = \begin{cases} 1, \qquad \qquad  \textrm{ if } n(\Phi) \neq n(\Psi)  \\
 \min \bigg\{1 , \max_{1 \leq j \leq n} |(t_j(\Phi), \nu_j(\Phi),v_j(\Phi)) - (t_j(\Psi), \nu_j(\Psi),v_j(\Psi)) | \bigg\}  \textrm{ else}.
\end{cases}
\end{equation*}
For $\Phi \in \mathcal{MT}$ and $h>0$ we define
\[ B_h(\Phi):= \left\{ \Psi \in \mathcal{MT} : d(\Phi,\Psi) < \frac{h}{2} \right\}.  \]
We obtain the Borel $\sigma$-algebra from the metric. All measures of interest will be absolutely continuous in each component of $\mathcal{MT}$ with respect the corresponding Lebesgue
measure for $U\times \R^3 \times ([0,T]\times \mathbb{S}^2 \times \R^3)^n$.
\end{deff}
We note that for a given $\varepsilon \geq 0$, the realisation of $\Phi$ at a time $t\in [0,T]$ uniquely determines $(x(t),v(t))$, the position and velocity of the tagged particle, and $(x_j(t),v_j(t))$, the position and velocity of the $j$ background particles involved in the tree. We note that $(x(t),v(t))$ is independent of $\varepsilon$ (since regardless of $\varepsilon$ the tagged particle has given velocities and collision times), but each $(x_j(t),v_j(t))$ is $\varepsilon$ dependent (since the relevant background particle must be $\varepsilon$ from the tagged particle at the collision).

Background particles might collide several times  with the tagged particle, in such case less than $n$ different particles are involved in a tree with $n(\Phi)$ collisions. Our parametrisation of the trees does not immediately identify such trees, we will later show that the resulting trees are rare.
Furthermore the realisation of $\Phi$ gives information on the remaining (at least) $N-n$ background particles, since we know that they have not interfered with the tree.

\section{The Idealised Distribution} \label{na-sec-id}
The idealised equation is the first of two Kolmogorov equations in this paper. In this section we show that there exists a solution to the idealised equation and relate it to the solution of the linear Boltzmann equation. We construct a solution by first considering the probability of finding the tagged particle at a certain position and velocity such that it has not yet had any collisions. From this we iteratively define a function and check that it solves the idealised equation and that the required connection to the linear Boltzmann equation holds.

A significant problem in this section is showing that we have the required evolution system to solve the non-autonomous equation that describes the probability of finding the tagged particle such that it has not yet experience any collisions. In the autonomous case  we were able to quote specific semigroup results for the Boltzmann equation from \cite{arlotti06}. However in this non-autonomous case we have to resort to more general evolution system theory. This leads to a number of technical results to check the various assumptions of the general theory.

In order to compare the solution of the idealised equation with the solution of the empirical equation, which is the main step in proving theorem~\ref{na-thm-main}, we consider an intermediate step by introducing a dependence on $\varepsilon$ in the idealised equation. In order to be able to connect this $\varepsilon$ dependent solution of the idealised equation to the linear Boltzmann equation we introduce an $\varepsilon$ dependent linear Boltzmann equation. Similarly to \eqref{na-eq-Q+deff} and \eqref{na-eq-Q-deff}, for $\varepsilon \geq0$, $t \geq 0 $ define $Q_t^{\varepsilon,+}$ and $Q_t^{\varepsilon,-} : L^1(U \times \mathbb{R}^3) \to L^1(U \times \mathbb{R}^3)$ by
\begin{align*}
Q_t^{\varepsilon,+}[f](x,v)&: = \int_{\mathbb{S}^2} \int_{\mathbb{R}^3} f(x,v')g_t(x+\varepsilon\nu,\bar{v}')[(v-\bar{v})\cdot \nu]_+ \, \mathrm{d}\bar{v} \, \mathrm{d}\nu,
\\
Q_t^{\varepsilon,-}[f](x,v)&: = f(x,v)\int_{\mathbb{S}^2} \int_{\mathbb{R}^3}  g_t(x+\varepsilon\nu,\bar{v})[(v-\bar{v})\cdot \nu]_+ \, \mathrm{d}\bar{v} \, \mathrm{d}\nu.
\end{align*}
Define $Q_t^{\varepsilon}:=Q_t^{\varepsilon,+}-Q_t^{\varepsilon,-}$. Then the $\varepsilon$ dependent non-autonomous linear Boltzmann equation is given by
\begin{equation} \label{na-eq-linboltzep}
\begin{cases}
\partial_t f_t^\varepsilon (x,v) & = -v \cdot \partial_x f_t^\varepsilon(x,v) + Q_t^\varepsilon[f_t^\varepsilon](x,v), \\
f_{t=0}^\varepsilon(x,v)  & = f_0(x,v).
\end{cases}
\end{equation}

We can now state the idealised equation. For $\varepsilon\geq 0$ consider
\begin{equation} \label{na-eq-id}
\begin{cases}
\partial _t P_t (\Phi) & = \mathcal{Q}^\varepsilon_t[P_t](\Phi) = \mathcal{Q}_t^{\varepsilon,+} [P_t](\Phi) - \mathcal{Q}_t^{\varepsilon,-}[P_t](\Phi), \\ P_0(\Phi) & = f_0 (x_0,v_0)\mathbbm{1}_{n(\Phi)=0},
\end{cases}
\end{equation}
where the gain term depends on the pruned tree $\bar{\Phi}$ as given in definition \ref{def:tree}
\begin{align}
\mathcal{Q}_t^{\varepsilon,+} [P_t](\Phi)&: =
\begin{cases}
 \delta(t-\tau)P_
\tau (\bar{\Phi})g_\tau(x(\tau)+\varepsilon \nu,v')[(v(\tau^-)-v')\cdot \nu]_+ & \textrm{ if } n\geq 1, \\
  0 &  \textrm{ if } n =0,
\end{cases}\\
\mathcal{Q}_t^{\varepsilon,-}[P_t](\Phi)&: = P_t(\Phi) \int_{\mathbb{S}^{2}} \int_{\mathbb{R}^3} g_t(x(t)+\varepsilon\nu,\bar{v})[(v(t)-\bar{v})\cdot \nu ]_+ \, \mathrm{d}\bar{v} \, \mathrm{d}\nu.\end{align}
For a tree $\Phi\in \mathcal{MT}$, $t\geq0$ and $\varepsilon\geq0$ we introduce the notation
\begin{equation}\label{na-eq-Ldeff}
L_t^{\varepsilon}(\Phi):= \int_{\mathbb{S}^{2}} \int_{\mathbb{R}^3} g_t(x(t)+\varepsilon\nu,\bar{v})[(v(t)-\bar{v})\cdot \nu ]_+ \, \mathrm{d}\bar{v} \, \mathrm{d}\nu
\end{equation}
and note that this implies
\[ \mathcal{Q}_t^{\varepsilon,-}[P_t](\Phi) = P_t(\Phi)L_t^\varepsilon(\Phi). \]
Moreover for any $t\in[0,T]$ and for any $\Omega \subset U \times \mathbb{R}^3$ define,
\[ S_t(\Omega):=\{ \Phi \in \mathcal{MT} : (x(t),v(t)) \in \Omega \}. \]
\begin{thm} \label{na-thm-id}
Suppose that $f_0$ and $g_0$ are tagged and background admissible respectively in the sense of definition~\ref{na-deff-admiss}. Then for all $\varepsilon\geq0$ there exists a solution $P^\varepsilon:[0,T
] \to L^1(\mathcal{MT})$ to \eqref{na-eq-id} such that for all $t\in [0,T]$, $P_t^\varepsilon$ is a probability measure on $\mathcal{MT}$. Furthermore there exists a $K>0$, independent of $\varepsilon$ such that for any $t \in [0,T]$
\begin{equation} \label{na-eq-Pt1st}
 \int_{\mathcal{MT}} P_t^\varepsilon(\Phi)(1+|v(\tau)|) \, \mathrm{d} \Phi \leq K < \infty.
\end{equation}
And for any $\varepsilon \geq 0$, $t\in[0,T]$ and any $\Omega \subset U \times \mathbb{R}^3$ measurable,
\begin{equation} \label{na-eq-ftptcon}
 \int_\Omega f_t^\varepsilon(x,v) \, \mathrm{d}x \, \mathrm{d}v = \int_{S_t(\Omega)} P_t^\varepsilon(\Phi) \, \mathrm{d}\Phi,
\end{equation}
where $f_t^\varepsilon$ is a solution to \eqref{na-eq-linboltzep}.
Finally, uniformly for $t \in [0,T]$,
\begin{equation} \label{na-eq-ptepuni}
\lim_{\varepsilon \to 0}  \int_{\mathcal{MT}}  \left| P_t^0(\Phi) - P_t^\varepsilon(\Phi) \right| \, \mathrm{d}\Phi  =0.
\end{equation}

\end{thm}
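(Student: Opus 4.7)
The plan is to construct $P^\varepsilon$ by an explicit iterative construction over tree levels, exploiting that for each fixed $\Phi$ the trajectory $(x(s),v(s))$ is deterministic and the idealised equation \eqref{na-eq-id} reduces to a scalar ODE in $t$. For $n(\Phi)=0$ the equation becomes $\partial_tP_t(\Phi)=-L_t^\varepsilon(\Phi)P_t(\Phi)$ with datum $f_0(x_0,v_0)$, whose solution is
\[
P_t^\varepsilon(\Phi)=f_0(x_0,v_0)\exp\Bigl(-\int_0^t L_s^\varepsilon(\Phi)\,\mathrm{d}s\Bigr).
\]
For $n(\Phi)\geq 1$ the Dirac mass in $\mathcal{Q}_t^{\varepsilon,+}$ at $t=\tau$ forces $P_t^\varepsilon(\Phi)=0$ for $t<\tau$, a jump of size $P_\tau^\varepsilon(\bar\Phi)\,g_\tau(x(\tau)+\varepsilon\nu,v')[(v(\tau^-)-v')\cdot\nu]_+$ at $t=\tau$, and loss-only decay thereafter, giving an immediate recursion from $P^\varepsilon(\bar\Phi)$ to $P^\varepsilon(\Phi)$. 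Iterating this from the root yields a closed product formula for $P_t^\varepsilon$, from which positivity, measurability and the fact that $P^\varepsilon$ solves \eqref{na-eq-id} are direct. The point at which non-autonomous evolution system theory is needed is in recognising the $n=0$ layer, viewed as a function on $U\times\mathbb{R}^3$, as the solution of the transport-with-absorption equation $\partial_t\varphi+v\cdot\nabla_x\varphi=-L_t^\varepsilon\varphi$: since $L_t^\varepsilon$ and $v\cdot\nabla_x$ do not commute in $t$, a Kato-type evolution system must be verified rather than a single semigroup quoted.

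The probability-measure property and the moment bound \eqref{na-eq-Pt1st} are then obtained level by level. Integrating \eqref{na-eq-id} over trees with exactly $n$ collisions and resolving the $\delta(t-\tau)$ in the gain at level $n+1$ via a Fubini exchange in $(\nu,v')$ shows that the mass flowing from level $n$ to level $n+1$ is exactly compensated; the total mass therefore telescopes, and conservation reduces to the tail estimate $\int_{\{n(\Phi)\geq N\}}P_t^\varepsilon(\Phi)\,\mathrm{d}\Phi\to 0$. This estimate follows by iterating the pointwise collision-rate bound $L_t^\varepsilon(\Phi)\leq C(1+|v(t)|)$, which is guaranteed by \eqref{na-eq-gl1assmp}--\eqref{na-eq-glinf}, against $f_0$. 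Carrying the weight $1+|v(\tau)|$ through the same iteration and using the triangle bound $|v(\tau_k)|\leq|v(\tau_k^-)|+|v_k|$ together with \eqref{na-eq-f0l2assmp} produces \eqref{na-eq-Pt1st} with a constant uniform in $\varepsilon$. For the identification \eqref{na-eq-ftptcon}, I would define $\tilde f_t^\varepsilon$ as the marginal of $P_t^\varepsilon$ in $(x(t),v(t))$, differentiate $\int_{S_t(\Omega)}P_t^\varepsilon\,\mathrm{d}\Phi$ in $t$ and sum over tree levels: the $t$-dependence of $S_t(\Omega)$ produces the transport $-v\cdot\nabla_x\tilde f_t^\varepsilon$, the loss part of $\mathcal{Q}^\varepsilon_t$ gives $-Q_t^{\varepsilon,-}[\tilde f_t^\varepsilon]$, and the gain contributions coming from level $n+1$, after the standard involutive change of variables $(v,\bar v)\mapsto(v',\bar v')$ on $\mathbb{S}^2\times\mathbb{R}^3$, assemble into $Q_t^{\varepsilon,+}[\tilde f_t^\varepsilon]$. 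Thus $\tilde f_t^\varepsilon$ solves \eqref{na-eq-linboltzep}, and mass conservation of $P^\varepsilon$ simultaneously yields the honesty that cannot be quoted from \cite{arlotti06} in the present non-autonomous setting.

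The convergence \eqref{na-eq-ptepuni} finally follows from dominated convergence applied to the closed product formula. Pointwise convergence $P_t^\varepsilon(\Phi)\to P_t^0(\Phi)$ holds for every $\Phi$ with regular collision data: each factor $g_{\tau_k}(x(\tau_k)+\varepsilon\nu_k,v_k)\to g_{\tau_k}(x(\tau_k),v_k)$ at rate $O(\varepsilon^\alpha)$ by the Hölder assumption \eqref{na-eq-ghldassmp}, and a further dominated convergence inside the exponentials handles $L_s^\varepsilon(\Phi)\to L_s^0(\Phi)$, yielding pointwise convergence uniformly for $t\in[0,T]$. A dominating function is built from the uniform pointwise estimate $g_\tau(x+\varepsilon\nu,v)\leq\bar g(v)$ together with the $\varepsilon$-uniform moment bound \eqref{na-eq-Pt1st}. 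The main obstacle I anticipate is precisely the construction of this dominating function in $L^1(\mathcal{MT})$, uniformly in $\varepsilon$ \emph{and} summable over tree levels: it forces every preceding estimate—mass conservation, moment bound, collision-rate bound—to come with constants independent of $\varepsilon$, and the Hölder plus $W^{1,1}$ control \eqref{na-eq-g0w1}--\eqref{na-eq-ghldassmp} on $g_0$ is what ultimately drives this uniformity.
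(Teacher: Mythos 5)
Your construction of $P_t^\varepsilon$ by the closed recursion over tree levels, and your recognition that the $n=0$ layer requires a Kato-type evolution system rather than a single semigroup, match the paper. The genuine gap is in how you propose to prove mass conservation and the moment bound \eqref{na-eq-Pt1st}. You reduce conservation to a tail estimate over trees with many collisions and claim it ``follows by iterating the pointwise collision-rate bound $L_t^\varepsilon(\Phi)\leq C(1+|v(t)|)$ against $f_0$'', carrying the weight $1+|v(\tau)|$ through the same iteration. This iteration does not close under the stated hypotheses: each application of the gain kernel costs a factor $[(v-\bar v)\cdot\nu]_+\leq |v|+|\bar v|$, and the post-collisional speed is controlled only by combinations of the pre-collisional ones, so after $n$ collisions the naive bound requires moments of $f_0$ and $\bar g$ of order $n$, whereas \eqref{na-eq-f0l2assmp} and \eqref{na-eq-gl1assmp} provide only second moments. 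The quantity you actually need to kill is the \emph{weighted} tail $\int_0^t\int_{\mathcal{T}_N}L_s^\varepsilon P_s^\varepsilon\,\mathrm{d}\Phi\,\mathrm{d}s$ (the loss escaping through the top level), which is exactly the honesty problem in disguise. The paper circumvents this by identifying the level-$j$ marginal $P_t^{\varepsilon,(j)}$ with the $j$-th term $V_j^\varepsilon(t,0)f_0$ of the substochastic Dyson series of \cite{arlotti14} (Propositions \ref{na-prop-ptjintform} and \ref{na-prop-ptjsum}); there the exact identity $\int_s^r\|B^\varepsilon(t)U^\varepsilon(t,s)f\|\,\mathrm{d}t=\|f\|-\|U^\varepsilon(r,s)f\|$ of Lemma \ref{na-lem-arlotti2} controls all levels at once using only $\|f_0\|_{L^1}$, a single application of the kernel bound paired with $(1+|v|)^2\leq 2(1+|v|^2)$ yields \eqref{na-eq-Pt1st} from second moments alone, and the equality $\int_{\mathcal{MT}}P_t^\varepsilon\,\mathrm{d}\Phi=1$ is then proved by an explicit two-sided computation of $\partial_t^{\pm}\int_{\mathcal{MT}}P_t^\varepsilon\,\mathrm{d}\Phi=0$ (Lemmas \ref{na-lem-prob+} and \ref{na-lem-prob-}), with \eqref{na-eq-Pt1st} serving as the dominating function.

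The same moment-growth problem resurfaces in your argument for \eqref{na-eq-ptepuni}: you flag the construction of an $\varepsilon$-uniform dominating function in $L^1(\mathcal{MT})$, summable over tree levels, as the main obstacle, and indeed no such function is available from the stated assumptions by direct iteration. The paper avoids constructing one entirely: once $P_t^\varepsilon$ and $P_t^0$ are known to be probability measures, pointwise convergence (proved by induction on the tree level in Lemma \ref{na-lem-ptptw}) upgrades to $L^1$ convergence by Scheff\'e's theorem. You should replace the dominated-convergence step by this argument; without it, and without repairing the mass-conservation step as above, the proof as proposed does not go through.
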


From now we assume that $f_0$ and $g_0$ are tagged and background admissible respectively.

  The rest of this section is devoted to proving theorem~\ref{na-thm-id}. We split this into a number of subsections. In the first subsection, \ref{na-sec-idevol}, we prove that there exists a solution $P_t^{\varepsilon,(0)}$ to the gainless linear Boltzmann equation and that this solution has a particular form given by an evolution system $U^\varepsilon$. This subsection takes a number of technical lemmas in order to prove various semigroup properties. Then in subsection \ref{na-sec-idlinbolt} we show that the $\varepsilon$ dependent non-autonomous linear Boltzmann equation has a solution, in the evolution system sense. Then in section~\ref{na-sec-idbuild} we construct $P_t^\varepsilon$ and show that it indeed satisfies the properties of theorem~\ref{na-thm-id}. We finish this section by using theorem~\ref{na-thm-id} to prove that the solution of the $\varepsilon$ dependent non-autonomous linear Boltzmann equation is a probability measure.

\subsection{The Evolution Semigroup}\label{na-sec-idevol}
In this subsection we prove that there exists a solution to the $\varepsilon$ dependent gainless linear Boltzmann  equation \eqref{na-eq-p0} by following standard evolution system theory as in \cite{pazy83}. This requires a number of technical results.

\begin{deff} \label{na-deff-ABops}
For any $t \in [0,T]$ and any $\varepsilon\geq 0$ define $D(A^\varepsilon(t)),D(B^\varepsilon(t)) \subset L^1(U \times \mathbb{R}^3)$ by
\begin{align*}
 D(A^\varepsilon(t))&: = \{ f \in L^1(U \times \mathbb{R}^3) : v\cdot \partial_x f(x,v) + Q_t^{\varepsilon,-}[f](x,v) \in L^1(U \times \mathbb{R}^3) \}, \\
  D(B^\varepsilon(t))&: = \{ f \in L^1(U \times \mathbb{R}^3) :  Q_t^{\varepsilon,+}[f](x,v) \in L^1(U \times \mathbb{R}^3) \}.
\end{align*}
Then define operators $A^\varepsilon(t):D(A^\varepsilon(t)) \to L^1(U \times \mathbb{R}^3)$ and $B^\varepsilon(t):D(B^\varepsilon(t)) \to L^1(U \times \mathbb{R}^3)$ by
\begin{align}
(A^\varepsilon(t)f)(x,v) &:= - v\cdot \partial_x f(x,v) - Q_t^{\varepsilon,-}[f](x,v) \label{na-eq-Adeff} \\
(B^\varepsilon(t)f)(x,v) &:= Q_t^{\varepsilon,+}[f](x,v) \label{na-eq-Bdeff}.
\end{align}
\end{deff}

\begin{prop} \label{na-prop-Pj0}
For $\varepsilon\geq 0$ there exists a solution $P^{\varepsilon,(0)}:[0,T] \to L^1(U\times \mathbb{R}^3)$ to the following equation
\begin{equation} \label{na-eq-p0}
\begin{cases}
\partial_t P_t^{\varepsilon,(0)} (x,v) & = (A^\varepsilon (t)P^{\varepsilon,(0)}_t)(x,v), \\
 P_0^{\varepsilon,(0)} (x,v) & = f_0(x,v),
\end{cases}
\end{equation}
Moreover the solution is given by $P_t^{\varepsilon,(0)}=U^\varepsilon(t,0)f_0$, where $U^\varepsilon$ is defined by,
 $U^\varepsilon:[0,T]\times[0,T] \times L^1(U\times\mathbb{R}^3)\to L^1(U\times\mathbb{R}^3)$ defined by
\begin{align} \label{na-eq-Uepdeff}
(U^\varepsilon(t,s)f)(x,v) & := \exp \left(- \int_s^t  \int_{\mathbb{S}^{2}} \int_{\mathbb{R}^3} g_\sigma(x+\varepsilon\nu - (t-\sigma) v,\bar{v})[(v-\bar{v})\cdot \nu ]_+ \, \mathrm{d}\bar{v} \, \mathrm{d}\nu \, \mathrm{d}\sigma \right) \nonumber \\
 & \quad \quad  f(x-(t-s)v,v).
\end{align}
\end{prop}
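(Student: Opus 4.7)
The plan is to verify the explicit formula \eqref{na-eq-Uepdeff} directly by the method of characteristics, which is natural here because $A^\varepsilon(t)$ decomposes as free transport $-v\cdot\partial_x$ plus a purely multiplicative time-dependent loss: the $Q_t^{\varepsilon,-}$ term is just $f(x,v)$ multiplied by the non-negative factor obtained by evaluating $L_t^\varepsilon$ from \eqref{na-eq-Ldeff} at the point $(x,v)$. Along the characteristic $\sigma\mapsto x-(t-\sigma)v$, the equation reduces to a scalar linear ODE with driving coefficient equal to minus this loss rate; integrating from $s$ to $t$ and composing with the translation of the initial data gives exactly \eqref{na-eq-Uepdeff}.

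First I would control the loss rate $\ell_t^\varepsilon(x,v):=\int_{\mathbb{S}^2}\int_{\mathbb{R}^3}g_t(x+\varepsilon\nu,\bar v)[(v-\bar v)\cdot\nu]_+\,\mathrm{d}\bar v\,\mathrm{d}\nu$. From \eqref{na-eq-gl1assmp} one obtains $0\le\ell_t^\varepsilon(x,v)\le C(1+|v|)$ uniformly in $(t,x)$, and from the H\"older assumption \eqref{na-eq-ghldassmp} together with $g_t(x,v)=g_0(x-tv,v)$ one obtains joint continuity (H\"older in $t$ with an extra $|v|^\alpha$ factor). Consequently the exponent in \eqref{na-eq-Uepdeff} is finite and the exponential factor lies in $[0,1]$, so by the translation invariance of Lebesgue measure on $U\times\mathbb{R}^3$ (using the periodic boundary conditions on $U$) one has the contractive bound $\|U^\varepsilon(t,s)f\|_{L^1}\le\|f\|_{L^1}$.

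Next I would check the evolution-system properties. The identity $U^\varepsilon(s,s)=I$ is immediate; for the composition $U^\varepsilon(t,r)U^\varepsilon(r,s)=U^\varepsilon(t,s)$ with $s\le r\le t$, one splits the $\sigma$-integral in the exponent as $[s,t]=[s,r]\cup[r,t]$, performs the change of variables $y=x-(t-r)v$ inside the exponential of $U^\varepsilon(r,s)$, and matches arguments using $(x-(t-\sigma)v)+\varepsilon\nu=(y-(r-\sigma)v)+\varepsilon\nu$ for $\sigma\in[s,r]$. Strong continuity of $(t,s,f)\mapsto U^\varepsilon(t,s)f$ in $L^1$ then follows from $L^1$-continuity of translations together with dominated convergence for the exponential factor, using the uniform bound on $\ell_t^\varepsilon$.

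Finally, to identify $P_t^{\varepsilon,(0)}:=U^\varepsilon(t,0)f_0$ as a solution of \eqref{na-eq-p0} I would differentiate for smooth $f_0$: writing $E^\varepsilon$ for the exponential factor, a direct calculation shows $(\partial_t+v\cdot\partial_x)E^\varepsilon=-\ell_t^\varepsilon(x,v)E^\varepsilon$ (the upper-limit contribution supplies $-\ell_t^\varepsilon$, and the transport term exactly cancels the $\partial_t$ of the inner integrand via the chain rule), while $(\partial_t+v\cdot\partial_x)f_0(x-tv,v)=0$; the product rule then gives $\partial_tP_t^{\varepsilon,(0)}=A^\varepsilon(t)P_t^{\varepsilon,(0)}$, and this extends to general $f_0\in L^1$ by density. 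The main obstacle I foresee is not the derivation itself but fitting it into the abstract Pazy framework: one must exhibit a dense invariant core on which $A^\varepsilon(t)$ acts classically and verify the required H\"older-type regularity of $t\mapsto A^\varepsilon(t)$ in an appropriate operator sense, both of which will be established using the admissibility bounds \eqref{na-eq-gl1assmp}--\eqref{na-eq-ghldassmp} together with the transport identity $g_t(x,v)=g_0(x-tv,v)$.
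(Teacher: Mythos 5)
Your proposal is correct and takes essentially the same route as the paper: the explicit formula is verified to be a strongly continuous contractive evolution family (composition law by splitting the $\sigma$-integral, contractivity from bounding the exponential by $1$ using \eqref{na-eq-gl1assmp}), the generator identity is the same characteristics computation of $(\partial_t+v\cdot\partial_x)E^\varepsilon=-L_t^\varepsilon E^\varepsilon$, and the Pazy ingredients you defer to the end --- a dense invariant core on which $A^\varepsilon(t)$ acts boundedly and operator-norm continuity of $t\mapsto A^\varepsilon(t)$ --- are precisely where the paper spends its effort, via the weighted $W^{1,1}$ space $Y$ of \eqref{na-eq-Ydeff} and the velocity-truncation estimate of lemma~\ref{na-lem-g0diffbound}. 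The only caveat is that your closing ``extend to general $f_0\in L^1$ by density'' can only produce a solution in a weak/integrated sense, which is likewise all the paper establishes through conditions (E2)--(E3) tested against measurable sets $\Omega$ for data in $Y$.
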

\begin{rmk}
$P_t^{\varepsilon,(0)}(x,v)$ can be thought of as the probability of finding the tagged particle at $(x,v)$ such that it has not yet experienced any collisions.
\end{rmk}
To prove this proposition we aim to apply \cite[Theorem 5.3.1]{pazy83}, which gives that there exists a evolution system defining the solution to \eqref{na-eq-p0}. First we present lemmas checking that conditions $(H1),(H2)$ and $(H3)$ hold. This tells us that there exists a unique evolution system satisfying $(E1),(E2)$ and $(E3)$. Next we show that $U^\varepsilon(t,s)$ is a strongly continuous evolution system and that it satisfies $(E1),(E2)$ and $(E3)$, so is indeed the evolution system described by \cite[Theorem 5.3.1]{pazy83}. This tells us that a solution to \eqref{na-eq-p0} is given by $U^\varepsilon(t,0)f_0$.

\begin{lem}\label{na-lem-AH1}
For $\varepsilon \geq 0$, $A^\varepsilon$, as defined in definition~\ref{na-deff-ABops}, satisfies condition $(H1)$ of \cite[Chapter 5]{pazy83}.
\end{lem}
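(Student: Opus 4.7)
The plan is to identify condition $(H_1)$ of Pazy, Chapter 5, with the requirement that $\{A^\varepsilon(t)\}_{t\in[0,T]}$ is a stable family of generators of $C_0$-semigroups on $L^1(U\times\R^3)$, with stability constants $(M,\omega)=(1,0)$. Since compositions of contractions are contractions, the stability bound on finite products of resolvents is automatic once I show that each frozen-time operator $A^\varepsilon(t)$ generates a $C_0$-semigroup of contractions. The lemma therefore reduces to a generation result for the single operator $A^\varepsilon(t) = -v\cdot\partial_x - L_t^\varepsilon$, with $L_t^\varepsilon$ as in \eqref{na-eq-Ldeff}.

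To generate, I would use Lumer--Phillips on $L^1$. First I would check that $D(A^\varepsilon(t))$ is dense: it contains $C_c^\infty(U\times\R^3)$, since a smooth compactly supported function has bounded $|v|$ on its support, and $L_t^\varepsilon(x,v) \leq C(1+|v|)$ by the moment bound \eqref{na-eq-gl1assmp} combined with \eqref{na-eq-gbar}, so $L_t^\varepsilon f \in L^1$ whenever $f \in C_c^\infty$. For dissipativity, I would take the duality pairing $\phi = \operatorname{sgn}(f) \in L^\infty$ and compute
\begin{equation*}
\int_{U\times\R^3} \operatorname{sgn}(f)\, A^\varepsilon(t) f\, \dd x\,\dd v
= -\int_{U\times\R^3} v \cdot \partial_x |f|\, \dd x\,\dd v - \int_{U\times\R^3} L_t^\varepsilon(x,v) |f(x,v)|\, \dd x\,\dd v \leq 0,
\end{equation*}
where the transport term vanishes by the periodic boundary conditions on $U$ and the second term is non-positive because $L_t^\varepsilon \geq 0$. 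To close the Lumer--Phillips argument I would then establish range density, i.e.\ solvability of $(\lambda - A^\varepsilon(t)) f = h$ in $L^1$ for some $\lambda>0$. The natural candidate is obtained by integrating along characteristics: for $h\in L^1$ set
\begin{equation*}
f(x,v) := \int_0^\infty \exp\!\left(-\lambda s - \int_0^s L_t^\varepsilon(x-\sigma v,v)\,\dd\sigma\right) h(x-sv,v)\, \dd s,
\end{equation*}
and verify by Fubini that $f\in L^1$ with $\|f\|_1 \leq \lambda^{-1}\|h\|_1$, that it lies in $D(A^\varepsilon(t))$, and that $(\lambda - A^\varepsilon(t))f = h$ in the distributional sense, hence in $L^1$.

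The main obstacle will be the integration-along-characteristics construction of the resolvent, because $L_t^\varepsilon$ is unbounded in $v$ and I cannot treat it as a bounded perturbation of the transport group. The argument has to keep the transport and absorption terms together; in particular I would need to argue carefully that $v\cdot\partial_x f$ is well-defined in the sense of distributions and that the resulting $f$ lies in $D(A^\varepsilon(t))$ as defined in Definition~\ref{na-deff-ABops}. Once this resolvent identity is in hand, Lumer--Phillips delivers a contraction semigroup for each $t$, and the uniform stability constants $(1,0)$ yield $(H_1)$ uniformly in both $t$ and $\varepsilon\geq 0$, which is what subsequent lemmas will need.
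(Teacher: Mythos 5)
Your proof takes a genuinely different route from the paper. The paper disposes of the generation step in a single line by citing Arlotti--Banasiak--Lods (the cited Theorem 10.4 of \cite{arlott07}), which directly gives the explicit semigroup formula \eqref{na-eq-Ssemigroup}; the contraction property is then manifest because the exponential prefactor is bounded by $1$, and stability with $(M,\omega)=(1,0)$ follows immediately. You instead propose a self-contained Lumer--Phillips argument: dissipativity via the sign duality pairing (transport term vanishes by periodicity, absorption term has the right sign because $L_t^\varepsilon \geq 0$) plus a range condition established by explicitly constructing the resolvent via integration along characteristics. Your approach buys independence from the Arlotti et al.\ reference and makes the mechanism transparent, at the cost of having to do the technical work (verifying $f\in D(A^\varepsilon(t))$, justifying the formal sign computation, and handling the unboundedness of $L_t^\varepsilon$ in $v$) that the cited theorem packages up. Note also that the resolvent formula you write down is essentially the Laplace transform in $s$ of the frozen-time semigroup $S_t^\varepsilon(s)$ from \eqref{na-eq-Ssemigroup}, so the two approaches are not far apart conceptually — the paper just quotes the semigroup directly instead of reconstructing it through the resolvent. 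Both routes, once complete, give a family of contraction semigroups and hence stability constants $(1,0)$, uniform in $t$ and $\varepsilon$, which is all that $(H1)$ requires.
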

\begin{proof}
By \cite[Theorem 10.4]{arlott07} we see that for $t\geq 0$, $A^\varepsilon(t)$ generates the $C_0$ semigroup $S_t^\varepsilon$ given by
\begin{align} \label{na-eq-Ssemigroup}
(S_t^\varepsilon(s)  P)(x,v)  =\exp \left(- \int_0^s  \int_{\mathbb{S}^{2}} \int_{\mathbb{R}^3} g_t(x+\varepsilon\nu - \sigma v,\bar{v})[(v-\bar{v})\cdot \nu ]_+ \, \mathrm{d}\bar{v} \, \mathrm{d}\nu \, \mathrm{d}\sigma \right)P(x-sv,v).
\end{align}
Since each $S_t^\varepsilon$ is a contraction semigroup this is a stable family, which proves condition $(H1)$ of  \cite[theorem 5.3.1]{pazy83}.
\end{proof}
Define
\begin{equation} \label{na-eq-Ydeff}
Y :=  \left\{ P \in L^1(U \times \mathbb{R}^3) : \textrm{for almost all } v \in \mathbb{R}^3, P(\cdot,v) \in W^{1,1}(U) \textrm{ and } \|P\|_Y < \infty \right\} ,
\end{equation}
where
\begin{equation} \nonumber
\| P \|_Y := \int_{U \times \mathbb{R}^3} (1+|v|^2)|P(x,v)| + (1+|v|)|\partial_x P(x,v)| \, \mathrm{d}x \, \mathrm{d}v.
\end{equation}
The following two lemmas, lemma~\ref{na-lem-Yinvariant} and lemma~\ref{na-lem-AH2help}, are used to help prove that condition $(H2)$ holds, which is shown in lemma~\ref{na-lem-AH2}.
\begin{lem} \label{na-lem-Yinvariant}
For $\varepsilon, t,s\geq 0$, $Y$ is invariant under the map $S_t^\varepsilon(s)$.
\end{lem}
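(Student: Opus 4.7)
The plan is to exploit the explicit product structure
$$(S_t^\varepsilon(s)P)(x,v) = E_{t,s,\varepsilon}(x,v)\, P(x-sv,v),$$
where $E_{t,s,\varepsilon}(x,v)$ denotes the exponential factor in \eqref{na-eq-Ssemigroup}, and verify the two defining requirements of $Y$: namely $P(\cdot,v)\in W^{1,1}(U)$ for a.e.\ $v$, and $\|P\|_Y<\infty$. Since $0\le E_{t,s,\varepsilon}\le 1$ pointwise and translation $x\mapsto x-sv$ preserves Lebesgue measure on the torus $U$, the moment part of the norm satisfies
$$\int_{U\times\R^3} (1+|v|^2)|(S_t^\varepsilon(s)P)(x,v)|\,\dd x\,\dd v \le \int_{U\times\R^3}(1+|v|^2)|P(x,v)|\,\dd x\,\dd v,$$
which is immediate.

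For the derivative term I would apply the product rule
$$\partial_x\bigl(E_{t,s,\varepsilon}(x,v)P(x-sv,v)\bigr) = \bigl(\partial_x E_{t,s,\varepsilon}(x,v)\bigr) P(x-sv,v) + E_{t,s,\varepsilon}(x,v)\bigl(\partial_x P\bigr)(x-sv,v).$$
The second summand is handled exactly like the moment part, yielding a contribution of at most $\int (1+|v|)|\partial_x P|\le\|P\|_Y$. For the first summand I need an $L^\infty_x$ bound on $\partial_x E_{t,s,\varepsilon}$, which is where the background-admissibility hypothesis \eqref{na-eq-g0w1} enters. Writing $E_{t,s,\varepsilon}=\exp(-h)$, differentiating under the integral sign (justified by $g_0(\cdot,\bar v)\in W^{1,1}(U)$), and using $g_t(y,\bar v)=g_0(y-t\bar v,\bar v)$ gives
$$|\partial_x E_{t,s,\varepsilon}(x,v)|\le \int_0^s\int_{\mathbb{S}^2}\int_{\R^3}|\partial_x g_0(x+\varepsilon\nu-\sigma v - t\bar v,\bar v)|\,(|v|+|\bar v|)\,\dd\bar v\,\dd\nu\,\dd\sigma.$$
Splitting $|v|+|\bar v|$ and applying \eqref{na-eq-g0w1} to the inner integral, which gives $\int|\partial_x g_0(y,\bar v)|(1+|\bar v|)\,\dd\bar v\le M_1$ for a.e.\ $y$, produces the key pointwise estimate
$$|\partial_x E_{t,s,\varepsilon}(x,v)|\le 4\pi s\, M_1\,(1+|v|).$$

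Combining everything, $\int(1+|v|)|\partial_x E_{t,s,\varepsilon}|\,|P(x-sv,v)|\,\dd x\,\dd v \lesssim s M_1\int(1+|v|)^2|P|\lesssim sM_1\|P\|_Y$, and adding the three contributions gives $\|S_t^\varepsilon(s)P\|_Y\le (2+8\pi s M_1)\|P\|_Y$, which proves the invariance. Only a small amount of technical care is needed to justify the differentiation under the integral and to verify that the product of $E_{t,s,\varepsilon}(\cdot,v)\in W^{1,\infty}(U)$ with $P(\cdot-sv,v)\in W^{1,1}(U)$ lies in $W^{1,1}(U)$ with the product rule valid. The main conceptual point — and the only place where a nontrivial assumption is consumed — is the bound on $\partial_x E_{t,s,\varepsilon}$ using the first-order moment hypothesis \eqref{na-eq-g0w1} on $\partial_x g_0$; everything else is routine manipulation via translation invariance on the torus and $|E_{t,s,\varepsilon}|\le 1$.
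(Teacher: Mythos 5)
Your proof is correct and follows essentially the same route as the paper's: the product rule applied to $E_{t,s,\varepsilon}(x,v)P(x-sv,v)$, bounding the exponential by $1$ for the moment and $\partial_x P$ contributions, and controlling $\partial_x E_{t,s,\varepsilon}$ pointwise by $O(sM_1(1+|v|))$ via \eqref{na-eq-g0w1} so that the remaining term is absorbed into $\int(1+|v|^2)|P|\le\|P\|_Y$. The only differences are immaterial constants.
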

\begin{proof}
Let $P \in Y$. It is clear that for almost all $v \in \mathbb{R}^3$, $S_t^\varepsilon(s) P(\cdot,v) \in L^1(U)$. Further, for each $i=1,2,3$, and almost all $x,v \in U \times \mathbb{R}^3$
\begin{align} \label{na-eq-partialStep}
\partial_{x_i}  \Big( (S_t^\varepsilon(s) P)(x,v) \Big)  &= -  \left( \int_0^s  \int_{\mathbb{S}^{2}} \int_{\mathbb{R}^3} \partial_{x_i} g_t(x+\varepsilon\nu - \sigma v,\bar{v})[(v-\bar{v})\cdot \nu ]_+ \, \mathrm{d}\bar{v} \, \mathrm{d}\nu \, \mathrm{d}\sigma \right) (S_t^\varepsilon(s)P)(x,v) \nonumber \\
&   - \exp \left(- \int_0^s  \int_{\mathbb{S}^{2}} \int_{\mathbb{R}^3} g_t(x+\varepsilon\nu - \sigma v,\bar{v})[(v-\bar{v})\cdot \nu ]_+ \, \mathrm{d}\bar{v} \, \mathrm{d}\nu \, \mathrm{d}\sigma \right)\partial_{x_i} P(x-sv,v).
\end{align}
Since $P \in Y$ and using \eqref{na-eq-g0w1} we can integrate each of these terms over $U$.
Hence for almost all $v \in \mathbb{R}^3$, $(S_t^\varepsilon(s)P)(\cdot,v) \in W^{1,1}(U)$. It remains to check that $\| S_t^\varepsilon(s)P \|_Y <\infty$. By bounding the exponential term in \eqref{na-eq-Ssemigroup} by $1$ we have
\begin{equation} \label{na-eq-Ynormpart1}
\int_{U \times \mathbb{R}^3} (1+|v|^2)|S_t^\varepsilon(s)P(x,v)|  \, \mathrm{d}x \, \mathrm{d}v \leq  \int_{U \times \mathbb{R}^3} (1+|v|^2)|P(x,v)|  \, \mathrm{d}x \, \mathrm{d}v \leq \| P \|_Y < \infty.
\end{equation}
Further we note that by \eqref{na-eq-g0w1} for some $C>0$,
\begin{align}\label{na-eq-intpartialg}
&\int_{U \times \mathbb{R}^3}  (1+|v|) \left( \int_0^s  \int_{\mathbb{S}^{2}} \int_{\mathbb{R}^3} \partial_{x_i} g_t(x+\varepsilon\nu - \sigma v,\bar{v})[(v-\bar{v})\cdot \nu ]_+ \, \mathrm{d}\bar{v} \, \mathrm{d}\nu \, \mathrm{d}\sigma \right)  (S_t^\varepsilon(s)P)(x,v)  \, \mathrm{d}x \, \mathrm{d}v \nonumber \\
& \leq \int_{U \times \mathbb{R}^3} (1+|v|) \left( \int_0^s  \int_{\mathbb{S}^{2}} (1+|v|)M_1 \, \mathrm{d}\nu \, \mathrm{d}\sigma \right) P(x-sv,v)  \, \mathrm{d}x \, \mathrm{d}v \nonumber \\
&\leq  C \int_{U \times \mathbb{R}^3} (1+|v|^2)  P(x-sv,v)  \, \mathrm{d}x \, \mathrm{d}v \leq C \| P \|_Y < \infty.
\end{align}
Also,
\begin{align}\label{na-eq-intexppartialp}
&\int_{U \times \mathbb{R}^3}  (1+|v|) \exp \left(- \int_0^s  \int_{\mathbb{S}^{2}} \int_{\mathbb{R}^3} g_t(x+\varepsilon\nu - \sigma v,\bar{v})[(v-\bar{v})\cdot \nu ]_+ \, \mathrm{d}\bar{v} \, \mathrm{d}\nu \, \mathrm{d}\sigma \right) \partial_{x_i} P(x-sv,v) \, \mathrm{d}x \, \mathrm{d}v \nonumber \\
\leq& \int_{U \times \mathbb{R}^3}  (1+|v|) \partial_{x_i} P(x-sv,v) \, \mathrm{d}x \, \mathrm{d}v \leq \| P \|_Y < \infty.
\end{align}
Combining \eqref{na-eq-partialStep}, \eqref{na-eq-intpartialg} and \eqref{na-eq-intexppartialp} with \eqref{na-eq-Ynormpart1} gives $\| S_t^\varepsilon(s)P \|_Y<\infty $ as required.
\end{proof}

\begin{lem} \label{na-lem-AH2help}
For $\varepsilon,t \geq 0$, $S_t^\varepsilon |_Y$, the restriction of the semigroup $S_t^\varepsilon$ to the space $Y$, is a $C_0$ semigroup on $Y$.
\end{lem}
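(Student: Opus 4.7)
The goal is to verify the three $C_0$-semigroup axioms on $Y$: (i) $S_t^\varepsilon(0) = I$; (ii) the semigroup law $S_t^\varepsilon(s_1)S_t^\varepsilon(s_2) = S_t^\varepsilon(s_1 + s_2)$; (iii) strong continuity $\lim_{s\to 0^+}\|S_t^\varepsilon(s) P - P\|_Y = 0$. Properties (i) and (ii) descend pointwise from the $L^1$-semigroup established in lemma~\ref{na-lem-AH1}, and the invariance $S_t^\varepsilon(s): Y \to Y$ together with a uniform bound $\|S_t^\varepsilon(s)\|_{Y\to Y} \leq C(T, M_1, M_g)$ on $[0,T]$ already follow from lemma~\ref{na-lem-Yinvariant} and its estimates \eqref{na-eq-Ynormpart1}--\eqref{na-eq-intexppartialp}. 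All the real content therefore lies in strong continuity at $s = 0$.

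Writing $E_s(x,v)$ for the exponential factor in \eqref{na-eq-Ssemigroup}, I would split
\[
S_t^\varepsilon(s) P (x,v) - P(x,v) = \bigl(E_s(x,v) - 1\bigr)\, P(x-sv, v) + \bigl[P(x-sv, v) - P(x,v)\bigr],
\]
and control each of the two parts of $\|\cdot\|_Y$ separately. For the weighted-$L^1$ piece with weight $(1+|v|^2)$, the first summand vanishes by dominated convergence: $E_s(x,v) \to 1$ pointwise as $s\to 0$ and is uniformly bounded by $1$, while the majorant $2(1+|v|^2)|P(x-sv,v)|$ has $L^1$-norm equal to $2\|P\|_{L^1_{1+|v|^2}} \leq 2\|P\|_Y$ by translation invariance of Lebesgue measure on the periodic box $U$. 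The second summand vanishes by continuity of translation in $L^1(U)$ applied fibrewise in $v$, followed by dominated convergence in $v$ against the integrable majorant $v\mapsto 2(1+|v|^2)\|P(\cdot,v)\|_{L^1(U)}$.

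For the $W^{1,1}$ piece of $\|\cdot\|_Y$ (weight $(1+|v|)$ on $\partial_x$), I would use the explicit differentiation formula \eqref{na-eq-partialStep}, which expresses $\partial_{x_i}(S_t^\varepsilon(s)P)$ as a sum of two terms. The first contains a factor $\int_0^s \cdots \,\mathrm{d}\sigma$ coming from differentiating the exponential, and its $(1+|v|)$-weighted $L^1$-norm is $O(s)$ by exactly the estimate \eqref{na-eq-intpartialg}, which is the place where assumption \eqref{na-eq-g0w1} enters. The second term $E_s(x,v)\partial_{x_i}P(x-sv,v)$ is handled as in the previous paragraph with $\partial_{x_i} P$ in place of $P$, using that $(1+|v|)|\partial_x P(\cdot,v)|\in L^1$ --- the second half of $\|P\|_Y$ --- provides the integrable majorant. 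The main technical hurdle is coordinating the dominated-convergence majorants with the two weight functions defining $\|\cdot\|_Y$; the weights and the assumptions \eqref{na-eq-gl1assmp}, \eqref{na-eq-g0w1} are chosen precisely so that these estimates close.
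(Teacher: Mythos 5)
Your proof is correct, and it reaches strong continuity by a genuinely different route from the paper. The paper first establishes $\|S_t^\varepsilon(s)\eta-\eta\|_Y<\delta$ for a test function $\eta\in C_c^\infty(U\times\mathbb{R}^3)$, exploiting the compact support in $v$ to make the exponential factor converge to $1$ uniformly and using uniform continuity of $\eta$ for the translation term; it then passes to general $P\in Y$ by density of $C_c^\infty$ in the $Y$-norm together with the uniform bound on $\|S_t^\varepsilon(s)\|_{Y\to Y}$. You instead work directly with $P\in Y$, splitting off $(E_s-1)P(x-sv,v)$ and the pure translation, and invoking dominated convergence plus the fibrewise $L^1(U)$-continuity of translations. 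What your route buys is that you never need density of $C_c^\infty$ in the full weighted $W^{1,1}$-type norm $\|\cdot\|_Y$ (a fact the paper asserts without proof), only the standard $L^1$ translation lemma; what the paper's route buys is that all limits become uniform on the compact $v$-support, so no care is needed about interchanging limits. One small imprecision in your argument: for the term $(E_s-1)P(x-sv,v)$ the proposed majorant $2(1+|v|^2)|P(x-sv,v)|$ depends on $s$, so the classical dominated convergence theorem does not apply verbatim (and $P(x-sv,v)$ need not converge pointwise). This is easily repaired, e.g.\ by splitting into $|v|\le R$, where $|E_s-1|\le 1-\exp(-s\pi M_g(1+R))\to 0$ uniformly and the remaining integral is bounded by $\|P\|_Y$, and $|v|>R$, where the tail of $\int(1+|v|^2)|P|$ is small; the same remark applies to the corresponding term for $\partial_{x_i}P$.
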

\begin{proof}
We know that $S_t^\varepsilon$ is a semigroup in $L^1(U \times \mathbb{R}^3)$ and $Y$ is invariant under $S_t^\varepsilon$ by lemma~\ref{na-lem-Yinvariant} so the only remaining property to check is that for any $P \in Y$
\begin{equation} \label{na-eq-StC0Y}
  \lim_{s \downarrow 0} \| S_t^\varepsilon(s)P - P \|_Y = 0.
\end{equation}
Let $\eta \in C^\infty_c (U \times \mathbb{R}^3)$ be a test function and let $\delta >0$. We show for $s>0$ sufficiently small,
\begin{equation}\label{na-eq-stetay}
\| S_t^\varepsilon (s)\eta - \eta  \|_Y = \int_{U \times \mathbb{R}^3} (1+|v|^2)|S_t^\varepsilon(s)\eta - \eta | + (1+|v|)|\partial_x(S_t^\varepsilon(s)\eta - \eta)| \, \mathrm{d}x \, \mathrm{d}v < \delta.
\end{equation}
Since $\eta \in C^\infty_c (U \times \mathbb{R}^3)$ there exists an $R>0$ such that for all $|v|>R$, $\eta(\cdot,v)=0$. By \eqref{na-eq-gl1assmp} we have for any $|v|<R$,
\begin{align*}
\int_0^s \int_{\mathbb{S}^2} \int_{\mathbb{R}^3}  g_t(x+\varepsilon\nu - \sigma v, \bar{v})[(v-\bar{v})\cdot \nu]_+ \, \mathrm{d}\bar{v} \, \mathrm{d}\nu \, \mathrm{d}\sigma  &\leq \int_0^s \int_{\mathbb{S}^2} \int_{\mathbb{R}^3} \bar{g}(\bar{v})[(v-\bar{v})\cdot \nu]_+ \, \mathrm{d}\bar{v} \, \mathrm{d}\nu \, \mathrm{d}\sigma  \\
& \leq \pi \int_0^s \int_{\mathbb{R}^3} \bar{g}(\bar{v})(|v|+ |\bar{v}|) \, \mathrm{d}\bar{v} \, \mathrm{d}\sigma \leq s \pi M_g (1+R).
\end{align*}
Hence for $|v| < R$,
\begin{align*}
1 - \exp \bigg(- \int_0^s \int_{\mathbb{S}^2} \int_{\mathbb{R}^3} g_t(x+\varepsilon\nu - \sigma v, \bar{v})[(v-\bar{v})\cdot \nu]_+ \, \mathrm{d}\bar{v} \, \mathrm{d}\nu \, \mathrm{d}\sigma \bigg) \leq 1 - \exp \left(- s \pi M_g (1+R) \right),
\end{align*}
and this converges to zero as $s$ converges to zero. Therefore
\begin{align} \label{na-eq-stetacomp}
\int_{U \times \mathbb{R}^3} & (1+|v|^2)| S_t^\varepsilon(s)\eta - \eta | \, \mathrm{d}x \, \mathrm{d}v \nonumber \\
& \leq \int_{U \times B_R(0)} (1+|v|^2) \bigg( |\eta(x-sv,v)-\eta(x,v)| + \left( 1 - \exp \left(- s \pi M_g (1+R) \right) \right) | \eta(x,v) | \bigg)  \, \mathrm{d}x \, \mathrm{d}v.
\end{align}
Since $\eta$ is continuous on $U \times \mathbb{R}^3$, is it uniformly continuous on $U \times B_R(0)$ so we can make $s$ sufficiently small so that this is less than $\delta /3$.  Now by \eqref{na-eq-g0w1} we have for almost all $x$
\begin{align} \label{na-eq-partialgtbound}
\int_0^s \int_{\mathbb{S}^2}  \int_{\mathbb{R}^3} |\partial_x g_t(x+\varepsilon\nu - \sigma v,\bar{v})|[(v-\bar{v})\cdot \nu]_+ \, \mathrm{d}\bar{v} \, \mathrm{d}\nu \, \mathrm{d}\sigma
  \leq s \pi M_1(1+|v|).
\end{align}
Hence for $s$ sufficiently small, again by the uniform continuity of $\eta$ on $U \times B_R(0)$,
\begin{align} \label{na-eq-etadiff1}
\int_{U \times \mathbb{R}^3} & (1+|v|) \left| \int_0^s \int_{\mathbb{S}^2}  \int_{\mathbb{R}^3} \partial_x g_t(x+\varepsilon\nu - \sigma v,\bar{v})[(v-\bar{v})\cdot \nu]_+ \, \mathrm{d}\bar{v} \, \mathrm{d}\nu \, \mathrm{d}\sigma  \right| | S_t^\varepsilon(s)\eta(x,v)| \, \mathrm{d}x \, \mathrm{d}v \nonumber \\
& \leq s\int_{U \times B_R(0)} \pi M_1(1+|v|)^2 |\eta(x-sv,v)| \, \mathrm{d}x \, \mathrm{d}v < \delta/3.
\end{align}
Also, by a similar process to \eqref{na-eq-stetacomp} we see that for $s$ sufficiently small
\begin{align} \label{na-eq-etadiff2}
\int_{U \times \mathbb{R}^3} & (1+|v|) | S_t^\varepsilon(s)\partial_x \eta(x,v) - \partial_x\eta(x,v) | \, \mathrm{d} x \, \mathrm{d}v < \delta /3.
\end{align}
Together \eqref{na-eq-etadiff1} and \eqref{na-eq-etadiff2} give that for $s$ sufficiently small
\begin{align*}
\int_{U \times \mathbb{R}^3} & (1+|v|)|\partial_x(S_t^\varepsilon(s)\eta - \eta)| \, \mathrm{d}x \, \mathrm{d}v\\
& = \int_{U \times \mathbb{R}^3}  (1+|v|) \bigg| \left( \int_0^s \int_{\mathbb{S}^2}  \int_{\mathbb{R}^3} \partial_x g_t(x+\varepsilon\nu - \sigma v,\bar{v})[(v-\bar{v})\cdot \nu]_+ \, \mathrm{d}\bar{v} \, \mathrm{d}\nu \, \mathrm{d}\sigma \right)  \\
& \qquad \qquad  S_t^\varepsilon(s)\eta(x,v)+ S_t^\varepsilon(s)\partial_x \eta (x,v) - \partial_x \eta (x,v) \bigg| \, \mathrm{d}x \, \mathrm{d}v < 2\delta / 3.
\end{align*}
Therefore with \eqref{na-eq-stetacomp}, we see that for $s$ sufficiently small, \eqref{na-eq-stetay} holds. Now let $P \in Y$. For $\delta >0$ there exists an $\eta \in C_c^\infty (U \times \mathbb{R}^3)$ such that $\| P-\eta \|_Y < \delta$. Using this and \eqref{na-eq-stetay} finally \eqref{na-eq-StC0Y} can be proved.

\end{proof}

\begin{lem} \label{na-lem-AH2}
For $\varepsilon \geq 0$, condition $(H2)$ of \cite[Theorem 5.3.1]{pazy83} is satisfied for $Y$ as defined above.
\end{lem}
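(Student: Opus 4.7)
The plan is to assemble the two preceding lemmas together with a uniform growth estimate in $Y$ to verify all clauses of $(H2)$ in \cite[Thm.~5.3.1]{pazy83}. Concretely, $(H2)$ requires that $Y$ be $A^\varepsilon(t)$-admissible for each $t\in[0,T]$ (invariance of $Y$ under the semigroup $S_t^\varepsilon(\cdot)$, together with the restriction being a $C_0$-semigroup on $Y$), and that the family $\{S_t^\varepsilon|_Y\}_{t\in[0,T]}$ be stable in $Y$ with constants $\tilde M,\tilde\omega$ that do not depend on $t$.

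The admissibility clause is essentially free from the preceding lemmas: Lemma~\ref{na-lem-Yinvariant} provides $S_t^\varepsilon(s)Y\subset Y$ and Lemma~\ref{na-lem-AH2help} gives the $C_0$ property of the restricted semigroup on $Y$. For the stability clause I would revisit the estimates inside the proof of Lemma~\ref{na-lem-Yinvariant}. The weighted $L^1$ part $\int(1+|v|^2)|S_t^\varepsilon(s)P|$ is dominated by the corresponding piece of $\|P\|_Y$ via \eqref{na-eq-Ynormpart1}, while the gradient term splits through \eqref{na-eq-partialStep} into two contributions bounded by \eqref{na-eq-intpartialg} (using $M_1$) and \eqref{na-eq-intexppartialp} (trivially). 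Summing these yields an inequality of the form $\|S_t^\varepsilon(s)P\|_Y\leq(1+Cs)\|P\|_Y$ with $C=C(M_g,M_1,M_\infty)$, and iterating via the semigroup property converts this into $\|S_t^\varepsilon(s)P\|_Y\leq e^{Cs}\|P\|_Y$. Stability then holds with $\tilde M=1$ and $\tilde\omega=C$.

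The main (mild) obstacle is verifying that the constant $C$ really is independent of $t$ and $\varepsilon$. Every bound used on $g_t(x+\varepsilon\nu-\sigma v,\bar v)$ and on $\partial_x g_t(x+\varepsilon\nu-\sigma v,\bar v)$ appeals only to the $x$-$\esssup$ quantities $M_g,M_\infty,M_1$ from Definition~\ref{na-deff-admiss}, and the shift of the spatial argument is absorbed by the periodicity of $U$; this is where care is needed to rule out any implicit $t$- or $\varepsilon$-dependence. Once that uniformity is recorded, both clauses of $(H2)$ are in place and the lemma follows.
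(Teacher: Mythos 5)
Your proof is correct and takes a genuinely different, and in fact simpler, route to the stability clause than the paper does. The paper verifies the product bound \eqref{na-eq-piPbound} by writing out $\Pi P=\prod_{j=1}^k S_{t_j}^\varepsilon(s_j)P$ explicitly, collecting all $k$ exponential factors into a single accumulated exponent $-W$, estimating $|\partial_x W|\leq \pi M_1(1+|v|)\sum_j s_j$, and then using the crude inequality $\sum s_j\leq\exp(\sum s_j)$; this yields stability constants $M=4\pi(M_1+1)$, $\omega=1$. You instead extract from the calculations of Lemma~\ref{na-lem-Yinvariant} a single-factor bound of the form $\|S_t^\varepsilon(s)P\|_Y\leq(1+Cs)\|P\|_Y$ with $C=2\pi M_1$ independent of $t$ and $\varepsilon$ (the $\varepsilon$-shift and the $\sigma v$-shift are harmless precisely because $M_1$ is an $\esssup$ over the periodic $U$), so $\|S_t^\varepsilon(s)\|_{Y\to Y}\leq e^{Cs}$ with leading constant $1$; submultiplicativity of the operator norm then gives $\|\prod_j S_{t_j}^\varepsilon(s_j)\|_{Y\to Y}\leq\prod_j e^{Cs_j}=e^{C\sum_j s_j}$, i.e. stability with $M=1$, $\omega=C$. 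This is cleaner and avoids the explicit product computation. Two small points. First, the phrase ``iterating via the semigroup property'' is unnecessary: $1+Cs\leq e^{Cs}$ directly, and the critical passage is not iteration but the observation that a \emph{uniform} quasi-contractive bound with leading constant $M=1$ passes through arbitrary finite products — had the single-factor bound been $M_0e^{Cs}$ with $M_0>1$ it would not, since $M_0^k$ is unbounded. You should state this submultiplicativity step, since it is exactly what Pazy's Theorem~5.2.2 requires. Second, the paper also records $(0,\infty)\subset\rho(A^\varepsilon(t))$ from \cite[Theorem~1.5.2]{pazy83} before invoking Theorem~5.2.2; you should include this, though it is immediate from $A^\varepsilon(t)$ being the generator of a $C_0$ contraction semigroup. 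Your parenthetical inclusion of $M_\infty$ in the constant is a harmless overstatement — only $M_g$ and $M_1$ enter.
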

\begin{proof}
Lemma~\ref{na-lem-Yinvariant} proves that $Y$ is invariant under $S_t^\varepsilon(s)$ and lemma~\ref{na-lem-AH2help} proves that $S_t^\varepsilon |_Y$ is a $C_0$ semigroup on $Y$. It remains to prove that $A^\varepsilon|_Y$ is a stable family in $Y$. We use \cite[Theorem 5.2.2]{pazy83}. By the calculations in the proof of lemma~\ref{na-lem-Yinvariant} we have that for any $s,t,\varepsilon \geq 0$ there exists a $C\geq 1$ such that for any $P \in Y$,
\[ \| S_t^\varepsilon(s)P \|_Y \leq C \| P \|_Y.\]
Since $A^\varepsilon(t)$ is the generator of the $C_0$ semigroup $S_t^\varepsilon$, \cite[Theorem 1.5.2]{pazy83} gives that $(0,\infty) \subset \rho(A^\varepsilon(t))$. Hence to apply  \cite[Theorem 5.2.2]{pazy83} we need to show that there exists an $M\geq 1$ and $\omega \geq 0$ such that for any $k \in \mathbb{N}$, any sequence $0 \leq t_1 \leq t_2 \leq \dots \leq t_k \leq T$, any list $0 \leq s_1,\dots,s_k$ and any $P \in Y$,
\begin{equation} \label{na-eq-piPbound}
 \left\| \prod_{j=1}^k S_{t_j}^\varepsilon (s_j)P \right\|_Y \leq M \exp \left( \omega \sum_{j=1}^k s_j \right) \| P \|_Y.
\end{equation}
To that aim fix $k \in \mathbb{N}$,  $0 \leq t_1 \leq t_2 \leq \dots \leq t_k \leq T$ and $0 \leq s_1,\dots,s_k$ and $P \in Y$. Define $\Pi P:= \prod_{j=1}^k S_{t_j}^\varepsilon (s_j)P$. By repeatedly applying \eqref{na-eq-Ssemigroup} we see that,
\begin{align*}
\Pi P(x,v) & = \exp \bigg(  - \int_0^{s_1} \int_{\mathbb{S}^2} \int_{\mathbb{R}^3}  g_{t_1}(x+\varepsilon\nu - \sigma v, \bar{v})[(v-\bar{v})\cdot \nu]_+ \, \mathrm{d}\bar{v} \, \mathrm{d}\nu \, \mathrm{d}\sigma \\
& \qquad - \int_{s_1}^{s_1+s_2} \int_{\mathbb{S}^2} \int_{\mathbb{R}^3}  g_{t_2}(x+\varepsilon\nu - \sigma v, \bar{v})[(v-\bar{v})\cdot \nu]_+ \, \mathrm{d}\bar{v} \, \mathrm{d}\nu \, \mathrm{d}\sigma  - \cdots \\
& \qquad -  \int_{s_1 + \cdots + s_{k-1}}^{s_1 + \cdots + s_k} \int_{\mathbb{S}^2} \int_{\mathbb{R}^3}  g_{t_k}(x+\varepsilon\nu - \sigma v, \bar{v})[(v-\bar{v})\cdot \nu]_+ \, \mathrm{d}\bar{v} \, \mathrm{d}\nu \, \mathrm{d}\sigma \bigg) P(x-\sum_{j=1}^k s_j v,v).
\end{align*}
Denoting the expression inside the exponential by $-W$ we have, by  the same calculation as in \eqref{na-eq-partialgtbound}, for almost all $x \in U$,
\begin{align*}
 | \partial_x W |  & \leq  \int_0^{s_1} \int_{\mathbb{S}^2} \int_{\mathbb{R}^3} |  \partial_x g_{t_1}(x+\varepsilon\nu - \sigma v, \bar{v}) | [(v-\bar{v})\cdot \nu]_+ \, \mathrm{d}\bar{v} \, \mathrm{d}\nu \, \mathrm{d}\sigma \\
& \qquad + \int_{s_1}^{s_1+s_2} \int_{\mathbb{S}^2} \int_{\mathbb{R}^3}  |\partial_x g_{t_2}(x+\varepsilon\nu - \sigma v, \bar{v})|[(v-\bar{v})\cdot \nu]_+ \, \mathrm{d}\bar{v} \, \mathrm{d}\nu \, \mathrm{d}\sigma  + \cdots \\
& \qquad +  \int_{s_1 + \cdots + s_{k-1}}^{s_1 + \cdots + s_k} \int_{\mathbb{S}^2} \int_{\mathbb{R}^3} | \partial_x g_{t_k}(x+\varepsilon\nu - \sigma v, \bar{v}) | [(v-\bar{v})\cdot \nu]_+ \, \mathrm{d}\bar{v} \, \mathrm{d}\nu \, \mathrm{d}\sigma \\
& \qquad \leq  \pi M_1 (1+ |v|) \sum_{j=1}^k s_j.
\end{align*}
Hence, by bounding $\exp(-W) \leq 1$ and using that for any $s \geq 0$, $s \leq \exp(s)$ we have
\begin{align*}
\| \Pi P \|_Y & = \int_{U \times \mathbb{R}^3} (1+ |v|^2)|\Pi P(x,v)| + (1+|v|)|\partial_x (\Pi P)(x,v) | \, \mathrm{d}x \, \mathrm{d}v \\
& \leq \| P \|_Y +  \int_{U \times \mathbb{R}^3}  (1+|v|)\big( |\partial_x W| |\Pi P (x,v)| +  |\Pi \partial_x P(x,v)|  \big)  \, \mathrm{d}x \, \mathrm{d}v \\
&  \leq  \| P \|_Y +  \int_{U \times \mathbb{R}^3}  (1+|v|)\left(  \pi M_1 (1+ |v|) \sum_{j=1}^k s_j|P (x,v)| \right)  \, \mathrm{d}x \, \mathrm{d}v + \| P \|_Y \\
& \leq 2 \| P \|_Y + 2 \pi M_1  \sum_{j=1}^k s_j \int_{U \times \mathbb{R}^3}  (1+|v|^2) |P(x,v)| \, \mathrm{d}x \, \mathrm{d}v \\
& \leq 2 \pi (M_1 +1 ) \| P \|_Y  + 2\pi(M_1+1) \exp \left ( \sum_{j=1}^k s_j \right) \| P \|_Y  \leq 4 \pi (M_1+ 1) \exp \left ( \sum_{j=1}^k s_j \right) \| P \|_Y.
\end{align*}
Hence we see that for $M=4 \pi (M_1+1)$ and $\omega =1$ \eqref{na-eq-piPbound} holds. Thus we can apply \cite[Theorem 5.2.2]{pazy83} which proves that $A^\varepsilon(t) |_Y$ is a stable family in $Y$, which completes the proof of the lemma.
\end{proof}

Having proved condition $(H2)$ in the previous lemma we now move on to proving that condition $(H3)$ holds.
\begin{lem} \label{na-lem-AH3}
For $\varepsilon\geq 0 $, condition $(H3)$ of  \cite[Theorem 5.3.1]{pazy83} is satisfied for $Y$ as defined above.
\end{lem}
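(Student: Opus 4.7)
The plan is to verify the three ingredients of condition $(H3)$ in \cite[Theorem 5.3.1]{pazy83}: (i) $Y \subset D(A^\varepsilon(t))$ for every $t \in [0,T]$; (ii) each $A^\varepsilon(t)$ is a bounded operator from $Y$ into $X = L^1(U\times\mathbb{R}^3)$, uniformly in $t$; and (iii) the map $t \mapsto A^\varepsilon(t) \in B(Y, X)$ is norm continuous.

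For (i) and (ii) I split $A^\varepsilon(t)P = -v\cdot\partial_x P - L_t^\varepsilon P$ using \eqref{na-eq-Ldeff} and \eqref{na-eq-Adeff}. The transport piece is immediate, since
\(
\|v\cdot\partial_x P\|_{L^1} \leq \int(1+|v|)|\partial_x P|\,\mathrm{d}x\,\mathrm{d}v \leq \|P\|_Y.
\)
For the loss piece, the computation already used in the proof of lemma~\ref{na-lem-AH2help} together with \eqref{na-eq-gl1assmp} gives $L_t^\varepsilon(x,v) \leq \pi M_g(1+|v|)$; combined with $(1+|v|) \leq 2(1+|v|^2)$ this yields $\|L_t^\varepsilon P\|_{L^1} \leq 2\pi M_g \|P\|_Y$. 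Both bounds are uniform in $t \in [0,T]$, which establishes (i) and (ii).

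The substantive part is (iii). Since the transport term is $t$-independent, it suffices to bound $\|(L_t^\varepsilon - L_s^\varepsilon)P\|_{L^1}$ by a quantity depending only on $|t-s|$ times $\|P\|_Y$ that vanishes as $t\to s$. Using $g_\sigma(y,\bar v) = g_0(y - \sigma\bar v, \bar v)$ and the H\"older hypothesis \eqref{na-eq-ghldassmp}, I obtain $|g_t(y,\bar v) - g_s(y,\bar v)| \leq M|t-s|^\alpha |\bar v|^\alpha$, which is sharp for moderate $|\bar v|$ but blows up at infinity. Consequently I split the $\bar v$-integral at a radius $R>0$: on $\{|\bar v|\leq R\}$ I apply the H\"older bound together with a volume count of $B_R$, while on $\{|\bar v|>R\}$ I use the trivial bound $|g_t - g_s| \leq 2\bar g(\bar v)$ and the tail quantity $\epsilon(R) := \int_{|\bar v|>R} \bar g(\bar v)(1+|\bar v|)\,\mathrm{d}\bar v$, which vanishes as $R\to\infty$ by \eqref{na-eq-gl1assmp}. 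This gives a pointwise estimate of the form
\begin{equation*}
|L_t^\varepsilon(x,v) - L_s^\varepsilon(x,v)| \leq \bigl( C(R)|t-s|^\alpha + 4\pi\,\epsilon(R) \bigr)(1+|v|),
\end{equation*}
with $C(R)$ polynomial in $R$ and independent of $t,s$. Multiplying by $|P(x,v)|$ and integrating yields $\|(L_t^\varepsilon - L_s^\varepsilon)P\|_{L^1} \leq 2\bigl(C(R)|t-s|^\alpha + 4\pi\,\epsilon(R)\bigr)\|P\|_Y$, so that sending $R \to \infty$ first to control the tail and then $|t-s| \to 0$ to kill the H\"older term delivers norm continuity.

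The main obstacle is precisely this uniformity over the unit ball of $Y$: a direct dominated-convergence argument on $|g_t-g_s|(1+|\bar v|) \leq 2\bar g(\bar v)(1+|\bar v|)$ would yield pointwise-in-$P$ convergence but not control in operator norm. The two-scale splitting above trades H\"older regularity on bounded velocities against integrable tails at infinity, converting the soft dominated-convergence argument into a quantitative one that is uniform in $P$. Once $(H3)$ is in place, \cite[Theorem 5.3.1]{pazy83} furnishes the evolution system required for proposition~\ref{na-prop-Pj0}.
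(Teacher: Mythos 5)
Your proof is correct and follows essentially the same route as the paper: the boundedness of $A^\varepsilon(t):Y\to X$ via the moment bound on $L_t^\varepsilon$, and norm continuity in $t$ via the two-scale splitting of the velocity integral at radius $R$ (H\"older regularity of $g_0$ on $\{|\bar v|\le R\}$, moment tail on $\{|\bar v|>R\}$). The paper merely packages that splitting as the standalone lemma~\ref{na-lem-g0diffbound} with the quantitative tail bound $2M_g/R$ from the second moment in \eqref{na-eq-gl1assmp}, which is what your $\epsilon(R)$ estimate amounts to.
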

\begin{proof}
Let $P \in Y$. Notice that, by \eqref{na-eq-gbar} and \eqref{na-eq-gl1assmp},
\begin{align*}
\|A^\varepsilon(t)P\| &= \int_{U \times \mathbb{R}^3} \Big| -\nu \cdot \partial_x P(x,v) - P(x,v)\int_{\mathbb{S}^{2}} \int_{\mathbb{R}^3} g_t(x+\varepsilon\nu,\bar{v})[(v-\bar{v})\cdot \nu ]_+ \, \mathrm{d}\bar{v} \, \mathrm{d}\nu \Big| \, \mathrm{d}x \, \mathrm{d}v \\
& \leq \int_{U \times \mathbb{R}^3} (1+|v|)|\partial_x P(x,v)| +  |P(x,v)|\int_{\mathbb{S}^{2}} \int_{\mathbb{R}^3} \bar{g}(\bar{v})(|v|+|\bar{v}|) \, \mathrm{d}\bar{v} \, \mathrm{d}\nu \, \mathrm{d}x \, \mathrm{d}v \\
& \leq  \int_{U \times \mathbb{R}^3} (1+|v|)|\partial_x P(x,v)| +  2\pi M_g(1+|v|)|P(x,v)|\, \mathrm{d}x \, \mathrm{d}v \leq C \| P \|_Y,
\end{align*}
for some $C>0$. Hence $Y \subset D(A^\varepsilon(t))$ and $A^\varepsilon(t)$ is bounded as a map $Y \to X$. It remains to prove that $t\mapsto A^\varepsilon(t)$ is continuous in the $B(Y,X)$ norm. Let $P \in Y$, $t\geq 0$ and $\delta>0$. We seek an $\eta>0$ such that for all $s\geq 0$ with $|t-s| < \eta$, we have
\[ \| A^\varepsilon(t)P-A^\varepsilon(s)P \|_X = \int_{U \times \mathbb{R}^3} |A^\varepsilon(t)P-A^\varepsilon(s)P| \, \mathrm{d}x \, \mathrm{d}v \leq \delta.  \]
Now by the definition of $A^\varepsilon$, in definition~\ref{na-deff-ABops},
\begin{align} \label{na-eq-Aeptcts}
\int_{U \times \mathbb{R}^3} & |A^\varepsilon(t)P-A^\varepsilon(s)P| \, \mathrm{d}x \, \mathrm{d}v \nonumber \\
& \leq \int_{U \times \mathbb{R}^3} P(x,v) \int_{\mathbb{S}^{2}} \int_{\mathbb{R}^3} |g_t(x+\varepsilon\nu,\bar{v})- g_s(x+\varepsilon\nu,\bar{v})| [(v-\bar{v})\cdot \nu ]_+ \, \mathrm{d}\bar{v} \, \mathrm{d}\nu \, \mathrm{d}x \, \mathrm{d}v.
\end{align}
Now take $R\geq 1$ sufficiently large such that $R>2C/\delta$, where $C$ is as in lemma~\ref{na-lem-g0diffbound} in section~\ref{na-sec-aux} below.   Further take $\eta >0$ sufficiently small so that, $CR^5\eta^\alpha < \delta /2$. Then lemma~\ref{na-lem-g0diffbound} gives that for any $s$ such that $|t-s|^\alpha <\eta$ and for almost all $x \in U$,
\begin{align*}
\int_{\mathbb{S}^{2}} \int_{\mathbb{R}^3} &  |g_t(x+\varepsilon\nu,\bar{v})- g_s(x+\varepsilon\nu,\bar{v})| [(v-\bar{v})\cdot \nu ]_+ \, \mathrm{d}\bar{v} \, \mathrm{d}\nu \leq  C(1+|v|) \Big(\frac{1}{R} +R^5 |t-s|^\alpha \Big) \leq (1+|v|)\delta.
\end{align*}
Hence substituting this into \eqref{na-eq-Aeptcts},
\[ \int_{U \times \mathbb{R}^3}  |A^\varepsilon(t)P-A^\varepsilon(s)P| \, \mathrm{d}x \, \mathrm{d}v \nonumber  \leq \delta \int_{U \times \mathbb{R}^3} (1+|v|)P(x,v) \, \mathrm{d}x \, \mathrm{d}v. \]

Taking the supremum over all $\| P \|_Y \leq 1$ gives that $\| A^\varepsilon(t)-A^\varepsilon(s) \|_{B(Y,X)} \leq \delta$ as required.
\end{proof}

The above lemmas have proved that conditions $(H1),(H2)$ and $(H3)$ hold. We now prove that the evolution system that results from \cite[Theorem 5.3.1]{pazy83} is indeed $U^\varepsilon$ as defined in \eqref{na-eq-Uepdeff}. We first show in the following lemma that $U^\varepsilon$ is indeed an evolution system.

\begin{lem} \label{na-lem-Uepevol}
Let $U^\varepsilon$ be as in \eqref{na-eq-Uepdeff}. $U^\varepsilon$ is an exponentially bounded evolution family on  $L^1(U\times\mathbb{R}^3)$.
\end{lem}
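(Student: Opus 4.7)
The plan is to verify, in sequence, the four defining properties of an exponentially bounded evolution family: $U^\varepsilon(s,s)=I$, the cocycle identity $U^\varepsilon(t,r)U^\varepsilon(r,s)=U^\varepsilon(t,s)$ for $s\le r\le t$, strong continuity of $(t,s)\mapsto U^\varepsilon(t,s)f$, and a uniform bound of the form $\|U^\varepsilon(t,s)\|\le Me^{\omega(t-s)}$. The identity at the diagonal is immediate from \eqref{na-eq-Uepdeff}, since the $\sigma$-integral vanishes and the transport argument collapses to $x$.

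For the cocycle property I would compute $U^\varepsilon(t,r)U^\varepsilon(r,s)f$ directly. Applying $U^\varepsilon(t,r)$ to $U^\varepsilon(r,s)f$ evaluates the latter at $(x-(t-r)v,v)$, which shifts the inner spatial argument of the exponential by $-(t-r)v$. Using the identity $(x-(t-r)v)+\varepsilon\nu-(r-\sigma)v=x+\varepsilon\nu-(t-\sigma)v$ for $\sigma\in[s,r]$, the inner exponent becomes $-\int_s^r\!\int_{\mathbb{S}^2}\!\int_{\mathbb{R}^3}g_\sigma(x+\varepsilon\nu-(t-\sigma)v,\bar v)[(v-\bar v)\cdot\nu]_+\,\mathrm{d}\bar v\,\mathrm{d}\nu\,\mathrm{d}\sigma$, which concatenates with the outer exponent on $[r,t]$ to give the full integral on $[s,t]$. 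The transport factor likewise becomes $f(x-(t-s)v,v)$, matching $U^\varepsilon(t,s)f$ exactly. Exponential boundedness is then read off: the exponent in \eqref{na-eq-Uepdeff} is the negative of a nonnegative quantity, so the multiplicative factor is at most $1$, and the pure translation $f\mapsto f(\cdot-(t-s)v,v)$ is an $L^1(U\times\mathbb{R}^3)$-isometry because $U$ is the torus. Hence $\|U^\varepsilon(t,s)\|_{L^1\to L^1}\le 1$, which is the case $M=1$, $\omega=0$.

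The main obstacle is strong continuity. I would first treat $f=\eta\in C_c^\infty(U\times\mathbb{R}^3)$ with velocity support in a ball of radius $R$. Writing $E^\varepsilon(t,s;x,v)$ for the exponential factor in \eqref{na-eq-Uepdeff}, decompose
\begin{align*}
U^\varepsilon(t,s)\eta-U^\varepsilon(t_0,s_0)\eta
&=E^\varepsilon(t,s)\bigl[\eta(x-(t-s)v,v)-\eta(x-(t_0-s_0)v,v)\bigr]\\
&\quad+\bigl[E^\varepsilon(t,s)-E^\varepsilon(t_0,s_0)\bigr]\eta(x-(t_0-s_0)v,v).
\end{align*}
On $|v|\le R$ the first bracket is controlled by uniform continuity of $\eta$, and the second by continuity of $E^\varepsilon$ in $(t,s)$, which holds uniformly on $|v|\le R$ because the integrand defining the exponent is bounded using \eqref{na-eq-gl1assmp}. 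Dominated convergence (with integrable majorant supplied by $\|\eta\|_{L^\infty}$ times the indicator of $|v|\le R$) yields $L^1$-convergence. The extension to general $f\in L^1(U\times\mathbb{R}^3)$ is a standard three-epsilon argument using density of $C_c^\infty$ and the uniform bound $\|U^\varepsilon(t,s)\|\le 1$ obtained above. This completes the verification.
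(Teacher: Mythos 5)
Your verification of the three properties that the paper itself checks---the identity $U^\varepsilon(s,s)=I$, the cocycle law, and the bound $\|U^\varepsilon(t,s)\|\le 1$ via $E^\varepsilon\le 1$ together with the fact that the torus translation is an $L^1$-isometry---is correct and is essentially the paper's argument. Note, however, that the paper deliberately does \emph{not} prove strong continuity inside this lemma: strong continuity is established separately in Proposition~\ref{na-prop-Uepstrongcts} via Lemmas~\ref{na-lem-stronga} and~\ref{na-lem-strongb}, and then the two results are invoked together in the proof of Proposition~\ref{na-prop-Pj0}.

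Your additional attempt to prove strong continuity has a genuine gap. You claim that continuity of $E^\varepsilon$ in $(t,s)$ ``holds uniformly on $|v|\le R$ because the integrand defining the exponent is bounded using~\eqref{na-eq-gl1assmp}.'' This is not enough. Boundedness of the integrand controls the variation of
\[
-\log E^\varepsilon(t,s,x,v)=\int_s^t\int_{\mathbb{S}^2}\int_{\mathbb{R}^3} g_\sigma\bigl(x+\varepsilon\nu-(t-\sigma)v,\bar v\bigr)[(v-\bar v)\cdot\nu]_+\,\mathrm{d}\bar v\,\mathrm{d}\nu\,\mathrm{d}\sigma
\]
with respect to the \emph{endpoint} $s$, but the variable $t$ also enters the \emph{integrand} through the spatial argument $x+\varepsilon\nu-(t-\sigma)v$. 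Shifting $t\to t'$ therefore produces a term of the form $\int_s^{\min(t,t')}\!\int\!\int \bigl|g_\sigma(x+\varepsilon\nu-(t-\sigma)v,\bar v)-g_\sigma(x+\varepsilon\nu-(t'-\sigma)v,\bar v)\bigr|[\cdots]_+$, which is a translation difference of $g_\sigma$ in its first slot, and a mere $L^1$ bound on $\bar g$ gives no pointwise or uniform control over it. The paper handles exactly this term in Lemma~\ref{na-lem-strongb} by invoking Lemma~\ref{na-lem-g0diffbound}, which relies crucially on the H\"older assumption~\eqref{na-eq-ghldassmp} on $g_0$. Without citing that hypothesis, the second bracket in your decomposition is uncontrolled and the dominated-convergence argument does not close.
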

\begin{proof}
We use \cite[definition~3.1]{nagel02}. It is clear to see that $U^\varepsilon(s,s)$ is the identity operator. Further, for $0\leq s\leq r \leq t$ and $f \in L^1(U \times \mathbb{R}^3)$ we have by \eqref{na-eq-Uepdeff},
\begin{align*}
&U^\varepsilon(t,r)  U^\varepsilon(r,s)f(x,v)   \\
=& U^\varepsilon(t,r) \exp \left(- \int_s^r  \int_{\mathbb{S}^{2}} \int_{\mathbb{R}^3} g_\sigma(x+\varepsilon\nu - (r-\sigma) v,\bar{v})[(v-\bar{v})\cdot \nu ]_+ \, \mathrm{d}\bar{v} \, \mathrm{d}\nu \, \mathrm{d}\sigma \right) f(x-(r-s)v,v) \\
= &  \exp \left(- \int_s^t  \int_{\mathbb{S}^{2}} \int_{\mathbb{R}^3} g_\sigma(x+\varepsilon\nu - (t-\sigma) v,\bar{v})[(v-\bar{v})\cdot \nu ]_+ \, \mathrm{d}\bar{v} \, \mathrm{d}\nu \, \mathrm{d}\sigma \right)   f(x-(t-s)v,v) \\
 =& U^\varepsilon(t,s)f(x,v).
\end{align*}
Exponential boundedness  follows with $M=1, \omega=0$ by bounding the exponential term in $U^\varepsilon$ by 1.
\end{proof}
In the following proposition we now prove that $U^\varepsilon$ is indeed strongly continuous.
\begin{prop} \label{na-prop-Uepstrongcts}
The evolution family $U^\varepsilon$ is strongly continuous.
\end{prop}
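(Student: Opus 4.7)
The plan is to establish strong continuity on $\{(t,s):0\le s\le t\le T\}$ by a standard density argument combined with a direct estimate on smooth compactly supported test functions. By lemma~\ref{na-lem-Uepevol} each $U^\varepsilon(t,s)$ is a contraction on $L^1(U\times\mathbb{R}^3)$, so for fixed $f\in L^1(U\times\mathbb{R}^3)$ and $\delta>0$ one picks $\eta\in C_c^\infty(U\times\mathbb{R}^3)$ supported in $U\times B_R(0)$ with $\|f-\eta\|_{L^1}<\delta$ and writes
\[
\|U^\varepsilon(t,s)f-U^\varepsilon(t',s')f\|_{L^1}\le 2\delta+\|U^\varepsilon(t,s)\eta-U^\varepsilon(t',s')\eta\|_{L^1},
\]
so that it suffices to prove joint continuity of $(t,s)\mapsto U^\varepsilon(t,s)\eta$ for such $\eta$.

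Writing $U^\varepsilon(t,s)\eta(x,v)=e^{-W^\varepsilon(t,s,x,v)}\eta(x-(t-s)v,v)$ with
\[
W^\varepsilon(t,s,x,v):=\int_s^t\int_{\mathbb{S}^{2}}\int_{\mathbb{R}^3}g_\sigma(x+\varepsilon\nu-(t-\sigma)v,\bar v)[(v-\bar v)\cdot\nu]_+\,\mathrm{d}\bar v\,\mathrm{d}\nu\,\mathrm{d}\sigma,
\]
I split
\[
U^\varepsilon(t,s)\eta-U^\varepsilon(t',s')\eta=\bigl(e^{-W^\varepsilon(t,s,\cdot)}-e^{-W^\varepsilon(t',s',\cdot)}\bigr)\eta(x-(t-s)v,v)+e^{-W^\varepsilon(t',s',\cdot)}\bigl(\eta(x-(t-s)v,v)-\eta(x-(t'-s')v,v)\bigr)
\]
and treat the two pieces separately. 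The translation piece is handled by uniform continuity of $\eta$ on the compact set $\bar U\times\overline{B_R(0)}$: the integrand tends to $0$ uniformly in $(x,v)$ as $(t,s)\to(t',s')$, and the integration in $v$ is confined to $|v|\le R$.

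For the exponential piece I use the elementary inequality $|e^{-a}-e^{-b}|\le|a-b|$ for $a,b\ge 0$ and reduce to showing that $W^\varepsilon(t,s,x,v)$ is continuous in $(t,s)$ uniformly for $v\in B_R(0)$. The $W^\varepsilon$-difference splits into two contributions: (i) the change in $\sigma$-integration limits, whose inner integrand is bounded by $\pi M_g(1+R)$ via \eqref{na-eq-gl1assmp}, giving an error $\lesssim(1+R)(|t-t'|+|s-s'|)$; and (ii) the change in the spatial argument of $g_\sigma$, controlled by the H\"older bound \eqref{na-eq-ghldassmp}, which yields $|g_\sigma(x+\varepsilon\nu-(t-\sigma)v,\bar v)-g_\sigma(x+\varepsilon\nu-(t'-\sigma)v,\bar v)|\le M|t-t'|^\alpha|v|^\alpha$, then integrated against $[(v-\bar v)\cdot\nu]_+$ using \eqref{na-eq-gl1assmp}. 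Both contributions vanish as $(t,s)\to(t',s')$ uniformly for $v\in B_R(0)$, so integrating against the bounded density $|\eta|$ on its compact support produces the required $L^1$-convergence.

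The main obstacle is the dual role of $t$ — it appears both as the upper limit of the $\sigma$-integral and multiplicatively inside the spatial argument of $g_\sigma$. Once this dependence is cleanly separated, one combines the first-moment bound \eqref{na-eq-gl1assmp} on $\bar g$ (to control changes in integration limits) with the spatial H\"older regularity \eqref{na-eq-ghldassmp} of $g_0$ (to control the shift in the argument), and the density argument completes the proof.
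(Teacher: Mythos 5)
Your overall architecture matches the paper's: the paper also reduces to test functions $\eta\in C_c^\infty$ by density and contractivity, splits $U^\varepsilon(t,s)\eta-U^\varepsilon(t',s')\eta$ into an exponential-factor difference and a translation difference, and controls the exponential piece via $|e^{-a}-e^{-b}|\le|a-b|$. (The paper does not prove joint continuity directly; it invokes a criterion from evolution-family theory that reduces strong continuity to the left limit $s\uparrow t$, continuity in $t$ for fixed $s$, and uniform boundedness, and proves these in lemmas~\ref{na-lem-stronga} and~\ref{na-lem-strongb} — but the estimates involved are the same ones you propose.)

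There is, however, a genuine gap in your step (ii). The H\"older bound gives $|g_\sigma(x+\varepsilon\nu-(t-\sigma)v,\bar v)-g_\sigma(x+\varepsilon\nu-(t'-\sigma)v,\bar v)|\le M|t-t'|^\alpha|v|^\alpha$, which is \emph{constant in $\bar v$} — it carries no decay in the background velocity. Hence
\[
\int_{\mathbb{R}^3} M|t-t'|^\alpha|v|^\alpha\,[(v-\bar v)\cdot\nu]_+\,\mathrm{d}\bar v=+\infty,
\]
and \eqref{na-eq-gl1assmp} cannot rescue this: that assumption controls integrals weighted by $\bar g(\bar v)$, which no longer appears once you have replaced the difference of $g_\sigma$'s by the H\"older constant. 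The step as written therefore fails. The fix — which is exactly the content of the paper's lemma~\ref{na-lem-g0diffbound} — is to introduce a second truncation radius $R_2$ in the $\bar v$-variable (independent of the support radius $R$ of $\eta$ in $v$): on $B_{R_2}(0)$ the H\"older bound is integrated over a set of finite volume, producing a term of order $R_2^5|t-t'|^\alpha$, while on $\mathbb{R}^3\setminus B_{R_2}(0)$ one bounds each $g_\sigma$ by $\bar g$ and uses the second-moment bound \eqref{na-eq-gl1assmp} to get a tail of order $1/R_2$. One then chooses $R_2$ large first and $|t-t'|$ small afterwards (or optimizes $R_2$ against $|t-t'|$, as the paper does with $R_2^5 h^\alpha$ versus $1/R_2$). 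Without this splitting your estimate for the change of spatial argument does not close.
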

To prove this proposition we use part 2 of \cite[Proposition 3.2]{nagel02}. In the following lemmas we prove that iii) holds, that is, uniformly for $0 \leq s\leq t$ in compact subsets,
\begin{enumerate}%[label=\alpha]
\item[a)] $\lim_{s\uparrow t}U^\varepsilon(t,s)f= f$ for all $f \in L^1(U \times \mathbb{R}^3)$
\item[b)] for each $s, f$ the mapping $[s,\infty) \ni t \to U^\varepsilon(t,s)f$ is continuous and,
\item[c)] $ \| U^\varepsilon(t,s) \|$ is bounded.
\end{enumerate}

The proposition gives that this is equivalent to i), strong continuity. We note that c) has been proved in lemma~\ref{na-lem-Uepevol}. We prove a) and b) separately in the following two lemmas.

\begin{lem} \label{na-lem-stronga}
For all $f \in L^1(U \times \mathbb{R}^3)$
\[ \lim_{s\uparrow t}U^\varepsilon(t,s)f= f, \]
uniformly for $0\leq s\leq t  \leq T$.
\end{lem}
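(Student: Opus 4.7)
The plan is to reduce to the case of smooth compactly supported test functions via density, then handle the remaining terms using the explicit form \eqref{na-eq-Uepdeff} of $U^\varepsilon$. Throughout I set $h:=t-s$, so the goal is to show that $\|U^\varepsilon(t,s)f-f\|_{L^1}\to 0$ as $h\downarrow 0$ with bounds depending only on $h$ and not on $t\in[0,T]$, which will deliver the claimed uniformity.

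First I would note that $U^\varepsilon(t,s)$ is a contraction on $L^1(U\times\mathbb{R}^3)$: the exponential factor in \eqref{na-eq-Uepdeff} is at most $1$, and the translation $(x,v)\mapsto(x-hv,v)$ is measure-preserving on $U\times\mathbb{R}^3$ (with the periodic boundary convention on $U$), so $\|U^\varepsilon(t,s)f\|_{L^1}\le\|f\|_{L^1}$. Given $\delta>0$ and $f\in L^1$, pick $\eta\in C_c^\infty(U\times\mathbb{R}^3)$ with $\|f-\eta\|_{L^1}<\delta/4$, and write
\begin{equation*}
\|U^\varepsilon(t,s)f-f\|_{L^1}\le\|U^\varepsilon(t,s)(f-\eta)\|_{L^1}+\|U^\varepsilon(t,s)\eta-\eta\|_{L^1}+\|\eta-f\|_{L^1}\le\tfrac{\delta}{2}+\|U^\varepsilon(t,s)\eta-\eta\|_{L^1},
\end{equation*}
so it suffices to estimate the middle term.

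Next, for the test function $\eta$ choose $R>0$ with $\operatorname{supp}\eta\subset U\times B_R(0)$. Abbreviating the exponent in \eqref{na-eq-Uepdeff} by $-W_{t,s}(x,v)$, I split
\begin{equation*}
U^\varepsilon(t,s)\eta(x,v)-\eta(x,v)=\bigl(e^{-W_{t,s}(x,v)}-1\bigr)\eta(x-hv,v)+\bigl(\eta(x-hv,v)-\eta(x,v)\bigr).
\end{equation*}
Using $g_\sigma(\cdot,\bar v)\le\bar g(\bar v)$ together with \eqref{na-eq-gl1assmp}, exactly as in the estimate preceding \eqref{na-eq-stetacomp}, for $|v|\le R$ one obtains $W_{t,s}(x,v)\le\pi h\,M_g(1+R)$, and so $|e^{-W_{t,s}}-1|\le W_{t,s}\le\pi h\,M_g(1+R)$ pointwise on the support of $\eta(\cdot-hv,v)$, which is contained in $U\times B_R(0)$. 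This bound is independent of $t$. The first term in the splitting is therefore majorised by $\pi h\,M_g(1+R)\|\eta\|_{L^1}$, which can be made less than $\delta/4$ by taking $h$ small.

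For the translation term, uniform continuity of $\eta$ on the compact set $\overline U\times\overline{B_R(0)}$ supplies an $h_0>0$ such that for all $h<h_0$ and all $(x,v)$ in this compact set, $|\eta(x-hv,v)-\eta(x,v)|<\delta/(4|U|(1+|B_R(0)|))$, giving the second term a contribution below $\delta/4$. Combining both estimates yields $\|U^\varepsilon(t,s)\eta-\eta\|_{L^1}<\delta/2$ whenever $h<h_0$, and since both bounds depend on $h$ alone, the resulting inequality $\|U^\varepsilon(t,s)f-f\|_{L^1}<\delta$ holds uniformly in $0\le s\le t\le T$ with $t-s<h_0$. The only mildly delicate step is checking that the $(1+|v|)$-style growth arising in the exponent does not obstruct uniformity; this is resolved precisely by confining $v$ to the support ball $B_R(0)$ once we have passed to $\eta\in C_c^\infty$, which is why the density argument is carried out before the exponential is linearised.
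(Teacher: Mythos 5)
Your proof is correct and follows essentially the same route as the paper's: approximate $f$ by a test function $\eta\in C_c^\infty$ using the contraction property of $U^\varepsilon(t,s)$, split $U^\varepsilon(t,s)\eta-\eta$ into the exponential-defect term and the pure translation term, bound the former via $g_\sigma(\cdot,\bar v)\le\bar g(\bar v)$ and \eqref{na-eq-gl1assmp} on the support ball $B_R(0)$, and the latter via uniform continuity of $\eta$. The uniformity in $t$ because all bounds depend only on $h=t-s$ is exactly the paper's argument as well.
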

To simplify notation here define:
\begin{equation} \label{na-eq-Edeff}
E^\varepsilon(t,s,x,v):= \exp \left(- \int_s^t  \int_{\mathbb{S}^{2}} \int_{\mathbb{R}^3} g_\sigma(x+\varepsilon\nu - (t-\sigma) v,\bar{v})[(v-\bar{v})\cdot \nu ]_+ \, \mathrm{d}\bar{v} \, \mathrm{d}\nu \, \mathrm{d}\sigma \right).
\end{equation}

\begin{proof}

Let $\eta \in C_c^\infty(U\times \mathbb{R}^3)$, $t \geq 0$ and $\delta >0$. We show that for $0\leq s \leq t$ sufficiently close to $t$,
\begin{equation}\label{na-eq-etaintgoal}
\int_{U \times \mathbb{R}^3}| U^\varepsilon(t,s)\eta(x,v) - \eta(x,v) | \,\mathrm{d}x\, \mathrm{d}v < \delta.
\end{equation}

Since $\eta \in C_c^\infty(U\times \mathbb{R}^3)$ there exists an $R>0$ such that for all $|v|>R$, $\eta(\cdot,v) =0$. By \eqref{na-eq-gl1assmp} we have, for any $v \in \mathbb{R}$ with $|v|<R$,
\begin{align} \label{na-eq-gintts}
\int_s^t  \int_{\mathbb{S}^{2}} \int_{\mathbb{R}^3} & g_\sigma(x+\varepsilon\nu - (t-\sigma) v,\bar{v})[(v-\bar{v})\cdot \nu ]_+ \, \mathrm{d}\bar{v} \, \mathrm{d}\nu \, \mathrm{d}\sigma \nonumber \\
& \leq \int_s^t  \int_{\mathbb{S}^{2}} \int_{\mathbb{R}^3} \bar{g}(\bar{v})[(v-\bar{v})\cdot \nu ]_+ \, \mathrm{d}\bar{v} \, \mathrm{d}\nu \, \mathrm{d}\sigma  \leq (t-s)\pi M_g(1+R).
\end{align}
This implies for $t-s$ sufficiently small
\begin{align} \label{na-eq-etaint1}
\int_{U \times \mathbb{R}^3} & |\eta(x-(t-s)v,v)| \left( 1-  E^\varepsilon(t,s,x,v) \right) \, \mathrm{d}x \, \mathrm{d}v \nonumber \\
& \leq  \Big( 1-  \exp \left(-(t-s)\pi M_g(1+R) \right) \Big) \int_{U \times B_R(0)}  |\eta(x-(t-s)v,v)| \mathrm{d}x \, \mathrm{d}v  < \delta /2.
\end{align}
By the uniform continuity of $\eta $ on $U \times B_R(0)$
\begin{align}\label{na-eq-etaint2}
\int_{U \times \mathbb{R}^3} & | \eta(x-(t-s)v,v) - \eta(x,v) | \,\mathrm{d}x\, \mathrm{d}v  =\int_{U \times B_R(0)}  | \eta(x-(t-s)v,v) - \eta(x,v) | \,\mathrm{d}x\, \mathrm{d}v  < \delta /2.
\end{align}
Hence by \eqref{na-eq-etaint1} and \eqref{na-eq-etaint2} for $t-s$ sufficiently small,
\begin{align*}
&\int_{U \times \mathbb{R}^3} | U^\varepsilon(t,s)\eta(x,v) - \eta(x,v) | \,\mathrm{d}x\, \mathrm{d}v \\
 \leq & \int_{U \times \mathbb{R}^3} \big| E^\varepsilon(t,s,x,v) \eta(x-(t-s)v,v)  -  \eta(x-(t-s)v,v) \big| + \big|  \eta(x-(t-s)v,v) -  \eta(x,v) \big| \,\mathrm{d}x\, \mathrm{d}v  < \delta.
\end{align*}
This proves \eqref{na-eq-etaintgoal}. Now for a general $f \in L^1(U\times \mathbb{R}^3)$ there exists an $\eta \in C_c^\infty(U\times \mathbb{R}^3)$ such that,
\begin{equation} \label{na-eq-intfetadiff}
\int_{U\times \mathbb{R}^3} |f(x,v)-\eta(x,v)| \, \mathrm{d}x \, \mathrm{d}v < \delta.
\end{equation}
The required result follows by \eqref{na-eq-intfetadiff} and comparing $f$ and $U^\varepsilon(t,s)f$ with $\eta$ and $U^\varepsilon(t,s)\eta$ respectively.
\end{proof}

\begin{lem} \label{na-lem-strongb}
For any $0\leq s \leq t$, $f \in L^1(U\times \mathbb{R}^3)$, the mapping $[s,\infty) \ni t \to U^\varepsilon(t,s)f$ \ is continuous.
\end{lem}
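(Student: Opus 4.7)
The plan is to reduce the question to the case of smooth compactly supported test functions by exploiting the contractivity $\|U^\varepsilon(t,s)\|_{L^1\to L^1}\le 1$ established in lemma~\ref{na-lem-Uepevol}, and then to conclude by dominated convergence on the explicit formula \eqref{na-eq-Uepdeff}.

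First I would note the standard reduction: for any $f\in L^1(U\times\mathbb{R}^3)$ and any $\eta\in C_c^\infty(U\times\mathbb{R}^3)$, the triangle inequality together with $\|U^\varepsilon(t,s)\|\le 1$ yields
\[
\|U^\varepsilon(t,s)f-U^\varepsilon(t_0,s)f\|_{L^1}\le 2\|f-\eta\|_{L^1}+\|U^\varepsilon(t,s)\eta-U^\varepsilon(t_0,s)\eta\|_{L^1},
\]
so by density of $C_c^\infty$ in $L^1$ it suffices to prove $t\mapsto U^\varepsilon(t,s)\eta$ is continuous in $L^1$ at every $t_0\ge s$, for arbitrary $\eta\in C_c^\infty$.

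For such $\eta$, I would work directly from the explicit formula $U^\varepsilon(t,s)\eta(x,v)=E^\varepsilon(t,s,x,v)\,\eta(x-(t-s)v,v)$, with $E^\varepsilon$ as in \eqref{na-eq-Edeff}. Pointwise continuity in $t$ follows from two ingredients: the free-streaming factor $\eta(x-(t-s)v,v)$ is continuous in $t$ by smoothness of $\eta$, and $E^\varepsilon(t,s,x,v)$ is continuous in $t$ because $F^\varepsilon(t,s,x,v):=-\log E^\varepsilon(t,s,x,v)$ is. The latter continuity splits into the contribution of enlarging the $\sigma$-integration domain (controlled by \eqref{na-eq-gl1assmp}, yielding a bound of order $|t-t_0|$ uniform on $|v|\le R$) and the contribution of replacing $(t-\sigma)v$ by $(t_0-\sigma)v$ inside $g_\sigma$ (controlled by the Hölder hypothesis \eqref{na-eq-ghldassmp} after integrating in $\bar v$ against the dominating envelope $\bar g$).

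To upgrade pointwise to $L^1$ convergence I would invoke dominated convergence: since $\eta\in C_c^\infty$ there is $R>0$ with $\eta(\cdot,v)\equiv 0$ for $|v|>R$, so $|U^\varepsilon(t,s)\eta(x,v)|\le \|\eta\|_\infty\,\mathbf{1}_{U}(x)\,\mathbf{1}_{\{|v|\le R\}}$, and this dominating function is independent of $t$ and integrable on $U\times\mathbb{R}^3$. The main obstacle — and the only non-routine piece — is the verification that $F^\varepsilon(\cdot,s,x,v)$ is continuous at $t_0$ with a bound uniform on $|v|\le R$; an alternative cleaner route for the right-continuity would be to apply lemma~\ref{na-lem-stronga} directly to $g:=U^\varepsilon(t_0,s)f$ via $U^\varepsilon(t,s)f-U^\varepsilon(t_0,s)f=[U^\varepsilon(t,t_0)-I]g$, but left-continuity does not follow from lemma~\ref{na-lem-stronga} quite as cleanly because the analogous factorisation $[U^\varepsilon(t_0,t)-I]U^\varepsilon(t,s)f$ involves the $t$-dependent function $U^\varepsilon(t,s)f$, so the density-plus-dominated-convergence route handles both one-sided limits uniformly and is what I would ultimately write.
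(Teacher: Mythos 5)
Your proposal is correct, but it takes a genuinely different route from the paper's. The paper treats the two one-sided limits asymmetrically: for $t+h$ it uses the cocycle identity to write $U^\varepsilon(t+h,s)f-U^\varepsilon(t,s)f = U^\varepsilon(t+h,t)g-g$ with $g=U^\varepsilon(t,s)f\in L^1$ and invokes the argument of lemma~\ref{na-lem-stronga}, exactly as you observed is available; for $t-h$ it makes no reduction to test functions for the exponential factor, but instead writes $U^\varepsilon(t,s)f - U^\varepsilon(t-h,s)f$ explicitly, splits it as $I_1+I_2$, handles $I_1$ by approximating $f$ by $\eta\in C_c^\infty$, and handles $I_2$ by a quantitative chain (velocity cutoff at radius $R$, $|e^{\alpha}-e^{\beta}|\le|\alpha-\beta|$, and the explicit two-parameter bound of lemma~\ref{na-lem-g0diffbound}) which yields an $O(1/R_2+R_2^5h^\alpha)$ estimate. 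Your route is cleaner and symmetric: reduce to $\eta\in C_c^\infty$ up front using contractivity, then prove pointwise continuity of $E^\varepsilon(t,s,x,v)\,\eta(x-(t-s)v,v)$ in $t$ and conclude by dominated convergence in $(x,v)$ using the fixed integrable majorant $\|\eta\|_\infty\mathbf 1_U(x)\mathbf 1_{\{|v|\le R\}}$. For the pointwise continuity of $E^\varepsilon$, the domain-enlargement piece is $O(|t-t_0|)$ via \eqref{na-eq-gl1assmp}, and the transport-shift piece uses a second layer of dominated convergence in $(\sigma,\nu,\bar v)$: pointwise convergence from the H\"older hypothesis \eqref{na-eq-ghldassmp}, with the integrable dominant $2\bar g(\bar v)(|v|+|\bar v|)$ (one cannot integrate the H\"older bound $M|t-t_0|^\alpha|v|^\alpha$ against the kernel directly, since it has no decay in $\bar v$; DCT sidesteps this, where the paper instead performs the explicit $B_R(0)$/complement split encoded in lemma~\ref{na-lem-g0diffbound}). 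What each buys: your version is shorter, unifies the two one-sided limits, and avoids reproving lemma~\ref{na-lem-stronga}-type estimates; the paper's version is quantitative and reuses lemma~\ref{na-lem-g0diffbound}, which it needs elsewhere anyway, so the extra bookkeeping is amortized. Both are sound.
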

\begin{proof}
Fix $f \in L^1(U\times \mathbb{R}^3)$ and $\delta>0$. Let $h>0$. By lemma~\ref{na-lem-Uepevol} $U^\varepsilon$ is an evolution family so,
\[ U^\varepsilon(t+h,s)f-U^\varepsilon(t,s)f = U^\varepsilon(t+h,t)U^\varepsilon(t,s)f - U^\varepsilon(t,s)f = U^\varepsilon(t+h,t)g-g, \]
where $g=U^\varepsilon(t,s)f$. Since $g \in L^1(U \times \mathbb{R}^3)$ we can follow the proof of lemma~\ref{na-lem-stronga} to prove that for $h$ sufficiently small,
\[ \int_{U \times \mathbb{R}^3} | U^\varepsilon(t+h,t)g(x,v)-g(x,v) | \, \mathrm{d}x \, \mathrm{d}v < \delta. \]
It remains to prove that
\[ \lim_{h \downarrow 0} U^\varepsilon(t-h,s)f = U^\varepsilon(t,s)f.  \]
Fix $\delta >0$. Let $h>0$. Then using \eqref{na-eq-Edeff},
\begin{align} \label{na-eq-Uepctsminh}
& \int_{U \times \mathbb{R}^3} | U^\varepsilon(t,s)f(x,v)-U^\varepsilon(t-h,s)f(x,v) | \, \mathrm{d}x \, \mathrm{d}v \nonumber \\
 =& \int_{U \times \mathbb{R}^3} | E^\varepsilon(t,s,x,v)f(x-(t-s)v,v)
 -E^\varepsilon(t-h,s,x,v)f(x-(t-h-s)v,v) | \, \mathrm{d}x \, \mathrm{d}v \nonumber \\
 \leq& \int_{U \times \mathbb{R}^3} E^\varepsilon(t,s,x,v) | f(x-(t-s)v,v)-f(x-(t-h-s)v,v)| \nonumber \\ & \quad+|E^\varepsilon(t-h,s,x,v)-E^\varepsilon(t,s,x,v)||f(x-(t-h-s)v,v) | \, \mathrm{d}x \, \mathrm{d}v \nonumber \\
 =& I_1+I_2.
\end{align}
Now since $E^\varepsilon(t,s,x,v) \leq 1$ we have that
\begin{align*} \label{na-eq-Uepctsmin1}
I_1 & = \int_{U \times \mathbb{R}^3}  E^\varepsilon(t,s,x,v) | f(x-(t-s)v,v)-f(x-(t-h-s)v,v)| \, \mathrm{d}x \, \mathrm{d}v \nonumber \\
& \leq \int_{U \times \mathbb{R}^3}  | f(x-(t-s)v,v)-f(x-(t-h-s)v,v)| \, \mathrm{d}x \, \mathrm{d}v.
\end{align*}
We can make this less than $\delta/2$ by approximating $f$ with a test function $\eta \in C_c^\infty(U \times \mathbb{R}^3)$ as in the above lemma. We now look to $I_2$. Firstly since $f \in L^1(U\times\mathbb{R}^3)$ there exists an $R>0$ such that,
\[ \int_{U \times \mathbb{R}^3\setminus B_R(0)} f(x,v)\, \mathrm{d}x \, \mathrm{d}v < \frac{\delta}{8}.  \]
Hence,
\begin{align} \label{na-eq-I2bound}
I_2 &  = \int_{U \times B_R(0)} |E^\varepsilon(t-h,s,x,v)-E^\varepsilon(t,s,x,v)||f(x-(t-h-s)v,v) | \, \mathrm{d}x \, \mathrm{d}v \nonumber \\
& \quad \quad +  \int_{U \times \mathbb{R}^3\setminus B_R(0)} |E^\varepsilon(t-h,s,x,v)-E^\varepsilon(t,s,x,v)||f(x-(t-h-s)v,v) | \, \mathrm{d}x \, \mathrm{d}v \nonumber \\
& < \int_{U \times B_R(0)} |E^\varepsilon(t-h,s,x,v)-E^\varepsilon(t,s,x,v)||f(x-(t-h-s)v,v) | \, \mathrm{d}x \, \mathrm{d}v  + \frac{\delta}{4}.
\end{align}
By the mean value theorem for any $\alpha,\beta \leq 0$ there exists an $\theta \in (\alpha,\beta) \cup (\beta,\alpha)$ such that
\[ \left| \frac{\exp(\alpha)-\exp(\beta)}{\alpha-\beta} \right| = \exp(\theta), \]
hence
\begin{equation} \label{na-eq-expab}
|\exp(\alpha)-\exp(\beta)| \leq |\alpha - \beta|.
\end{equation}
By lemma~\ref{na-lem-g0diffbound}, for any $R_2\geq 1$ and for almost all $x \in U$,
\begin{align*}
\bigg| & \exp  \left(- \int_s^{t-h}  \int_{\mathbb{S}^{2}} \int_{\mathbb{R}^3} g_\sigma(x+\varepsilon\nu - (t-h-\sigma) v,\bar{v})[(v-\bar{v})\cdot \nu ]_+ \, \mathrm{d}\bar{v} \, \mathrm{d}\nu \, \mathrm{d}\sigma \right)  \\
& \quad \quad - \exp \left(- \int_s^{t-h}  \int_{\mathbb{S}^{2}} \int_{\mathbb{R}^3} g_\sigma(x+\varepsilon\nu - (t-\sigma) v,\bar{v})[(v-\bar{v})\cdot \nu ]_+ \, \mathrm{d}\bar{v} \, \mathrm{d}\nu \, \mathrm{d}\sigma \right) \bigg| \\
& \leq  \int_s^{t-h}  \int_{\mathbb{S}^{2}} \int_{\mathbb{R}^3} |g_\sigma(x+\varepsilon\nu - (t-h-\sigma) v,\bar{v}) -g_\sigma(x+\varepsilon\nu - (t-\sigma) v,\bar{v})|[(v-\bar{v})\cdot \nu ]_+ \, \mathrm{d}\bar{v} \, \mathrm{d}\nu \, \mathrm{d}\sigma \\
& \leq  \int_s^{t-h}  C(1+|v|) \Big(\frac{1}{R_2} +R_2^5 h^\alpha  \Big) \, \mathrm{d}\sigma  \leq (t-h-s)  C(1+|v|) \Big(\frac{1}{R_2} +R_2^5 h^\alpha  \Big)  \\
& \leq T C(1+|v|) \Big(\frac{1}{R_2} +R_2^5 h^\alpha  \Big) .
\end{align*}
Hence, by a similar calculation to \eqref{na-eq-gintts},
\begin{align*}
&| E^\varepsilon  (t-h,s,x,v)-E^\varepsilon(t,s,x,v)| \\
 =& \bigg| \exp \left(- \int_s^{t-h}  \int_{\mathbb{S}^{2}} \int_{\mathbb{R}^3} g_\sigma(x+\varepsilon\nu - (t-h-\sigma) v,\bar{v})[(v-\bar{v})\cdot \nu ]_+ \, \mathrm{d}\bar{v} \, \mathrm{d}\nu \, \mathrm{d}\sigma \right) \\
& \quad \quad  - \exp \left(- \int_s^t  \int_{\mathbb{S}^{2}} \int_{\mathbb{R}^3} g_\sigma(x+\varepsilon\nu - (t-\sigma) v,\bar{v})[(v-\bar{v})\cdot \nu ]_+ \, \mathrm{d}\bar{v} \, \mathrm{d}\nu \, \mathrm{d}\sigma \right) \bigg|\\
\leq &\bigg| \exp \left(- \int_s^{t-h}  \int_{\mathbb{S}^{2}} \int_{\mathbb{R}^3} g_\sigma(x+\varepsilon\nu - (t-h-\sigma) v,\bar{v})[(v-\bar{v})\cdot \nu ]_+ \, \mathrm{d}\bar{v} \, \mathrm{d}\nu \, \mathrm{d}\sigma \right)  \\
& \quad \quad - \exp \left(- \int_s^{t-h}  \int_{\mathbb{S}^{2}} \int_{\mathbb{R}^3} g_\sigma(x+\varepsilon\nu - (t-\sigma) v,\bar{v})[(v-\bar{v})\cdot \nu ]_+ \, \mathrm{d}\bar{v} \, \mathrm{d}\nu \, \mathrm{d}\sigma \right) \bigg| \\
& \quad \quad + \exp \left(- \int_s^{t-h}  \int_{\mathbb{S}^{2}} \int_{\mathbb{R}^3} g_\sigma(x+\varepsilon\nu - (t-\sigma) v,\bar{v})[(v-\bar{v})\cdot \nu ]_+ \, \mathrm{d}\bar{v} \, \mathrm{d}\nu \, \mathrm{d}\sigma \right) \\
& \quad \quad \left( 1- \exp \left(- \int_t^{t-h}  \int_{\mathbb{S}^{2}} \int_{\mathbb{R}^3} g_\sigma(x+\varepsilon\nu - (t-\sigma) v,\bar{v})[(v-\bar{v})\cdot \nu ]_+ \, \mathrm{d}\bar{v} \, \mathrm{d}\nu \, \mathrm{d}\sigma \right) \right) \\
\leq &T C(1+|v|) \Big(\frac{1}{R_2} +R_2^5 h^\alpha  \Big)\\& \quad \quad + \left( 1- \exp \left(- \int_t^{t-h}  \int_{\mathbb{S}^{2}} \int_{\mathbb{R}^3} g_\sigma(x+\varepsilon\nu - (t-\sigma) v,\bar{v})[(v-\bar{v})\cdot \nu ]_+ \, \mathrm{d}\bar{v} \, \mathrm{d}\nu \, \mathrm{d}\sigma \right) \right) \\
 \leq & T C(1+|v|) \Big(\frac{1}{R_2} +R_2^5 h^\alpha  \Big) + 1 - \exp (-h\pi M_g (1+|v|)).
\end{align*}
This gives that
\begin{align*}
 &\int_{U \times B_R(0)}  |E^\varepsilon(t-h,s,x,v)-E^\varepsilon(t,s,x,v)||f(x-(t-h-s)v,v) | \, \mathrm{d}x \, \mathrm{d}v \\
\leq & \int_{U \times B_R(0)} \Big(  T C(1+|v|) \Big(\frac{1}{R_2} +R_2^5 h^\alpha  \Big) + 1 - \exp (-h\pi M_g (1+|v|)) \Big)  |f(x-(t-h-s)v,v)| \, \mathrm{d}x \, \mathrm{d}v \\
 \leq & \Big(  T C(1+R) \Big(\frac{1}{R_2} +R_2^5 h^\alpha  \Big) + 1 - \exp (-h\pi M_g (1+R)) \Big)   \int_{U \times B_R(0)} |f(x-(t-h-s)v,v)| \, \mathrm{d}x \, \mathrm{d}v \\
 \leq & \Big(  T C(1+R) \Big(\frac{1}{R_2} +R_2^5 h^\alpha  \Big) + 1 - \exp (-h\pi M_g (1+R)) \Big) \| f \|.
\end{align*}
Now take $R_2 \geq 1$ sufficiently large such that
\[ \frac{TC(1+R)\|f\|}{R_2} < \frac{\delta }{12},  \]
and $h>0$ sufficiently small so that both
\[ TC(1+R)R_2^5 \|f \| h^\alpha < \frac{\delta}{12} \textrm{ and, } 1 - \exp (-h\pi M_g (1+R))  \| f \| < \frac{\delta}{12}.    \]
Hence
\[  \int_{U \times B_R(0)}   |E^\varepsilon(t-h,s,x,v)-E^\varepsilon(t,s,x,v)||f(x-(t-h-s)v,v) | \, \mathrm{d}x \, \mathrm{d}v < \frac{\delta}{4}. \]
Substituting this into \eqref{na-eq-I2bound} gives $I_2 < \delta/2$. Returning to \eqref{na-eq-Uepctsminh} this gives for $h>0$ sufficiently small,
\[ \int_{U \times \mathbb{R}^3}  | U^\varepsilon(t,s)f(x,v)-U^\varepsilon(t-h,s)f(x,v) | \, \mathrm{d}x \, \mathrm{d}v < \delta,  \]
which completes the proof of the lemma.
\end{proof}

\begin{proof}[Proof of proposition~\ref{na-prop-Uepstrongcts}]
This proposition follows from lemma~\ref{na-lem-stronga} and lemma~\ref{na-lem-strongb}.
\end{proof}

Finally to prove proposition~\ref{na-prop-Pj0} it remains to prove that $U^\varepsilon$ satisfies the properties $(E1),(E2)$ and $(E3)$.

\begin{prop} \label{na-prop-UepE123}
The evolution system $U^\varepsilon$ satisfies the properties $(E1),(E2)$ and $(E3)$ of \cite[Theorem 5.3.1]{pazy83}.
\end{prop}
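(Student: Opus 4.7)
The plan is to verify each of the three properties directly using the explicit formula for $U^\varepsilon(t,s)$. Property (E1), the exponential boundedness $\|U^\varepsilon(t,s)\|\leq Me^{\omega(t-s)}$, has essentially already been established: lemma~\ref{na-lem-Uepevol} gives $M=1$, $\omega=0$ by bounding the exponential factor in \eqref{na-eq-Uepdeff} by $1$. So for this property I need only point to that lemma.

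For (E2), I need to show that for $w\in Y$, the right derivative $\frac{\partial^+}{\partial t}U^\varepsilon(t,s)w\big|_{t=s}$ equals $A^\varepsilon(s)w$ in $L^1$. Writing $U^\varepsilon(t,s)w(x,v)=E^\varepsilon(t,s,x,v)\,w(x-(t-s)v,v)$ with $E^\varepsilon$ as in \eqref{na-eq-Edeff}, I apply the product rule. The transport factor contributes $-v\cdot\partial_x w(x,v)$, matching the transport part of $A^\varepsilon(s)$. For the exponential, note that $E^\varepsilon(s,s,\cdot)=1$; combining the Leibniz rule with the fundamental theorem of calculus applied to the integral in the exponent (the dependence on $t$ through both the upper limit and through $(t-\sigma)v$), the boundary term at $\sigma=t=s$ dominates and yields
\[ \frac{\partial^+}{\partial t}E^\varepsilon(t,s,x,v)\Big|_{t=s}=-\int_{\mathbb{S}^2}\!\int_{\mathbb{R}^3}g_s(x+\varepsilon\nu,\bar v)[(v-\bar v)\cdot\nu]_+\,\mathrm{d}\bar v\,\mathrm{d}\nu, \]
so multiplication by $w(x,v)$ produces $-Q_s^{\varepsilon,-}[w](x,v)$, exactly the absorption part of $A^\varepsilon(s)$.

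For (E3), I perform the analogous computation for $\partial_s U^\varepsilon(t,s)w$ at general $0\le s\le t$. The parameter $s$ appears only as the lower limit of the integral in $E^\varepsilon$ and inside the shift $x-(t-s)v$. Differentiating, the lower-limit contribution gives $+\int\!\int g_s(x+\varepsilon\nu-(t-s)v,\bar v)[(v-\bar v)\cdot\nu]_+\,\mathrm{d}\bar v\,\mathrm{d}\nu$ times $E^\varepsilon\cdot w(x-(t-s)v,v)$, while the shifted-argument contribution gives $E^\varepsilon\cdot v\cdot\partial_x w(x-(t-s)v,v)$. On the other hand, by definition~\ref{na-deff-ABops},
\[ (U^\varepsilon(t,s)A^\varepsilon(s)w)(x,v)=E^\varepsilon(t,s,x,v)\bigl[-v\cdot\partial_x w-Q_s^{\varepsilon,-}[w]\bigr](x-(t-s)v,v), \]
so inspection shows that $\partial_s U^\varepsilon(t,s)w=-U^\varepsilon(t,s)A^\varepsilon(s)w$ as required.

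The only real obstacle is upgrading these pointwise identities to convergence of the difference quotients in the $L^1$ norm. For this I will bound the difference quotients uniformly in the step $h$ by integrable majorants and invoke dominated convergence. The bound \eqref{na-eq-expab} together with the estimates used in the proof of lemma~\ref{na-lem-AH2}, lemma~\ref{na-lem-AH3} and lemma~\ref{na-lem-strongb} (exploiting the moment assumptions \eqref{na-eq-gl1assmp}, \eqref{na-eq-g0w1} and the Hölder bound \eqref{na-eq-ghldassmp} via lemma~\ref{na-lem-g0diffbound}) control the difference quotients of $E^\varepsilon$ by $C(1+|v|)$ times factors of the form $\tfrac1{R}+R^5h^\alpha$, while $w\in Y$ supplies integrability of $(1+|v|^2)|w|$ and $(1+|v|)|\partial_x w|$. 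Once (E2) and (E3) are verified in this $L^1$ sense, uniqueness in Pazy's theorem~5.3.1 identifies $U^\varepsilon$ with the abstract evolution system produced by (H1)--(H3), completing the proof of proposition~\ref{na-prop-UepE123} and thereby proposition~\ref{na-prop-Pj0}.
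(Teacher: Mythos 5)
Your proposal is correct and follows essentially the same route as the paper: verify $(E1)$ by bounding the exponential factor by $1$, then differentiate the product $E^\varepsilon(t,s,x,v)\,w(x-(t-s)v,v)$ directly, noting for $(E2)$ that the contribution involving $\partial_x g_\sigma$ vanishes at $t=s$, and for $(E3)$ that the resulting expression matches $-U^\varepsilon(t,s)A^\varepsilon(s)w$. You are somewhat more explicit than the paper about upgrading the pointwise derivative identities to $L^1$-convergence of the difference quotients via integrable majorants and dominated convergence, a step the paper leaves implicit.
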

\begin{proof}
By bounding the exponential term by $1$ it is clear that $(E1)$ holds with $M=1, \omega =0$. Now let $P \in Y$  and $\Omega \subset U \times \mathbb{R}^3$ be measurable. Then
\begin{align*}
\int_{\Omega} \partial_t|_{t=s}P(x-(t-s)v,v) \, \mathrm{d}x \, \mathrm{d}v = \int_{\Omega}  -v\cdot \partial_x P(x,v) \, \mathrm{d}x \, \mathrm{d}v.
\end{align*}
And using \eqref{na-eq-Edeff} we have
\begin{align*}
\int_{\Omega} & P(x,v)  \partial_t |_{t=s} E^\varepsilon(t,s,x,v) \, \mathrm{d}x \, \mathrm{d}v \\
& = \int_{\Omega}  P(x,v) \bigg( -\int_{\mathbb{S}^2} \int_{\mathbb{R}^3} g_t(x+\varepsilon\nu,\bar{v})[(v-\bar{v})\cdot \nu]_+ \, \mathrm{d}\bar{v} \, \mathrm{d}\nu  \bigg|_{t=s} \\
& \qquad -v\cdot \int_s^t \int_{\mathbb{S}^2} \int_{\mathbb{R}^3} \partial_x g_\sigma(x+\varepsilon\nu-(t-\sigma)v,\bar{v}) [(v-\bar{v})\cdot \nu]_+ \mathrm{d}\bar{v} \, \mathrm{d}\nu \, \mathrm{d}\sigma \bigg|_{t=s} \bigg) \, \mathrm{d}x \, \mathrm{d}v \\
& = - \int_{\Omega}   P(x,v)  \int_{\mathbb{S}^2} \int_{\mathbb{R}^3} g_s(x+\varepsilon\nu,\bar{v})[(v-\bar{v})\cdot \nu]_+ \, \mathrm{d}\bar{v} \, \mathrm{d}\nu \, \mathrm{d}x \, \mathrm{d}v.
\end{align*}
Hence
\begin{align*}
\int_{\Omega} & \partial_t |_{t=s} (U^\varepsilon(t,s)P)(x,v) \, \mathrm{d}x \, \mathrm{d}v \\
& = \int_{\Omega} P(x,v) \partial_t |_{t=s} E^\varepsilon(t,s,x,v) + E^\varepsilon(s,s,x,v)  \partial_t |_{t=s} P(x-(t-s)v,v) \, \mathrm{d}x \, \mathrm{d}v \\
& = \int_{\Omega}  -v\cdot \partial_x P(x,v) - P(x,v)  \int_{\mathbb{S}^2} \int_{\mathbb{R}^3} g_s(x+\varepsilon\nu,\bar{v})[(v-\bar{v})\cdot \nu]_+ \, \mathrm{d}\bar{v} \, \mathrm{d}\nu \, \mathrm{d}x \, \mathrm{d}v \\
& = \int_{\Omega} A^\varepsilon(s) P(x,v) \, \mathrm{d}x \, \mathrm{d}v.
\end{align*}
This proves $(E2)$. Further
\begin{align*}
\int_{\Omega} & E^\varepsilon(t,s,x,v) \partial_s P(x-(t-s)v,v)  \, \mathrm{d}x \, \mathrm{d}v  = \int_{\Omega}   E^\varepsilon(t,s,x,v) v \cdot \partial_x P(x-(t-s)v,v)  \, \mathrm{d}x \, \mathrm{d}v
\end{align*}
and
\begin{align*}
\int_{\Omega} &P(x-(t-s)v,v)  \partial_s E^\varepsilon(t,s,x,v)  \, \mathrm{d}x \, \mathrm{d}v \\
& = \int_{\Omega} P(x-(t-s)v,v)  E^\varepsilon(t,s,x,v)  \int_{\mathbb{S}^2}   \int_{\mathbb{R}^3} g_s(x+\varepsilon\nu - (t-s)v,\bar{v}) [(v-\bar{v})\cdot \nu]_+ \, \mathrm{d}\bar{v} \, \mathrm{d}\nu     \, \mathrm{d}x \, \mathrm{d}v.
\end{align*}
Hence
\begin{align*}
\int_{\Omega} & \partial_s (U^\varepsilon(t,s)P)(x,v) \, \mathrm{d}x \, \mathrm{d}v \\
& = \int_{\Omega} E^\varepsilon(t,s,x,v) \partial_s P(x-(t-s)v,v) + P(x-(t-s)v,v)  \partial_s E^\varepsilon(t,s,x,v)  \, \mathrm{d}x \, \mathrm{d}v \\
& = \int_{\Omega} E^\varepsilon(t,s,x,v) v \cdot \partial_x P(x-(t-s)v,v) \\
& \qquad + E^\varepsilon(t,s,x,v) P(x-(t-s)v,v)    \int_{\mathbb{S}^2}   \int_{\mathbb{R}^3} g_s(x+\varepsilon\nu - (t-s)v,\bar{v}) [(v-\bar{v})\cdot \nu]_+ \, \mathrm{d}\bar{v} \, \mathrm{d}\nu    \, \mathrm{d}x \, \mathrm{d}v \\
&= \int_{\Omega} - U^\varepsilon(t,s)A^\varepsilon(s)P(x,v)  \, \mathrm{d}x \, \mathrm{d}v.
\end{align*}
This proves $(E3)$ which completes the proof of the lemma.
\end{proof}

We can finally now combine all the results in this subsection to prove proposition~\ref{na-prop-Pj0}.

\begin{proof}[Proof of proposition~\ref{na-prop-Pj0}]
Let $\varepsilon \geq 0$. By lemmas~\ref{na-lem-AH1}, \ref{na-lem-AH2} and \ref{na-lem-AH3} we can apply \cite[Theorem 5.3.1]{pazy83}. This gives that there exists a unique evolution system satisfying $(E1),(E2),(E3)$. By lemma~\ref{na-lem-Uepevol}, $U^\varepsilon$ is an exponentially bounded evolution family and by proposition~\ref{na-prop-Uepstrongcts} it is strongly continuous. By proposition~\ref{na-prop-UepE123}, $U^\varepsilon$ satisfies these conditions and hence the solution is given by $P_t^{\varepsilon,(0)}=U^\varepsilon(t,0)f_0$ as required.
\end{proof}

\subsection{Existence of Non-Autonomous Linear Boltzmann Solution} \label{na-sec-idlinbolt}
In this subsection we prove that there exists a solution to the $\varepsilon$ dependent non-autonomous linear Boltzmann equation \eqref{na-eq-linboltzep}. We prove the result by adapting the method of \cite{arlotti14}.

\begin{prop} \label{na-prop-linboltz}
For $\varepsilon \geq 0$ there exists a solution $f^\varepsilon:[0,T] \to L^1(U\times\mathbb{R}^3)$ to the non-autonomous linear Boltzmann equation \eqref{na-eq-linboltz}. Moreover there exists a $K>0$ such that for any $t \in [0,T]$ and any $\varepsilon \geq 0$
\begin{equation} \label{na-eq-ft1st}
\int_{U \times \mathbb{R}^3} f_t^\varepsilon(x,v)(1+|v|) \, \mathrm{d}x \, \mathrm{d}v \leq K.
\end{equation}
\end{prop}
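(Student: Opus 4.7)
The plan is to write \eqref{na-eq-linboltzep} in mild form using the evolution system $U^\varepsilon$ from proposition~\ref{na-prop-Pj0}, namely
\[
f_t^\varepsilon = U^\varepsilon(t,0)f_0 + \int_0^t U^\varepsilon(t,s)Q_s^{\varepsilon,+}[f_s^\varepsilon]\,\mathrm{d}s,
\]
and to construct the solution as a Dyson--Phillips series $f_t^\varepsilon=\sum_{k\geq 0}w_k(t)$ with $w_0(t):=U^\varepsilon(t,0)f_0$ and $w_{k+1}(t):=\int_0^t U^\varepsilon(t,s)Q_s^{\varepsilon,+}[w_k(s)]\,\mathrm{d}s$. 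Each $w_k$ is non-negative since $f_0\geq 0$ and both $U^\varepsilon$ (visible from \eqref{na-eq-Uepdeff}) and $Q_s^{\varepsilon,+}$ preserve positivity.

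The central tool for controlling the series is the collision change of variables $(v,\bar{v})\mapsto(v',\bar{v}')$, an involution of unit Jacobian sending $[(v-\bar{v})\cdot\nu]_+$ to $[(v'-\bar{v}')\cdot\nu]_-$, followed by $\nu\mapsto-\nu$: for any $h\geq 0$,
\[
\int_{U\times\mathbb{R}^3}Q_s^{\varepsilon,+}[h](x,v)\,\mathrm{d}x\,\mathrm{d}v = \int h(x,v)\int_{\mathbb{S}^2}\int_{\mathbb{R}^3}g_s(x-\varepsilon\nu,\bar{v})[(v-\bar{v})\cdot\nu]_+\,\mathrm{d}\bar{v}\,\mathrm{d}\nu\,\mathrm{d}x\,\mathrm{d}v.
\]
Bounding $g_s\leq \bar{g}$ and using \eqref{na-eq-gl1assmp} gives $\|Q_s^{\varepsilon,+}[h]\|_{L^1}\leq\pi M_g\|h\|_{L^1((1+|v|)\,\mathrm{d}x\,\mathrm{d}v)}$. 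Combined with $U^\varepsilon$ being a contraction on $L^1((1+|v|^p)\,\mathrm{d}x\,\mathrm{d}v)$ for every $p\geq 0$ (since transport preserves velocity and the exponential factor in \eqref{na-eq-Uepdeff} is bounded by $1$), a coupled iteration in $L^1$ and $L^1((1+|v|)\,\mathrm{d}x\,\mathrm{d}v)$, combined with the moment control established next, shows that the series converges absolutely and uniformly on $[0,T]$ and defines a solution to \eqref{na-eq-linboltzep}.

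The moment bound \eqref{na-eq-ft1st} follows from a Gronwall argument on $\psi(t):=\int f_t^\varepsilon(x,v)(1+|v|)\,\mathrm{d}x\,\mathrm{d}v$. Integrating \eqref{na-eq-linboltzep} against $(1+|v|)$, the transport term vanishes by periodicity of $U$, and the collision contribution rewrites via the change of variables above as
\[
\int f_t^\varepsilon(x,v)\bigl\{[g_t(x-\varepsilon\nu,\bar{v})-g_t(x+\varepsilon\nu,\bar{v})](1+|v'|)+g_t(x+\varepsilon\nu,\bar{v})(|v'|-|v|)\bigr\}[(v-\bar{v})\cdot\nu]_+.
\]
The first bracket is $O(\varepsilon^\alpha)$ by the Hölder assumption \eqref{na-eq-ghldassmp} (and handled via the interpolation of lemma~\ref{na-lem-g0diffbound}); the second is bounded by $\bigl||v'|-|v|\bigr|\leq|v-\bar{v}|$ together with \eqref{na-eq-gl1assmp} and the pointwise decay $\bar{g}(\bar{v})(1+|\bar{v}|)\leq M_\infty$ from \eqref{na-eq-glinf}. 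These give a linear differential inequality for $\psi(t)$, and Gronwall yields the uniform bound $K=K(T,M_f,M_g,M_\infty,M,\alpha)$.

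The principal obstacle is exactly this moment closure: a naive bound on $\int(1+|v|)Q_s^{\varepsilon,+}[f]$ already requires the $(1+|v|^2)$-moment of $f$, and iterating through the Dyson series in weighted $L^1$ spaces would demand unboundedly many moments of $f_0$. The resolution is the near-cancellation of gain and loss displayed in the bracket above---exact at $\varepsilon=0$ and of order $\varepsilon^\alpha$ otherwise---which reduces the required control to a first-moment quantity, while the combination of the integrability \eqref{na-eq-gl1assmp} and the pointwise decay \eqref{na-eq-glinf} of $\bar{g}$ prevents any higher background moment from appearing. This is the main adaptation of the abstract perturbation machinery of \cite{arlotti14} required to accommodate the non-autonomous, spatially heterogeneous collision kernel.
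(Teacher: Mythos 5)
Your overall architecture (mild formulation with $U^\varepsilon$, Dyson--Phillips iteration, a priori first-moment bound) matches the paper's route through \cite{arlotti14}, but the moment bound \eqref{na-eq-ft1st} as you propose it contains a genuine gap: the Gronwall inequality for $\psi(t)=\int f_t^\varepsilon(1+|v|)$ does not close. After the collision change of variables, the quantity you must control is $\int f_t^\varepsilon(x,v)\,g_t(x+\varepsilon\nu,\bar v)\,\bigl(|v'|-|v|\bigr)[(v-\bar v)\cdot\nu]_+$ plus the $O(\varepsilon^\alpha)$ spatial-difference term. The cancellation buys you $\bigl||v'|-|v|\bigr|\leq|(v-\bar v)\cdot\nu|$, but the collision kernel $[(v-\bar v)\cdot\nu]_+$ is itself of degree one in $|v|$, so the integrand is still quadratic in $|v|$ against $f_t^\varepsilon$: the bound is $\lesssim \int f_t^\varepsilon(x,v)\,\bar g(\bar v)\,|v-\bar v|^2$, whose worst term is $|v|^2\|\bar g\|_{L^1}$ and requires the \emph{second} moment of $f_t^\varepsilon$, not $\psi(t)$. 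The assumption \eqref{na-eq-glinf} controls growth in $\bar v$, not in $v$, so it cannot repair this. Moving the Gronwall up to the second moment does not help either: the spatial-difference bracket and the kernel then produce cubic weights, requiring $\int f_0|v|^3$ and $\int\bar g|\bar v|^3$, neither of which is assumed. The same degree-counting problem undermines your claimed convergence of the series ``in $L^1$ and $L^1((1+|v|)\,\mathrm{d}x\,\mathrm{d}v)$'': the bound $\|Q_s^{\varepsilon,+}[h]\|_{L^1}\leq\pi M_g\|h\|_{L^1((1+|v|))}$ consumes one moment per iterate, so iterating it would demand infinitely many moments of $f_0$, exactly the obstacle you name but do not actually circumvent.

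The paper avoids both issues by a different bookkeeping. For existence it verifies the Kato-type substochasticity identity $\int_s^r\|B^\varepsilon(t)U^\varepsilon(t,s)f\|\,\mathrm{d}t=\|f\|-\|U^\varepsilon(r,s)f\|$ (lemmas~\ref{na-lem-arlotti1} and \ref{na-lem-arlotti2}), which makes the Dyson--Phillips series summable in plain $L^1$ with no velocity weights at all; this is the hypothesis of \cite[theorem 2.1]{arlotti14}. For the moment bound it uses the \emph{backward} Duhamel formula $f_t^\varepsilon=U^\varepsilon(t,0)f_0+\int_0^tV^\varepsilon(t,r)B^\varepsilon(r)U^\varepsilon(r,0)f_0\,\mathrm{d}r$, in which the gain operator acts exactly once and only on the free flow of $f_0$. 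The one moment that $B^\varepsilon$ costs is then paid a single time out of the assumed finite second moment $M_f$ of $f_0$, giving $K=M_f(1+2\pi TM_g)$ directly, with no differential inequality and no moment closure to worry about. If you want to keep your forward iteration, you must replace the weighted-norm estimates by this substochasticity structure for convergence, and restructure the moment estimate so that $Q^{\varepsilon,+}$ is never applied to $f_t^\varepsilon$ itself but only to quantities whose second moment is already known.
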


\begin{rmk}
Later, in proposition~\ref{na-prop-ftprob}, we are able to show that for any $\varepsilon \geq 0$ and any $t \in [0,T]$, $f_t^\varepsilon$ is a probability measure on $U \times \mathbb{R}^3$ and that $f_t^\varepsilon$ converges in $L^1$ to $f_t^0$ uniformly for $t\in[0,T]$.
\end{rmk}
We first introduce some notation. For $\varepsilon, t \geq 0$, $(x,v) \in U \times \mathbb{R}^3$ and $v_* \neq v \in \mathbb{R}^3$ define
\begin{align*}
\Sigma^\varepsilon_t(x,v) & := \int_{\mathbb{S}^2} \int_{\mathbb{R}^3} g_t(x+\varepsilon\nu,\bar{v})[(v-\bar{v})\cdot \nu]_+ \, \mathrm{d}\bar{v} \, \mathrm{d}\nu, \textrm{ and } \\
k^\varepsilon_t(x,v,v_*) &= \frac{1}{| v-v_* |} \int_{E_{vv_*}} g_t(x+\varepsilon\nu,w) \, \mathrm{d}w,
\end{align*}
where $E_{vv_*} = \{ w \in \mathbb{R}^3 : w \cdot (v-v_*)  = v \cdot (v-v_*)\}$. By the use of Carleman's representation (see \cite{carleman57} and \cite[Section 3]{bisi14}) we have
\begin{align*}
(B^\varepsilon(t)f)(x,v) & =  \int_{\mathbb{S}^2} \int_{\mathbb{R}^3} f(x,v')g_t(x+\varepsilon\nu,\bar{v}')[(v-\bar{v})\cdot \nu]_+ \, \mathrm{d}\bar{v} \, \mathrm{d}\nu
 = \int_{\mathbb{R}^3} k^\varepsilon_t (x,v,v_*)f(x,v_* ) \, \mathrm{d}v_*.
\end{align*}

\begin{lem} \label{na-lem-arlotti1}
For any $f \in L^1_+ (U \times \mathbb{R}^3)$ and any $s, \varepsilon \geq 0$ we have $\Sigma_t^\varepsilon U^\varepsilon(t,s)f \in L^1(U \times \mathbb{R}^3)$ for almost all $t \geq s$.
\end{lem}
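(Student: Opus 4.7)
The plan is to show the stronger statement that $t \mapsto \|\Sigma_t^\varepsilon U^\varepsilon(t,s)f\|_{L^1(U\times\mathbb{R}^3)}$ is locally integrable on $[s,\infty)$, from which the pointwise $L^1$-conclusion follows for almost every $t\geq s$. Fix $T\geq s$ and consider
\[
I:=\int_s^T \int_{U\times\mathbb{R}^3} \Sigma_t^\varepsilon(x,v)\,(U^\varepsilon(t,s)f)(x,v)\,\mathrm{d}x\,\mathrm{d}v\,\mathrm{d}t,
\]
which is well defined in $[0,\infty]$ by non-negativity of $f$, $\Sigma_t^\varepsilon$ and $E^\varepsilon$.

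First I would apply Tonelli's theorem and, in the inner spatial integral, perform the characteristic change of variables $y=x-(t-s)v$ (legitimate since $U=[0,1]^3$ with periodic identifications is translation-invariant). Using the definition \eqref{na-eq-Edeff} of $E^\varepsilon$ one sees that along this characteristic
\[
E^\varepsilon(t,s,y+(t-s)v,v) = \exp\!\left(-\int_s^t \Sigma^\varepsilon_\sigma(y+(\sigma-s)v,v)\,\mathrm{d}\sigma\right) =: e^{-\Psi(t,s,y,v)}.
\]
The key observation, which is the heart of the proof, is that $\partial_t \Psi(t,s,y,v) = \Sigma_t^\varepsilon(y+(t-s)v,v)$, so that the $t$-integrand is a perfect derivative:
\[
\Sigma_t^\varepsilon(y+(t-s)v,v)\,e^{-\Psi(t,s,y,v)} = -\frac{\mathrm{d}}{\mathrm{d}t}\,e^{-\Psi(t,s,y,v)}.
\]

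Second, after swapping integrals by Tonelli, I integrate in $t$ explicitly:
\[
I = \int_{U\times\mathbb{R}^3} f(y,v)\int_s^T -\frac{\mathrm{d}}{\mathrm{d}t}e^{-\Psi(t,s,y,v)}\,\mathrm{d}t\,\mathrm{d}y\,\mathrm{d}v
   = \int_{U\times\mathbb{R}^3} f(y,v)\bigl[1-e^{-\Psi(T,s,y,v)}\bigr]\,\mathrm{d}y\,\mathrm{d}v \leq \|f\|_{L^1} < \infty.
\]
Hence the map $t\mapsto \|\Sigma_t^\varepsilon U^\varepsilon(t,s)f\|_{L^1}$, being non-negative and with finite Lebesgue integral on $[s,T]$, is finite for almost every $t\in[s,T]$; since $T$ was arbitrary this gives the claim for almost every $t\geq s$.

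The only obstacle is the rigorous justification of Tonelli and the change of variables, which is routine: non-negativity of the integrand makes Tonelli automatic, and the estimate $\Sigma_t^\varepsilon(x,v)\leq \pi M_g(1+|v|)$ (obtained by bounding $g_t\leq \bar g$ and integrating in $\nu$ as in \eqref{na-eq-gintts}) together with $E^\varepsilon\leq 1$ shows that each slice is measurable in a standard way. No moment hypothesis on $f$ is needed beyond $f\in L^1_+$ precisely because the exponential factor $E^\varepsilon$ absorbs the growth produced by $\Sigma_t^\varepsilon$ through the total-derivative identity above.
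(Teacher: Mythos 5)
Your proposal is correct and follows essentially the same route as the paper's proof: integrate the non-negative quantity $\|\Sigma_t^\varepsilon U^\varepsilon(t,s)f\|_{L^1}$ in $t$, change variables along characteristics, recognise the integrand as $-\partial_t$ of the exponential, and conclude the total integral is bounded by $\|f\|_{L^1}$, so the integrand is finite for almost every $t$. The only cosmetic difference is your choice of shift $y=x-(t-s)v$ versus the paper's $\bar{x}=x-tv$, which changes nothing.
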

\begin{proof}
We adapt the proof of \cite[lemma 5.3]{arlotti14}. Define $Q:[s, \infty) \to [0,\infty)$ by
\begin{align*}
Q(t) &: = \int_{U \times \mathbb{R}^3} \Sigma_t^\varepsilon(x,v)U^\varepsilon(t,s)f(x,v) \, \mathrm{d}x \mathrm{d}v \\
& = \int_{U \times \mathbb{R}^3} \Sigma_t^\varepsilon(x,v) \exp \left(  - \int_s^t \Sigma_\sigma^\varepsilon(x-(t-\sigma )v,v) \, \mathrm{d}\sigma \right) f(x-(t-s)v,v) \, \mathrm{d}x \, \mathrm{d}v
\end{align*}
Then for any $r>s$ we have, using the substitution $\bar{x}=x-tv$ and Fubini's theorem,
\begin{align*}
\int_s^r Q(t) \, \mathrm{d}t & = \int_s^r \int_{U \times \mathbb{R}^3} \Sigma_t^\varepsilon(\bar{x}+tv,v) \exp \left(  - \int_s^t \Sigma_\sigma^\varepsilon(\bar{x} +  \sigma  v) \, \mathrm{d}\sigma \right) f(\bar{x}+sv,v) \, \mathrm{d}\bar{x} \, \mathrm{d}v \, \mathrm{d}t \\
& = \int_{U \times \mathbb{R}^3} f(\bar{x}+sv,v) \int_s^r  \Sigma_t^\varepsilon(\bar{x}+tv,v) \exp \left(  - \int_s^t \Sigma_\sigma^\varepsilon(\bar{x} +  \sigma  v) \, \mathrm{d}\sigma \right)   \, \mathrm{d}t \, \mathrm{d}\bar{x} \, \mathrm{d}v \\
& = \int_{U \times \mathbb{R}^3} f(\bar{x}+sv,v) \int_s^r - \partial_t   \exp \left(  - \int_s^t \Sigma_\sigma^\varepsilon(\bar{x} +  \sigma  v) \, \mathrm{d}\sigma \right)   \, \mathrm{d}t \, \mathrm{d}\bar{x} \, \mathrm{d}v \\
& = \int_{U \times \mathbb{R}^3} f(\bar{x}+sv,v) \left( 1  -    \exp \left(  - \int_s^r \Sigma_\sigma^\varepsilon(\bar{x} +  \sigma  v) \, \mathrm{d}\sigma \right)  \right)  \, \mathrm{d}\bar{x} \, \mathrm{d}v \\
& = \|f \| - \| U(r,s)f \| < \infty.
\end{align*}
Hence $Q(t)$ is finite for almost all $t\geq s$ which proves the lemma.
\end{proof}

\begin{lem} \label{na-lem-arlotti2}
For any $f\in L_+^1 (U \times \mathbb{R}^3)$ and any $s \geq 0$, $U^\varepsilon(t,s) f \in D(B^\varepsilon(t))$ for almost every $t \geq s$ and the mapping $[s,\infty) \ni t \mapsto B^\varepsilon(t)U(t,s)f $ is measurable. Moreover, for any $r \geq s$,
\begin{align*}
\int_s^r \| B^\varepsilon(t)U^\varepsilon(t,s)f \| \, \mathrm{d}t = \int_s^r \| \Sigma_t^\varepsilon U^\varepsilon(t,s)f \| \, \mathrm{d}t = \| f \| - \| U^\varepsilon (r,s)f \|.
\end{align*}
\end{lem}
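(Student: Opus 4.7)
The plan is to deduce Lemma~\ref{na-lem-arlotti2} from Lemma~\ref{na-lem-arlotti1} via a gain/loss mass-conservation identity. First I observe that for $f \in L_+^1$ the trajectory $h(t) := U^\varepsilon(t,s)f$ is non-negative, since the factor $E^\varepsilon(t,s,x,v)$ in \eqref{na-eq-Edeff} is positive and $f(x-(t-s)v,v) \geq 0$; this positivity is what makes all subsequent Fubini interchanges legitimate.

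The central step is to establish, for any $h \in L_+^1(U \times \mathbb{R}^3)$, the identity
\begin{equation*}
\|B^\varepsilon(t) h\|_{L^1} \;=\; \|\Sigma_t^\varepsilon h\|_{L^1}.
\end{equation*}
Using the Carleman representation $(B^\varepsilon(t) h)(x,v) = \int_{\mathbb{R}^3} k_t^\varepsilon(x,v,v_*) h(x,v_*)\, \mathrm{d}v_*$ recorded just before this lemma and applying Fubini (justified by $h \geq 0$), this reduces to the fibre identity $\int_{\mathbb{R}^3} k_t^\varepsilon(x,v,v_*)\, \mathrm{d}v = \Sigma_t^\varepsilon(x,v_*)$. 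Equivalently one may start from the definition of $Q_t^{\varepsilon,+}$, integrate first in $v$, and then perform the involutive substitution $(v,\bar{v}) \leftrightarrow (v',\bar{v}')$ (with Jacobian $1$) together with the sign change $\nu \mapsto -\nu$ on $\mathbb{S}^2$ to bring the integrand back into the form defining $\Sigma_t^\varepsilon$. I expect this identity to be the main obstacle, since both $Q_t^{\varepsilon,\pm}$ use the displaced kernel $g_t(x+\varepsilon\nu,\cdot)$; one must verify that the combined substitutions genuinely reproduce $\Sigma_t^\varepsilon$ rather than a shifted variant, and this requires careful bookkeeping of the hard-sphere collision parameters (the natural route goes through the Carleman variables, where $\nu$ is tied to $(v,v_*)$ and the bookkeeping is automatic).

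Granted this identity with $h = U^\varepsilon(t,s) f$, Lemma~\ref{na-lem-arlotti1} immediately gives $\|B^\varepsilon(t) U^\varepsilon(t,s) f\|_{L^1} = \|\Sigma_t^\varepsilon U^\varepsilon(t,s) f\|_{L^1} < \infty$ for almost every $t \geq s$, so $U^\varepsilon(t,s) f \in D(B^\varepsilon(t))$ for a.e.\ $t \geq s$. For the $L^1$-measurability of $t \mapsto B^\varepsilon(t) U^\varepsilon(t,s) f$ I would combine the strong continuity of $U^\varepsilon(\cdot,s) f$ from proposition~\ref{na-prop-Uepstrongcts} with the continuous $t$-dependence of the gain kernel $g_t$ already exploited in Lemma~\ref{na-lem-AH3}, approximating $f$ by $C_c^\infty$ test functions and passing to the $L^1$ limit to obtain a separable-valued, hence strongly measurable, map. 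Finally, integrating the pointwise identity over $t \in [s,r]$ yields
\begin{equation*}
\int_s^r \|B^\varepsilon(t) U^\varepsilon(t,s) f\| \, \mathrm{d}t \;=\; \int_s^r \|\Sigma_t^\varepsilon U^\varepsilon(t,s) f\| \, \mathrm{d}t,
\end{equation*}
and the computation already carried out inside the proof of Lemma~\ref{na-lem-arlotti1} identifies the common value as $\|f\| - \|U^\varepsilon(r,s) f\|$, completing the lemma.
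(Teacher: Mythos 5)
Your proposal is correct and takes essentially the same route as the paper: the paper likewise reduces everything to the pointwise identity $\int_{\mathbb{R}^3} B^\varepsilon(t)h\,\mathrm{d}v = \int_{\mathbb{R}^3}\Sigma_t^\varepsilon(x,v)h(x,v)\,\mathrm{d}v$ via the pre-to-post collisional change of variables and then concludes from the statement and proof of Lemma~\ref{na-lem-arlotti1}. If anything you are more explicit than the paper about the bookkeeping for the $\varepsilon$-displaced kernel and about the measurability of $t\mapsto B^\varepsilon(t)U^\varepsilon(t,s)f$, both of which the paper passes over.
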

\begin{proof}
By changing from pre to post collisional variables, see for example \cite[Chapter 2, section 1.4.5]{02handbook}, we have, for any $x \in U$,
\begin{align} \label{na-eq-BeqSigma}
\int_{\mathbb{R}^3 }  B^\varepsilon(t)f(x,v) \, \mathrm{d}v & = \int_{\mathbb{R}^3 }  \int_{\mathbb{S}^2} \int_{\mathbb{R}^3} f(x,v')g_t(x+\varepsilon\nu,\bar{v}')[(v-\bar{v})\cdot \nu]_+ \, \mathrm{d}\bar{v} \, \mathrm{d}\nu  \, \mathrm{d}v  \nonumber \\
& = \int_{\mathbb{R}^3 } \int_{\mathbb{S}^2} \int_{\mathbb{R}^3} f(x,v)g_t(x+\varepsilon\nu,\bar{v})[(v-\bar{v})\cdot \nu]_+ \, \mathrm{d}\bar{v} \, \mathrm{d}\nu \, \mathrm{d}v \nonumber \\
& = \int_{\mathbb{R}^3 } \Sigma_t^\varepsilon(x,v)f(x,v)  \, \mathrm{d}v .
\end{align}
The required results now follow from the statement and proof of the previous lemma.
\end{proof}

\begin{proof}[Proof of proposition~\ref{na-prop-linboltz}]
For this proof we use \cite{arlotti14}. The above two lemmas give that the modification of \cite[lemma 5.11, corollary 5.12 and  assumptions 5.1]{arlotti14} to our situation hold. Hence, as in \cite[section 5.2]{arlotti14}, we see that \cite[theorem 2.1]{arlotti14} holds, which gives that there exists an evolution family $V^\varepsilon(t,s)$. Hence $f_t^\varepsilon := V^\varepsilon(t,0)f_0$ defines a solution  to the non-autonomous linear Boltzmann equation \eqref{na-eq-linboltz}. We now prove \eqref{na-eq-ft1st}. We note that by \cite[theorem 2.1]{arlotti14} for any $f \in L_+^1(U \times \mathbb{R}^3)$,
\begin{equation} \label{na-eq-Vepnorm}
\int_{U \times \mathbb{R}^3} V^\varepsilon(t,0)f(x,v) \, \mathrm{d}x \, \mathrm{d}v \leq \int_{U \times \mathbb{R}^3} f(x,v) \, \mathrm{d}x \, \mathrm{d}v,
\end{equation}
and,
\[ f_t^\varepsilon = V^\varepsilon(t,0)f_0 = U^\varepsilon(t,0)f_0 + \int_0^t V^\varepsilon(t,r)B^\varepsilon(r)U^\varepsilon(r,0)f_0 \, \mathrm{d}r.  \]
Now by \eqref{na-eq-gl1assmp} for almost all $x \in U$,
\begin{align*}
\Sigma_t^\varepsilon(x,v) & =  \int_{\mathbb{S}^2} \int_{\mathbb{R}^3} g_t(x+\varepsilon\nu,\bar{v})[(v-\bar{v})\cdot \nu]_+ \, \mathrm{d}\bar{v} \, \mathrm{d}\nu \leq \pi \int_{\mathbb{R}^3} \bar{g}(\bar{v})(|v|+|\bar{v}|) \, \mathrm{d}\bar{v} \leq \pi M_g(1+|v|).
\end{align*}
So by \eqref{na-eq-BeqSigma}, noting that $U^\varepsilon(r,0) f_0(x,v)(1+|v|) \in L_+^1(U \times \mathbb{R}^3)$,
\begin{align*}
&\int_{U \times \mathbb{R}^3}   B^\varepsilon(r)U^\varepsilon(r,0)f_0(x,v)(1+|v|) \, \mathrm{d}x \, \mathrm{d}v
 = \int_{U \times \mathbb{R}^3} \Sigma_t^\varepsilon(x,v) U^\varepsilon(r,0)f_0(x,v)(1+|v|) \, \mathrm{d}x \, \mathrm{d}v \\
 \leq& \int_{U \times \mathbb{R}^3} \pi M_g(1+|v|)^2 U^\varepsilon(r,0)f_0(x,v) \, \mathrm{d}x \, \mathrm{d}v
  \leq 2 \pi M_g \int_{U \times \mathbb{R}^3}  f_0(x,v)(1+|v|^2) \, \mathrm{d}x \, \mathrm{d}v.
\end{align*}
Hence,
\begin{align*}
\int_{U \times \mathbb{R}^3} & f_t^\varepsilon(x,v)(1+|v|) \, \mathrm{d}x \, \mathrm{d}v \\
& = \int_{U \times \mathbb{R}^3}  U^\varepsilon(t,0)f_0(x,v)(1+|v|)  + \int_0^t V^\varepsilon(t,r)B^\varepsilon(r)U^\varepsilon(r,0)f_0 \, \mathrm{d}r (x,v)(1+|v|) \, \mathrm{d}x \, \mathrm{d}v \\
&\leq \int_{U \times \mathbb{R}^3}  f_0(x,v)(1+|v|) \, \mathrm{d}x \, \mathrm{d}v + \int_0^t \int_{U \times \mathbb{R}^3} V^\varepsilon(t,r)B^\varepsilon(r)U^\varepsilon(r,0)f_0  (x,v)(1+|v|)\, \mathrm{d}x \, \mathrm{d}v \, \mathrm{d}r \\
& \leq \int_{U \times \mathbb{R}^3}  f_0(x,v)(1+|v|) \, \mathrm{d}x \, \mathrm{d}v + \int_0^t 2 \pi M_g \int_{U \times \mathbb{R}^3}  f_0(x,v)(1+|v|^2) \, \mathrm{d}x \, \mathrm{d}v \, \mathrm{d}r \\
& \leq M_f(1+2\pi T M_g) =: K < \infty.
\end{align*}
\end{proof}
\begin{rmk}
We were unable to adapt the honesty results of \cite{arlotti14} to our situation, so we cannot yet deduce that $V^\varepsilon$ is an honest semigroup and that the solution $f_t^\varepsilon = V^\varepsilon(t,0)f_0$ conserves mass in the expect way. Honestly is proved later in proposition~\ref{na-prop-ftprob} by exploiting the connection to the idealised equation.
\end{rmk}

\subsection{Building the Solution} \label{na-sec-idbuild}
In this subsection we construct the function $P_t^\varepsilon(\Phi)$ iteratively and prove that is satisfies the properties in theorem~\ref{na-thm-id}. After defining $P_t^\varepsilon(\Phi)$, we define $P_t^{\varepsilon,(j)}$, which similarly to $P_t^{\varepsilon,(0)}$, can be thought of as the probability that the tagged particle is at a certain position and has experienced exactly $j$ collisions. Once a few properties of $P_t^{\varepsilon,(j)}$ have been checked the majority of theorem~\ref{na-thm-id} follows. Proving that $P_t^\varepsilon$ is indeed a probability measure on $\mathcal{MT}$ requires a careful analysis of the time derivative of $P_t^\varepsilon$.

This subsection differs from our previous work \cite{matt16} in two ways. Firstly the $\varepsilon$ dependence, which is important in  the later proof to deal with spatial dependence for positive $\varepsilon$, but  makes little technical difference. Secondly there are significant differences in proving that $P_t^\varepsilon$ is a probability measure. Previously it followed from the honesty of the solution of the autonomous linear Boltzmann equation that the idealised distribution is a probability measure. However in this case we do not have the equivalent honesty result for the non-autonomous linear Boltzmann. Therefore we prove that $P_t^\varepsilon$ is a probability measure by explicitly showing that the measure of the whole space has zero derivative with respect to time. This requires a significant number of calculations.
\begin{deff} \label{na-deff-Ptphi}
For $j \in \mathbb{N} \cup \{ 0\}$ define,
\begin{equation} \label{na-eq-tjdeff}
\mathcal{T}_j := \{ \Phi \in \mathcal{MT} : n(\Phi) = j \}.
\end{equation}
That is, $\mathcal{T}_j$ contains all trees with exactly $j$ collisions.
Let $\varepsilon \geq 0$ and $t \in [0,T]$.  For $\Phi \in \mathcal{T}_0$ define
\begin{equation} \label{na-eq-ptt0}
 P_t^\varepsilon(\Phi) := P_t^{\varepsilon,(0)} (x(t),v(t)).
\end{equation}
Else define
\begin{align} \label{na-eq-ptfull}
P_t^\varepsilon(\Phi): = \mathbbm{1}_{t \geq \tau} \exp \left(- \int_\tau^t \int_{\mathbb{S}^2} \int_{\mathbb{R}^3} g_\sigma(x(\sigma)+\varepsilon\nu,\bar{v}) [(v(\tau)-\bar{v})\cdot \nu']_+ \, \mathrm{d}\bar{v} \, \mathrm{d}\nu' \, \mathrm{d}\sigma  \right) \nonumber \\ P_\tau (\bar{\Phi})g_\tau(x(\tau)+\varepsilon\nu,v')[(v(\tau^-)-v')\cdot \nu]_+.
\end{align}
The right hand side of this equation depends on $P_\tau^\varepsilon (\bar{\Phi})$ but since $\bar{\Phi}$ has degree exactly one less than $\Phi$ and we have defined $P_t^\varepsilon(\Phi)$ for trees $\Phi$ with degree $0$ the equation is well defined. Note that this definition implies that for any, $\varepsilon \geq 0$, $\Phi \in \mathcal{MT}$ and $\tau \leq s \leq t \leq T$.
\[ P_t^\varepsilon(\Phi ) = \exp\left ( - \int_s^t L_\sigma ^\varepsilon(\Phi) \, \mathrm{d}\sigma \right) P_s^\varepsilon(\Phi).  \]
\end{deff}

\begin{deff} \label{na-deff-ptj}
Let $t \in [0,T]$, $j \geq 1$, $\varepsilon \geq 0$ and $\Omega \subset U \times \mathbb{R}^3$ be measurable. Recall we define $S_t(\Omega)= \{ \Phi \in \mathcal{MT} : (x(t),v(t)) \in \Omega  \}$. Define
\begin{equation} \label{na-eq-deffstjomega}
S_t^j(\Omega):= S_t(\Omega)\cap \mathcal{T}_j.
\end{equation}
Then define
\begin{equation}
P_t^{\varepsilon,(j)}(\Omega) : = \int_{S_t^j(\Omega)} P_t^\varepsilon(\Phi) \, \mathrm{d}\Phi.
\end{equation}
\end{deff}

\begin{lem} \label{na-lem-ptjabscts}
Let $t \in [0,T]$, $j \geq 1$ and $\varepsilon \geq 0$. Then $P_t^{\varepsilon,(j)}$ is absolutely continuous with respect to the Lebesgue measure on $U \times \mathbb{R}^3$.
\end{lem}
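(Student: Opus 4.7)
The plan is to exhibit $P_t^{\varepsilon,(j)}$ as a measure on $U\times\R^3$ with an $L^1$ density, by a change of variables in the defining integral. First I would parametrise $\mathcal{T}_j$ by $\Phi=(x_0,v_0,t_1,\nu_1,v_1,\ldots,t_j,\nu_j,v_j)\in U\times\R^3\times\{0\le t_1<\cdots<t_j\le T\}\times(\mathbb{S}^2\times\R^3)^j$ with the product Lebesgue measure $d\Phi$, and split the ambient integral via Fubini into an inner integral over $x_0\in U$ and an outer integral over the remaining coordinates, which I abbreviate by $d(\text{other})$.

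The key structural observation is that, with $(\text{other})$ held fixed, the velocity $v(t)=:V(\text{other})\in\R^3$ of the tagged particle at time $t$ does not depend on $x_0$, while its position at time $t$ takes the form $x(t)=x_0+C(\text{other})$ for some $C(\text{other})\in\R^3$. This is because the tagged trajectory is determined by $v_0$ and the collision parameters and only its base point is translated with $x_0$. The change of variables $x_0\mapsto y=x(t)$ therefore has unit Jacobian, and after applying it Fubini yields
\begin{equation*}
P_t^{\varepsilon,(j)}(\Omega)
=\int d(\text{other})\,\int_U P_t^\varepsilon\bigl(\Phi(y-C,\text{other})\bigr)\,\mathbbm{1}_\Omega\bigl(y,V(\text{other})\bigr)\,dy.
\end{equation*}
Suppose $|\Omega|=0$. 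By Fubini in $U\times\R^3$ the slice $\Omega_V:=\{y:(y,V)\in\Omega\}$ has zero Lebesgue measure for $V$ outside a null set $N\subset\R^3$. For every $(\text{other})$ with $V(\text{other})\notin N$ the inner $y$-integral vanishes, since the a.e.\ finite, nonnegative integrand is then integrated against the indicator of a null set.

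It remains to show that $\{(\text{other}):V(\text{other})\in N\}$ is itself Lebesgue-null; equivalently, the pushforward of Lebesgue measure under the map $(\text{other})\mapsto V$ is absolutely continuous with respect to Lebesgue on $\R^3$. I would establish this by computing the Jacobian of $V$ with respect to the velocity coordinates $(v_0,v_1,\ldots,v_j)$ using the recursion $v(\tau_k^+)=(I-\nu_k\nu_k^T)v(\tau_k^-)+(\nu_k\cdot v_k)\,\nu_k$. Variation of $\nu_j\cdot v_j$ contributes rank one in the $\nu_j$-direction of $\partial V$, while variation of the remaining $v_k$ (propagated through the earlier rank-two projections $I-\nu_k\nu_k^T$) fills out the two-dimensional complement $\nu_j^\perp$ for almost every configuration of the collision normals $\nu_1,\ldots,\nu_j$. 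Hence $V$ is a submersion off a Lebesgue-null subset of parameter space, and a standard coarea/Sard argument shows that its pushforward of Lebesgue measure is absolutely continuous.

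The main obstacle is the Jacobian computation in the last step: because the single-collision map $v\mapsto v'$ (with the background's pre-collision velocity held fixed) is only a rank-two projection, the composed map $v_0\mapsto V$ can be rank one or two after $j$ collisions, and full rank of $(v_0,v_1,\ldots,v_j)\mapsto V$ must be recovered by bringing in the $\nu_k$-components of the $v_k$. Carefully verifying this, and dispensing with a measure-zero set of degenerate configurations of the $\nu_k$, is the technically delicate part; once it is in hand, absolute continuity of $P_t^{\varepsilon,(j)}$ follows directly.
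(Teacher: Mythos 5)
Your argument is correct in outline, but it takes a genuinely different and more laborious route than the paper. The paper changes variables \emph{jointly} at the final collision, mapping $(\nu,x_0,v_0,v')\mapsto(\nu,x,v,\bar w)$ with $v=v_0+\nu(v'-v_0)\cdot\nu$, $\bar w=v'-\nu(v'-v_0)\cdot\nu$ and $x=x(t)$; since the hard-sphere collision map $(v_0,v')\mapsto(v,\bar w)$ is an involution with Jacobian of modulus one and $x_0\mapsto x(t)$ is a translation, the pair $(x,v)=(x(t),v(t))$ becomes a bona fide Lebesgue integration variable and absolute continuity is immediate, with the case $j\geq 2$ handled through the iterative formula for $P_t^\varepsilon$. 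You instead change only the position variable and are then forced to prove separately that the law of $V=v(t)$ under the parameter measure is absolutely continuous, via a submersion/coarea argument. That step does close, and in fact the ``technically delicate'' rank verification you flag has a clean resolution that removes any degenerate set: writing $\Pi_k=\mathrm{Id}-\nu_k\otimes\nu_k$ and $A_k=\Pi_j\cdots\Pi_{k+1}$ (with $A_j=\mathrm{Id}$), one has $\partial V/\partial v_0=A_0$ and $\partial V/\partial v_k=A_k(\nu_k\otimes\nu_k)$, and the telescoping identity
\begin{equation*}
A_0+\sum_{k=1}^{j}A_k(\nu_k\otimes\nu_k)=\mathrm{Id}
\end{equation*}
shows the combined Jacobian in the velocity variables always has full rank, so $V$ is a submersion for \emph{every} configuration of the $\nu_k$ and no Sard-type exceptional set is needed. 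The trade-off is that your route makes visible why the velocity marginal is absolutely continuous, whereas the paper's single measure-preserving substitution gets position and velocity simultaneously in one line and is the more economical proof.
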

\begin{proof}
Let $j=1$ and $\Omega \subset U \times \mathbb{R}^3$ be measurable. Then by \eqref{na-eq-ptfull} we have
\begin{align}
P_t^{\varepsilon,(1)}(\Omega) & = \int_{S_t^1(\Omega)} P_t^\varepsilon(\Phi) \, \mathrm{d}\Phi \nonumber \\
&= \int_0^t \int_{\mathbb{S}^2} \int_{\mathbb{R}^3} \int_U \int_{\mathbb{R}^3} \exp \Bigg(- \int_\tau^t \int_{\mathbb{S}^2} \int_{\mathbb{R}^3} g_\sigma(x(\sigma)+\varepsilon\nu',\bar{v})  [(v(\tau)-\bar{v})\cdot \nu']_+ \, \mathrm{d}\bar{v} \, \mathrm{d}\nu' \, \mathrm{d}\sigma  \Bigg)\nonumber \\  &  \qquad
P_\tau^{\varepsilon,(0)}(x_0+\tau v_0,v_0)g_\tau(x_0+\tau v_0 + \varepsilon\nu,v') %\nonumber  \\&  \qquad
[(v_0 - v')\cdot\nu]_+ \mathbbm{1}_{(x(t),v(t)) \in \Omega} \, \mathrm{d}v_0 \, \mathrm{d}x_0 \, \mathrm{d}v' \, \mathrm{d}\nu \, \mathrm{d}\tau.\label{na-eq-ptepOmega}
\end{align}
We now introduce a change of coordinates $(\nu,x_0,v_0,v') \mapsto (\nu,x,v,\bar{w})$ defined by,
\begin{align*}
v  = v_0 + \nu (v'-v_0)\cdot \nu; \quad x = x_0 + \tau v_0 + (t-\tau)v; \quad
\bar{w} = v' - \nu (v'-v_0 )\cdot \nu.
\end{align*}
Computing the Jacobian of this transformation,
\begin{equation*}
\begin{pmatrix}
\textrm{Id} & 0 & 0 & 0 \\
& \textrm{Id} & & \\
& 0 & \textrm{Id} - \nu \otimes \nu & \nu \otimes \nu  \\
& 0 & \nu \otimes \nu &  \textrm{Id} - \nu \otimes \nu
\end{pmatrix}
\end{equation*}
where the non-filled entries are not required to compute the determinant. We now see that the bottom right 2x2 matrix has determinant $-1$ and hence the absolute value of the determinant of the Jacobi matrix is 1. We note that under this transformation for $t\geq \tau$, $(x(t),v(t)) = (x,v)$ and for $\tau \leq \sigma \leq t $, $x(\sigma) = x-(t-\sigma)v$. Hence with this transformation \eqref{na-eq-ptepOmega} becomes
\begin{align}
 P_t^{\varepsilon,(1)}(\Omega) %\nonumber \\
& = \int_\Omega \int_0^t \int_{\mathbb{R}^3} \int_{\mathbb{S}^2} \exp \left(- \int_\tau^t \int_{\mathbb{S}^2} \int_{\mathbb{R}^3} g_\sigma(x-(t-\sigma)v+\varepsilon\nu',\bar{v}) [(v-\bar{v})\cdot \nu']_+ \, \mathrm{d}\bar{v} \, \mathrm{d}\nu' \, \mathrm{d}\sigma  \right) \nonumber \\
& \qquad \qquad P_\tau^{\varepsilon,(0)}(x-(t-\tau)v,w')g_\tau(x-(t-\tau)v+\varepsilon\nu,\bar{w}')
% \nonumber \\& \qquad \qquad
 [(v - \bar{w})\cdot\nu]_+  \, \mathrm{d}\bar{w} \, \mathrm{d}\nu \, \mathrm{d}\tau \, \mathrm{d}x \, \mathrm{d}v,\label{na-eq-Pt1mble}
\end{align}
where $w'=v+\nu (\bar{w}-v) \cdot \nu$ and $\bar{w}'=\bar{w}-\nu(\bar{w}-v)\cdot\nu$. Hence we see that if the Lebesgue measure of $\Omega$ is zero then $P_t^{\varepsilon,(1)}(\Omega)$ equals zero also. For $j \geq 1$ we use a similar approach using the iterative formula for $P_t^\varepsilon(\Phi)$.
\end{proof}

\begin{rmk}
By the Radon-Nikodym theorem it follows that $P_t^{\varepsilon,(j)}$ has a density, which we also denote by $P_t^{\varepsilon,(j)}$. Hence for any $\Omega \subset U \times \mathbb{R}^3$ we have that
\[ \int_{\Omega} P_t^{\varepsilon,(j)}(x,v)\, \mathrm{d}x \, \mathrm{d}v = \int_{S_t^{j}(\Omega)} P_t^\varepsilon(\Phi) \, \mathrm{d}\Phi, \]
This implies that for almost all $(x,v) \in U \times \mathbb{R}^3$ we have
\[ P_t^{\varepsilon,(j)}(x,v) = \int_{S_t^{j}(x,v)} P_t^\varepsilon(\Phi) \, \mathrm{d}\Phi. \]

\end{rmk}

\begin{prop} \label{na-prop-ptjintform}
For any $\varepsilon \geq 0$, $j,t \geq 0$ for almost all $(x,v) \in U \times \mathbb{R}^3$,
\begin{equation} \label{na-eq-Ptjintform}
P_t^{\varepsilon,(j+1)}(x,v) = \int_0^t (U^\varepsilon(t,\tau) B^\varepsilon(\tau) P_\tau^{\varepsilon,(j)})(x,v) \, \mathrm{d}\tau.
\end{equation}
\end{prop}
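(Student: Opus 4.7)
The identity is equivalent, by absolute continuity of $P_t^{\varepsilon,(j+1)}$ (Lemma \ref{na-lem-ptjabscts}), to showing that for every measurable $\Omega \subset U \times \mathbb{R}^3$,
\[ \int_{S_t^{j+1}(\Omega)} P_t^\varepsilon(\Phi)\, \mathrm{d}\Phi = \int_\Omega \int_0^t \bigl(U^\varepsilon(t,\tau) B^\varepsilon(\tau) P_\tau^{\varepsilon,(j)}\bigr)(x,v)\, \mathrm{d}\tau\, \mathrm{d}x\, \mathrm{d}v. \]
The plan is to unfold the iterative definition \eqref{na-eq-ptfull} on the left, disintegrate the $\bar{\Phi}$-integral using the density $P_\tau^{\varepsilon,(j)}$, and then perform a single change of variables so that the resulting expression is recognised as $U^\varepsilon(t,\tau) B^\varepsilon(\tau) P_\tau^{\varepsilon,(j)}$.

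First I would parametrise $\Phi \in \mathcal{T}_{j+1}$ as $(\bar{\Phi}, (\tau, \nu, v'))$ with $\bar{\Phi} \in \mathcal{T}_j$, so that $\mathrm{d}\Phi = \mathrm{d}\bar{\Phi}\, \mathrm{d}\tau\, \mathrm{d}\nu\, \mathrm{d}v'$ (the constraint $\tau > \bar{\tau}(\bar{\Phi})$ excludes only a null set). Because the tagged particle in $\Phi$ undergoes free flight on $[\tau,t]$ with constant velocity equal to $v(t;\Phi)$, the exponential factor in \eqref{na-eq-ptfull} is exactly $E^\varepsilon\bigl(t,\tau,x(t;\Phi),v(t;\Phi)\bigr)$ as defined in \eqref{na-eq-Edeff}. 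Next, for fixed $(\tau,\nu,v')$ the remaining integrand depends on $\bar{\Phi}$ only through the pair $(\bar{x},\bar{v}) := (x(\tau;\bar{\Phi}), v(\tau;\bar{\Phi}))$, so the definition of $P_\tau^{\varepsilon,(j)}$ as a density (Lemma \ref{na-lem-ptjabscts} and the remark following it) yields the disintegration
\[ \int_{\mathcal{T}_j} F\bigl(x(\tau;\bar{\Phi}), v(\tau;\bar{\Phi})\bigr)\, P_\tau^\varepsilon(\bar{\Phi})\, \mathrm{d}\bar{\Phi} = \int_{U \times \mathbb{R}^3} F(\bar{x},\bar{v})\, P_\tau^{\varepsilon,(j)}(\bar{x},\bar{v})\, \mathrm{d}\bar{x}\, \mathrm{d}\bar{v} \]
for any measurable $F$.

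Finally I would apply the change of variables $(\bar{x}, \bar{v}, v') \mapsto (x, v, \bar{w})$ given by $x = \bar{x} + (t-\tau)v$, $v = \bar{v} - \nu \cdot(\bar{v} - v')\nu$, $\bar{w} = v' + \nu \cdot(\bar{v} - v')\nu$, exactly as in the proof of Lemma \ref{na-lem-ptjabscts}; its absolute Jacobian is $1$. The inverse substitution gives $\bar{v} = v + \nu \cdot(\bar{w} - v)\nu$ and $v' = \bar{w} - \nu \cdot(\bar{w} - v)\nu$, which are precisely the pre-collision velocities appearing in $Q_\tau^{\varepsilon,+}$, and $[(\bar{v}-v')\cdot\nu]_+ = [(v-\bar{w})\cdot\nu]_+$. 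After substitution the inner $(\nu,\bar{w})$-integral collapses, by \eqref{na-eq-Bdeff}, to $B^\varepsilon(\tau) P_\tau^{\varepsilon,(j)}(x-(t-\tau)v, v)$, and multiplication by the exponential factor $E^\varepsilon(t,\tau,x,v)$ produces $U^\varepsilon(t,\tau) B^\varepsilon(\tau) P_\tau^{\varepsilon,(j)}(x,v)$ by \eqref{na-eq-Uepdeff}. Fubini in $\tau$ together with the arbitrariness of $\Omega$ then gives the claim almost everywhere.

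The step I expect to require most care is the disintegration: one must verify that $P_\tau^{\varepsilon,(j)}$, which a priori exists only as an $L^1$ density on $U\times\mathbb{R}^3$, genuinely arises as the marginal of $P_\tau^\varepsilon\, \mathrm{d}\bar{\Phi}$ under the map $\bar{\Phi}\mapsto (x(\tau;\bar{\Phi}), v(\tau;\bar{\Phi}))$. This is however the content of Definition \ref{na-deff-ptj} together with Lemma \ref{na-lem-ptjabscts}; since for $\tau > \bar{\tau}(\bar{\Phi})$ the map is just free transport, no further regularity input is needed and the identity propagates from indicator $F = \mathbbm{1}_\Omega$ to general $F$ by monotone class.
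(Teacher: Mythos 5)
Your proposal is correct and follows essentially the same route as the paper: both reduce the identity to integrals over arbitrary measurable $\Omega$, apply the unit-Jacobian collision change of variables already set up in Lemma~\ref{na-lem-ptjabscts}, and identify the resulting kernel and exponential with $B^\varepsilon(\tau)$ and $U^\varepsilon(t,\tau)$. The only difference is presentational: the paper works from the right-hand side and treats $j=0$ explicitly (asserting $j\ge 1$ is similar), while you make the general-$j$ step explicit through the disintegration of $P_\tau^\varepsilon\,\mathrm{d}\bar{\Phi}$ onto its marginal $P_\tau^{\varepsilon,(j)}$, which is indeed supplied by Definition~\ref{na-deff-ptj} and the remark after Lemma~\ref{na-lem-ptjabscts}.
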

\begin{proof}
First consider $j=0$. We prove that for any $\Omega \subset U \times \mathbb{R}^3$ measurable,
\begin{equation} \label{na-eq-Ptjintformomega}
\int_\Omega P_t^{\varepsilon,(1)}(x,v) \, \mathrm{d}x \, \mathrm{d}v = \int_\Omega \int_0^t (U^\varepsilon(t,\tau) B^\varepsilon(\tau) P_\tau^{\varepsilon,(0)})(x,v) \, \mathrm{d}\tau \, \mathrm{d}x \, \mathrm{d}v.
\end{equation}
By the definition of $B^\varepsilon$ \eqref{na-eq-Bdeff} and $U^\varepsilon$ \eqref{na-eq-Uepdeff} we have, for $w'=v+\nu(\bar{w}-\nu)\cdot \nu$ and $\bar{w}' = \bar{w} - \nu(\bar{w} - v) \cdot \nu$,
\begin{align*}
\int_\Omega \int_0^t & (U^\varepsilon(t,\tau) B^\varepsilon(\tau) P_\tau^{\varepsilon,(0)})(x,v) \, \mathrm{d}\tau \, \mathrm{d}x \mathrm{d}v \\
& = \int_\Omega \int_0^t U^\varepsilon(t,\tau) \int_{\mathbb{S}^2} \int_{\mathbb{R}^3} P_\tau^{\varepsilon,(0)}(x,w')g_\tau(x+\varepsilon \nu, \bar{w}')[(v-\bar{w}) \cdot \nu]_+ \, \mathrm{d}\bar{
w} \, \mathrm{d}\nu \, \mathrm{d}\tau \, \mathrm{d}x \, \mathrm{d}v \\
& = \int_\Omega \int_0^t \exp \left( - \int_\tau^t \int_{\mathbb{S}^2} \int_{\mathbb{R}^3} g_\sigma (x+\varepsilon\nu' -(t-\sigma)v,\bar{v})[(v-\bar{v})\cdot \nu']_+ \, \mathrm{d}\bar{v} \, \mathrm{d}\nu' \mathrm{d}\sigma   \right) \\
& \qquad
\int_{\mathbb{S}^2} \int_{\mathbb{R}^3} P_\tau^{\varepsilon,(0)}(x-(t-\tau)v,w')g_\tau(x-(t-\tau)v+\varepsilon \nu, \bar{w}') %\\& \qquad
[(v-\bar{w}) \cdot \nu]_+ \, \mathrm{d}\bar{
w} \, \mathrm{d}\nu \, \mathrm{d}\tau \, \mathrm{d}x \, \mathrm{d}v.
\end{align*}
Hence by \eqref{na-eq-Pt1mble} we notice that this is equal to the right hand side of \eqref{na-eq-Ptjintformomega}. Hence for $j=0$ \eqref{na-eq-Ptjintform} holds for almost all $(x,v) \in U \times \mathbb{R}^3$. For $j \geq 1$ one takes a similar approach.

\end{proof}
\begin{prop} \label{na-prop-ptjsum}
For almost all $(x,v) \in U \times \mathbb{R}^3$, for any $\varepsilon,t\geq 0$,
\[ \sum_{j=0}^\infty P_t^{\varepsilon,(j)}(x,v)= f_t^\varepsilon(x,v). \]
\end{prop}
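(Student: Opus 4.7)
The plan is to identify $P_t^{\varepsilon,(j)}$ as the $j$-th iterate in the Dyson--Phillips expansion of the substochastic evolution family $V^\varepsilon$ constructed in proposition~\ref{na-prop-linboltz}, and then to use the fact that, in the Arlotti--Banasiak framework of \cite{arlotti14}, $V^\varepsilon(t,0)f_0$ is by construction equal to the sum of this series.

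Concretely, I would define $V_0^\varepsilon(t,s)f := U^\varepsilon(t,s)f$ and, recursively for $j \geq 0$,
\[
V_{j+1}^\varepsilon(t,s)f := \int_s^t U^\varepsilon(t,\tau)\,B^\varepsilon(\tau)\,V_j^\varepsilon(\tau,s)f \, \mathrm{d}\tau.
\]
Proposition~\ref{na-prop-Pj0} immediately gives $P_t^{\varepsilon,(0)} = V_0^\varepsilon(t,0)f_0$. Assuming inductively that $P_\tau^{\varepsilon,(j)}(x,v) = V_j^\varepsilon(\tau,0)f_0(x,v)$ for almost every $(x,v)$ and every $\tau \in [0,T]$, proposition~\ref{na-prop-ptjintform} yields at once
\[
P_t^{\varepsilon,(j+1)}(x,v) = \int_0^t \bigl(U^\varepsilon(t,\tau)B^\varepsilon(\tau)V_j^\varepsilon(\tau,0)f_0\bigr)(x,v)\,\mathrm{d}\tau = V_{j+1}^\varepsilon(t,0)f_0(x,v),
\]
so by induction $P_t^{\varepsilon,(j)} = V_j^\varepsilon(t,0)f_0$ for every $j \geq 0$. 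Note that each term is non-negative, being an integral of the non-negative density $P_t^\varepsilon$ over $S_t^j(x,v)$.

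Finally, the theorem of \cite{arlotti14} invoked in the proof of proposition~\ref{na-prop-linboltz} produces $V^\varepsilon$ as the minimal (sub)stochastic solution, which coincides with the Dyson--Phillips series $V^\varepsilon(t,s)f = \sum_{j=0}^\infty V_j^\varepsilon(t,s)f$, converging in $L^1(U \times \mathbb{R}^3)$ for every $f \in L^1_+$; the convergence is guaranteed by the substochastic bound~\eqref{na-eq-Vepnorm} together with lemma~\ref{na-lem-arlotti2}. Taking $s=0$ and $f=f_0$, non-negativity and monotone convergence let us equate the pointwise and $L^1$ sums, yielding
\[
\sum_{j=0}^\infty P_t^{\varepsilon,(j)}(x,v) = \sum_{j=0}^\infty V_j^\varepsilon(t,0)f_0(x,v) = V^\varepsilon(t,0)f_0(x,v) = f_t^\varepsilon(x,v)
\]
for almost every $(x,v) \in U \times \mathbb{R}^3$, which is the required identity.

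The main subtlety is aligning our iteration scheme with the one used in \cite{arlotti14}: the Duhamel identity cited in the proof of proposition~\ref{na-prop-linboltz} is written in the form $V^\varepsilon = U^\varepsilon + \int V^\varepsilon B^\varepsilon U^\varepsilon$, corresponding to iterates $V_{j+1}^\varepsilon = \int V_j^\varepsilon B^\varepsilon U^\varepsilon$, whereas proposition~\ref{na-prop-ptjintform} realises the conjugate recursion $V_{j+1}^\varepsilon = \int U^\varepsilon B^\varepsilon V_j^\varepsilon$. A short Fubini-type argument shows that both recursions generate the same family of operators, namely $V_j^\varepsilon$ acting on $f_0$ as $j$-fold iterated compositions of the form $(U^\varepsilon B^\varepsilon)^j U^\varepsilon$, so they generate the same Dyson--Phillips series and hence the same sum $V^\varepsilon(t,0)f_0$.
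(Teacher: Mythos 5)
Your proposal is correct and follows essentially the same route as the paper: both identify $P_t^{\varepsilon,(j)}$ with the $j$-th Dyson--Phillips iterate via proposition~\ref{na-prop-ptjintform} and then invoke the series representation $V^\varepsilon(t,0)f_0=\sum_j V_j^\varepsilon(t,0)f_0$ from \cite{arlotti14}. The ``conjugate recursion'' subtlety you flag at the end is precisely the point the paper handles by the explicit Fubini computation in its $V_2^\varepsilon$ step, so your treatment matches the published argument.
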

\begin{proof}
In the proof of proposition~\ref{na-prop-linboltz} we saw, by using
\cite{arlotti14}, that $f_t^\varepsilon = V^\varepsilon(t,0)f_0$. By the proof of \cite[theorem 2.1]{arlotti14}, we have,
\begin{equation} \label{na-eq-ftsum}
f_t^\varepsilon = V^\varepsilon(t,0)f_0 = \sum_{j=0}^\infty V_j^\varepsilon(t,0)f_0,
\end{equation}
where $V_0^\varepsilon = U^\varepsilon$, and for $j \geq 0$,
\[ V_{j+1}^\varepsilon(t,s) = \int_s^t V_j^\varepsilon(t,r)B^\varepsilon(r)U^\varepsilon(r,s) \, \mathrm{d}r. \]
We notice by proposition~\ref{na-prop-Pj0}, $V_0^\varepsilon(t,0)f_0  = U^\varepsilon(t,0)f_0=P_t^{\varepsilon,(0)}$. Hence by proposition~\ref{na-prop-ptjintform},
\begin{align*}
 V_1^\varepsilon(t,0)f_0 & = \int_0^t V_0^\varepsilon(t,r)B^\varepsilon(r)U^\varepsilon(r,0) f_0 \, \mathrm{d}r = \int_0^t U^\varepsilon(t,r)B^\varepsilon(r)U^\varepsilon(r,0) f_0 \, \mathrm{d}r
\\
& = \int_0^t U^\varepsilon(t,r)B^\varepsilon(r)P_r^{\varepsilon,(0)} \, \mathrm{d}r = P_t^{\varepsilon,(1)}.
\end{align*}
Further by Fubini's theorem and proposition~\ref{na-prop-ptjintform},
\begin{align*}
V_2^\varepsilon(t,0)f_0 & = \int_0^t V_1^\varepsilon(t,r)B^\varepsilon(r)U^\varepsilon(r,0) f_0 \, \mathrm{d}r\\
& =  \int_0^t \int_r^t U^\varepsilon(t,t_1)B^\varepsilon(t_1)U^\varepsilon(t_1,r) B^\varepsilon(r)U^\varepsilon(r,0) f_0 \, \mathrm{d}t_1 \, \mathrm{d}r  \\
& = \int_0^t \int_0^{t_1}  U^\varepsilon(t,t_1)B^\varepsilon(t_1)U^\varepsilon(t_1,r) B^\varepsilon(r)U^\varepsilon(r,0) f_0  \, \mathrm{d}r \, \mathrm{d}t_1 \\
& =  \int_0^t U^\varepsilon(t,t_1)B^\varepsilon(t_1)  \int_0^{t_1}  U^\varepsilon(t_1,r) B^\varepsilon(r)U^\varepsilon(r,0) f_0  \, \mathrm{d}r \, \mathrm{d}t_1 \\
& =  \int_0^t U^\varepsilon(t,t_1)B^\varepsilon(t_1)  P_{t_1}^{\varepsilon,(1)} \, \mathrm{d}t_1 = P_t^{\varepsilon,(2)}.
\end{align*}
Similarly we can see that for all $j \geq 0$, $V_j^\varepsilon(t,0)f_0 = P_t^{\varepsilon,(j)}$. Hence by \eqref{na-eq-ftsum} the required result holds.
\end{proof}

We can now prove the existence part  of theorem~\ref{na-thm-id}. The next lemmas  provide the additional properties.
\begin{prop} \label{na-prop-idpart1}
For any $\varepsilon \geq 0$ and $t \in [0,T]$, $P_t^\varepsilon \in L^1(\mathcal{MT})$ and $P^\varepsilon$ is a solution to \eqref{na-eq-id}. Furthermore  \eqref{na-eq-Pt1st} and \eqref{na-eq-ftptcon} hold.
\end{prop}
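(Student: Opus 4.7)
The plan is to assemble the four claims from results already established in subsections \ref{na-sec-idevol}, \ref{na-sec-idlinbolt}, in particular proposition~\ref{na-prop-ptjsum}, plus direct inspection of the defining formula \eqref{na-eq-ptfull}.

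For the $L^1$-membership, I would first use definition~\ref{na-deff-ptj} together with Fubini's theorem to write
\[
\int_{\mathcal{MT}} P_t^\varepsilon(\Phi)\,\mathrm{d}\Phi
 = \sum_{j=0}^\infty \int_{\mathcal{T}_j} P_t^\varepsilon(\Phi)\,\mathrm{d}\Phi
 = \sum_{j=0}^\infty \int_{U\times\mathbb{R}^3} P_t^{\varepsilon,(j)}(x,v)\,\mathrm{d}x\,\mathrm{d}v.
\]
By proposition~\ref{na-prop-ptjsum} this sum equals $\|f_t^\varepsilon\|_{L^1(U\times\mathbb{R}^3)}$, and $f_t^\varepsilon\in L^1$ is provided by proposition~\ref{na-prop-linboltz}. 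Hence $P_t^\varepsilon\in L^1(\mathcal{MT})$.

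To see that $P^\varepsilon$ solves \eqref{na-eq-id}, I would differentiate \eqref{na-eq-ptfull} in $t$ for a fixed tree $\Phi$. For $n(\Phi)=0$, \eqref{na-eq-ptt0} gives $P_t^\varepsilon(\Phi)=(U^\varepsilon(t,0)f_0)(x(t),v(t))$, and proposition~\ref{na-prop-Pj0} together with the chain rule along the characteristic $x(t)=x_0+tv_0$ identifies $\partial_t P_t^\varepsilon(\Phi)=-L_t^\varepsilon(\Phi)P_t^\varepsilon(\Phi)=-\mathcal{Q}_t^{\varepsilon,-}[P_t^\varepsilon](\Phi)$, which matches \eqref{na-eq-id} since the gain term vanishes when $n=0$. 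For $n\ge 1$, the identity
\[
P_t^\varepsilon(\Phi)=\exp\!\Bigl(-\int_\tau^t L_\sigma^\varepsilon(\Phi)\,\mathrm{d}\sigma\Bigr)P_\tau^\varepsilon(\Phi),\qquad t\geq\tau,
\]
recorded in definition~\ref{na-deff-Ptphi}, yields $\partial_t P_t^\varepsilon(\Phi)=-L_t^\varepsilon(\Phi)P_t^\varepsilon(\Phi)=-\mathcal{Q}_t^{\varepsilon,-}[P_t^\varepsilon](\Phi)$ for $t>\tau$, while the factor $\mathbbm{1}_{t\geq\tau}$ produces precisely the delta jump at $t=\tau$ whose strength $P_\tau^\varepsilon(\bar\Phi)g_\tau(x(\tau)+\varepsilon\nu,v')[(v(\tau^-)-v')\cdot\nu]_+$ coincides with $\mathcal{Q}_t^{\varepsilon,+}[P_t^\varepsilon](\Phi)$. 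This is the step requiring most care, since the equation is to be understood in the distributional (integrated in $t$) sense; the main obstacle is checking that the jump condition and the pointwise ODE on $(\tau,T]$ combine consistently across strata $\mathcal{T}_j$, which I would justify by testing against a smooth time cut-off and using the iterative structure of the definition.

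The first-moment estimate \eqref{na-eq-Pt1st} is immediate once one notes that after the last collision the tagged particle moves with constant velocity, so $v(t)=v(\tau)$ for any tree $\Phi$ with $t\geq\tau$. Partitioning by $\mathcal{T}_j$ and invoking proposition~\ref{na-prop-ptjsum},
\[
\int_{\mathcal{MT}} P_t^\varepsilon(\Phi)\bigl(1+|v(\tau)|\bigr)\,\mathrm{d}\Phi
=\sum_{j=0}^\infty\int_{U\times\mathbb{R}^3}P_t^{\varepsilon,(j)}(x,v)(1+|v|)\,\mathrm{d}x\,\mathrm{d}v
=\int_{U\times\mathbb{R}^3} f_t^\varepsilon(x,v)(1+|v|)\,\mathrm{d}x\,\mathrm{d}v,
\]
and proposition~\ref{na-prop-linboltz} bounds the right-hand side by $K$ uniformly in $\varepsilon$ and $t\in[0,T]$. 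Finally, \eqref{na-eq-ftptcon} follows by the same stratification: for any measurable $\Omega\subset U\times\mathbb{R}^3$,
\[
\int_{S_t(\Omega)}P_t^\varepsilon(\Phi)\,\mathrm{d}\Phi
=\sum_{j=0}^\infty\int_{S_t^j(\Omega)}P_t^\varepsilon(\Phi)\,\mathrm{d}\Phi
=\sum_{j=0}^\infty\int_\Omega P_t^{\varepsilon,(j)}(x,v)\,\mathrm{d}x\,\mathrm{d}v
=\int_\Omega f_t^\varepsilon(x,v)\,\mathrm{d}x\,\mathrm{d}v,
\]
where the second equality uses definition~\ref{na-deff-ptj} and the third is again proposition~\ref{na-prop-ptjsum}.
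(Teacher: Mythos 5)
Your proposal matches the paper's proof in essence: establish $\int_\Omega f_t^\varepsilon = \int_{S_t(\Omega)}P_t^\varepsilon$ via proposition~\ref{na-prop-ptjsum} and monotone convergence, specialise to $\Omega=U\times\mathbb{R}^3$ for $L^1$-membership, verify the Kolmogorov equation by differentiating \eqref{na-eq-ptt0} and \eqref{na-eq-ptfull} stratum by stratum, and deduce the first-moment bound \eqref{na-eq-Pt1st} from proposition~\ref{na-prop-linboltz} using $v(t)=v(\tau)$ for $t\geq\tau$. You flag the distributional reading of the delta at $t=\tau$ a bit more explicitly than the paper (which simply reads off the jump), and the paper routes the moment identity through the change-of-variables argument of lemma~\ref{na-lem-ptjabscts} rather than stating it directly, but these are only differences of emphasis, not of method.
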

\begin{proof}
Firstly let $\Omega \subset U \times \mathbb{R}^3$ be measurable. By proposition~\ref{na-prop-ptjsum}, and since each $P_t^{\varepsilon,(j)}$ is positive, the monotone convergence theorem and definition~\ref{na-deff-ptj},
\begin{align} \label{na-eq-ftptphiprop}
 \int_\Omega f_t^\varepsilon(x,v) \, \mathrm{d}x \, \mathrm{d}v & = \int_\Omega \sum_{j=0}^\infty P_t^{\varepsilon,(j)}(x,v) \, \mathrm{d}x \, \mathrm{d}v  = \sum_{j=0}^\infty \int_\Omega  P_t^{\varepsilon,(j)}(x,v) \, \mathrm{d}x \, \mathrm{d}v \nonumber \\
 & = \sum_{j=0}^\infty \int_{S_t^j(\Omega)} P_t^\varepsilon(\Phi) \, \mathrm{d}\Phi    = \int_{S_t(\Omega)} P_t^\varepsilon(\Phi) \, \mathrm{d}\Phi.
\end{align}
Hence for $\Omega = U \times \mathbb{R}^3$ we have
\begin{equation} \label{na-eq-ptepftepint}
\int_{\mathcal{MT}} P_t^\varepsilon(\Phi) \, \mathrm{d}\Phi = \int_{U \times \mathbb{R}^3} f_t^\varepsilon(x,v) \, \mathrm{d}x \, \mathrm{d}v < \infty.
\end{equation}
Thus $P_t^\varepsilon \in L^1(\mathcal{MT})$. Now we check that $P_t^\varepsilon(\Phi)$ indeed solves \eqref{na-eq-id}. For $\Phi \in \mathcal{T}_0$ noting that $x(t)=x_0+tv_0$ and $v(t)=v_0$, we have for $t\geq 0$
\begin{align} \label{na-eq-Ptepphiform}
P_t^\varepsilon(\Phi) & = P_t^{\varepsilon,(0)}(x(t),v(t)) = U^\varepsilon(t,0)f_0(x(t),v(t)) \nonumber \\
& =  \exp \left( -\int_0^t  \int_{\mathbb{S}^{2}} \int_{\mathbb{R}^3} g_\sigma(x(\sigma)+\varepsilon\nu,\bar{v})[(v_0-\bar{v})\cdot \nu ]_+ \, \mathrm{d}\bar{v} \, \mathrm{d}\nu \, \mathrm{d}\sigma \right)f_0(x_0,v_0).
\end{align}
We see that this gives the required initial value at $t=0$, that it is differentiable with respect to $t$ and differentiates to give the required term. Now consider $\Phi \in \mathcal{T}_j$ for $j \geq 1$. By definition \eqref{na-eq-ptfull} we see that for $t<\tau$, $P_t^\varepsilon(\Phi)=0$ and that $P_\tau^\varepsilon(\Phi)$ has the required form. We also see that for $t> \tau$ we have
\begin{align*}
\partial_t P_t^\varepsilon(\Phi) & = \partial_t \Bigg(  \exp \left(- \int_\tau^t \int_{\mathbb{S}^2} \int_{\mathbb{R}^3} g_\sigma(x(\sigma)+\varepsilon\nu,\bar{v}) [(v(\tau)-\bar{v})\cdot \nu']_+ \, \mathrm{d}\bar{v} \, \mathrm{d}\nu' \, \mathrm{d}\sigma  \right) \nonumber \\ & \quad \quad  P_\tau (\bar{\Phi})g_\tau(x(\tau)+\varepsilon\nu,v')[(v(\tau^-)-v')\cdot \nu]_+ \Bigg) \\
& = -\int_{\mathbb{S}^2} \int_{\mathbb{R}^3} g_t(x(t)+\varepsilon\nu,\bar{v}) [(v(\tau)-\bar{v})\cdot \nu']_+ \, \mathrm{d}\bar{v} \, \mathrm{d}\nu' \\
& \quad \quad \exp \left(- \int_\tau^t \int_{\mathbb{S}^2} \int_{\mathbb{R}^3} g_\sigma(x(\sigma)+\varepsilon\nu,\bar{v}) [(v(\tau)-\bar{v})\cdot \nu']_+ \, \mathrm{d}\bar{v} \, \mathrm{d}\nu' \, \mathrm{d}\sigma  \right) \nonumber \\
& \quad \quad  P_\tau (\bar{\Phi})g_\tau(x(\tau)+\varepsilon\nu,v')[(v(\tau^-)-v')\cdot \nu]_+ \\
& =- L_t^\varepsilon(\Phi)P_t^\varepsilon(\Phi).
\end{align*}
We now prove \eqref{na-eq-Pt1st}. Let $K>0$ be as in proposition~\ref{na-prop-linboltz}. By a similar argument to \eqref{na-eq-ftptphiprop}, by using the same method as the proof of lemma~\ref{na-lem-ptjabscts} and by proposition~\ref{na-prop-linboltz} we have
\begin{align*}
&\int_{\mathcal{MT}}  P_t^\varepsilon(\Phi)(1+|v(\tau)|) \, \mathrm{d} \Phi  = \sum_{j=0}^\infty \int_{\mathcal{T}_j} P_t^\varepsilon(\Phi)(1+|v(\tau)|) \, \mathrm{d}\Phi  = \sum_{j=0}^\infty \int_{U \times \mathbb{R}^3}  P_t^{\varepsilon,(j)}(x,v)(1+|v|) \, \mathrm{d}x \, \mathrm{d}v \\
  =&  \int_{U \times \mathbb{R}^3} \sum_{j=0}^\infty P_t^{\varepsilon,(j)}(x,v)(1+|v|) \, \mathrm{d}x \, \mathrm{d}v  = \int_{U \times \mathbb{R}^3} f_t^\varepsilon(x,v)(1+|v|) \, \mathrm{d}x \, \mathrm{d}v  \leq K.
\end{align*}
We see that \eqref{na-eq-ftptcon} has been proved in \eqref{na-eq-ftptphiprop}.

\end{proof}
The only remaining parts of theorem~\ref{na-thm-id} are that $P_t^\varepsilon$ is a probability measure on $\mathcal{MT}$ and that \eqref{na-eq-ptepuni} holds. We remark here that for the corresponding result for $P_t$ in the autonomous case being a probability measure resulted from the fact that we were able to prove that the semigroup defining the solution of the autonomous linear Boltzmann equation was honest and hence conserved mass. However in this non-autonomous case we have not been able to find equivalent honesty results. Therefore we prove that $P_t^\varepsilon$ is a probability measure explicitly by showing that $\int_{\mathcal{MT}} P_t^\varepsilon(\Phi) \, \mathrm{d}\Phi$ is differentiable with respect to $t$ and has derivative zero.

To that aim, the following lemmas calculate various limits that are required to show that $\int_{\mathcal{MT}} P_t^\varepsilon(\Phi) \, \mathrm{d}\Phi$ is differentiable, which is finally proved in lemma~\ref{na-lem-prob+} and \ref{na-lem-prob-}.

\begin{lem}
Let $\varepsilon \geq 0$ and $\Psi \in \mathcal{MT}$. Then for $t \geq \tau$,
\begin{align} \label{na-eq-probhelp11}
\frac{1}{h} & \int_t^{t+h} \int_{\mathbb{S}^2} \int_{\mathbb{R}^3} | P_s^\varepsilon(\Psi) g_s(x(s)+\varepsilon\bar{\nu},\bar{v}) -P_t^\varepsilon(\Psi) g_t(x(t)+\varepsilon\bar{\nu},\bar{v})  | [(v(\tau)-\bar{v})\cdot \bar{\nu}]_+ \, \mathrm{d}\bar{v} \, \mathrm{d}\bar{\nu} \, \mathrm{d}s \nonumber \\
& \qquad \qquad  \to 0 \textrm{ as } h \downarrow 0.
\end{align}
And for $t>\tau$
\begin{align} \label{na-eq-probhelp12}
\frac{1}{h} & \int_{t-h}^{t} \int_{\mathbb{S}^2} \int_{\mathbb{R}^3} | P_s^\varepsilon(\Psi) g_s(x(s)+\varepsilon\bar{\nu},\bar{v}) -P_t^\varepsilon(\Psi) g_t(x(t)+\varepsilon\bar{\nu},\bar{v})  | [(v(\tau)-\bar{v})\cdot \bar{\nu}]_+ \, \mathrm{d}\bar{v} \, \mathrm{d}\bar{\nu} \, \mathrm{d}s \nonumber \\
& \qquad \qquad  \to 0 \textrm{ as } h \downarrow 0.
\end{align}
\end{lem}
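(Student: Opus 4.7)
For both limits the strategy is the same: use the triangle inequality inside the integrand
\begin{align*}
\bigl|P_s^\varepsilon(\Psi) g_s(x(s)+\varepsilon\bar\nu,\bar v) &- P_t^\varepsilon(\Psi) g_t(x(t)+\varepsilon\bar\nu,\bar v)\bigr|\\
&\leq P_t^\varepsilon(\Psi)\,\bigl|g_s(x(s)+\varepsilon\bar\nu,\bar v) - g_t(x(t)+\varepsilon\bar\nu,\bar v)\bigr|\\
&\qquad + g_s(x(s)+\varepsilon\bar\nu,\bar v)\,\bigl|P_s^\varepsilon(\Psi)-P_t^\varepsilon(\Psi)\bigr|,
\end{align*}
and handle the two pieces separately. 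Throughout, $s$ lies in an interval within $[\tau,T]$, so by definition~\ref{na-deff-Ptphi} we have the explicit exponential form
\[ P_s^\varepsilon(\Psi) = \exp\!\left(-\int_\tau^s L_\sigma^\varepsilon(\Psi)\,\mathrm d\sigma\right)P_\tau^\varepsilon(\Psi), \]
which is continuous (indeed Lipschitz on $[\tau,T]$) as a function of $s$, since $L_\sigma^\varepsilon(\Psi)$ is bounded by $\pi M_g(1+|v(\tau)|)$ uniformly in $\sigma$ by \eqref{na-eq-gl1assmp}.

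\textbf{The $g$-piece.} The key observation is that for $s,t\geq\tau$ one has $x(s)=x(t)+(s-t)v(\tau)$, hence
\[ g_s(x(s)+\varepsilon\bar\nu,\bar v)-g_t(x(t)+\varepsilon\bar\nu,\bar v)=g_0(y_s,\bar v)-g_0(y_t,\bar v), \qquad y_s-y_t=(s-t)(v(\tau)-\bar v). \]
This is precisely the structure handled by lemma~\ref{na-lem-g0diffbound}: splitting the $\bar v$-integral at $|\bar v|\leq R$ (where we use the Hölder bound \eqref{na-eq-ghldassmp}) and $|\bar v|>R$ (where we use the tail bound from \eqref{na-eq-gl1assmp}) yields
\[ \int_{\mathbb{S}^2}\!\!\int_{\mathbb R^3}\bigl|g_s(x(s)+\varepsilon\bar\nu,\bar v)-g_t(x(t)+\varepsilon\bar\nu,\bar v)\bigr|[(v(\tau)-\bar v)\cdot\bar\nu]_+\,\mathrm d\bar v\,\mathrm d\bar\nu \leq C(1+|v(\tau)|)\Bigl(\tfrac1R+R^5|s-t|^\alpha\Bigr). \]
Multiplying by $P_t^\varepsilon(\Psi)$, integrating $s$ over $[t,t+h]$ (respectively $[t-h,t]$), dividing by $h$, and first sending $h\downarrow 0$ and then $R\to\infty$ gives the desired vanishing contribution.

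\textbf{The $P$-piece.} Using $g_s(x(s)+\varepsilon\bar\nu,\bar v)\leq\bar g(\bar v)$ and \eqref{na-eq-gl1assmp}, the inner double integral is bounded by $\pi M_g(1+|v(\tau)|)$ uniformly in $s$. Hence
\[ \frac1h\int (\cdots)\,\mathrm ds \leq \pi M_g(1+|v(\tau)|)\,\sup_{|s-t|\leq h}\bigl|P_s^\varepsilon(\Psi)-P_t^\varepsilon(\Psi)\bigr|, \]
and the right-hand side tends to $0$ as $h\downarrow 0$ by the continuity of $s\mapsto P_s^\varepsilon(\Psi)$ noted above.

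\textbf{Main obstacle.} The non-trivial point is that a naive application of Hölder continuity \eqref{na-eq-ghldassmp} in $\bar v$ loses integrability, since the bound $M|s-t|^\alpha|v(\tau)-\bar v|^\alpha$ does not decay for large $\bar v$. The cure is the truncation argument at level $R$ already encapsulated in lemma~\ref{na-lem-g0diffbound}, which trades regularity against tail decay; apart from this the rest of the argument reduces to elementary continuity and dominated convergence.
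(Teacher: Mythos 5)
Your proposal is correct and follows essentially the same route as the paper: the same triangle-inequality split into a $g$-piece (handled via lemma~\ref{na-lem-g0diffbound} after noting $x(s)-x(t)=(s-t)v(\tau)$) and a $P$-piece (handled by continuity of $s\mapsto P_s^\varepsilon(\Psi)$ on $[\tau,T]$ and the uniform bound $\pi M_g(1+|v(\tau)|)$ on the collision kernel integral). The only cosmetic difference is that the paper couples the truncation level to $h$ via $R=h^{-\alpha/6}$ to extract an explicit rate, whereas you take the iterated limit $h\downarrow 0$ then $R\to\infty$; both are valid.
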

\begin{proof}
We begin with \eqref{na-eq-probhelp11}. Let $t \geq \tau$. Firstly,
\begin{align} \label{na-eq-psgsptgt}
| P_s^\varepsilon(\Psi) &  g_s(x(s)+\varepsilon\bar{\nu},\bar{v}) -P_t^\varepsilon(\Psi) g_t(x(t)+\varepsilon\bar{\nu},\bar{v})  | \nonumber \\
& \leq P_t^\varepsilon(\Psi) | g_s(x(s)+\varepsilon\bar{\nu},\bar{v}) - g_t(x(t)+\varepsilon\bar{\nu},\bar{v})  |  + g_s(x(s)+\varepsilon\bar{\nu},\bar{v}) |P_t^\varepsilon(\Psi) - P_s^\varepsilon(\Psi)|.
\end{align}
Noting that for $s,t\geq\tau$, $|x(s)-x(t)| = |t-s||v(\tau)|$ it follows by lemma~\ref{na-lem-g0diffbound}, with $R=h^{-\alpha/6}$ that
\begin{align} \label{na-eq-ptgtgs}
\frac{1}{h} & \int_{t}^{t+h} \int_{\mathbb{S}^2} \int_{\mathbb{R}^3}  P_t^\varepsilon(\Psi) | g_s(x(s)+\varepsilon\bar{\nu},\bar{v}) - g_t(x(t)+\varepsilon\bar{\nu},\bar{v})  |  [(v(\tau)-\bar{v})\cdot \bar{\nu}]_+ \, \mathrm{d}\bar{v} \, \mathrm{d}\bar{\nu} \, \mathrm{d}s \nonumber \\
& \leq \frac{1}{h}  \int_{t}^{t+h} C P_t^\varepsilon(\Psi) (1+|v(\tau)|) \left( h^{\alpha/6} + h^{-5\alpha/6}|t-s|^\alpha (1+|v(\tau)|^\alpha)  \right) \, \mathrm{d}s \nonumber \\
& = C P_t^\varepsilon(\Psi) (1+|v(\tau)|) \left( h^{\alpha/6}  + h^{-5\alpha/6} (1+|v(\tau)|^\alpha) \frac{1}{h}  \int_{t-h}^{t}  |t-s|^\alpha \, \mathrm{d}s  \right) \nonumber \\
& =  C P_t^\varepsilon(\Psi)(1+|v(\tau)|) \left( h^{\alpha/6}  + h^{-5\alpha/6} (1+|v(\tau)|^\alpha)  \frac{h^\alpha}{\alpha + 1}  \right) \nonumber \\
& = C P_t^\varepsilon(\Psi) (1+|v(\tau)|) \left( 1  + \frac{1+|v(\tau)|^\alpha}{\alpha + 1}    \right) h^{\alpha/6} %\nonumber \\&
\to 0 \textrm{ as } h \to 0.
\end{align}
Now let $\delta > 0$. By the proof of proposition~\ref{na-prop-idpart1} we know that $P_s^\varepsilon(\Phi)$ is differentiable with respect to $s$ and hence continuous for $s \geq \tau$. So for $h$ sufficiently small and any $s \in [t,t+h]$
\[ |P_t^\varepsilon(\Phi) - P_s^\varepsilon(\Phi)| < \frac{\delta}{\pi M_g (1+|v(\tau)|)}. \]
Thus
\begin{align*}
\frac{1}{h} & \int_{t}^{t+h} \int_{\mathbb{S}^2} \int_{\mathbb{R}^3}  g_s(x(s)+\varepsilon\bar{\nu},\bar{v}) |P_t^\varepsilon(\Psi) - P_s^\varepsilon(\Psi)|  [(v(\tau)-\bar{v})\cdot \bar{\nu}]_+ \, \mathrm{d}\bar{v} \, \mathrm{d}\bar{\nu} \, \mathrm{d}s  \\
& < \frac{\delta}{\pi M_g (1+|v(\tau)|)} \frac{1}{h} \int_{t}^{t+h} \int_{\mathbb{S}^2} \int_{\mathbb{R}^3}  g_s(x(s)+\varepsilon\bar{\nu},\bar{v})  [(v(\tau)-\bar{v})\cdot \bar{\nu}]_+ \, \mathrm{d}\bar{v} \, \mathrm{d}\bar{\nu} \, \mathrm{d}s \\
& < \frac{\delta}{\pi M_g (1+|v(\tau)|)} \frac{1}{h}  \int_{t}^{t+h} \pi M_g(1+|v(\tau)|) \, \mathrm{d}s = \delta.
\end{align*}
This, together with \eqref{na-eq-psgsptgt} and \eqref{na-eq-ptgtgs} proves \eqref{na-eq-probhelp11}.

The proof of \eqref{na-eq-probhelp12} is similar but we must exclude $t=\tau$ because in that case $P_s^\varepsilon(\Phi)$ in the integrand is always 0. For $t > \tau$ we take $h$ sufficiently small so that $t-h > \tau$ and hence $P_s^\varepsilon(\Phi)$ is continuous with respect to $s$ for $s \in [t-h,t]$. The result now follows by the same method as \eqref{na-eq-probhelp11}.
\end{proof}
\begin{deff} \label{na-deff-MTS}
For any $S \subset [0,T]$ define $\mathcal{MT}_S$ by
\begin{equation} \label{na-eq-MTS}
 \mathcal{MT}_S := \{ \Phi \in \mathcal{MT} : \tau \in S \}.
\end{equation}
And for $t \in [0,T]$ define
\begin{equation} \label{na-eq-MTt}
 \mathcal{MT}_t := \mathcal{MT}_{\{t\}} = \{ \Phi \in \mathcal{MT} : \tau =t \}.
\end{equation}
\end{deff}
\begin{lem} \label{na-lem-MTtzero}
For any $t \in (0,T]$, $\mathcal{MT}_t$ is a set of zero measure with respect to the Lebesgue measure on $\mathcal{MT}$.
\end{lem}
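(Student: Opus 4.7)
The plan is straightforward: decompose $\mathcal{MT}$ into the strata $\mathcal{T}_n$ of trees with exactly $n$ collisions, observe that on each stratum the condition $\tau = t$ cuts out a codimension-one slice, and conclude by countable additivity.

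First I would recall from Definition \ref{def:tree} that $\mathcal{MT} = \bigsqcup_{n=0}^\infty \mathcal{T}_n$, where the reference measure on $\mathcal{T}_n$ is the Lebesgue measure on $U\times\R^3 \times ([0,T]\times\mathbb{S}^2\times\R^3)^n$ restricted to the ordered simplex $\{t_1<\dots<t_n\}$. The case $n=0$ is immediate: since $\tau(\Phi)=0$ for every $\Phi\in\mathcal{T}_0$ and we are assuming $t>0$, we have $\mathcal{MT}_t\cap\mathcal{T}_0 = \emptyset$.

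For $n\geq 1$, the set $\mathcal{MT}_t\cap\mathcal{T}_n$ consists of trees whose final collision time satisfies $t_n = t$. Parametrising $\mathcal{T}_n$ by the coordinates $(x_0,v_0,t_1,\nu_1,v_1,\dots,t_n,\nu_n,v_n)$, this is precisely the hyperplane slice obtained by fixing the single real coordinate $t_n$ to the value $t$. Hence $\mathcal{MT}_t\cap\mathcal{T}_n$ is contained in a codimension-one affine subspace and therefore has zero Lebesgue measure in $\mathcal{T}_n$ (e.g.\ by Fubini, integrating out the $t_n$-variable first).

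Finally, $\mathcal{MT}_t = \bigsqcup_{n=0}^\infty (\mathcal{MT}_t\cap\mathcal{T}_n)$ is a countable union of sets each of measure zero in its stratum, so by countable additivity of the reference measure on $\mathcal{MT}$ the set $\mathcal{MT}_t$ itself has measure zero. No genuine obstacle arises: the only point worth flagging is that we must exclude $t=0$, because $\mathcal{MT}_0\supset\mathcal{T}_0$ carries the full initial mass $\int f_0\,\dd x\,\dd v$ and hence is not negligible; the hypothesis $t>0$ in the statement handles this.
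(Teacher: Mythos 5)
Your proof is correct and follows exactly the same route as the paper: the empty intersection with $\mathcal{T}_0$, the codimension-one (fixed final collision time) argument on each $\mathcal{T}_n$ with $n\geq 1$, and countable additivity over the strata. The additional remark explaining why $t=0$ must be excluded is a sensible observation but does not change the argument.
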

\begin{proof}
Let $t \in (0,T]$. Then $\mathcal{MT}_t \cap \mathcal{T}_0 = \emptyset $ since for any $\Phi \in \mathcal{T}_0$, $\tau =0$. Now for any $j \geq 1$, $\mathcal{MT}_t \cap \mathcal{T}_j$ is a set of co-dimension 1 in $\mathcal{T}_j$ (since one component, the final collision time, is fixed) and hence has zero measure. Since,
\[ \mathcal{MT}_t = \cup_{j \geq 1} \mathcal{MT}_t \cap \mathcal{T}_j \]
it follows that $\mathcal{MT}_t$ is \ set of zero measure.
\end{proof}

\begin{deff}
Let $\Psi \in \mathcal{MT}$. For $s \in (\tau,T] $, $\bar{\nu} \in \mathbb{S}^2$ and $\bar{v} \in \mathbb{R}^3$, when the context is clear let $\Psi' := \Psi \cup (s,\bar{\nu},\bar{v})$ denote the new tree formed by adding the collision $(s,\bar{\nu},\bar{v})$ to $\Psi$.
\end{deff}

\begin{lem} \label{na-lem-probhelp2}
Let $t \in (0,T]$ and $\varepsilon \geq 0$. Then for any $\Psi \in \mathcal{MT}$,
\begin{align} \label{na-eq-probhelp21}
L_t^\varepsilon(\Psi)& P_t^\varepsilon(\Psi)  \nonumber \\
& = \lim_{h \downarrow 0 } \frac{1}{h} \int_t^{t+h} \int_{\mathbb{S}^2} \int_{\mathbb{R}^3} \exp\left( - \int_{s}^{t+h} L_\sigma^\varepsilon(\Psi') \, \mathrm{d}\sigma \right) P_s^\varepsilon(\Psi) g_s(x(s)+\varepsilon\bar{\nu},\bar{v})  [(v(\tau)-\bar{v})\cdot \bar{\nu}]_+ \, \mathrm{d}\bar{v} \, \mathrm{d}\bar{\nu} \, \mathrm{d}s.
\end{align}
and for almost all $\Psi \in \mathcal{MT}$
\begin{align} \label{na-eq-probhelp22}
L_t^\varepsilon(\Psi)& P_t^\varepsilon(\Psi)  \nonumber \\
& = \lim_{h \downarrow 0 } \frac{1}{h} \int_{t-h}^{t} \int_{\mathbb{S}^2} \int_{\mathbb{R}^3} \exp\left( - \int_{s}^{t} L_\sigma^\varepsilon(\Psi') \, \mathrm{d}\sigma \right) P_s^\varepsilon(\Psi) g_s(x(s)+\varepsilon\bar{\nu},\bar{v})  [(v(\tau)-\bar{v})\cdot \bar{\nu}]_+ \, \mathrm{d}\bar{v} \, \mathrm{d}\bar{\nu} \, \mathrm{d}s.
\end{align}
\end{lem}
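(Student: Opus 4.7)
The plan is to view both \eqref{na-eq-probhelp21} and \eqref{na-eq-probhelp22} as Lebesgue-differentiation statements in the variable $s$, with the previous lemma supplying the required $L^1$-continuity of the integrand. Fix $\Psi \in \mathcal{MT}$ and abbreviate $E(s,h) := \exp\bigl(-\int_s^{t+h} L_\sigma^\varepsilon(\Psi') \, d\sigma\bigr)$. The key initial observation is that $v(t) = v(\tau)$ for $t \geq \tau$, so $L_t^\varepsilon(\Psi) P_t^\varepsilon(\Psi)$ can itself be written as $\frac{1}{h}\int_t^{t+h}\int_{\mathbb{S}^2}\int_{\mathbb{R}^3} P_t^\varepsilon(\Psi) g_t(x(t)+\varepsilon\bar{\nu},\bar{v})[(v(\tau)-\bar{v})\cdot\bar{\nu}]_+ \, d\bar{v} \, d\bar{\nu} \, ds$. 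Subtracting this from the right-hand side of \eqref{na-eq-probhelp21} and applying the triangle inequality splits the error into an exponential piece controlled by $|E(s,h)-1|$ and a bulk piece of exactly the form estimated in the previous lemma.

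For the exponential piece, the elementary bound $|1-e^{-x}|\leq x$ combined with $L_\sigma^\varepsilon(\Psi')\leq \pi M_g(1+|v(\sigma)|)$ from \eqref{na-eq-gl1assmp}, together with the hard-sphere estimate $|v(\sigma)|\leq 2|v(\tau)|+|\bar{v}|$ for the post-collision velocity of $\Psi'$ on $[s,t+h]$, gives $|E(s,h)-1| \leq h\pi M_g(1+2|v(\tau)|+|\bar{v}|)$. Multiplying by $P_s^\varepsilon(\Psi) g_s(x(s)+\varepsilon\bar{\nu},\bar{v})[(v(\tau)-\bar{v})\cdot\bar{\nu}]_+$, averaging over $s\in[t,t+h]$, and using \eqref{na-eq-gbar}--\eqref{na-eq-gl1assmp} to bound the resulting $\bar{v}$ and $\bar{\nu}$ integrals shows that this contribution is $O(h)$ as $h\downarrow 0$. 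For the bulk piece, replacing $P_s^\varepsilon(\Psi) g_s(x(s)+\varepsilon\bar{\nu},\bar{v})$ by $P_t^\varepsilon(\Psi) g_t(x(t)+\varepsilon\bar{\nu},\bar{v})$ introduces an error whose averaged weighted $L^1$-norm is precisely the left-hand side of \eqref{na-eq-probhelp11} and hence also vanishes in the limit. Combining the two estimates proves \eqref{na-eq-probhelp21}.

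The argument for \eqref{na-eq-probhelp22} is entirely analogous, replacing \eqref{na-eq-probhelp11} with \eqref{na-eq-probhelp12}. The only new subtlety is that for trees $\Psi$ with $\tau = t$ the factor $P_s^\varepsilon(\Psi)$ is identically zero on $[t-h,t]$ whereas the left-hand side need not vanish, so the identity genuinely fails on the set $\mathcal{MT}_t = \{\Psi : \tau = t\}$; however lemma~\ref{na-lem-MTtzero} shows this exceptional set has Lebesgue measure zero in $\mathcal{MT}$, which is exactly what the ``almost all $\Psi$'' qualifier accommodates. For all other $\Psi$ one has $t>\tau$ strictly, so $P_s^\varepsilon(\Psi)$ is continuous at $s=t$ and the replacement step from \eqref{na-eq-probhelp12} applies as in the forward case. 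The main obstacle I anticipate is coordinating the $h$-dependence in both the outer averaging and the upper limit of the exponential's inner integral; this forces us to use the $\bar{v}$-dependent pointwise bound on $L_\sigma^\varepsilon(\Psi')$ rather than a crude supremum, but the moment assumption \eqref{na-eq-gl1assmp} on $\bar{g}$ is exactly strong enough to make that bound integrable against $g_s[(v(\tau)-\bar{v})\cdot\bar{\nu}]_+$.
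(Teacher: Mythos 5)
Your proposal is correct and follows essentially the same route as the paper: rewrite $L_t^\varepsilon(\Psi)P_t^\varepsilon(\Psi)$ as the $h$-average of $P_t^\varepsilon(\Psi)g_t(x(t)+\varepsilon\bar\nu,\bar v)[(v(\tau)-\bar v)\cdot\bar\nu]_+$ (using $v(t)=v(\tau)$ for $t\geq\tau$), split off the continuity error handled by \eqref{na-eq-probhelp11}--\eqref{na-eq-probhelp12}, bound the exponential remainder via the post-collision estimate $|w|\le 2|v(\tau)|+|\bar v|$, and dispose of the backward case on the measure-zero set $\mathcal{MT}_t$ by lemma~\ref{na-lem-MTtzero}. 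The only (harmless) deviation is in the exponential remainder, where you use $1-e^{-x}\le x$ together with the second-moment bound in \eqref{na-eq-gl1assmp} to obtain an $O(h)$ estimate, whereas the paper cuts $\bar v$ into $B_R(0)$ and its tail and argues by a dominated-convergence-type limit; both are valid, and yours even yields a rate.
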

\begin{proof}
Let $t \in (0,T]$. We first prove \eqref{na-eq-probhelp21}. Let $\Psi \in \mathcal{MT}$. If $t<\tau$ then the left hand side is zero and the right side is zero also, since for $h$ sufficiently small $P_s^\varepsilon(\Psi) =0$ for all $s \in [t,t+h]$. Suppose $t \geq \tau$. Note that,
\begin{align*}
L_t^\varepsilon(\Psi)  P_t^\varepsilon(\Psi) & = \frac{1}{h} \int_{t}^{t+h} L_t^\varepsilon(\Psi)P_t^\varepsilon(\Psi) \,  \mathrm{d}s \\
& = \frac{1}{h} \int_{t}^{t+h} \int_{\mathbb{S}^2} \int_{\mathbb{R}^3} P_t^\varepsilon(\Psi) g_t(x(t)+\varepsilon\bar{\nu},\bar{v}) [(v(\tau)-\bar{v})\cdot \bar{\nu}]_+ \, \mathrm{d}\bar{v} \, \mathrm{d}\bar{\nu} \, \mathrm{d}s.
\end{align*}
Hence to prove \eqref{na-eq-probhelp21} we show that
\begin{align*}
 \frac{1}{h} & \int_t^{t+h} \int_{\mathbb{S}^2} \int_{\mathbb{R}^3} \bigg| \exp\left( - \int_{s}^{t+h} L_\sigma^\varepsilon(\Psi') \, \mathrm{d}\sigma \right) P_s^\varepsilon(\Psi) g_s(x(s)+\varepsilon\bar{\nu},\bar{v}) - P_t^\varepsilon(\Psi) g_t(x(t)+\varepsilon\bar{\nu},\bar{v}) \bigg| \nonumber \\
& \qquad \qquad [(v(\tau)-\bar{v})\cdot \bar{\nu}]_+ \, \mathrm{d}\bar{v} \, \mathrm{d}\bar{\nu} \, \mathrm{d}s \\
& \to 0 \textrm{ as } h \downarrow 0.
\end{align*}
Now,
\begin{align*}
&\bigg| \exp  \left( - \int_{s}^{t+h} L_\sigma^\varepsilon(\Psi') \, \mathrm{d}\sigma \right) P_s^\varepsilon(\Psi) g_s(x(s)+\varepsilon\bar{\nu},\bar{v}) - P_t^\varepsilon(\Psi) g_t(x(t)+\varepsilon\bar{\nu},\bar{v}) \bigg| \\
 \leq & | P_s^\varepsilon(\Psi) g_s(x(s)+\varepsilon\bar{\nu},\bar{v}) - P_t^\varepsilon(\Psi) g_t(x(t)+\varepsilon\bar{\nu},\bar{v}) |   \\
 & \quad+ \left( 1- \exp  \left( - \int_{s}^{t+h} L_\sigma^\varepsilon(\Psi') \, \mathrm{d}\sigma \right) \right)P_t^\varepsilon(\Psi) g_t(x(t)+\varepsilon\bar{\nu},\bar{v}).
\end{align*}
So by using \eqref{na-eq-probhelp11} it remains to prove that
\begin{align}  \label{na-eq-prob221}
I(h) & := \frac{1}{h}  \int_t^{t+h} \int_{\mathbb{S}^2} \int_{\mathbb{R}^3} \left( 1- \exp  \left( - \int_{s}^{t+h} L_\sigma^\varepsilon(\Psi') \, \mathrm{d}\sigma \right) \right)P_t^\varepsilon(\Psi)
%\nonumber \\& \qquad \qquad
g_t(x(t)+\varepsilon\bar{\nu},\bar{v}) [(v(\tau)-\bar{v})\cdot \bar{\nu}]_+ \, \mathrm{d}\bar{v} \, \mathrm{d}\bar{\nu} \, \mathrm{d}s \nonumber \\
& \to 0 \textrm{ as } h \downarrow 0.
\end{align}
Recall $\Psi' = \Psi \cup (s, \bar{\nu},\bar{v})$. Denote by $w$ the velocity of the root particle of $\Psi'$ after its final collision at $s$. Then,
\[ w = v(\tau) + \bar{\nu} (v(\tau) - \bar{v}) \cdot \bar{\nu} \]
Hence,
\[ |w| \leq |v(\tau)| + |v(\tau )-\bar{v}| \leq 2|v(\tau)| + |\bar{v}|.  \]
Thus,
\begin{align*}
\int_{s}^{t+h} L_\sigma^\varepsilon(\Psi') \, \mathrm{d}\sigma & = \int_{s}^{t+h} \int_{\mathbb{S}^2} \int_{\mathbb{R}^3}  g_\sigma (x(\Psi')(\sigma) + \varepsilon \nu_1,v_1)[(w-v_1)\cdot \nu_1]_+ \, \mathrm{d}\sigma \\
& \leq  \pi \int_{s}^{t+h}  \int_{\mathbb{R}^3}  \bar{g} (v_1)(|w|+|v_1|) \, \mathrm{d}v_1 \, \mathrm{d}\sigma
% \\&
\leq \pi M_g(1 + |w|) (t+h-s) \\
& \leq h \pi M_g(1 + 2|v(\tau)| + |\bar{v}|).
\end{align*}
It follows that
\[ 1- \exp  \left( - \int_{s}^{t+h} L_\sigma^\varepsilon(\Psi') \, \mathrm{d}\sigma \right) \leq 1- \exp  \big( -  h \pi M_g(1 + 2|v(\tau)| + |\bar{v}|) \big).  \]
Hence
\begin{align} \label{na-eq-Ihprob}
I(h) & \leq \frac{1}{h} \int_t^{t+h} \int_{\mathbb{S}^2} \int_{\mathbb{R}^3} \left( 1- \exp  \big( -  h \pi M_g(1 + 2|v(\tau)| + |\bar{v}|) \big) \right) %\nonumber \\& \qquad \qquad
P_t^\varepsilon(\Psi) \bar{g}(\bar{v}) (|(v(\tau)| + |\bar{v}|) \, \mathrm{d}\bar{v} \, \mathrm{d}\bar{\nu} \, \mathrm{d}s \nonumber \\
& \leq  \pi P_t^\varepsilon(\Psi) \int_{\mathbb{R}^3} \left( 1- \exp  \big( -  h \pi M_g(1 + 2|v(\tau)| + |\bar{v}|) \big) \right) \bar{g}(\bar{v}) (|(v(\tau)| + |\bar{v}|) \, \mathrm{d}\bar{v}.
\end{align}
Let $\delta > 0$. By \eqref{na-eq-gl1assmp} there exists an $R>0$ such that,
\[ \int_{\mathbb{R}^3 \setminus B_R(0)} \bar{g}(\bar{v}) (1 + |\bar{v}|) \, \mathrm{d}\bar{v} < \frac{\delta}{\pi (1+P_t^\varepsilon(\Psi))(1+|v(\tau)|)}.  \]
Hence,
\begin{align}
\pi P_t^\varepsilon(\Psi) &  \int_{\mathbb{R}^3 \setminus B_R(0)} \left( 1- \exp  \big( -  h \pi M_g(1 + 2|v(\tau)| + |\bar{v}|) \big) \right) \bar{g}(\bar{v}) (|(v(\tau)| + |\bar{v}|) \, \mathrm{d}\bar{v} \nonumber \\
&  \leq \pi P_t^\varepsilon(\Psi)   \int_{\mathbb{R}^3 \setminus B_R(0)}  \bar{g}(\bar{v}) (|(v(\tau)| + |\bar{v}|) \, \mathrm{d}\bar{v} < \delta.\label{na-eq-prob222}
\end{align}
Further for $h$ sufficiently small,
\begin{align*}
\pi P_t^\varepsilon(\Psi)  & \int_{B_R(0)} \left( 1- \exp  \big( -  h \pi M_g(1 + 2|v(\tau)| + |\bar{v}|) \big) \right) \bar{g}(\bar{v}) (|(v(\tau)| + |\bar{v}|) \, \mathrm{d}\bar{v} \\
& \leq \pi P_t^\varepsilon(\Psi)  \left( 1- \exp  \big( -  h \pi M_g(1 + 2|v(\tau)| + R) \big) \right) \int_{B_R(0)} \bar{g}(\bar{v}) (|(v(\tau)| + R) \, \mathrm{d}\bar{v} \\
& \leq \pi M_g P_t^\varepsilon(\Psi) (|v(\tau)| + R) \left( 1- \exp  \big( -  h \pi M_g(1 + 2|v(\tau)| + R) \big) \right) < \delta.
\end{align*}
By substituting this and \eqref{na-eq-prob222} into \eqref{na-eq-Ihprob} we see that \eqref{na-eq-prob221} holds, which concludes the proof of \eqref{na-eq-probhelp21}.

We now prove \eqref{na-eq-probhelp22}, which we  prove holds for all $\Psi \in \mathcal{MT} \setminus \mathcal{MT}_t $. Indeed $\mathcal{MT}_t$ is a set of zero measure by lemma~\ref{na-lem-MTtzero}. Let $\Psi \in \mathcal{MT} \setminus \mathcal{MT}_t $. If $\tau > t $ then the left hand side of \eqref{na-eq-probhelp22} is zero and the right hand side is also zero since for any $s \in [t-h,t]$, $P_s^\varepsilon(\Phi) = 0$. If $t > \tau$ we use the same method as we used for \eqref{na-eq-probhelp21}, using \eqref{na-eq-probhelp12} instead of \eqref{na-eq-probhelp11}.
\end{proof}

\begin{lem} \label{na-lem-prob+}
Let $\varepsilon \geq 0$ and $t \in (0,T]$. Then $\partial_t^+ \int_\mathcal{MT} P_t^\varepsilon(\Phi ) \, \mathrm{d}\Phi$ exists and is equal to zero.
\end{lem}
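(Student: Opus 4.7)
The plan is to split the difference quotient
\[
\frac{1}{h}\left(\int_{\mathcal{MT}} P_{t+h}^\varepsilon(\Phi)\,\mathrm{d}\Phi-\int_{\mathcal{MT}} P_{t}^\varepsilon(\Phi)\,\mathrm{d}\Phi\right)
\]
according to whether the final collision time $\tau(\Phi)$ lies in $[0,t]$ or in $(t,t+h]$. Since $P_t^\varepsilon(\Phi)=0$ whenever $\tau(\Phi)>t$, the integral of $P_t^\varepsilon$ is already concentrated on $\{\tau\le t\}$, so the difference quotient decomposes into a ``decay'' contribution over $\{\tau\le t\}$ and a ``creation'' contribution over $\{t<\tau\le t+h\}$; I will show these converge respectively to $-\int_{\mathcal{MT}} L_t^\varepsilon(\Phi)P_t^\varepsilon(\Phi)\,\mathrm{d}\Phi$ and $+\int_{\mathcal{MT}} L_t^\varepsilon(\Psi)P_t^\varepsilon(\Psi)\,\mathrm{d}\Psi$, and cancel.

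For the decay piece I would use the semigroup identity noted in definition~\ref{na-deff-Ptphi}: for $\tau(\Phi)\le t$,
\[
P_{t+h}^\varepsilon(\Phi)-P_t^\varepsilon(\Phi)=\left(\exp\!\left(-\int_t^{t+h} L_\sigma^\varepsilon(\Phi)\,\mathrm{d}\sigma\right)-1\right)P_t^\varepsilon(\Phi).
\]
Dividing by $h$, the pointwise limit is $-L_t^\varepsilon(\Phi)P_t^\varepsilon(\Phi)$. For a dominating function, the elementary bound $|1-e^{-a}|\le a$ together with the estimate $L_\sigma^\varepsilon(\Phi)\le \pi M_g(1+|v(\tau)|)$ (from \eqref{na-eq-gl1assmp}) gives an upper bound by $\pi M_g(1+|v(\tau)|)P_t^\varepsilon(\Phi)$, which is integrable over $\mathcal{MT}$ by \eqref{na-eq-Pt1st}. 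Dominated convergence yields the decay limit.

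For the creation piece, I would parametrise $\{\Phi:\tau(\Phi)\in(t,t+h]\}$ by writing $\Phi=\Psi\cup(s,\bar\nu,\bar v)$ with $\Psi=\bar\Phi\in\mathcal{MT}$, $s\in(t,t+h]$, $\bar\nu\in\mathbb{S}^2$, $\bar v\in\mathbb R^3$ (extending the $\Psi$-integral to all of $\mathcal{MT}$ is harmless since $P_s^\varepsilon(\Psi)=0$ when $\tau(\Psi)>s$). By \eqref{na-eq-ptfull},
\[
P_{t+h}^\varepsilon(\Phi)=\exp\!\left(-\int_s^{t+h}L_\sigma^\varepsilon(\Psi')\,\mathrm{d}\sigma\right)P_s^\varepsilon(\Psi)\,g_s(x(s)+\varepsilon\bar\nu,\bar v)[(v(\tau(\Psi))-\bar v)\cdot\bar\nu]_+,
\]
with $\Psi'=\Psi\cup(s,\bar\nu,\bar v)$. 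By Fubini and lemma~\ref{na-lem-probhelp2}, specifically \eqref{na-eq-probhelp21}, the inner $(s,\bar\nu,\bar v)$-integral divided by $h$ converges pointwise in $\Psi$ to $L_t^\varepsilon(\Psi)P_t^\varepsilon(\Psi)$. A uniform-in-$h$ upper bound for the inner integrand is obtained by dropping the exponential, bounding $g_s\le\bar g(\bar v)$ and using \eqref{na-eq-gl1assmp} to see that the inner integral is at most $\pi M_g(1+|v(\tau(\Psi))|)\sup_{s\in[t,t+h]}P_s^\varepsilon(\Psi)$; the latter supremum is $\le P_t^\varepsilon(\Psi)$ when $\tau(\Psi)\le t$ (again by the semigroup identity) and equals $0$ otherwise, producing an integrable envelope via \eqref{na-eq-Pt1st}. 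A second application of dominated convergence delivers the creation limit.

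Combining the two pieces gives existence of the right derivative and its value $0$. The main technical obstacle is the DCT justification for the creation term, since pointwise convergence is only shown in lemma~\ref{na-lem-probhelp2} and one must be careful that the envelope does not depend on $h$ and is integrable after taking the supremum over $s\in[t,t+h]$; the semigroup identity of definition~\ref{na-deff-Ptphi} plus \eqref{na-eq-Pt1st} is exactly what makes this work.
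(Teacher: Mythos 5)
Your decomposition of the difference quotient (decay over $\{\tau\le t\}$, creation over $\{t<\tau\le t+h\}$, trivial contribution over $\{\tau>t+h\}$), the treatment of the decay term via the semigroup identity of definition~\ref{na-deff-Ptphi} together with $|1-e^{-a}|\le a$ and the envelope $\pi M_g(1+|v(\tau)|)P_t^\varepsilon(\Phi)$ integrable by \eqref{na-eq-Pt1st}, and the reparametrisation of the creation term as a $\Psi$-integral using \eqref{na-eq-probhelp21} all match the paper's argument.

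The one genuine flaw is in your dominating function for the creation piece. You assert that $\sup_{s\in[t,t+h]}P_s^\varepsilon(\Psi)=0$ whenever $\tau(\Psi)>t$; this fails precisely for those $\Psi$ with $\tau(\Psi)\in(t,t+h]$, for which the supremum equals $P_{\tau(\Psi)}^\varepsilon(\Psi)>0$. Consequently the envelope you propose, $\pi M_g(1+|v(\tau)|)P_t^\varepsilon(\Psi)\mathbbm{1}_{\{\tau(\Psi)\le t\}}$, does \emph{not} dominate the $h$-family of inner integrals uniformly in $h$: for fixed small $h$ the set $\{\tau(\Psi)\in(t,t+h]\}$ carries a nonzero contribution that your $G$ is zero on. The correct pointwise bound there is $\pi M_g(1+|v(\tau)|)P_{\tau(\Psi)}^\varepsilon(\Psi)$, and its integrability over $\mathcal{MT}$ does not follow directly from \eqref{na-eq-Pt1st}, which controls $P_t^\varepsilon$ at a single fixed time, not $P^\varepsilon$ evaluated at the tree-dependent time $\tau(\Psi)$. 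To be fair, the paper itself invokes dominated convergence at this step without exhibiting an envelope, so the subtlety is glossed over there as well; but you should not assert the false vanishing. Either produce a valid integrable majorant (e.g.\ bound $P_{\tau(\Psi)}^\varepsilon(\Psi)$ suitably and verify integrability) or split off the $\{\tau(\Psi)\in(t,t+h]\}$ contribution and show directly that it stays bounded and converges.
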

\begin{proof}
Fix $\varepsilon \geq 0$ and $t \in (0,T]$. We want to show that
\[ \lim_{h \downarrow 0} \frac{1}{h} \int_{\mathcal{MT}} P_{t+h}^\varepsilon(\Phi) - P_t^\varepsilon(\Phi) \, \mathrm{d}\Phi =0. \]
For $\mathcal{MT}_S$ as defined in definition~\ref{na-deff-MTS} and $h>0$ we have,
\begin{align*}
&\frac{1}{h}\int_{\mathcal{MT}}   P_{t+h}^\varepsilon(\Phi)   - P_t^\varepsilon(\Phi) \, \mathrm{d}\Phi \\
 =& \frac{1}{h} \int_{\mathcal{MT}_{[0,t]}} P_{t+h}^\varepsilon(\Phi) - P_t^\varepsilon(\Phi) \, \mathrm{d}\Phi + \frac{1}{h} \int_{\mathcal{MT}_{(t,t+h]}} P_{t+h}^\varepsilon(\Phi) - P_t^\varepsilon(\Phi) \, \mathrm{d}\Phi\\&
 + \frac{1}{h} \int_{\mathcal{MT}_{(t+h,T]}} P_{t+h}^\varepsilon(\Phi) - P_t^\varepsilon(\Phi) \, \mathrm{d}\Phi .
\end{align*}
We show that each of these terms converges and that their sum is zero. Firstly,
\begin{align} \label{na-eq-+1}
\frac{1}{h} \int_{\mathcal{MT}_{(t+h,T]}} P_{t+h}^\varepsilon(\Phi) - P_t^\varepsilon(\Phi) \, \mathrm{d}\Phi = \frac{1}{h} \int_{\mathcal{MT}_{(t+h,T]}} 0 \, \mathrm{d}\Phi  = 0.
\end{align}
Now note that for any $h>0$, $\int_t^{t+h} L_s^\varepsilon(\Phi) \, \mathrm{d}s \leq h \pi M_g (1+|v(\tau)|)$. Hence for any $h>0$,
\begin{align*}
\frac{1}{h} &   \left| \exp \left( - \int_t^{t+h} L_s^\varepsilon(\Phi) \, \mathrm{d}s \right) - 1 \right| \leq  \frac{1}{h}  \big( 1-  \exp \left( -  h \pi M_g (1+|v(\tau)|) \right)  \big)  \\
& \leq \frac{1}{h}  \big( 1- 1  -  h \pi M_g (1+|v(\tau)|)   \big) = \pi M_g (1+|v(\tau)|).
\end{align*}
By \eqref{na-eq-Pt1st} it follows that
\begin{align*}
 \int_{\mathcal{MT}_{[0,t]}} &  \frac{1}{h}  \left| \exp \left( - \int_t^{t+h} L_s^\varepsilon(\Phi) \, \mathrm{d}s \right) - 1 \right| P_t^\varepsilon(\Phi) \, \mathrm{d}\Phi
  \leq \int_{\mathcal{MT}} \pi M_g(1+|v(\tau)|) P_t^\varepsilon(\Phi) \, \mathrm{d}\Phi \leq \pi M_g K < \infty.
\end{align*}
Hence by the dominated convergence theorem and the fact that for any $\Phi$ with $\tau > t$, $P_t^\varepsilon(\Phi) = 0$,
\begin{align}
&\frac{1}{h}  \int_{\mathcal{MT}_{[0,t]}} P_{t+h}^\varepsilon(\Phi)  - P_t^\varepsilon(\Phi) \, \mathrm{d}\Phi \nonumber \\
 = &\frac{1}{h}  \int_{\mathcal{MT}_{[0,t]}} \exp \left( - \int_t^{t+h} L_s^\varepsilon(\Phi) \, \mathrm{d}s \right)P_{t}^\varepsilon(\Phi) - P_t^\varepsilon(\Phi) \, \mathrm{d}\Phi \nonumber \\
 = & \int_{\mathcal{MT}_{[0,t]}} \frac{1}{h}  \left( \exp \left( - \int_t^{t+h} L_s^\varepsilon(\Phi) \, \mathrm{d}s \right) - 1 \right) P_t^\varepsilon(\Phi) \, \mathrm{d}\Phi \nonumber \\
 \xrightarrow{h \downarrow 0} &\int_{\mathcal{MT}_{[0,t]}} \partial_\sigma|_{\sigma=t} \exp \left( - \int_t^{\sigma} L_s^\varepsilon(\Phi) \, \mathrm{d}s \right) P_t^\varepsilon(\Phi) \, \mathrm{d}\Phi = - \int_{\mathcal{MT}}  L_t^\varepsilon(\Phi)  P_t^\varepsilon(\Phi) \, \mathrm{d}\Phi.\label{na-eq-+2}
\end{align}
Now
\begin{align*}
&\frac{1}{h}  \int_{\mathcal{MT}_{(t,t+h]}} P_{t+h}^\varepsilon(\Phi) - P_t^\varepsilon(\Phi) \, \mathrm{d}\Phi  \\
 =& \frac{1}{h} \int_{\mathcal{MT}_{(t,t+h]}} P_{t+h}^\varepsilon(\Phi)  \, \mathrm{d}\Phi = \frac{1}{h} \int_{\mathcal{MT}_{(t,t+h]}} \exp\left( - \int_\tau^{t+h} L_\sigma^\varepsilon(\Phi) \, \mathrm{d}\sigma \right) P_{\tau}^\varepsilon(\Phi)  \, \mathrm{d}\Phi \\
 =&\frac{1}{h} \int_{\mathcal{MT}_{(t,t+h]}} \exp\left( - \int_\tau^{t+h} L_\sigma^\varepsilon(\Phi) \, \mathrm{d}\sigma \right)P_\tau^\varepsilon(\bar{\Phi})g_\tau(x(\tau)+\varepsilon\nu,v')[(v(\tau^-) - v')\cdot \nu]_+   \, \mathrm{d}\Phi \\
 =& \int_{\mathcal{MT}} \frac{1}{h} \int_{t}^{t+h} \int_{\mathbb{S}^2} \int_{\mathbb{R}^3}  \exp\left( - \int_s^{t+h} L_\sigma^\varepsilon(\Psi') \, \mathrm{d}\sigma \right)P_s^\varepsilon(\Psi)
%\\& \qquad \qquad
 g_s(x(s)+\varepsilon\nu,\bar{v})[(v(\tau) - \bar{v})\cdot \bar{\nu}]_+ \, \mathrm{d}\bar{v} \, \mathrm{d}\bar{\nu} \, \mathrm{d}s  \, \mathrm{d}\Psi.
\end{align*}
Hence by the dominated convergence theorem and \eqref{na-eq-probhelp21},
\begin{equation} \label{na-eq-+3}
\lim_{h \downarrow 0} \frac{1}{h}  \int_{\mathcal{MT}_{(t,t+h]}} P_{t+h}^\varepsilon(\Phi) - P_t^\varepsilon(\Phi) \, \mathrm{d}\Phi = \int_{\mathcal{MT}} L_t^\varepsilon(\Phi) P_t^\varepsilon(\Phi).
\end{equation}
Combining \eqref{na-eq-+1},\eqref{na-eq-+2} and \eqref{na-eq-+3} we see that the limit indeed exists and is equal to zero, proving the lemma.
\end{proof}

\begin{lem}\label{na-lem-prob-}
Let $\varepsilon\geq 0$ and $t \in (0,T]$. Then $\partial_t^-\int_\mathcal{MT} P_t^\varepsilon(\Phi)$ exists and is equal to zero.
\end{lem}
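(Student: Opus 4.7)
The plan is to mirror the argument of lemma~\ref{na-lem-prob+}, but working with the backward difference $\frac{1}{h}\int_{\mathcal{MT}} P_t^\varepsilon(\Phi) - P_{t-h}^\varepsilon(\Phi)\,\mathrm{d}\Phi$ and splitting the domain of integration according to the position of $\tau$ relative to the interval $[t-h,t]$. Fix $\varepsilon\geq 0$ and $t\in(0,T]$, and for $h>0$ sufficiently small decompose
\[
\int_{\mathcal{MT}} P_t^\varepsilon(\Phi) - P_{t-h}^\varepsilon(\Phi)\,\mathrm{d}\Phi
= \int_{\mathcal{MT}_{[0,t-h]}} + \int_{\mathcal{MT}_{(t-h,t]}} + \int_{\mathcal{MT}_{(t,T]}}.
\]
The third piece vanishes since both $P_t^\varepsilon$ and $P_{t-h}^\varepsilon$ are identically zero on $\mathcal{MT}_{(t,T]}$.

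For the first piece, on $\mathcal{MT}_{[0,t-h]}$ we have $\tau\leq t-h$ so definition~\ref{na-deff-Ptphi} gives $P_t^\varepsilon(\Phi) = \exp(-\int_{t-h}^t L_\sigma^\varepsilon(\Phi)\,\mathrm{d}\sigma)P_{t-h}^\varepsilon(\Phi)$. As in lemma~\ref{na-lem-prob+}, the bound $\int_{t-h}^t L_\sigma^\varepsilon(\Phi)\,\mathrm{d}\sigma \leq h\pi M_g(1+|v(\tau)|)$ gives $\tfrac{1}{h}|1-\exp(-\int_{t-h}^t L_\sigma^\varepsilon(\Phi)\,\mathrm{d}\sigma)| \leq \pi M_g(1+|v(\tau)|)$, which is integrable against $P_{t-h}^\varepsilon$ uniformly in $h$ by \eqref{na-eq-Pt1st}. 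Dominated convergence (plus a routine continuity argument for $L^\varepsilon_\sigma(\Phi)$ at $\sigma=t$) then yields the limit
\[
\lim_{h\downarrow 0}\frac{1}{h}\int_{\mathcal{MT}_{[0,t-h]}} \left(P_t^\varepsilon(\Phi)-P_{t-h}^\varepsilon(\Phi)\right)\,\mathrm{d}\Phi = -\int_{\mathcal{MT}} L_t^\varepsilon(\Phi)P_t^\varepsilon(\Phi)\,\mathrm{d}\Phi,
\]
using that $P_t^\varepsilon(\Phi)=0$ on $\mathcal{MT}_{(t-h,t]}$ contributes nothing as $h\downarrow 0$ because $\mathcal{MT}_t$ has zero measure by lemma~\ref{na-lem-MTtzero}.

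For the second piece, on $\mathcal{MT}_{(t-h,t]}$ we have $P_{t-h}^\varepsilon(\Phi)=0$, and applying definition~\ref{na-deff-Ptphi} together with Fubini in the same manner as in lemma~\ref{na-lem-prob+} gives
\[
\frac{1}{h}\int_{\mathcal{MT}_{(t-h,t]}} P_t^\varepsilon(\Phi)\,\mathrm{d}\Phi
= \int_{\mathcal{MT}} \frac{1}{h}\int_{t-h}^{t}\int_{\mathbb{S}^2}\int_{\mathbb{R}^3} \exp\!\left(-\int_s^t L_\sigma^\varepsilon(\Psi')\,\mathrm{d}\sigma\right) P_s^\varepsilon(\Psi) g_s(x(s)+\varepsilon\bar\nu,\bar v)[(v(\tau)-\bar v)\cdot\bar\nu]_+\,\mathrm{d}\bar v\,\mathrm{d}\bar\nu\,\mathrm{d}s\,\mathrm{d}\Psi,
\]
where $\Psi'=\Psi\cup(s,\bar\nu,\bar v)$. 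By \eqref{na-eq-probhelp22} of the preceding lemma, the inner quantity converges pointwise (for almost every $\Psi$) to $L_t^\varepsilon(\Psi)P_t^\varepsilon(\Psi)$ as $h\downarrow 0$. The bound obtained in \eqref{na-eq-Ihprob}, namely $\pi P_t^\varepsilon(\Psi)\int_{\mathbb{R}^3}\bar g(\bar v)(|v(\tau)|+|\bar v|)\,\mathrm{d}\bar v \leq \pi M_g(1+|v(\tau)|)P_t^\varepsilon(\Psi)$, provides a dominant that is integrable on $\mathcal{MT}$ by \eqref{na-eq-Pt1st}, so dominated convergence gives
\[
\lim_{h\downarrow 0}\frac{1}{h}\int_{\mathcal{MT}_{(t-h,t]}} P_t^\varepsilon(\Phi)\,\mathrm{d}\Phi = \int_{\mathcal{MT}} L_t^\varepsilon(\Psi)P_t^\varepsilon(\Psi)\,\mathrm{d}\Psi.
\]
Summing the three pieces, the two nonzero contributions cancel and $\partial_t^-\int_{\mathcal{MT}} P_t^\varepsilon(\Phi)\,\mathrm{d}\Phi = 0$, as required.

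The main technical obstacle is justifying the change-of-variables representation on $\mathcal{MT}_{(t-h,t]}$ and producing an $L^1(\mathcal{MT})$ dominant for the inner backward-average so that \eqref{na-eq-probhelp22} can be promoted to a limit under the outer integral; once the estimate from \eqref{na-eq-Ihprob} is available this is routine, but the careful bookkeeping to separate the pointwise convergence from the integrable bound is where most of the work sits.
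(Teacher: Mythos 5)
Your proposal is correct and follows essentially the same route as the paper: the identical three-way decomposition of $\mathcal{MT}$ by the location of $\tau$ relative to $[t-h,t]$, the same use of \eqref{na-eq-probhelp22}, lemma~\ref{na-lem-MTtzero}, the moment bound \eqref{na-eq-Pt1st} and dominated convergence, and the same cancellation of the two nonzero limits. The only difference is presentational: the paper isolates the pointwise limit for the $\mathcal{MT}_{[0,t-h]}$ piece as a separate displayed claim \eqref{na-eq-prob3} before invoking dominated convergence, which you compress into a remark about continuity of $L^\varepsilon_\sigma(\Phi)$.
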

\begin{proof}
Fix $\varepsilon \geq 0$ and $t \in (0,T]$. We show that
\[ \lim_{h \downarrow 0 } \frac{1}{h}\int_{\mathcal{MT}} P_t^\varepsilon(\Phi) - P_{t-h}^\varepsilon(\Phi) \, \mathrm{d}\Phi  =0.  \]
As in  lemma~\ref{na-lem-prob+} note that,
\begin{align*}
&\frac{1}{h} \int_{\mathcal{MT}} P_t^\varepsilon(\Phi) - P_{t-h}^\varepsilon(\Phi) \, \mathrm{d}\Phi \\
  =&\frac{1}{h} \int_{\mathcal{MT}_{[0,t-h]}} P_t^\varepsilon(\Phi) - P_{t-h}^\varepsilon(\Phi) \, \mathrm{d}\Phi + \frac{1}{h} \int_{\mathcal{MT}_{(t-h,t]}} P_t^\varepsilon(\Phi) - P_{t-h}^\varepsilon(\Phi) \, \mathrm{d}\Phi  \\&+ \frac{1}{h} \int_{\mathcal{MT}_{(t,T]}} P_t^\varepsilon(\Phi) - P_{t-h}^\varepsilon(\Phi) \, \mathrm{d}\Phi.
\end{align*}
We again show each limit exists and the sum is zero. Firstly
\begin{equation} \label{na-eq--1}
\frac{1}{h} \int_{\mathcal{MT}_{(t,T]}} P_t^\varepsilon(\Phi) - P_{t-h}^\varepsilon(\Phi) \, \mathrm{d}\Phi = 0.
\end{equation}
By lemma~\ref{na-lem-MTtzero}, \eqref{na-eq-probhelp22} and the dominated convergence theorem we have,
\begin{align} \label{na-eq--2}
\frac{1}{h} &  \int_{\mathcal{MT}_{(t-h,t]}} P_t^\varepsilon(\Phi) - P_{t-h}^\varepsilon(\Phi) \, \mathrm{d}\Phi = \frac{1}{h} \int_{\mathcal{MT}_{(t-h,t)}} P_t^\varepsilon(\Phi)  \, \mathrm{d}\Phi \nonumber \\
& = \frac{1}{h} \int_{\mathcal{MT}_{(t-h,t)}} \exp\left(-\int_\tau^t L_\sigma^\varepsilon(\Phi) \, \mathrm{d}\sigma \right)  P_\tau^\varepsilon(\bar{\Phi})g_\tau(x(\tau)+\varepsilon\nu,v')[(v(\tau^-)-v')\cdot \nu]_+  \, \mathrm{d}\Phi \nonumber  \\
& =  \int_{\mathcal{MT}} \frac{1}{h} \int_{t-h}^t \int_{\mathbb{S}^2} \int_{\mathbb{R}^3} \exp\left(-\int_s^t L_\sigma^\varepsilon(\Psi') \, \mathrm{d}\sigma \right)  P_s^\varepsilon(\Psi)  g_s(x(s)+\varepsilon\bar{\nu},\bar{v})[(v(\tau)-\bar{v})\cdot \bar{\nu}]_+ \, \mathrm{d}\bar{v} \, \mathrm{d}\bar{\nu} \, \mathrm{d}s \, \mathrm{d}\Psi \nonumber \\
& = \int_{\mathcal{MT}} L_t^\varepsilon(\Psi) P_t^\varepsilon(\Psi) \, \mathrm{d}\Psi.
\end{align}
Now for the final term we first prove that for any $\Phi \in \mathcal{MT}_{[0,t)}$
\begin{equation} \label{na-eq-prob3}
\lim_{h \downarrow 0} \frac{1}{h} P_{t-h}^\varepsilon(\Phi) \left( \exp\left(-\int_{t-h}^t L_\sigma^\varepsilon(\Phi) \, \mathrm{d}\sigma \right)   -1 \right)  = -L_t^\varepsilon(\Phi)P _t^\varepsilon(\Phi).
\end{equation}
To this aim fix $\Phi \in \mathcal{MT}_{[0,t)}$. Then $\tau < t$. Let $h$ sufficiently small so that $t-h>\tau$. Then since $P_s^\varepsilon(\Phi)$ is continuous for $s \in [\tau,T]$ we have that $P_{t-h}^\varepsilon(\Phi)$ converges to $P_{t}^\varepsilon(\Phi) $ as $h$ tends to zero. Further,
\begin{align*}
\lim_{h \downarrow 0} \frac{1}{h}  \left( \exp\left(-\int_{t-h}^t L_\sigma^\varepsilon(\Phi) \, \mathrm{d}\sigma \right)   -1 \right) = - \partial_s |_{s=t}  \exp\left(-\int_{s}^t L_\sigma^\varepsilon(\Phi) \, \mathrm{d}\sigma \right) = -L_t^\varepsilon(\Phi).
\end{align*}
This proves \eqref{na-eq-prob3}. Hence by the dominated convergence theorem and lemma~\ref{na-lem-MTtzero}
\begin{align} \label{na-eq--3}
\frac{1}{h}  \int_{\mathcal{MT}_{[0,t-h]}} P_t^\varepsilon(\Phi) - P_{t-h}^\varepsilon(\Phi) \, \mathrm{d}\Phi
& = \frac{1}{h}  \int_{\mathcal{MT}_{[0,t-h]}} \exp\left(-\int_{t-h}^t L_\sigma^\varepsilon(\Phi) \, \mathrm{d}\sigma \right) P_{t-h}^\varepsilon(\Phi) - P_{t-h}^\varepsilon(\Phi) \, \mathrm{d}\Phi \nonumber \\
& =   \int_{\mathcal{MT}_{[0,t)}} \frac{1}{h} P_{t-h}^\varepsilon(\Phi) \left( \exp\left(-\int_{t-h}^t L_\sigma^\varepsilon(\Phi) \, \mathrm{d}\sigma \right)   -1 \right) \, \mathrm{d}\Phi \nonumber \\
& \xrightarrow{h \downarrow 0} \int_{\mathcal{MT}_{[0,t)}}  -L_t^\varepsilon(\Phi)P _t^\varepsilon(\Phi) \, \mathrm{d}\Phi \nonumber \\
& = \int_{\mathcal{MT}_{[0,t]}}  -L_t^\varepsilon(\Phi)P _t^\varepsilon(\Phi) \, \mathrm{d}\Phi = \int_{\mathcal{MT}}  -L_t^\varepsilon(\Phi)P _t^\varepsilon(\Phi) \, \mathrm{d}\Phi.
\end{align}
Combining \eqref{na-eq--1}, \eqref{na-eq--2} and \eqref{na-eq--3} proves the lemma.
\end{proof}
The following lemma is used to prove \eqref{na-eq-ptepuni}.
\begin{lem} \label{na-lem-ptptw}
For almost all $\Phi \in \mathcal{MT}$, uniformly for $t \in [0,T]$,
\[ \lim_{\varepsilon \to 0} \left| P_t^0(\Phi) - P_t^\varepsilon(\Phi) \right| =0. \]
\end{lem}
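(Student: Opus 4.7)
I will prove the lemma by induction on $n = n(\Phi)$, the number of collisions in the tree, using the iterative definition \eqref{na-eq-ptt0}--\eqref{na-eq-ptfull} of $P_t^\varepsilon(\Phi)$. The key analytic tools are the H\"older continuity of the initial data \eqref{na-eq-ghldassmp}, which gives $|g_\sigma(x+\varepsilon\nu,\bar v)-g_\sigma(x,\bar v)|=|g_0(x+\varepsilon\nu-\sigma\bar v,\bar v)-g_0(x-\sigma\bar v,\bar v)|\le M\varepsilon^\alpha$ for almost every $\bar v$, together with the envelope bound $\bar g$ and its moment assumption \eqref{na-eq-gl1assmp}. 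Uniformity in $t$ is obtained from the elementary inequality \eqref{na-eq-expab} applied to the exponential factors.

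For the base case $n=0$, $P_t^\varepsilon(\Phi) = U^\varepsilon(t,0)f_0(x_0+tv_0,v_0)$ and the two expressions differ only through the exponent in $E^\varepsilon(t,0,x_0+tv_0,v_0)$ of \eqref{na-eq-Edeff}. By \eqref{na-eq-expab},
\[
\bigl|P_t^0(\Phi)-P_t^\varepsilon(\Phi)\bigr| \;\le\; f_0(x_0,v_0)\int_0^t\!\!\int_{\mathbb S^2}\!\!\int_{\mathbb R^3}\bigl|g_\sigma(x_0\!+\!\sigma v_0\!+\!\varepsilon\nu,\bar v)-g_\sigma(x_0\!+\!\sigma v_0,\bar v)\bigr|[(v_0-\bar v)\cdot\nu]_+\,\dd\bar v\,\dd\nu\,\dd\sigma.
\]
For fixed $v_0$ and any $\delta>0$, I split the $\bar v$-integral at $|\bar v|=R$. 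On $\{|\bar v|>R\}$ the integrand is controlled by $2\bar g(\bar v)(|v_0|+|\bar v|)$, and \eqref{na-eq-gl1assmp} makes this tail less than $\delta/2$ for $R$ large. On $\{|\bar v|\le R\}$ the H\"older bound yields a pointwise majorant $M\varepsilon^\alpha$, so the integral there is bounded by $CTR^4(1+|v_0|)M\varepsilon^\alpha$, which is less than $\delta/2$ once $\varepsilon$ is small. The bound is independent of $t\in[0,T]$, giving uniform convergence for a.e.\ $(x_0,v_0)$, hence for a.e.\ $\Phi\in\mathcal T_0$.

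For the inductive step, assume the claim holds for the pruned tree $\bar\Phi$ with $n-1$ collisions. For $t<\tau$ both sides vanish; for $t\ge\tau$ I decompose
\[
\bigl|P_t^0(\Phi)-P_t^\varepsilon(\Phi)\bigr| \le D_1 + D_2 + D_3,
\]
where, writing $E^\varepsilon_{\tau,t}(\Phi):=\exp(-\int_\tau^t L_\sigma^\varepsilon(\Phi)\,\dd\sigma)$ and $\kappa:=[(v(\tau^-)-v')\cdot\nu]_+$,
\begin{align*}
D_1 &= \bigl|E^0_{\tau,t}(\Phi)-E^\varepsilon_{\tau,t}(\Phi)\bigr|\,P_\tau^0(\bar\Phi)\,g_\tau(x(\tau),v')\,\kappa,\\
D_2 &= E^\varepsilon_{\tau,t}(\Phi)\,\bigl|P_\tau^0(\bar\Phi)-P_\tau^\varepsilon(\bar\Phi)\bigr|\,g_\tau(x(\tau),v')\,\kappa,\\
D_3 &= E^\varepsilon_{\tau,t}(\Phi)\,P_\tau^\varepsilon(\bar\Phi)\,\bigl|g_\tau(x(\tau),v')-g_\tau(x(\tau)+\varepsilon\nu,v')\bigr|\,\kappa.
\end{align*}
Term $D_3$ is bounded by $P_\tau^\varepsilon(\bar\Phi)\,\kappa\,M\varepsilon^\alpha$ by \eqref{na-eq-ghldassmp}, valid for almost every $v'$; $D_2$ tends to zero uniformly in $t$ by the induction hypothesis applied to $\bar\Phi$ (after excluding a null set of trees); and $D_1$ is treated as in the base case, now via $|E^0_{\tau,t}-E^\varepsilon_{\tau,t}|\le \int_\tau^T|L^0_\sigma(\Phi)-L^\varepsilon_\sigma(\Phi)|\,\dd\sigma$, which by the same $R$-splitting converges to zero independently of $t\in[\tau,T]$. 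Combining the three estimates completes the induction.

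\textbf{Main obstacle.} The delicate point is making the base-case argument go through uniformly in $t$ and in a way that composes cleanly under the induction, because the natural majorant $[(v(\sigma)-\bar v)\cdot\nu]_+$ is not integrable in $\bar v$ and so one cannot directly invoke dominated convergence. The $R$-splitting must be performed with $R$ chosen independently of $\sigma\in[0,T]$, and then $\varepsilon$ chosen small depending on $R$; this ordering is what delivers the required uniformity. A secondary care point is that \eqref{na-eq-ghldassmp} is only stated for a.e.\ $v$, so the excluded null set of velocities at collisions propagates into a null set of trees via the parametrisation of $\mathcal{MT}$, which is harmless since the claim is only asserted almost everywhere in $\Phi$.
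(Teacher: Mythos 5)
Your proposal is correct and follows essentially the same route as the paper: induction on $n(\Phi)$, the elementary bound \eqref{na-eq-expab} for the exponential factors, the H\"older condition \eqref{na-eq-ghldassmp} for the $\varepsilon$-shift in $g$, and an $R$-splitting of the $\bar v$-integral (which the paper packages as lemmas~\ref{na-lem-g0diffbound} and \ref{na-lem-LLepcomp}, with $R=\varepsilon^{-\alpha/6}$), together with the same three-term telescoping in the inductive step. The only detail worth adding is that the coefficient $P_\tau^\varepsilon(\bar\Phi)$ in your $D_3$ must itself be bounded for small $\varepsilon$ via the inductive hypothesis (the paper uses $P_\tau^\varepsilon(\bar\Psi)\le 1+P_\tau^0(\bar\Psi)$).
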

\begin{proof}
Let $\varepsilon$ be sufficiently small so that lemma~\ref{na-lem-LLepcomp} holds. We prove by induction on $n$, the number of collisions in $\Phi$. Suppose $n =0$. Then by definition~\ref{na-deff-Ptphi}, \eqref{na-eq-expab} and lemma~\ref{na-lem-LLepcomp},
\begin{align*}
 \left| P_t^0(\Phi) - P_t^\varepsilon(\Phi) \right| &  = \left| \exp\left( -\int_0^t L_s^0(\Phi) \, \mathrm{d}s \right) - \exp\left( -\int_0^t L_s^\varepsilon(\Phi) \, \mathrm{d}s \right) \right| f_0(x_0,v_0) \\
 & \leq  \int_0^t | L_s^0(\Phi)- L_s^\varepsilon(\Phi) | \, \mathrm{d}s  f_0(x_0,v_0)  \leq  2CT(1+|v(\tau)|)\varepsilon^{\alpha/6} f_0(x_0,v_0),
\end{align*}
as required. Now suppose the result holds true for almost all $\Phi \in \mathcal{MT}$ with $n = j$ for some $j \geq 0$ and let $\Psi \in \mathcal{MT}$ with $n=j+1$ be such that the result holds for $\bar{\Psi}$ and,
\begin{align*}
\bar{g}(v')(1+|v'|)  & \leq M_\infty \textrm{ and } \\
| g_\tau(x(\tau),v')   - g_\tau(x(\tau)+\varepsilon\nu,v') | & \leq M\varepsilon^{\alpha}.
\end{align*}
Indeed by \eqref{na-eq-glinf} and \eqref{na-eq-ghldassmp} this only excludes a set  of zero measure. Let $\delta >0$. Then using \eqref{na-eq-ghldassmp} take $\varepsilon $ sufficiently small so that,
\begin{equation} \label{na-eq-ptepunihelp1}
|g_\tau (x(\tau),v') - g_\tau (x(\tau)+\varepsilon\nu,v')| \leq M\varepsilon ^{\alpha} < \frac{\delta}{3 (1+P_\tau^0(\bar{\Psi})) (1+ |v(\tau^-)| + |v'|)}.
\end{equation}
And using the inductive assumption take $\varepsilon$ sufficiently small so that,
\begin{align}\label{na-eq-ptepunihelp2}
| P_\tau^\varepsilon(\bar{\Psi}) - P_\tau^\varepsilon(\bar{\Psi}) | <\frac{\delta}{3  M_\infty (1+|v(\tau^-)|) }.
\end{align}
Now by the inductive assumption for $\varepsilon$ sufficiently small,
\[ 0 \leq P_\tau^\varepsilon(\bar{\Psi}) \leq  | P_\tau^0(\bar{\Psi}) - P_\tau^\varepsilon(\bar{\Psi}) | + P_\tau^0(\bar{\Psi}) \leq 1+ P_\tau^0(\bar{\Psi}).  \]
So, as in the base case, take $\varepsilon$ sufficiently small so that
\begin{align}\label{na-eq-ptepunihelp3}
\left|\exp\left( -\int_\tau^t L_s^0(\Psi) \, \mathrm{d}s \right) - \exp\left( -\int_\tau^t L_s^\varepsilon(\Psi) \, \mathrm{d}s \right) \right| & \leq  \int_\tau^t | L_s^0(\Psi)- L_s^\varepsilon(\Psi) | \, \mathrm{d}s  \nonumber \\
& \leq \frac{\delta}{3 (1+P_\tau^\varepsilon(\bar{\Psi})) M_\infty (1+|v(\tau^-)|) } .
\end{align}
Hence by \eqref{na-eq-ptepunihelp1}, \eqref{na-eq-ptepunihelp2} and \eqref{na-eq-ptepunihelp3} and bounding the exponential term by 1, for $\varepsilon$ sufficiently small,
\begin{align*}
|P_t^0(\Psi ) - P_t^\varepsilon(\Psi)| & =  \left| \exp\left( -\int_\tau^t L_s^0(\Phi) \, \mathrm{d}s \right)P_\tau^0(\Psi)  - \exp\left( -\int_\tau^t L_s^\varepsilon(\Phi) \, \mathrm{d}s \right)P_\tau^\varepsilon(\Psi) \right| \\
& = \bigg| \exp\left( -\int_\tau^t L_s^0(\Phi) \, \mathrm{d}s \right)P_\tau^0(\bar{\Psi})g_\tau(x(\tau),v')[(v(\tau^-)-v')\cdot \nu]_+   \\
& \qquad - \exp\left( -\int_\tau^t L_s^\varepsilon(\Phi) \, \mathrm{d}s \right)P_\tau^\varepsilon(\bar{\Psi})g_\tau(x(\tau)+\varepsilon\nu,v')[(v(\tau^-)-v')\cdot \nu]_+ \bigg| \\
& \leq  P_\tau^0(\bar{\Psi})[(v(\tau^-)-v')\cdot \nu]_+ | g_\tau(x(\tau),v')   - g_\tau(x(\tau)+\varepsilon\nu,v') | \\
& \qquad  + g_\tau(x(\tau)+\varepsilon\nu,v')[(v(\tau^-)-v')\cdot \nu]_+ \left|  P_\tau^0(\bar{\Psi}) -  P_\tau^\varepsilon(\bar{\Psi}) \right| \\
& \qquad + P_\tau^\varepsilon(\bar{\Psi})  g_\tau(x(\tau)+\varepsilon\nu,v')[(v(\tau^-)-v')\cdot \nu]_+ \\
&  \qquad \qquad \times \left| \exp\left( -\int_\tau^t L_s^0(\Phi) \, \mathrm{d}s \right) - \exp\left( -\int_\tau^t L_s^\varepsilon(\Phi) \, \mathrm{d}s \right) \right| \\
 & < \delta.
\end{align*}
This completes the inductive step and so proves the result.

\end{proof}
We can now prove the remainder of theorem~\ref{na-thm-id}.

\begin{proof}[Proof of Theorem~\ref{na-thm-id}]
Let $\varepsilon\geq 0$. By proposition~\ref{na-prop-idpart1} it remains only to prove that $P_t^\varepsilon$ is a probability measure and that \eqref{na-eq-ptepuni} holds. Positivity follows by the definition of $P_t^\varepsilon$ in definition~\ref{na-deff-Ptphi}. By \eqref{na-eq-ftptcon},
\[ \int_{\mathcal{MT}} P_0^\varepsilon(\Phi) \, \mathrm{d}\Phi  = \int_{U \times \mathbb{R}^3} f_0(x,v) \, \mathrm{d}x \, \mathrm{d}v = 1.   \]
Now let $t>0$. By lemmas~\ref{na-lem-prob+} and \ref{na-lem-prob-},
$\partial_t \int_{\mathcal{MT}} P_t^\varepsilon(\Phi) \, \mathrm{d}\Phi $ exists and is equal to zero. Hence,
\[ \int_\mathcal{MT} P_t^\varepsilon(\Phi) \, \mathrm{d}\Phi =1. \]
It remains to prove \eqref{na-eq-ptepuni}. Since $P_t^\varepsilon$ and $P_t^0$ are probability measures on $\mathcal{MT}$ and we have proven pointwise convergence in lemma~\ref{na-lem-ptptw} we apply Scheff\'{e}'s theorem (see \cite[Theorem 16.12]{bill12}) which immediately gives the result.
  \end{proof}
We finish this section by proving that $f_t^\varepsilon$, the evolution system solution to the $\varepsilon$ dependent linear Boltzmann equation, is a probability measure and that it converges in $L^1$ to $f_t^0$ as $\varepsilon$ tends to zero.

\begin{prop} \label{na-prop-ftprob}
For any $t \in [0,T]$ and $\varepsilon \geq 0$, $f_t^\varepsilon$ is a probability measure on $U \times \mathbb{R}^3$ and the trajectory $V^\varepsilon(t,0)f_0$ is honest (see \cite[Remark 4.20]{arlotti14}). Moreover $f_t^\varepsilon$ converges to $f_t^0$ in $L^1$ as $\varepsilon$ tends to zero uniformly for $t \in [0,T]$.
\end{prop}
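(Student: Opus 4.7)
\textbf{Proof proposal for Proposition~\ref{na-prop-ftprob}.} The plan is to transfer both the probability measure statement and the $L^1$ convergence from $P_t^\varepsilon$ on $\mathcal{MT}$ to $f_t^\varepsilon$ on $U \times \mathbb{R}^3$ via the correspondence \eqref{na-eq-ftptcon}, and then read off honesty as a consequence of the mass conservation obtained along the way. All three conclusions therefore reduce to direct corollaries of theorem~\ref{na-thm-id}.

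For the probability measure property, positivity of $f_t^\varepsilon$ is immediate from proposition~\ref{na-prop-ptjsum}, which represents $f_t^\varepsilon = \sum_{j \geq 0} P_t^{\varepsilon,(j)}$ as a sum of non-negative densities. For the total mass, apply \eqref{na-eq-ftptcon} with $\Omega = U \times \mathbb{R}^3$; then $S_t(\Omega) = \mathcal{MT}$ and theorem~\ref{na-thm-id} yields
\[
\int_{U \times \mathbb{R}^3} f_t^\varepsilon(x,v) \, \mathrm{d}x \, \mathrm{d}v
= \int_{\mathcal{MT}} P_t^\varepsilon(\Phi) \, \mathrm{d}\Phi = 1
\]
for every $t \in [0,T]$. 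For honesty of $V^\varepsilon(t,0)f_0$ in the sense of \cite[Remark 4.20]{arlotti14}, combine the sub-stochastic bound \eqref{na-eq-Vepnorm} with the identity above: since $\|V^\varepsilon(t,0)f_0\| \leq \|f_0\| = 1$ and $\|V^\varepsilon(t,0)f_0\| = \|f_t^\varepsilon\| = 1$, equality holds, which is exactly mass conservation along the trajectory starting from $f_0$. This is the characterisation of an honest trajectory in the Arlotti--Lods framework.

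For the $L^1$ convergence, define $\Omega^+ := \{(x,v) : f_t^\varepsilon(x,v) > f_t^0(x,v)\}$ and $\Omega^- := \{(x,v) : f_t^\varepsilon(x,v) \leq f_t^0(x,v)\}$; both are measurable since $f_t^\varepsilon, f_t^0$ are. The map $\Phi \mapsto (x(t),v(t))$ is $\varepsilon$-independent, so $S_t(\Omega^\pm)$ does not depend on $\varepsilon$. Applying \eqref{na-eq-ftptcon} to $f_t^\varepsilon$ and to $f_t^0$ on $\Omega^\pm$ and subtracting gives
\[
\int_{U \times \mathbb{R}^3} |f_t^\varepsilon - f_t^0| \, \mathrm{d}x \, \mathrm{d}v
= \int_{S_t(\Omega^+)} \bigl(P_t^\varepsilon - P_t^0\bigr) \mathrm{d}\Phi - \int_{S_t(\Omega^-)} \bigl(P_t^\varepsilon - P_t^0\bigr) \mathrm{d}\Phi
\leq 2 \int_{\mathcal{MT}} |P_t^\varepsilon - P_t^0| \, \mathrm{d}\Phi,
\]
which vanishes as $\varepsilon \to 0$ uniformly for $t \in [0,T]$ by \eqref{na-eq-ptepuni}. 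The main conceptual subtlety here, more than a technical obstacle, is confirming that the honesty definition of \cite{arlotti14} reduces to the trajectorial mass identity $\|V^\varepsilon(t,0)f_0\| = \|f_0\|$; once this identification is made, the entire proposition follows from theorem~\ref{na-thm-id} and \eqref{na-eq-ftptcon} with only the elementary $\Omega^\pm$ decomposition as extra work.
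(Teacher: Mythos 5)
Your proposal is correct and follows the same route the paper takes: transfer everything to $P_t^\varepsilon$ via \eqref{na-eq-ftptcon}, take $\Omega = U\times\mathbb{R}^3$ to get the mass identity from theorem~\ref{na-thm-id}, identify honesty with norm conservation along the trajectory, and bound the $L^1$ difference by $2\int_{\mathcal{MT}}|P_t^0-P_t^\varepsilon|\,\mathrm{d}\Phi$ using \eqref{na-eq-ptepuni}. The only cosmetic difference is that the paper cites \cite[proposition 2.2]{arlotti14} for positivity of each $V_j^\varepsilon(t,0)f_0$ and \cite[section 4.3]{arlotti14} for the honesty characterisation, whereas you deduce positivity directly from the non-negativity of the $P_t^{\varepsilon,(j)}$ and spell out the $\Omega^\pm$ decomposition and the reduction of honesty to $\|V^\varepsilon(t,0)f_0\|=\|f_0\|$; both are valid and equivalent once one recalls that $V_j^\varepsilon(t,0)f_0 = P_t^{\varepsilon,(j)}$ from proposition~\ref{na-prop-ptjsum}.
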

\begin{proof}
Let $t\in[0,T], \varepsilon\geq 0$. Since $f_0 \in L_1^+(U\times \mathbb{R}^3) $ we have by \cite[proposition 2.2]{arlotti14} for any $j \geq 0$, $V_j^\varepsilon(t,0)f_0 \in L_1^+(U\times \mathbb{R}^3)$, where $V_j^\varepsilon$ are as in the proof of proposition~\ref{na-prop-ptjsum}. Since $V^\varepsilon = \sum_{j=0}^\infty V_j^\varepsilon$ it follows that $V^\varepsilon(t,0)f_0 \in L_1^+(U\times \mathbb{R}^3) $. Now by theorem~\ref{na-thm-id} and \eqref{na-eq-ptepftepint},
\begin{align*}
\int_{U \times \mathbb{R}^3} f_t^\varepsilon(x,v) \, \mathrm{d}x \, \mathrm{d}v = \int_{\mathcal{MT}} P_t^\varepsilon(\Phi) \, \mathrm{d}\Phi  =1,
\end{align*}
so $f_t^\varepsilon$ is a probability measure. Further this implies,
\begin{align*}
\int_{U \times \mathbb{R}^3} V^\varepsilon(t,0)f_0(x,v) \, \mathrm{d}x \, \mathrm{d}v   =\int_{U \times \mathbb{R}^3} f_0(x,v) \, \mathrm{d}x \, \mathrm{d}v.
\end{align*}
Honesty of the trajectory of $V^\varepsilon(t,0)f_0$ follows from \cite[section 4.3]{arlotti14}. To prove convergence in $L^1$, it is enough to let $t \in [0,T]$ and $\Omega \subset U \times \mathbb{R}^3$ measurable. Then by theorem~\ref{na-thm-id},
\begin{align*}
\left| \int_{\Omega} f_t^0(x,v) - f_t^\varepsilon(x,v) \, \mathrm{d}x \, \mathrm{d}v  \right| & = \left| \int_{S_t(\Omega)} P_t^0(\Phi) - P_t^\varepsilon(\Phi) \, \mathrm{d}\Phi  \right|  \leq  \int_{\mathcal{MT}} \left| P_t^0(\Phi) - P_t^\varepsilon(\Phi) \right| \, \mathrm{d}\Phi   \to 0,
\end{align*}
as required.
\end{proof}

\section{The Empirical Distribution} \label{na-sec-emp}

We now describe the empirical distribution $\hat{P}_t^\varepsilon$. The main result of this section is theorem~\ref{na-thm-emp}, where we show that $\hat{P}_t^\varepsilon$ solves the empirical equation - at least for well controlled trees. The similarity of the empirical and idealised equations is then used in section~\ref{na-sec-conv} to prove the convergence between $P_t^\varepsilon$ and $\hat{P}_t^\varepsilon$, which is used to prove the required convergence of theorem~\ref{na-thm-main}.

\begin{deff}
For $t \in [0,T]$ and $\varepsilon >0$ let $\hat{P}_t^\varepsilon$ be the probability measure on $\mathcal{MT}$ obtained by observing the particle dynamics as described in section \ref{na-sec-model}. Notice that for any $\Omega \subset U \times \mathbb{R}^3$, $\varepsilon >0$ and $t \in [0,T]$
\[ \int_{\Omega} \hat{f}_t^N(x,v) \, \mathrm{d}x \, \mathrm{d}v =  \hat{P}_t^\varepsilon(S_t(\Omega)). \]
\end{deff}
\begin{lem}\label{lem-abscts}
Given the assumptions of Theorem \ref{na-thm-main},  the empirical distribution of the tagged particle $\hat{f}_t^N$ is absolutely continuous with respect to the Lebesgue measure on $U\times \mathbb{R}^3$.
\end{lem}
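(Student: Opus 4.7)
The plan is to realise $\hat f_t^N$ as the pushforward of an absolutely continuous measure under a piecewise diffeomorphism of unit Jacobian, mimicking the change of variables that appears in the proof of Lemma~\ref{na-lem-ptjabscts}. The joint law of the initial data $(x_0,v_0,x_1,v_1,\dots,x_N,v_N)$ on $(U\times\mathbb{R}^3)^{N+1}$ has product density $f_0(x_0,v_0)\prod_{i=1}^N g_0(x_i,v_i)$ and is therefore absolutely continuous; by the preceding well-posedness proposition the map $\Psi_t$ sending initial data to the tagged particle's state at time $t$ is defined off a Lebesgue null set, and $\hat f_t^N$ is by construction the pushforward of this absolutely continuous measure under $\Psi_t$.

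I would next decompose the initial condition space according to the collision history. For each $n\geq 0$ and each sequence $\sigma=(i_1,\dots,i_n)\in\{1,\dots,N\}^n$ (repetitions allowed, since background particles may re-collide with the tagged particle after changing velocity), let $E_\sigma$ denote the measurable event on which the tagged particle's collisions during $[0,t]$ are precisely those with particles $i_1,\dots,i_n$ in this chronological order. The union of the $E_\sigma$ exhausts the well-posedness set up to a null set, because energy conservation at each collision bounds the velocities in terms of the initial energy and forces the number of collisions in $[0,t]$ to be almost surely finite. Setting $J_\sigma=\{i_1,\dots,i_n\}$, the trajectory of the tagged particle on $E_\sigma$ depends only on $(x_0,v_0)$ and on $(x_j,v_j)_{j\in J_\sigma}$; the remaining coordinates are integrated out using $\int g_0=1$, and exchangeability lets me reduce to the canonical case $J_\sigma=\{1,\dots,|J_\sigma|\}$ at the cost of a combinatorial factor.

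On this reduced space I would then study the map
\[
\bigl(x_0,v_0,(x_j,v_j)_{j\in J_\sigma}\bigr)\ \longmapsto\ \bigl(\Phi,(x(t),v(t))\bigr),
\]
where $\Phi=((x_0,v_0),(t_1,\nu_1,v_1'),\dots,(t_n,\nu_n,v_n'))$ records the collision history. This is a piecewise diffeomorphism whose inverse is built by iteratively applying, at each collision node, the transformation $(\nu,x_0,v_0,v')\mapsto(\nu,x,v,\bar w)$ from the proof of Lemma~\ref{na-lem-ptjabscts}; each such step has absolute Jacobian determinant one. Hence the pushforward of the absolutely continuous density on $(U\times\mathbb{R}^3)^{|J_\sigma|+1}$ is absolutely continuous on the $(\Phi,(x(t),v(t)))$-space, and marginalising out the tree coordinates $\Phi$ yields an absolutely continuous measure on $U\times\mathbb{R}^3$. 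Summing over $n$ and $\sigma$ writes $\hat f_t^N$ as an almost surely finite sum of absolutely continuous measures and so completes the proof.

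The main technical obstacle will be verifying that the $E_\sigma$ really do cover almost every initial configuration, which reduces to almost sure finiteness of the number of collisions in $[0,t]$; this is precisely the issue already addressed in the well-posedness proposition and should transfer with only cosmetic adjustments. The remaining care is in checking measurability and piecewise-smoothness of the collision-history parametrisation on each cell $E_\sigma$, both of which are routine but need to be spelled out for the decomposition to carry through.
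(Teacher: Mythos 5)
Your argument is correct in outline but takes a genuinely different and much heavier route than the paper. The paper's proof is essentially two lines: the initial $(N+1)$-particle law has the product density $f_0(x_0,v_0)\prod_{i=1}^N g_0(x_i,v_i)$ and is therefore absolutely continuous on $(U\times\mathbb{R}^3)^{N+1}$; the hard-sphere flow preserves Lebesgue measure on the full phase space (free transport plus measure-preserving reflections at collisions), so the $(N+1)$-particle distribution at time $t$ is still absolutely continuous; and $\hat f_t^N$ is its marginal, hence absolutely continuous. No decomposition by collision history, no tree parametrisation, and no Jacobian computation beyond Liouville's theorem is needed. What your approach buys is finer information — an explicit representation of $\hat f_t^N$ as a sum over collision histories, which is essentially the content of the empirical-equation analysis in section~\ref{na-sec-emp} and of Lemma~\ref{na-lem-phatabscts} — but for the bare statement of absolute continuity it is overkill.

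There is also one place where your argument is more delicate than you acknowledge: histories with re-collisions. If a background particle collides twice with the tagged particle, the tree coordinates $(t_j,\nu_j,v_j)$ of the second collision are constrained by those of the first, so the map from initial data to $(\Phi,(x(t),v(t)))$ is not a diffeomorphism onto a full-dimensional subset of tree space, and the ``unit Jacobian at each collision node'' picture from Lemma~\ref{na-lem-ptjabscts} does not apply verbatim on those cells. The fix is to abandon the tree coordinates on such cells and instead push forward to the full phase space of the tagged particle together with the $|J_\sigma|$ involved background particles, where the subsystem flow is again measure-preserving, and then take the marginal in $(x(t),v(t))$ — but at that point you have reproduced the paper's Liouville argument, which suggests using it globally from the start. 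A second, minor, imprecision: on $E_\sigma$ the non-involved particles cannot simply be integrated out ``using $\int g_0=1$'', since $E_\sigma$ constrains them not to meet the tagged particle; this is harmless for absolute continuity because the resulting marginal density is merely bounded above by the unconstrained one, but it should be stated.
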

\begin{proof} The empirical distribution of the tagged particle $\hat{f}_t^N$ can be equivalently obtained by integrating over the background particles of the $N+1$  particle distribution. The initial distribution of the $N+1$ particles is given by
\[\mbox{Prob}_N(((x_0,v_0),(x_1,v_1),\ldots,(x_N,v_N))=((u_0,w_0),(u_1,w_1),\ldots,(u_N,w_N)))=f_0(u_0,w_0) \prod_{i=1}^Ng_0(u_i,w_i), \]
which, under our assumptions, is absolutely continuous with respect to Lebesgue measure on $(U \times \mathbb{R}^3)^{N+1}$. As the $N+1$ particle flow preserves Lebesgue measure, this implies that
the empiric $N+1$ particle distribution is absolutely  continuous with respect to Lebesgue measure on $(U \times \mathbb{R}^3)^{N+1}$. Hence its marginal $\hat{f}_t^N$ is
  absolutely continuous with respect to the Lebesgue measure on $U\times \mathbb{R}^3$.
\end{proof}
We now describe the set of `good' trees that we will work with.
\begin{deff} \label{na-deff-V}
For a tree $\Phi \in \mathcal{MT}$ and time $t \in [0,T]$ recall that we denote the position and velocity of the root by $(x(t),v(t))$ and for $j=1,\dots,n$ the position and velocity of the background particle corresponding to the $j$-th collisions by $(x_j(t),v_j(t))$. Define $\mathcal{V}(\Phi) \in [0,\infty)$ to be the maximum velocity involved in the tree,
\[ \mathcal{V}(\Phi):= \max_{t \in [0,T]} \left\{  |v(t)| ,  \max_{j=1,\dots,n(\Phi)} |v_j(t)|\right\}. \]
\end{deff}
\begin{deff} \label{na-deff-recollfree}
A tree $\Phi$ is called re-collision free at diameter $\varepsilon$ if for all $j=1,\dots,n$ and for all $t \in [0,T]\setminus\{t_j\}$ - where $t_j$ denotes the time of collision between the root and background particle $j$,
\[ |x(t)-x_j(t)|>\varepsilon. \]
That is if the root collides with a background particle at time $t_j$, it has not collided with that background particle before in the tree and up to time $T$ it does not come into contact with that particle again. Define
\[ R(\varepsilon):= \{ \Phi \in \mathcal{MT} : \Phi \textrm{ is re-collision free at diameter } \varepsilon \}.  \]
\end{deff}
\begin{deff} \label{na-deff-nongraz}
A tree $\Phi \in \mathcal{MT}$ is called non-grazing if all collisions in $\Phi$ are non-grazing, that is
\[ \min_{j=1,\dots,n(\Phi)} \nu_j \cdot (v(t_j^-) - v_j(t^-) ) > 0. \]
\end{deff}
\begin{deff} \label{na-deff-intialover}
A tree $\Phi \in \mathcal{MT}$ is called free from initial overlap at diameter $\varepsilon >0$ if initially the root is at least $\varepsilon$ away from all the background particles. That is if for $j=1,\cdots, N$
\[ |x_0 - x_j| > \varepsilon. \]
we define
\[ S(\varepsilon) := \{ \Phi \in \mathcal{MT} : \Phi \textrm{ is free from initial overlap at diameter } \varepsilon \}. \]
\end{deff}
\begin{deff} \label{na-deff-goodtrees}
For any pair of decreasing functions $V,M:(0,\infty)\to [0,\infty)$ such that $\lim_{\varepsilon \to 0} V(\varepsilon) = \infty = \lim_{\varepsilon \to 0} M(\varepsilon) $ the set of good tress of diameter $\varepsilon$ is defined by,
\begin{align*}
\mathcal{G}(\varepsilon) &: = \Big\{   \Phi \in \mathcal{MT} : n(\Phi) \leq M(\varepsilon), \, \mathcal{V}(\Phi) \leq V(\varepsilon),  \Phi \in  R(\varepsilon) \cap S(\varepsilon) \, \textrm{ and } \Phi \textrm{ is non-grazing}   \Big\}.
\end{align*}
\end{deff}
\begin{lem}
As $\varepsilon$ decreases $\mathcal{G}(\varepsilon)$ increases.
\end{lem}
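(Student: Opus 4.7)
The plan is to verify monotonicity of each of the four ingredients in the definition of $\mathcal{G}(\varepsilon)$ separately, showing that each defining condition becomes (weakly) easier to satisfy as $\varepsilon$ decreases. Concretely, I will show that for any $0<\varepsilon_1\leq\varepsilon_2$ one has $\mathcal{G}(\varepsilon_2)\subseteq\mathcal{G}(\varepsilon_1)$, which is the precise meaning of the statement.

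First I would dispose of the three easy conditions. The bounds $n(\Phi)\leq M(\varepsilon)$ and $\mathcal{V}(\Phi)\leq V(\varepsilon)$ are immediate because $M$ and $V$ are decreasing, and because $n(\Phi)$ and $\mathcal{V}(\Phi)$ depend only on the tree data (number of collisions and the velocities appearing in the tree), which are manifestly $\varepsilon$-independent: the piecewise constant velocities of the tagged particle and of each background particle along the tree are determined only by $v_0$, $v_j$, $\nu_j$ and the standard hard-sphere collision rule, with no reference to $\varepsilon$. The non-grazing condition $\nu_j\cdot(v(t_j^-)-v_j(t_j^-))>0$ is likewise $\varepsilon$-independent, so it is preserved under any change of $\varepsilon$.

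The only slightly non-trivial step is to check the inclusions $R(\varepsilon_2)\subseteq R(\varepsilon_1)$ and $S(\varepsilon_2)\subseteq S(\varepsilon_1)$, since the background particle positions $x_j(t)$ do depend on $\varepsilon$. The key observation is that this dependence is a simple translation: from $x_j^{(\varepsilon)}(t_j)=x(t_j)-\varepsilon\nu_j$ together with the fact that $\tilde v_j(t)$ (the pre- or post-collision velocity of the $j$th background particle) is $\varepsilon$-independent, one obtains
\[
x_j^{(\varepsilon_1)}(t)-x_j^{(\varepsilon_2)}(t)=(\varepsilon_2-\varepsilon_1)\nu_j
\]
for every $t\in[0,T]$. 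Since $x(t)$ itself does not depend on $\varepsilon$, the reverse triangle inequality yields
\[
|x(t)-x_j^{(\varepsilon_1)}(t)|\geq |x(t)-x_j^{(\varepsilon_2)}(t)|-(\varepsilon_2-\varepsilon_1).
\]
If $\Phi\in R(\varepsilon_2)$ then the right-hand side exceeds $\varepsilon_2-(\varepsilon_2-\varepsilon_1)=\varepsilon_1$ for every $t\neq t_j$, which gives $\Phi\in R(\varepsilon_1)$. The same shift argument applied at $t=0$ (using $x_j^{(\varepsilon)}(0)=x(t_j)-\varepsilon\nu_j-t_j v_j$) yields $S(\varepsilon_2)\subseteq S(\varepsilon_1)$.

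Putting the four parts together gives $\mathcal{G}(\varepsilon_2)\subseteq\mathcal{G}(\varepsilon_1)$, completing the proof. The main (and essentially only) obstacle is recognising that the $\varepsilon$-dependence of the background trajectories is a pure translation in the direction $\nu_j$; once that is in hand, the reverse triangle inequality makes the geometry explicit and the rest is bookkeeping. No technical estimates on $f_0$ or $g_0$ are needed.
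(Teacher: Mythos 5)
Your proof is correct and follows essentially the same route as the paper: the conditions on $n(\Phi)$, $\mathcal{V}(\Phi)$ and non-grazing are $\varepsilon$-independent, and the real content is the monotonicity of $S(\varepsilon)$ and $R(\varepsilon)$, which in both arguments reduces to the observation that the background trajectories depend on $\varepsilon$ only through the translation $\varepsilon\nu_j$. The only difference is cosmetic: for $R(\varepsilon)$ the paper argues by contraposition, propagating a recollision from diameter $\varepsilon'$ to diameter $\varepsilon$ via a continuity argument, whereas your direct reverse-triangle-inequality bound $|x(t)-x_j^{(\varepsilon_1)}(t)|\geq|x(t)-x_j^{(\varepsilon_2)}(t)|-(\varepsilon_2-\varepsilon_1)$ yields the inclusion immediately and is, if anything, cleaner.
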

\begin{proof}
The only non-trivial conditions are checking that $S(\varepsilon)$ and $R(\varepsilon)$ are increasing. To this aim suppose that $\varepsilon ' < \varepsilon$ and $\Phi \in S(\varepsilon)$. If $n=0$ then it follows from the definition that $\Phi \in S(\varepsilon')$. Else $n \geq 1$. For the background particles not involved in the tree it is clear that reducing $\varepsilon$ to $\varepsilon'$ will not cause initial overlap. For $1\leq j \leq n$ the initial position of the background particle corresponding to collision $j$ is $x(t_j)  - t_j v_j+\varepsilon\nu_j$. Since $\Phi \in S(\varepsilon)$,
\[ |x_0 - (x(t_j)  - t_j v_j+\varepsilon\nu_j) | > \varepsilon, \]
that is $x_0 - (x(t_j)  - t_j v_j) \notin B_{\varepsilon}(-\varepsilon\nu_j)$. Hence $x_0 - (x(t_j)  - t_j v_j) \notin B_{\varepsilon'}(-\varepsilon'\nu_j)$ and so $\Phi \in S(\varepsilon')$.

Now suppose that $\varepsilon ' < \varepsilon$ and $\Phi \notin R(\varepsilon')$. Then in particular $n \geq 1$ and there exists a $1 \leq j \leq n$ and $t >t_j$ such that, if we denote the velocity of the background particle $j$ after its collision at time $t_j$ by $\bar{v}$,
\[ x(t) - (x(t_j) + \varepsilon' \nu_j + (t-t_j)\bar{v}) \in \varepsilon' \mathbb{S}^2, \]
that is $x(t) - (x(t_j)+ (t-t_j)\bar{v} )\in -  \varepsilon' \nu_j + \varepsilon' \mathbb{S}^2$. Hence since the left in side is continuous with respect to $t$ it must be that there exists a $t'$ such that, $x(t') - (x(t_j)+ (t'-t_j)\bar{v}) \in -  \varepsilon \nu_j + \varepsilon \mathbb{S}^2$, i.e. $\Phi \notin R(\varepsilon)$. Hence $R(\varepsilon) \subset R(\varepsilon')$ and so $\mathcal{G}(\varepsilon) \subset \mathcal{G}(\varepsilon')$.
\end{proof}

The last lemma allows a simplified definition of $\mathcal{G}(\varepsilon)$ compared to \cite{matt16}, where the monotonicity was enforced by taking unions. We will later give restrictions on $V$ and $M$ in order to control bounds in order to prove required results.

\begin{lem} \label{na-lem-phatabscts}
Let $\varepsilon>0$ and $\Phi \in \mathcal{G}(\varepsilon)$ then $\hat{P}_t^\varepsilon$ is absolutely continuous with respect to the Lebesgue measure $\lambda$ on a neighbourhood of $\Phi$.
\end{lem}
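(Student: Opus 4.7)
The plan is to invert, locally near $\Phi$, the map from initial particle configurations to the induced collision history of the tagged particle, and then to transfer absolute continuity from Lemma~\ref{lem-abscts} via this inversion.

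First, I would pick an open neighbourhood $\mathcal{N} \subset \mathcal{T}_{n(\Phi)}$ of $\Phi$ small enough that every $\Phi' \in \mathcal{N}$ is still non-grazing, re-collision free, and free from initial overlap at diameter $\varepsilon$, with uniform strict lower bounds on the relevant inequalities. Such $\mathcal{N}$ exists because all three conditions are given by strict inequalities on finitely many continuous functions of the tree parameters, hence are open.

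Next, I would construct an explicit map
\[
\Psi : \mathcal{N} \to (U \times \mathbb{R}^3)^{n+1}, \qquad \Phi' \mapsto \bigl((x_0,v_0),(x_j(0), v_j)_{j=1}^{n}\bigr),
\]
where $x_j(0) := x(t_j) - \varepsilon \nu_j - t_j v_j$ is the initial position of the $j$-th participating background particle and $x(t_j)$ is reconstructed from $\Phi'$. A block computation of the Jacobian, using that $v_j^{\text{init}} = v_j$ and that $\partial_{t_j} x_j(0) = v(t_j^-) - v_j$, gives
\[
|\det D\Psi(\Phi')| = \prod_{j=1}^{n} \varepsilon^{2}\, \bigl|\nu_j \cdot (v(t_j^-) - v_j)\bigr|,
\]
which is bounded below uniformly on $\mathcal{N}$ by the non-grazing condition. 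Hence $\Psi$ is a $C^1$-diffeomorphism onto its image.

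Now let $A \subset \mathcal{N}$ be a Borel set with Lebesgue measure zero. The event that the observed tagged-particle history, as an element of $\mathcal{MT}$, lies in $A$ corresponds at the level of the $(N+1)$-particle initial data to a union, over ordered choices $(j_1,\ldots,j_n)$ of $n$ labels out of $N$, of sets of the form $\Psi^{-1}(A) \times E_{j_1,\ldots,j_n}$, where $E_{j_1,\ldots,j_n} \subset (U \times \mathbb{R}^3)^{N-n}$ collects the initial data of the non-participating background particles (compatible with the fact that they do not collide with the tagged particle up to time $T$). Since $\Psi$ is a diffeomorphism, $\Psi^{-1}(A)$ has Lebesgue measure zero in $(U \times \mathbb{R}^3)^{n+1}$, so each product $\Psi^{-1}(A) \times E_{j_1,\ldots,j_n}$ is Lebesgue null in $(U \times \mathbb{R}^3)^{N+1}$; a union of finitely many null sets is null, and Lemma~\ref{lem-abscts} then gives $\hat{P}_t^\varepsilon(A)=0$, proving absolute continuity on $\mathcal{N}$.

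The main obstacle is the Jacobian computation that makes $\Psi$ a local diffeomorphism; everything reduces to the block determinant above, which is non-zero precisely by the non-grazing condition. The rest is a standard Fubini/null-set transfer, justified by the fact that re-collision freeness and boundedness of $\mathcal{V}(\Phi)$ on $\mathcal{N}$ ensure only finitely many particle relabellings can realise a tree with $n(\Phi)$ collisions close to $\Phi$, and that the non-participating particles can be absorbed into the product structure without affecting the null-set argument.
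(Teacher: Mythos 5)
Your argument is correct in substance, but it is a genuinely different route from the one the paper takes. The paper simply ports the proof of \cite[Lemma 4.8]{matt16}, which estimates $\hat{P}_t^\varepsilon(B_h(\Phi))$ directly for small balls $B_h(\Phi)$ of trees: one bounds the probability that some background particle starts in the collision cylinder $C_{h,j}$ compatible with the $j$-th collision parameters by $N\int_{C_{h,j}}g_0(x,v)\,\mathrm{d}x\,\mathrm{d}v$, and the only modification needed here is that $g_0$ now depends on $x$, which costs nothing since $g_0\in L^1$. You instead package the same cylinder geometry as a local change of variables $\Psi$ from tree parameters to initial data of the participating particles, compute the block-triangular Jacobian $\prod_j\varepsilon^2|\nu_j\cdot(v(t_j^-)-v_j)|$ (nonvanishing by non-grazing), and pull back absolute continuity from the initial $(N+1)$-particle product density. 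Your version is arguably more structural (null sets are preserved under a local diffeomorphism, end of story), while the paper's ball estimate is more elementary and is the form actually reused elsewhere (e.g.\ in Lemma \ref{na-lem-2coll}). Three small points to tidy: (i) since $\Psi$ maps trees to configurations, the relevant image set is $\Psi(A)$, not $\Psi^{-1}(A)$; (ii) re-collision freeness is a continuum of strict inequalities, not finitely many, so openness of $\mathcal{N}$ needs compactness of $[0,T]$ together with transversality of the separation at each $t_j$ (supplied by non-grazing), not just continuity of finitely many functions; (iii) what you actually invoke is the absolute continuity of the initial product density $f_0\prod_i g_0$ on $(U\times\mathbb{R}^3)^{N+1}$, which is the first step \emph{inside} the proof of Lemma \ref{lem-abscts} rather than its statement about the marginal $\hat f_t^N$.
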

\begin{proof}
The only difference to the proof of \cite[Lemma 4.8]{matt16} is that instead of $\int_{C_{h,j}}g_0(v)\,\mathrm{d}x\,\mathrm{d}v$, we have $\int_{C_{h,j}}g_0(x,v)\,\mathrm{d}x\,\mathrm{d}v$
due to $g_0$ depending on $x$. As $g_0\in L^1(U\times \mathbb{R}^3)$ by assumption, the argument can be concluded in the same way.

\end{proof}

From now on we let $\hat{P}_t^\varepsilon$ refer to the density of the probability measure on $\mathcal{MT}$. We now define the empirical equation, which we show $\hat{P}_t^\varepsilon$ solves. First we define the operator $\hat{\mathcal{Q}}_t^\varepsilon$, which is similar to the operator $\mathcal{Q}_t^\varepsilon$ in the idealised case, but includes the complexities of the particle evolution.

For a given tree $\Phi$, a time $t \in [0,T]$ and $\varepsilon>0$, define the function $\mathbbm{1}_t^\varepsilon[\Phi]:  U \times \mathbb{R}^3 \rightarrow \{0,1\}$ by
\begin{equation} \label{na-eq-deff1phi}
 \mathbbm{1}_t^\varepsilon[\Phi] (\bar{x},\bar{v}):= \begin{cases}
1 \textrm{ if for all } s \in (0,t), \, |x(s)-(\bar{x}+s\bar{v})| > \varepsilon, \\ 0 \textrm{ else}.
\end{cases}
\end{equation}

That is $\mathbbm{1}_t^\varepsilon[\Phi] (\bar{x},\bar{v})$ is $1$ if a background particle starting at the position $(\bar{x},\bar{v})$ avoids colliding with the root particle of the tree $\Phi$ up to the time $t$ and zero otherwise.

For $\Phi \in \mathcal{MT}, t \geq 0$ and $\varepsilon >0$, define the gain operator,
\begin{equation*}
\hat{\mathcal{Q}}_t^{\varepsilon,+}[\hat{P}_t](\Phi) :=
\begin{cases}
\delta(t-\tau)P_t(\bar{\Phi})\dfrac{g_\tau(x(\tau)+\varepsilon\nu,v')[(v(\tau^-)-v')\cdot\nu]_+}{\int_{ U \times \mathbb{R}^3}  g_0(\bar{x},\bar{v}) \mathbbm{1}_\tau^\varepsilon[\Phi] (\bar{x},\bar{v}) \, \mathrm{d}\bar{x} \, \mathrm{d}\bar{v}} & \textrm{ if } n\geq 1, \\
0 & \textrm{ if } n=0,
\end{cases}
\end{equation*}
and define the loss operator,
\[ \hat{\mathcal{Q}}_t^{\varepsilon,-} [\hat{P}_t](\Phi): =  \hat{P}_t(\Phi)\frac{\int_{\mathbb{S}^{2}}  \int_{\mathbb{R}^3}   g_t(x(t)+\varepsilon\nu,\bar{v})[(v(\tau)-\bar{v})\cdot \nu]_+ \, \mathrm{d}\bar{v} \, \mathrm{d}\nu - \hat{C}(\varepsilon)}{\int_{ U \times \mathbb{R}^3}  g_0(\bar{x},\bar{v}) \mathbbm{1}_t^\varepsilon[\Phi] (\bar{x},\bar{v}) \, \mathrm{d}\bar{x} \, \mathrm{d}\bar{v}  }. \]
For some $\hat{C}(\varepsilon)>0$ depending on $t$ and $\Phi$ of $o(1)$ as $\varepsilon$ tends to zero detailed later. Finally define the operator $\hat{\mathcal{Q}}_t^\varepsilon$ as follows,
\[ \hat{\mathcal{Q}}_t^\varepsilon = \hat{\mathcal{Q}}_t^{\varepsilon,+} - \hat{\mathcal{Q}}_t^{\varepsilon,-}  . \]

\begin{thm} \label{na-thm-emp}
For $\varepsilon$ sufficiently small and for all $\Phi \in \mathcal{G}(\varepsilon)$, $\hat{P}_t^\varepsilon$ solves the following
\begin{equation} \label{na-eq-emp}
\begin{cases}
\partial_t \hat{P}_t^\varepsilon(\Phi) & = (1-\gamma^\varepsilon(t)) \hat{\mathcal{Q}}_t^\varepsilon [\hat{P}_t^\varepsilon](\Phi) \\
\hat{P}_0^\varepsilon (\Phi)& = \zeta^\varepsilon(\Phi)  f_0 (x_0,v_0) \mathbbm{1}_{n(\Phi) = 0}.
\end{cases}
\end{equation}
The functions $\gamma^\varepsilon$ and $\zeta^\varepsilon$ are given by
\begin{equation} \label{na-eq-zetadeff}
\zeta^\varepsilon(\Phi) := \left( 1- \int_{B_\varepsilon(x_0)} \int_{\mathbb{R}^3} g_0(\bar{x},\bar{v}) \, \mathrm{d}\bar{v} \,\mathrm{d}\bar{x}\right)^N,
\end{equation}
and,
\begin{equation*}
\gamma^\varepsilon(t):=
\begin{cases}
1  & \textrm{ if } t < \tau  \\
n(\bar{\Phi})\varepsilon^2 &  \textrm{ if } t=\tau \\
n(\Phi)\varepsilon^2 & \textrm{ if } t > \tau.
\end{cases}
\end{equation*}
\end{thm}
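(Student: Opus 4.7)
The plan is to compute $\hat P_t^\varepsilon(\Phi)$ directly from the $(N+1)$-particle dynamics for $\Phi\in\mathcal{G}(\varepsilon)$, by integrating out over initial configurations of the $N$ background particles that realize exactly the collision history $\Phi$ by time $t$, and then to differentiate in $t$. The hypothesis $\Phi\in\mathcal{G}(\varepsilon)$ is crucial: the re-collision-free and non-grazing properties mean that the tree structure is stable under infinitesimal changes of initial data and that the $n$ background particles involved in $\Phi$ never again come within $\varepsilon$ of the tagged particle, so bookkeeping reduces to tracking the $N-n$ uninvolved particles.

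For the initial condition, at $t=0$ no tree with $n\geq 1$ has positive density, so I reduce to $n=0$. The tagged particle contributes $f_0(x_0,v_0)$, and independently each of the $N$ i.i.d. background particles must lie outside $B_\varepsilon(x_0)$, since an initial overlap is defined to produce pass-through rather than a collision. Independence gives the factor $\zeta^\varepsilon(\Phi)=(1-\int_{B_\varepsilon(x_0)}\int g_0\,d\bar v\,d\bar x)^N$ exactly as stated.

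For $t>\tau$ (between or after collisions), I fix $\Phi$ and compute $\partial_t\hat P_t^\varepsilon(\Phi)$ by taking a small increment $h>0$ and asking for the probability that $\Phi$ persists on $[t,t+h]$. Conditional on the tree up to time $t$, the tagged particle moves freely from $(x(t),v(t))$, and the $N-n$ uninvolved background particles are i.i.d. with initial-data density proportional to $g_0(\bar x,\bar v)\mathbbm{1}_t^\varepsilon[\Phi](\bar x,\bar v)$, i.e. conditioned on not having collided with the tagged trajectory up to time $t$. In the limit $h\downarrow 0$, the per-particle collision rate against the tagged particle at time $t$ is, by a standard cylinder-swept computation for hard spheres, $\int_{\mathbb{S}^{2}}\int_{\mathbb{R}^3}g_t(x(t)+\varepsilon\nu,\bar v)[(v(t)-\bar v)\cdot\nu]_+\,d\bar v\,d\nu$, divided by the normalization $\int g_0\mathbbm{1}_t^\varepsilon[\Phi]\,d\bar x\,d\bar v$ of the conditional measure. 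Multiplying by $N-n$ uninvolved particles and using the scaling $N\varepsilon^2=1$ produces the prefactor $(1-n\varepsilon^2)=(1-\gamma^\varepsilon(t))$; the terms arising from (i) excluding the $n$ involved particles (which have already scattered, are by good-tree assumption far from the tagged trajectory, and have small phase-space measure controlled by $\mathcal{V}(\Phi)\leq V(\varepsilon)$) and (ii) the $O(h^2)$ remainder from second-order expansion of the survival probability, are collected into $\hat C(\varepsilon)=o(1)$. This reproduces $-(1-\gamma^\varepsilon(t))\hat{\mathcal Q}_t^{\varepsilon,-}[\hat P_t^\varepsilon](\Phi)$.

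For $t=\tau$, the contribution is the instantaneous jump from $\hat P_{\tau^-}^\varepsilon(\bar\Phi)$ to $\hat P_\tau^\varepsilon(\Phi)$, produced by any of the $N-n(\bar\Phi)$ previously-uninvolved background particles entering the collision with the specific parameters $(\nu,v')$ recorded by the added leaf. A direct computation using the change-of-variables Jacobian for a hard-sphere collision and the conditional density $g_0\mathbbm{1}_\tau^\varepsilon[\bar\Phi]/\int g_0\mathbbm{1}_\tau^\varepsilon[\bar\Phi]$ gives density $g_\tau(x(\tau)+\varepsilon\nu,v')[(v(\tau^-)-v')\cdot\nu]_+$ per uninvolved particle, combinatorially weighted by $N-n(\bar\Phi)=N(1-n(\bar\Phi)\varepsilon^2)$ and thus by $(1-\gamma^\varepsilon(\tau))$; multiplying by $\hat P_{\tau^-}^\varepsilon(\bar\Phi)=\hat P_\tau^\varepsilon(\bar\Phi)$ and inserting the $\delta(t-\tau)$ produces $(1-\gamma^\varepsilon(t))\hat{\mathcal Q}_t^{\varepsilon,+}[\hat P_t^\varepsilon](\Phi)$.

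I expect the main obstacle to be the rigorous justification of the conditional density of the uninvolved background particles, i.e.\ that conditioned on the good-tree event those particles remain independent with density proportional to $g_0\mathbbm{1}_t^\varepsilon[\Phi]$. This requires a Markovian argument that exploits the independence of the initial data together with the deterministic nature of the free flight of uninvolved particles; the good-tree hypothesis (bounded number of collisions $n\leq M(\varepsilon)$ and bounded velocities) is what allows one to control the error made by ignoring the involved particles and to absorb it into $\hat C(\varepsilon)$. The absolute continuity statement of Lemma~\ref{na-lem-phatabscts} is what legitimizes working with the density $\hat P_t^\varepsilon$ in the first place.
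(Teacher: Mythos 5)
Your proposal follows essentially the same route as the paper: the initial condition via independence and exclusion of initial overlap, the loss term via the probability that one of the $N-n$ uninvolved particles (conditionally distributed with density proportional to $g_0\mathbbm{1}_t^\varepsilon[\Phi]$) enters the swept collision cylinder over $[t,t+h]$, and the gain term via the hard-sphere change of variables weighted by $N-n(\bar\Phi)$. One small correction to your bookkeeping: in the paper the exclusion of the $n$ involved particles produces exactly the prefactor $1-\gamma^\varepsilon(t)$ (so it is not part of $\hat C(\varepsilon)$), the two-or-more-collision events are shown to be $o(h)$ and contribute nothing in the limit, and $\hat C(\varepsilon)$ arises specifically from subtracting the set $B_{t,h}^\varepsilon(\Phi)$ of initial data whose free flight would have already struck the tagged trajectory before time $t$ yet re-enters the cylinder in $(t,t+h)$ --- the discrepancy between $\int g_0\,\mathbbm{1}_{W_{t,h}^\varepsilon}\mathbbm{1}_t^\varepsilon$ and the clean cylinder integral $\int g_0\,\mathbbm{1}_{W_{t,h}^\varepsilon}$.
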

We prove this theorem by breaking it into several lemmas proving the initial data, gain and loss term separately using the definition of $\hat{P}_t$. Firstly, the initial condition requirement for $\hat{P}_t^\varepsilon$.
\begin{deff}
Let $\omega_0  \in U \times \mathbb{R}^3$ be the random initial position and velocity of the tagged particle. For $1 \leq j \leq N$ let $\omega _ j$ be the random initial position and velocity of the $j$th background particle. By our assumptions $\omega_0$ has distribution $f_0$ and each $\omega _j$ has distribution $g_0$. Finally let $\omega = (\omega_1,\dots, \omega_N)$.
\end{deff}
\begin{lem} \label{na-lem-phatinitial}
Under the assumptions and set up of theorem~\ref{na-thm-emp} we have
\[ \hat{P}_0^\varepsilon (\Phi) = \zeta^\varepsilon(\Phi)  f_0 (x_0,v_0) \mathbbm{1}_{n(\Phi) = 0}. \]
\end{lem}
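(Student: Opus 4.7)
The plan is to unpack the definition of $\hat{P}_0^\varepsilon$ as the distribution on $\mathcal{MT}$ induced by the random initial configuration $(\omega_0,\omega_1,\ldots,\omega_N)$ and the deterministic particle dynamics, and to exploit the product structure of the underlying law: $\omega_0\sim f_0$ is independent of $(\omega_1,\ldots,\omega_N)$, which are i.i.d.\ with law $g_0$.

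First, I would dispose of the indicator factor. For any realisation of the particle dynamics, no collision can have occurred at time $t=0$, so the observed tree necessarily lies in $\mathcal{T}_0$. Consequently, for any $\Phi\in\mathcal{G}(\varepsilon)$ with $n(\Phi)\geq1$ we have $\hat{P}_0^\varepsilon(\Phi)=0$, which accounts for the factor $\mathbbm{1}_{n(\Phi)=0}$ on the right-hand side.

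Next, for $\Phi\in\mathcal{G}(\varepsilon)\cap\mathcal{T}_0$ with root $(x_0,v_0)$, I would compute the density at $(x_0,v_0)$ with respect to Lebesgue measure on $U\times\mathbb{R}^3$, appealing to Lemma~\ref{na-lem-phatabscts} to ensure this density is well defined. Because $n(\Phi)=0$, every condition in the definition of $\mathcal{G}(\varepsilon)$ is either vacuous or constrains only the root velocity, except for $\Phi\in S(\varepsilon)$, i.e.\ the event that all $N$ background particles start at distance greater than $\varepsilon$ from $x_0$. The joint density of $\omega_0$ and the event $\Phi\in S(\varepsilon)$ therefore factors, by independence, as
\[
f_0(x_0,v_0)\,\prod_{j=1}^{N}\mathbb{P}\bigl(\omega_j\notin B_\varepsilon(x_0)\times\mathbb{R}^3\bigm|\omega_0=(x_0,v_0)\bigr).
\]
Each factor in the product equals $1-\int_{B_\varepsilon(x_0)}\int_{\mathbb{R}^3}g_0(\bar{x},\bar{v})\,\mathrm{d}\bar{v}\,\mathrm{d}\bar{x}$, so by the i.i.d.\ property the product collapses to $\zeta^\varepsilon(\Phi)$ as defined in~\eqref{na-eq-zetadeff}. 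Multiplying by the marginal $f_0(x_0,v_0)$ yields $\hat{P}_0^\varepsilon(\Phi)=\zeta^\varepsilon(\Phi)f_0(x_0,v_0)$.

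The argument is essentially a direct consequence of the independence built into the initial measure, so I do not expect a substantive obstacle. The only care required is in correctly identifying the event $\{\Phi\in S(\varepsilon)\}$ with the condition that $N$ independent background particles avoid $B_\varepsilon(x_0)$, so that the resulting Bernoulli-type product matches $\zeta^\varepsilon(\Phi)$ exactly; once this identification is made, the calculation is immediate.
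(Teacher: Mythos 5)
Your proposal is correct and follows essentially the same route as the paper: the indicator is handled by noting that no collision can have occurred at time $0$, and for $n(\Phi)=0$ the factor $\zeta^\varepsilon(\Phi)$ arises exactly as you describe, from the independence of the $N$ background particles and the computation $\mathbb{P}(|x_0-x_j|>\varepsilon)=1-\int_{B_\varepsilon(x_0)}\int_{\mathbb{R}^3}g_0(\bar{x},\bar{v})\,\mathrm{d}\bar{v}\,\mathrm{d}\bar{x}$ raised to the $N$th power. No gaps.
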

\begin{proof}
If $n(\Phi) > 0$, $\hat{P}_0(\Phi)=0$, because the tree involves collisions happening at some positive time and as such cannot have occurred at time 0.

Else $n(\Phi)=0$, so $\Phi$ contains only the root particle. The probability of finding the root at the given initial data $(x_0,v_0)$ is $f_0(x_0,v_0)$. But this must be multiplied by a factor less than one because we rule out situations that give initial overlap of the root particle with a background particle. Firstly we calculate,
 \begin{align*}
\mathbb{P}(|x_0 - x_1| > \varepsilon ) & = 1 - \mathbb{P}(|x_0 - x_1| < \varepsilon )
 = 1 - \int_{\mathbb{R}^3} \int_{|x_0 - x_1| <\varepsilon}  g_0(x_1,\bar{v}) \, \mathrm{d}{x_1} \, \mathrm{d}\bar{v}\\ & = 1- \int_{B_\varepsilon(x_0)} \int_{\mathbb{R}^3} g_0(\bar{x},\bar{v}) \, \mathrm{d}\bar{v} \,\mathrm{d}\bar{x}.
\end{align*}
Hence,
\begin{align*}
\mathbb{P}(|x_0 - x_j| > \varepsilon, \forall j = 1 ,\dots ,N ) & = \mathbb{P}(|x_0 - x_1| > \varepsilon ) ^N  =\left( 1 - \int_{B_\varepsilon(x_0)} \int_{\mathbb{R}^3} g_0(\bar{x},\bar{v}) \, \mathrm{d}\bar{v} \,\mathrm{d}\bar{x}\right) ^N = \zeta^\varepsilon(\Phi).
\end{align*}
\end{proof}

\begin{lem} \label{na-lem-empgain}
Under the set up of Theorem~\ref{na-thm-emp} for $n \geq 1$
\begin{equation*}
\hat{P}_\tau^\varepsilon(\Phi) = (1-\gamma^\varepsilon(\tau))P_\tau^\varepsilon(\bar{\Phi})\dfrac{g_\tau(x(\tau)+\varepsilon\nu,v')[(v(\tau^-)-v')\cdot\nu]_+}{\int_{ U \times \mathbb{R}^3}  g_0(\bar{x},\bar{v}) \mathbbm{1}_\tau^\varepsilon[\Phi] (\bar{x},\bar{v}) \, \mathrm{d}\bar{x} \, \mathrm{d}\bar{v}}.
\end{equation*}
\end{lem}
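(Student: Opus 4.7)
The plan is to compute $\hat{P}_\tau^\varepsilon(\Phi)$ directly from the deterministic $(N+1)$-particle dynamics, using the inductive structure of the tree: factor out the contribution from the pruned tree $\bar{\Phi}$ and identify the conditional density of the final collision $(\tau,\nu,v')$ given that $\bar{\Phi}$ has been realised up to time $\tau^-$. Because $\Phi \in \mathcal{G}(\varepsilon)$ is non-grazing, re-collision free and free from initial overlap, Lemma~\ref{na-lem-phatabscts} ensures that $\hat{P}_\tau^\varepsilon$ has a density in a neighbourhood of $\Phi$, and the map from the initial data of the relevant background particle to the collision parameters $(\tau,\nu,v')$ is a local diffeomorphism.

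First I would identify which background particle realises the final collision. Out of the $N$ background particles, the $n(\bar{\Phi}) = n-1$ involved in $\bar{\Phi}$ are already specified, and by the re-collision-free condition in $\mathcal{G}(\varepsilon)$ none of them can collide again at $\tau$. This leaves $N - n(\bar{\Phi})$ candidate particles. For a specific candidate to produce the collision $(\tau,\nu,v')$, its initial data must satisfy $\bar{v}_0 = v'$ and $\bar{x}_0 = x(\tau) + \varepsilon\nu - \tau v'$; since the initial joint density of the $N+1$ particles is $f_0(x_0,v_0)\prod_j g_0(\omega_j)$, this contributes a factor $g_0(x(\tau)+\varepsilon\nu - \tau v', v') = g_\tau(x(\tau)+\varepsilon\nu, v')$ by the free transport identity $g_\tau(x,v) = g_0(x-\tau v, v)$.

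Next I would compute the Jacobian of the change of variables from the initial data $(\bar{x}_0,\bar{v}_0) \in U \times \mathbb{R}^3$ of this candidate to the collision parameters $(\tau,\nu,v') \in [0,T]\times\mathbb{S}^2\times\mathbb{R}^3$. The parametrisation $\bar{x}_0 = x(\tau) + \varepsilon\nu - \tau v'$, $\bar{v}_0 = v'$ has a block-triangular derivative matrix whose determinant (via a standard triple product computation) equals $\varepsilon^2\,(v(\tau^-) - v')\cdot\nu$: the $\varepsilon^2$ is the area element on the collision sphere of radius $\varepsilon$, and the scalar product arises from the relative velocity transversally crossing this sphere. By the non-grazing assumption in $\mathcal{G}(\varepsilon)$, this scalar product is strictly positive, and taking absolute values gives $\varepsilon^2[(v(\tau^-) - v')\cdot\nu]_+$. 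Multiplying by the $N - n(\bar{\Phi})$ available candidates and using the Boltzmann-Grad scaling $N\varepsilon^2 = 1$ produces the prefactor $(N - n(\bar{\Phi}))\varepsilon^2 = 1 - n(\bar{\Phi})\varepsilon^2 = 1 - \gamma^\varepsilon(\tau)$.

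Finally, the candidate particle appears as a spectator in $\bar{\Phi}$, which implicitly conditions on its not having collided with the tagged particle on $[0,\tau)$. Under this conditioning it has truncated initial density $g_0(\bar{x},\bar{v}) \mathbbm{1}_\tau^\varepsilon[\Phi](\bar{x},\bar{v})/Z$ with $Z = \int_{U \times \mathbb{R}^3} g_0(\bar{x},\bar{v}) \mathbbm{1}_\tau^\varepsilon[\Phi](\bar{x},\bar{v}) \, \mathrm{d}\bar{x} \, \mathrm{d}\bar{v}$, which produces precisely the denominator in the claimed formula. Assembling these ingredients, with the remaining factor being the distribution of the tree $\bar{\Phi}$ at time $\tau^-$, which by the inductive structure and the conditioning already incorporated coincides with $P_\tau^\varepsilon(\bar{\Phi})$ as in the statement, yields the claimed identity. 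The main obstacle will be the rigorous bookkeeping of the conditioning: verifying that the combinatorial factor is exactly $N-n(\bar{\Phi})$ with no lower-order correction, tracking how the non-collision indicator $\mathbbm{1}_\tau^\varepsilon[\Phi]$ emerges cleanly from the construction rather than a slight variant, and confirming that the good-tree conditions of $\mathcal{G}(\varepsilon)$ genuinely rule out all the degenerate configurations (tangential encounters, initial overlap, re-collisions of spectator particles) that could invalidate the change of variables.
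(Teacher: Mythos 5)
The paper's own proof is a one-line deferral to \cite[lemma 4.16]{matt16}, so there is nothing to compare line-by-line; your sketch is the kind of argument that proof contains: identify the candidate background particle, change variables from its initial data $(\bar{x}_0,\bar{v}_0)$ to the collision parameters $(\tau,\nu,v')$ with Jacobian $\varepsilon^2[(v(\tau^-)-v')\cdot\nu]_+$, count the $N-n(\bar{\Phi})$ spectators and use $N\varepsilon^2=1$ to produce $(1-\gamma^\varepsilon(\tau))$, and account for conditioning on ``no prior collision'' to obtain the normalising integral $\int g_0\,\mathbbm{1}_\tau^\varepsilon[\Phi]$ in the denominator. That part of your reasoning is sound.

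There is however a genuine error in the final step. You write that the remaining factor, the distribution of the pruned tree $\bar{\Phi}$ at time $\tau^-$, ``by the inductive structure and the conditioning already incorporated coincides with $P_\tau^\varepsilon(\bar{\Phi})$.'' This is false: the distribution of $\bar{\Phi}$ obtained from the particle dynamics is the \emph{empirical} quantity $\hat{P}_\tau^\varepsilon(\bar{\Phi})$, and it does \emph{not} coincide with the idealised $P_\tau^\varepsilon(\bar{\Phi})$. Establishing that these two agree in the limit $\varepsilon\to 0$ (and only in that limit) is precisely the content of Proposition~\ref{na-prop-phatrt} and Section~\ref{na-sec-conv}; no inductive conditioning argument makes them equal for finite $\varepsilon$. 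What you have actually derived is the identity with $\hat{P}_\tau^\varepsilon(\bar{\Phi})$ in place of $P_\tau^\varepsilon(\bar{\Phi})$, and indeed that is how the lemma is used later (compare the first display in the inductive step of the proof of Proposition~\ref{na-prop-phatrt}, which has $\hat{P}_\tau^\varepsilon(\bar{\Phi})$). The printed statement of Lemma~\ref{na-lem-empgain}, and likewise the definition of $\hat{\mathcal{Q}}_t^{\varepsilon,+}$, appears to drop the hat on $P_t(\bar{\Phi})$; the right response to that inconsistency is to note it and stop at $\hat{P}_\tau^\varepsilon(\bar{\Phi})$, not to assert a false identification between the empirical and idealised densities to make the formula match literally.
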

\begin{proof}
The proof is unchanged from the proof of \cite[lemma 4.16]{matt16}.
\end{proof}

From now on we make the following assumptions on the functions $V,M$ in the definition~\ref{na-deff-goodtrees}. Assume that for any $0<\varepsilon<1$ we have
\begin{equation} \label{na-eq-VMass}
\varepsilon V(\varepsilon)^3 \leq \frac{1}{8} \textrm{ and } M(\varepsilon) \leq \frac{1}{\sqrt[]{\varepsilon}}.
\end{equation}

Before we can prove the loss term we require a number of lemmas that are used to justify that $\hat{P}_t^\varepsilon$ is differentiable for $t>\tau$ and has the required derivative.

\begin{deff}
Let $\varepsilon>0$ and $\Phi \in \mathcal{G}(\varepsilon)$. For $h>0$ define
\begin{align*}
W_{t,h}^\varepsilon(\Phi) := &  \bigg\{ (\bar{x},\bar{v}) \in U \times \mathbb{R}^3 : \exists (t',\nu') \in (t,t+h) \times \mathbb{S}^2 \\&\textrm{ with } x(t')+\varepsilon \nu' = \bar{x} + t'\bar{v} \textrm{ and } (v(t') - \bar{v})\cdot \nu' > 0  \bigg\}.
\end{align*}
That is $W_{t,h}^\varepsilon(\Phi)$ contains all possible initial points for a background particle to start such that, if it travels with constant velocity, it will collide the tagged particle at some time in $(t,t+h)$. Further define,
\[ I_{t,h}^\varepsilon(\Phi) : = \frac{ \int_{U \times \mathbb{R}^3} g_0(\bar{x},\bar{v}) \mathbbm{1}_{W_{t,h}^\varepsilon(\Phi)}(\bar{x},\bar{v}) \mathbbm{1}_{t}^\varepsilon(\Phi)(\bar{x},\bar{v}) \, \mathrm{d}\bar{x} \, \mathrm{d}\bar{v} }{ \int_{U \times \mathbb{R}^3} g_0(\bar{x},\bar{v})  \mathbbm{1}_{t}^\varepsilon(\Phi)(\bar{x},\bar{v}) \, \mathrm{d}\bar{x} \, \mathrm{d}\bar{v}} .\]
\end{deff}

\begin{lem} \label{na-lem-2coll}
For $\varepsilon > 0$ sufficiently small, $\Phi \in \mathcal{G}(\varepsilon)$ and $t > \tau$
\begin{align} \label{na-eq-2cpos}
\lim_{h \downarrow 0}& \frac{1}{h}\hat{P}_t^\varepsilon( \# (\omega \cap W_{t,h}^\varepsilon(\Phi) ) \geq 2 \, | \, \Phi )  = 0.
\\ \label{na-eq-2cneg}
\lim_{h \downarrow 0}& \frac{1}{h}\hat{P}_{t-h}^\varepsilon( \#( \omega \cap W_{t-h,h}^\varepsilon(\Phi) )\geq 2 \, |\, \Phi )  = 0.
\end{align}
\end{lem}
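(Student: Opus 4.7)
The plan is to reduce both claims to a tube-volume estimate for $W_{t,h}^\varepsilon(\Phi)$ combined with a second-moment bound using the conditional independence of the ``free'' background particles (those not involved in $\Phi$). Since $t>\tau$ and $h$ is eventually small, on $(t,t+h)$ (respectively $(t-h,t)$) the trajectory $x(\cdot)$ is affine with velocity $v(\tau)$, and the good-tree bound $\mathcal{V}(\Phi)\leq V(\varepsilon)$ in particular gives $|v(\tau)|\leq V(\varepsilon)$. The two statements are symmetric, so I will treat \eqref{na-eq-2cpos} in detail and indicate at the end the obvious adaptations for \eqref{na-eq-2cneg}.

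The first step is the tube estimate
\[
\int_{U\times\mathbb{R}^3} g_0(\bar{x},\bar{v})\,\mathbbm{1}_{W_{t,h}^\varepsilon(\Phi)}(\bar{x},\bar{v})\,\mathbbm{1}_{t}^\varepsilon[\Phi](\bar{x},\bar{v})\,\mathrm{d}\bar{x}\,\mathrm{d}\bar{v}\leq C h\,\varepsilon^2(1+V(\varepsilon))+O(h^2).
\]
For fixed $\bar{v}$, set $w=x(t)-\bar{x}-t\bar{v}$; membership in $W_{t,h}^\varepsilon(\Phi)$ forces $|w+s(v(\tau)-\bar{v})|=\varepsilon$ for some $s\in(0,h)$, while the non-overlap factor $\mathbbm{1}_{t}^\varepsilon[\Phi]$ forces $|w|\geq\varepsilon$ by continuity. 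Together these confine $w$ to the spherical annulus $\{\varepsilon\leq|w|\leq\varepsilon+h|v(\tau)-\bar{v}|\}$, whose volume expands to $4\pi\varepsilon^2 h|v(\tau)-\bar{v}|+O(h^2)$. Integrating in $\bar{v}$ against $\bar{g}$ and invoking \eqref{na-eq-gl1assmp} together with $|v(\tau)|\leq V(\varepsilon)$ yields the claim. The non-overlap factor is essential here: without it the hemispherical caps of the tube would contribute an unwanted $O(\varepsilon^3)$ term that does not vanish after division by $h$.

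The second step exploits the empirical structure. By the i.i.d.\ initial distribution of the background particles, and the fact that the $n(\Phi)$ tree particles are excluded from $W_{t,h}^\varepsilon(\Phi)$ because $\Phi\in R(\varepsilon)$, the event $\{\#(\omega\cap W_{t,h}^\varepsilon(\Phi))\geq 2\}$ conditional on $\Phi$ reduces to at least two of the $N-n(\Phi)$ uninvolved particles lying in $W_{t,h}^\varepsilon(\Phi)$. These uninvolved particles are exchangeable with conditional density $g_0\,\mathbbm{1}_{t}^\varepsilon[\Phi]/Z$, where $Z=\int g_0\mathbbm{1}_{t}^\varepsilon[\Phi]\,\mathrm{d}\bar{x}\,\mathrm{d}\bar{v}$. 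A tube estimate over $(0,t)$ analogous to the one above, combined with $\varepsilon V(\varepsilon)^3\leq 1/8$ from \eqref{na-eq-VMass}, gives $1-Z\leq CT\varepsilon^2(1+V(\varepsilon))\to 0$ as $\varepsilon\to 0$, so $Z\geq 1/2$ once $\varepsilon$ is small. Hence each uninvolved particle lies in $W_{t,h}^\varepsilon(\Phi)$ with probability $p_1\leq 2Ch\varepsilon^2(1+V(\varepsilon))+O(h^2)$, and a union bound gives
\[
\hat{P}_t^\varepsilon\bigl(\#(\omega\cap W_{t,h}^\varepsilon(\Phi))\geq 2\bigm|\Phi\bigr)\leq\binom{N-n(\Phi)}{2}p_1^2\leq\tfrac{1}{2}N^2 p_1^2\leq C'h^2(1+V(\varepsilon))^2+o(h^2)
\]
after invoking $N\varepsilon^2=1$. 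Dividing by $h$ and letting $h\downarrow 0$ for fixed $\varepsilon$ proves \eqref{na-eq-2cpos}.

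For \eqref{na-eq-2cneg} the identical argument applies with the interval $(t-h,t)$ replacing $(t,t+h)$ and the non-overlap factor $\mathbbm{1}_{t-h}^\varepsilon[\Phi]$ replacing $\mathbbm{1}_{t}^\varepsilon[\Phi]$; for $h$ small we still have $t-h>\tau$, so $v(\cdot)=v(\tau)$ stays constant on the relevant interval. The main technical obstacle is the clean identification of the conditional law of the uninvolved background particles given $\Phi$, which requires unpacking the structure of the empirical measure $\hat{P}_t^\varepsilon$ via the exchangeability of $\omega_1,\ldots,\omega_N$ and the absolute continuity established in lemma~\ref{lem-abscts}; once this factorisation is in place, the quadratic gain $\binom{N}{2}$ together with the Boltzmann-Grad scaling $N\varepsilon^2=1$ reduces the estimate to the annular volume bound and delivers the desired $o(h)$ conclusion.
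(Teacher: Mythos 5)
Your proposal follows the same route as the paper: exclude the $n(\Phi)$ tree particles via $\Phi\in R(\varepsilon)$, use exchangeability and a union bound over pairs to reduce to $N^2\,I_{t,h}^\varepsilon(\Phi)^2$, bound the denominator of $I_{t,h}^\varepsilon$ below by $1/2$ using \eqref{na-eq-VMass}, bound the numerator by a tube-volume estimate of order $h\varepsilon^2(1+V(\varepsilon))$, and let the Boltzmann--Grad scaling $N\varepsilon^2=1$ turn $N^2\varepsilon^4h^2$ into $h^2$. The structure and all the key inputs match.

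One concrete caveat about your tube estimate. You bound the admissible $w$ by the annulus $\{\varepsilon\le|w|\le\varepsilon+h|v(\tau)-\bar v|\}$, whose volume is $4\pi\varepsilon^2 h|u|+4\pi\varepsilon h^2|u|^2+\tfrac43\pi h^3|u|^3$ with $|u|=|v(\tau)-\bar v|$. Your ``$+O(h^2)$'' after integrating against $\bar g$ therefore requires $\int\bar g(\bar v)|\bar v|^3\,\mathrm{d}\bar v<\infty$, which is not guaranteed by the background-admissibility assumptions \eqref{na-eq-gl1assmp}--\eqref{na-eq-glinf} (only second moments and pointwise decay like $(1+|v|)^{-1}$ are assumed); if that third moment diverges, your bound is vacuous for every $h>0$. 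The paper avoids this by observing that, because of the incoming condition $(v(t')-\bar v)\cdot\nu'>0$ in the definition of $W_{t,h}^\varepsilon(\Phi)$, the set of admissible $\bar x$ for fixed $\bar v$ is (contained in) a swept cylinder of cross-section $\pi\varepsilon^2$ and length exactly $h|v(\tau)-\bar v|$, so only the first moment of $\bar g$ is needed and no $O(h^2)$ remainder appears; for the same reason your worry about hemispherical caps is already resolved by the incoming condition rather than by the non-overlap factor, and indeed the paper simply drops $\mathbbm{1}_t^\varepsilon[\Phi]$ from the numerator. Your argument is repaired either by switching to that cylinder bound or by taking the minimum of your annulus bound (good for $h|u|\le\varepsilon$) and the capsule bound $\pi\varepsilon^2h|u|+\tfrac43\pi\varepsilon^3$ (good for $h|u|>\varepsilon$), both of which give $C\varepsilon^2h|u|$ uniformly in $\bar v$.
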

\begin{proof}
We first prove \eqref{na-eq-2cpos}. Since $\Phi$ is a good tree, so in particular is re-collision free, and we are conditioning on $\Phi$ occurring at time $t$, we know that for $1\leq j \leq n$, $\omega _j \notin W_{t,h}^\varepsilon(\Phi)$ (since if this was not the case there would be a re-collision). Hence by the inclusion exclusion principle and the independence of the initial distribution of the background particles,
\begin{align} \label{na-eq-pthpos}
\hat{P}_t^\varepsilon( \# (\omega \cap W_{t,h}^\varepsilon(\Phi) ) \geq 2 \, | \, \Phi ) &  \leq \sum_{n+1 \leq i < j \leq N }  \hat{P}_t^\varepsilon( \omega_i \in W_{t,h}^\varepsilon(\Phi) \textrm{ and } \omega_j \in W_{t,h}^\varepsilon(\Phi)  \, | \, \Phi ) \nonumber \\
& \leq N^2 \hat{P}_t^\varepsilon( \omega_N \in W_{t,h}^\varepsilon(\Phi) \, |\, \Phi )^2.
\end{align}
We note that,
\[ \hat{P}_t^\varepsilon( \omega_N \in W_{t,h}^\varepsilon(\Phi) \, |\, \Phi ) = I_{t,h}^\varepsilon(\Phi). \]
We now bound the numerator and denominator of $I_{t,h}^\varepsilon(\Phi)$. Firstly, for a fixed $\bar{v}$, the set of  points $\bar{x}$ such that $(\bar{x},\bar{v}) \in W_{h,t}^\varepsilon(\Phi)$ is a cylinder of radius $\varepsilon$ and length $\int_t^{t+h} |v(s) - \bar{v}|\, \mathrm{d}s$. Hence, since $\Phi \in \mathcal{G}(\varepsilon)$,
\begin{align} \label{na-eq-ihtnum}
\int_{U \times \mathbb{R}^3} &  g_0(\bar{x},\bar{v}) \mathbbm{1}_{W_{t,h}^\varepsilon(\Phi)}(\bar{x},\bar{v}) \mathbbm{1}_{t}^\varepsilon(\Phi)(\bar{x},\bar{v}) \, \mathrm{d}\bar{x} \, \mathrm{d}\bar{v} \leq \int_{U \times \mathbb{R}^3}   \bar{g}(\bar{v}) \mathbbm{1}_{W_{t,h}^\varepsilon(\Phi)}(\bar{x},\bar{v})  \, \mathrm{d}\bar{x} \, \mathrm{d}\bar{v} \nonumber  \\
& = \pi \varepsilon^2 \int_{ \mathbb{R}^3}   \bar{g}(\bar{v}) \int_t^{t+h} |v(s) - \bar{v}|\, \mathrm{d}s  \, \mathrm{d}\bar{v} \nonumber  \leq h \pi \varepsilon^2 \int_{ \mathbb{R}^3}   \bar{g}(\bar{v}) (V(\varepsilon) + |\bar{v}|)  \, \mathrm{d}\bar{v} \nonumber \\
& \leq h \pi \varepsilon^2 M_g (V(\varepsilon) + 1) .
\end{align}
Now turning to the denominator, we note first that
\begin{align*}
\int_{U \times \mathbb{R}^3} g_0(\bar{x},\bar{v})  \mathbbm{1}_{t}^\varepsilon(\Phi)(\bar{x},\bar{v}) \, \mathrm{d}\bar{x} \, \mathrm{d}\bar{v} & = \int_{U \times \mathbb{R}^3} g_0(\bar{x},\bar{v}) (1-  \mathbbm{1}_{W_{0,t}^\varepsilon(\Phi)}(\bar{x},\bar{v}) ) \, \mathrm{d}\bar{x} \, \mathrm{d}\bar{v} \\
& = 1- \int_{U \times \mathbb{R}^3} g_0(\bar{x},\bar{v})   \mathbbm{1}_{W_{0,t}^\varepsilon(\Phi)}(\bar{x},\bar{v}) \, \mathrm{d}\bar{x} \, \mathrm{d}\bar{v}.
\end{align*}
By the same estimates as in the numerator, using $t \in [0,T]$ we have
\[ \int_{U \times \mathbb{R}^3} g_0(\bar{x},\bar{v})   \mathbbm{1}_{W_{0,t}^\varepsilon(\Phi)}(\bar{x},\bar{v}) \, \mathrm{d}\bar{x} \, \mathrm{d}\bar{v}  \leq \varepsilon^2 T M_g (V(\varepsilon) + 1).  \]
So by \eqref{na-eq-VMass} we have that for $\varepsilon$ sufficiently small this is less than $1/2$. Hence
\[ \int_{U \times \mathbb{R}^3} g_0(\bar{x},\bar{v})  \mathbbm{1}_{t}^\varepsilon(\Phi)(\bar{x},\bar{v}) \, \mathrm{d}\bar{x} \, \mathrm{d}\bar{v}  \geq 1/2. \]
Combining this and \eqref{na-eq-ihtnum} we have that for $\varepsilon$ sufficiently small,
\begin{equation}\label{na-eq-ihtbound}
 I_{t,h}^\varepsilon(\Phi) \leq 2h \pi \varepsilon^2 M_g (V(\varepsilon) + 1).
\end{equation}
Hence substituting this into \eqref{na-eq-pthpos}, and using that in the Boltzmann-Grad scaling $N\varepsilon^2 = 1$,
\begin{align*}
\hat{P}_t^\varepsilon( \# (\omega \cap W_{t,h}^\varepsilon(\Phi) ) \geq 2 \, | \, \Phi ) &  \leq N^2 \varepsilon^4 h^2 \pi ^2 M_g^2 (V(\varepsilon) + 1)^2  = h^2 \pi ^2 M_g^2 (V(\varepsilon) + 1)^2.
\end{align*}
Diving by $h$ and taking the limit $h\downarrow 0$ gives \eqref{na-eq-2cpos}. For \eqref{na-eq-2cneg} we use the same argument to see that,
\begin{align*}
\hat{P}_{t-h}^\varepsilon( \# (\omega \cap W_{t-h,h}^\varepsilon(\Phi) ) \geq 2 \, | \, \Phi ) & \leq N^2 \hat{P}_{t-h}^\varepsilon( \omega_N \in W_{t-h,h}^\varepsilon(\Phi) \, |\, \Phi )^2 = N^2 I_{t-h,h}^\varepsilon(\Phi)^2.
\end{align*}
We can now employ a similar approach to show that for $\varepsilon$ sufficiently small, after diving by $h$, this converges to zero as $h\downarrow 0$.
\end{proof}

\begin{deff}
For $\Phi \in \mathcal{MT}$, $t>\tau$, $h>0$ and $\varepsilon > 0$ define
\[ B_{t,h}^\varepsilon(\Phi) : = \{ (\bar{x},\bar{v}) \in U \times \mathbb{R}^3 : \mathbbm{1}_t^\varepsilon[\Phi] (\bar{x},\bar{v}) =0 \textrm{ and } \mathbbm{1}_{W_{t,h}^\varepsilon(\Phi)} (\bar{x},\bar{v}) =1 \} \]
That is, $B_{t,h}^a\varepsilon(\Phi)$ is the set of all initial positions such that, if a background particles starts at $(\bar{x},\bar{v})$ and travels with constant velocity  (even if it meets the tagged particle) it collides with the tagged particle once in $(0,t)$ and again in $(t,t+h)$.
\end{deff}

\begin{lem} \label{na-lem-recoll}
For $\varepsilon$ sufficiently small, $\Phi \in \mathcal{G}(\varepsilon)$ and $t>\tau$ there exists a $\hat{C}(\varepsilon) >0$ depending on $t$ and $\Phi$ with $\hat{C}(\varepsilon) =o(1)$ as $\varepsilon$ tends to zero, such that
\begin{align*}
\int_{B_{t,h}^\varepsilon(\Phi)} g_0(\bar{x},\bar{v}) \, \mathrm{d}\bar{x} \, \mathrm{d}\bar{v} = \int_{B_{t-h,h}^\varepsilon(\Phi)} g_0(\bar{x},\bar{v}) \, \mathrm{d}\bar{x} \, \mathrm{d}\bar{v} = h\varepsilon^2 \hat{C}(\varepsilon).
\end{align*}
\end{lem}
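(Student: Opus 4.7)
The integrand is supported on hypothetical backgrounds whose straight-line trajectories (i) enter $B_\varepsilon(x(s))$ for some $s\in(0,t)$ (the indicator $\mathbbm{1}_t^\varepsilon[\Phi]=0$) and (ii) produce a fresh entry into $B_\varepsilon(x(s))$ at some $s\in(t,t+h)$ (membership in $W_{t,h}^\varepsilon$); this is the \emph{re-collision} set. My starting point will be the hard-sphere change of variables $(\bar x,\bar v)\leftrightarrow (t',\nu',\bar v)$ with $\bar x=x(t')+\varepsilon\nu'-t'\bar v$, which was already used in the proof of lemma~\ref{na-lem-2coll} and gives the measure element $\varepsilon^2[(v(t')-\bar v)\cdot\nu']_+\,dt'\,d\nu'\,d\bar v$ on $W_{t,h}^\varepsilon$. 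A short convexity argument then shows that the ``witness time'' $s'\in(0,t)$ for condition (i) cannot lie on the final free-flight segment $[\tau,t']$ of the tagged particle: indeed on that piece the relative displacement $\delta(s)=\varepsilon\nu'+(s-t')(\bar v-v(\tau))$ gives $|\delta(s)|^2$ quadratic in $s$ with the two roots $t'$ and $t'+2\varepsilon\nu'\!\cdot\!(v(\tau)-\bar v)/|v(\tau)-\bar v|^2>t'$, so $|\delta(s)|\geq\varepsilon$ for every $s<t'$ on that segment. Hence the witness must fall inside one of the at most $n(\Phi)\leq M(\varepsilon)$ earlier velocity segments of the tagged particle.

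\textbf{Segment-by-segment bound and summation.} Fix a segment $j$ of the tagged trajectory on which $x(s)=x(\tau_j)+(s-\tau_j)u_j$ for $s\in[\tau_{j-1},\tau_j]$, and write the condition $|\bar x+s\bar v-x(s)|\leq\varepsilon$ in the $(t',\nu',\bar v)$-coordinates as
\[
|\varepsilon\nu'+Y_j(t',\bar v)+\xi(\bar v-u_j)|\leq\varepsilon\quad\text{for some }\xi\in[\tau_{j-1}-\tau_j,0],
\]
with $Y_j(t',\bar v)=x(t')-x(\tau_j)-(t'-\tau_j)\bar v$. Projecting onto the hyperplane orthogonal to $\bar v-u_j$, the perpendicular component of $\nu'$ is forced to lie in a disc of radius $O(\varepsilon/|Y_{j,\perp}|)$ around a specific point; equivalently $\nu'$ is restricted to a spherical cap of area $O(\varepsilon/((t'-\tau_j)|p_j(t',\bar v)-\bar v|_\perp))$, where $p_j=(x(t')-x(\tau_j))/(t'-\tau_j)$. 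After integrating $[(v(t')-\bar v)\cdot\nu']_+$ over this restricted cap and the weight $\bar g(\bar v)(1+|\bar v|)$ (controlled by $M_g$ via \eqref{na-eq-gl1assmp}), we obtain a single-segment bound of order $h\varepsilon^3$, with a constant that stays uniform once one excises a $\bar v$-neighbourhood of the line through $u_j$ and $p_j$ (harmless because $\mathcal V(\Phi)\leq V(\varepsilon)$ and $\Phi$ is non-grazing, so the denominator $|(\bar v-u_j)_\perp|$ is bounded below on the relevant set). Summing over $j=1,\ldots,n(\Phi)$ yields
\[
\int_{B_{t,h}^\varepsilon(\Phi)} g_0(\bar x,\bar v)\,d\bar x\,d\bar v \;\leq\; h\varepsilon^2\cdot\bigl(C(\Phi,T)\,\varepsilon\, n(\Phi)\bigr) \;=\; h\varepsilon^2\,\hat C(\varepsilon),
\]
and the good-tree assumption $n(\Phi)\leq M(\varepsilon)\leq\varepsilon^{-1/2}$ gives $\hat C(\varepsilon)=O(C(\Phi,T)\sqrt\varepsilon)=o(1)$. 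The bound for $B_{t-h,h}^\varepsilon(\Phi)$ is obtained by the same argument, swapping the forward window $(t,t+h)$ for the backward window $(t-h,t)$ and using $t>\tau$ to guarantee that the past witness time and the collision time still lie on distinct velocity segments. One then takes $\hat C(\varepsilon)$ to be the common bound.

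\textbf{Main obstacle.} The delicate point is step two: the angular restriction on $\nu'$ is only genuinely of order $\varepsilon$ if $|\bar v-u_j|$ and the transverse component $|(\bar v-u_j)_\perp|$ (with respect to the direction joining consecutive tagged-particle positions) are bounded below. Near-degenerate situations --- where $\bar v$ is close to the relevant $u_j$, or where the two cylinder axes $\bar v-u_j$ and $\bar v-v(\tau)$ become nearly parallel (making the Steinmetz intersection long) --- must be handled by removing a small $\bar v$-set whose $\bar g$-mass is $o(1)$, exploiting the uniform velocity cap $V(\varepsilon)$, the first-moment assumption \eqref{na-eq-gl1assmp} on $\bar g$, and the non-grazing condition built into $\mathcal G(\varepsilon)$. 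Tracking how these local losses propagate into $\hat C(\varepsilon)$ while preserving $\hat C(\varepsilon)=o(1)$ is the most technical part of the argument.
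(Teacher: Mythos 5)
The paper's own proof of this lemma is a two-line reduction: bound $g_0(\bar x,\bar v)\leq\bar g(\bar v)$ pointwise and then invoke the spatially homogeneous re-collision estimate of the earlier paper (Lemma 4.13 of \cite{matt16}). Your proposal instead attempts to reprove that geometric estimate from scratch. The architecture you describe is the right one and matches what the cited lemma does: the reduction to $\bar g$ is present (you integrate against $\bar g(\bar v)(1+|\bar v|)$), the change of variables $(\bar x,\bar v)\leftrightarrow(t',\nu',\bar v)$ producing the factor $\varepsilon^2[(v(t')-\bar v)\cdot\nu']_+$ is correct, and your convexity computation showing that the witness time for $\mathbbm{1}_t^\varepsilon[\Phi]=0$ cannot lie on the final free-flight segment is sound (the two roots of $|\delta(s)|^2=\varepsilon^2$ are $t'$ and $t'+2\varepsilon\nu'\cdot(v(\tau)-\bar v)/|v(\tau)-\bar v|^2>t'$, so $|\delta(s)|\geq\varepsilon$ for $s\leq t'$ on that segment). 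The summation over at most $M(\varepsilon)\leq\varepsilon^{-1/2}$ earlier segments, combined with any per-segment gain $\varepsilon^\beta$ with $\beta>1/2$, does deliver $\hat C(\varepsilon)=o(1)$, which is all the lemma needs.

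The genuine gap is the one you flag yourself but do not close: the per-segment bound of order $h\varepsilon^3$ is asserted only "once one excises a $\bar v$-neighbourhood" of the degenerate directions, and neither the size of that excision nor its contribution to $\hat C(\varepsilon)$ is quantified. This is not a cosmetic omission --- it is the entire content of the estimate. When $\bar v$ is close to $u_j$, or when the back-traced line is nearly parallel to segment $j$, the constraint $|\varepsilon\nu'+Y_j+\xi(\bar v-u_j)|\leq\varepsilon$ can permit a set of $\nu'$ of measure $O(1)$ rather than $O(\varepsilon)$, so the claimed cap-area bound fails there. One must choose an explicit excision radius $\rho=\rho(\varepsilon)$, bound the excised set's weighted $\bar g$-mass using \eqref{na-eq-glinf} and the velocity cap $V(\varepsilon)$ (a tube of radius $\rho$ in $B_{V(\varepsilon)}(0)$ has mass $O(\rho^2V(\varepsilon)^2)$ under the stated moment assumptions), multiply by $M(\varepsilon)$ segments, and verify that both the excised contribution and the main term tend to zero simultaneously under \eqref{na-eq-VMass}; this balancing is where the actual exponents come from and it is absent. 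Note also that the non-grazing condition in $\mathcal G(\varepsilon)$ constrains the collisions recorded in $\Phi$, not the free variables $(\nu',\bar v)$ of the hypothetical re-colliding particle, so it cannot be invoked to lower-bound $|(\bar v-u_j)_\perp|$ as you suggest; the degeneracy must be handled purely by excision. As written, the proposal is a correct plan with the decisive quantitative step left open, whereas the paper discharges exactly this step by citation.
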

\begin{proof}
Using that for any $\Omega \subset U \times \mathbb{R}^3$ measurable
\[ \int_{\Omega} g_0(\bar{x},\bar{v})\, \mathrm{d}\bar{x} \, \mathrm{d}\bar{v} \leq \int_{\Omega} \bar{g}(\bar{v})\, \mathrm{d}\bar{x} \, \mathrm{d}\bar{v}, \]
we can repeat the proof of \cite[lemma 4.13]{matt16}.
\end{proof}
\begin{lem}
For $\varepsilon >0$ sufficiently small, $\Phi \in \mathcal{G}(\varepsilon)$ and $t>\tau$
\begin{align} \label{na-eq-0cpos}
\lim_{h \downarrow 0} \frac{1}{h}\hat{P}_t^\varepsilon  ( \# (\omega \cap W_{t,h}^\varepsilon(\Phi) ) > 0 \, | \, \Phi )
&=  (1-\gamma^\varepsilon(t)) \frac{\int_{\mathbb{S}^{2}}  \int_{\mathbb{R}^3}   g_t(x(t)+\varepsilon\nu,\bar{v})[(v(\tau)-\bar{v})\cdot \nu]_+ \, \mathrm{d}\bar{v} \, \mathrm{d}\nu - \hat{C}(\varepsilon)}{\int_{ U \times \mathbb{R}^3}  g_0(\bar{x},\bar{v}) \mathbbm{1}_t^\varepsilon[\Phi] (\bar{x},\bar{v}) \, \mathrm{d}\bar{x} \, \mathrm{d}\bar{v}  }.
\\ \label{na-eq-0cneg}
\lim_{h \downarrow 0} \frac{1}{h}\hat{P}_{t-h}^\varepsilon  ( \#( \omega \cap W_{t-h,h}^\varepsilon(\Phi) ) >0 \, |\, \Phi )
& = (1-\gamma^\varepsilon(t)) \frac{\int_{\mathbb{S}^{2}}  \int_{\mathbb{R}^3}   g_t(x(t)+\varepsilon\nu,\bar{v})[(v(\tau)-\bar{v})\cdot \nu]_+ \, \mathrm{d}\bar{v} \, \mathrm{d}\nu - \hat{C}(\varepsilon)}{\int_{ U \times \mathbb{R}^3}  g_0(\bar{x},\bar{v}) \mathbbm{1}_t^\varepsilon[\Phi] (\bar{x},\bar{v}) \, \mathrm{d}\bar{x} \, \mathrm{d}\bar{v}  }.
\end{align}
\end{lem}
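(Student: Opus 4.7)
The plan for both limits is the same three-step reduction: (i) replace the union event by its first moment via the union bound, using lemma~\ref{na-lem-2coll} to discard the two-particle contribution; (ii) compute that first moment by a Jacobian change of variables that parametrises the collision tube $W_{t,h}^\varepsilon(\Phi)$ in terms of the collision time, collision parameter, and incoming velocity; (iii) insert the indicator $\mathbbm{1}_t^\varepsilon[\Phi]$ by subtracting the re-collision overlap controlled by lemma~\ref{na-lem-recoll}.

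For step (i), writing $A_i := \{\omega_i \in W_{t,h}^\varepsilon(\Phi)\}$, re-collision freeness of $\Phi \in \mathcal{G}(\varepsilon)$ excludes $i \leq n(\Phi)$, and the remaining initial data are independent conditionally on $\Phi$ with $\hat{P}_t^\varepsilon(A_i \mid \Phi) = I_{t,h}^\varepsilon(\Phi)$. By inclusion--exclusion,
\[
\Big|\hat{P}_t^\varepsilon\big(\textstyle\bigcup_{i>n(\Phi)} A_i \,\big|\, \Phi\big) - (N-n(\Phi))\, I_{t,h}^\varepsilon(\Phi)\Big| \leq \binom{N-n(\Phi)}{2} I_{t,h}^\varepsilon(\Phi)^2,
\]
and the bound \eqref{na-eq-ihtbound} combined with $N\varepsilon^2 = 1$ shows this error is $O(h^2)$, which vanishes after dividing by $h$.

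For step (ii), I would change variables from $(\bar{x},\bar{v})$ to $(t',\nu,\bar{v}) \in (t,t+h)\times\mathbb{S}^2\times\mathbb{R}^3$ via $\bar{x} = x(t') + \varepsilon\nu - t'\bar{v}$. Since $t > \tau$ and $h$ is small, there is no collision of the tagged particle in $(t,t+h)$, so $v(t') = v(\tau)$ and $x(t')$ evolves linearly; the Jacobian produces the standard surface element $\varepsilon^2 [(v(\tau)-\bar{v})\cdot\nu]_+ \, dt'\, d\nu$, and the identity $g_0(\bar{x},\bar{v}) = g_{t'}(x(t')+\varepsilon\nu,\bar{v})$ from the definition of $g_t$ gives
\[
\int_{W_{t,h}^\varepsilon(\Phi)} g_0(\bar{x},\bar{v})\, d\bar{x}\, d\bar{v}
= \varepsilon^2 \int_t^{t+h}\!\int_{\mathbb{S}^2}\!\int_{\mathbb{R}^3} g_{t'}(x(t')+\varepsilon\nu,\bar{v})[(v(\tau)-\bar{v})\cdot\nu]_+\, d\bar{v}\, d\nu\, dt'.
\]
Step (iii) then inserts $\mathbbm{1}_t^\varepsilon[\Phi]$ by subtracting $\int_{B_{t,h}^\varepsilon(\Phi)} g_0$, which by lemma~\ref{na-lem-recoll} equals $h\varepsilon^2 \hat{C}(\varepsilon)$.

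Dividing by $h$ and using continuity of the integrand in $t'$ yields $\varepsilon^2$ times the bracketed expression in the statement, while the denominator of $I_{t,h}^\varepsilon(\Phi)$ is bounded below by $1/2$ uniformly as already checked in the proof of lemma~\ref{na-lem-2coll}. Multiplying by $N-n(\Phi)$ and using $(N-n(\Phi))\varepsilon^2 = 1 - n(\Phi)\varepsilon^2 = 1 - \gamma^\varepsilon(t)$ for $t > \tau$ produces \eqref{na-eq-0cpos}. For \eqref{na-eq-0cneg} the identical argument applies with $\hat{P}_{t-h}^\varepsilon$ in place of $\hat{P}_t^\varepsilon$ and the integration range $(t-h,t)$; for $h$ small enough that $t-h > \tau$ we have $\gamma^\varepsilon(t-h) = \gamma^\varepsilon(t)$, and continuity as $t' \uparrow t$ recovers the same limit. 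The main obstacle is the Jacobian computation in step (ii) and verifying that the two-particle error term survives division by $h$; both are essentially packaged by the preceding lemmas, so the remaining work is careful bookkeeping.
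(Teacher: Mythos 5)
Your proposal is correct and follows essentially the same route as the paper: reduce to the first moment via the two-particle bound of lemma~\ref{na-lem-2coll}, compute $\lim_h \frac{1}{h}(N-n)I_{t,h}^\varepsilon(\Phi)$ by splitting the numerator of $I_{t,h}^\varepsilon(\Phi)$ into the $W$-tube contribution and the re-collision overlap $B_{t,h}^\varepsilon(\Phi)$ handled by lemma~\ref{na-lem-recoll}, then conclude via $(N-n)\varepsilon^2 = 1-\gamma^\varepsilon(t)$. The only variation is in computing $\lim_h \frac{1}{h}\int_{W_{t,h}^\varepsilon(\Phi)} g_0$: you propose the explicit change of variables $\bar{x} = x(t')+\varepsilon\nu - t'\bar{v}$ with Jacobian $\varepsilon^2[(v(\tau)-\bar{v})\cdot\nu]_+$, whereas the paper does a slicing argument (cylinder of radius $\varepsilon$ and length $h|v(\tau)-\bar{v}|$ for each fixed $\bar{v}$) and then passes to the limit parametrising $W_{t,0}^\varepsilon(\Phi)$ as the sphere $\{x(t)+\varepsilon\nu - t\bar{v}\}$; both are standard ways of evaluating the same collision-tube integral and give the identical expression $\int_{\mathbb{S}^2}\int_{\mathbb{R}^3} g_t(x(t)+\varepsilon\nu,\bar{v})[(v(\tau)-\bar{v})\cdot\nu]_+\,\mathrm{d}\bar{v}\,\mathrm{d}\nu$.
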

\begin{proof}
We first show \eqref{na-eq-0cpos}. By \eqref{na-eq-2cpos} we only need to calculate
\[ \lim_{h \downarrow 0} \frac{1}{h}\hat{P}_t^\varepsilon  ( \# (\omega \cap W_{t,h}^\varepsilon(\Phi) ) = 1 \, | \, \Phi ). \]
Now by a similar argument to the proof of lemma~\ref{na-lem-2coll} we have
\begin{align*}
\hat{P}_t^\varepsilon  ( \# & (\omega \cap W_{t,h}^\varepsilon(\Phi) ) = 1 \, | \, \Phi )  \\
& = \sum_{i=n+1}^N \hat{P}_t^\varepsilon  ( \omega_i \in W_{t,h}^\varepsilon(\Phi) \textrm{ and for } n+1\leq j \leq N, j \neq i, \omega_j \notin W_{t,h}^\varepsilon(\Phi)  \, | \, \Phi ) \\
& = (N-n) \hat{P}_t^\varepsilon  ( \omega_N \in W_{t,h}^\varepsilon(\Phi)   \, | \, \Phi )\hat{P}_t^\varepsilon  ( \omega_{N-1} \notin W_{t,h}^\varepsilon(\Phi)   \, | \, \Phi )^{N-n-1} \\
& = (N-n)I_{t,h}^\varepsilon(\Phi) (1-I_{t,h}^\varepsilon(\Phi) )^{N-n-1}
 = (N-n) \sum_{j=0}^{N-n-1} (-1)^j\binom{N-n-1}{j}I_{t,h}^\varepsilon(\Phi)^{j+1}.
\end{align*}
By \eqref{na-eq-ihtbound} it follows that
\begin{align} \label{na-eq-pth1coll}
\lim_{h \downarrow 0} \frac{1}{h}\hat{P}_t^\varepsilon  ( \#  (\omega \cap W_{t,h}^\varepsilon(\Phi) ) = 1 \, | \, \Phi )
& = \lim_{h \downarrow 0} \frac{1}{h} (N-n) \sum_{j=0}^{N-n-1} (-1)^j\binom{N-n-1}{j}I_{t,h}^\varepsilon(\Phi)^{j+1}  \nonumber \\
& = \lim_{h \downarrow 0} \frac{1}{h} (N-n) I_{t,h}^\varepsilon(\Phi).
\end{align}
We compute this limit by noting that
\begin{align*}
I_{t,h}^\varepsilon(\Phi) & =  \frac{ \int_{U \times \mathbb{R}^3} g_0(\bar{x},\bar{v}) \mathbbm{1}_{W_{t,h}^\varepsilon(\Phi)}(\bar{x},\bar{v}) \mathbbm{1}_{t}^\varepsilon(\Phi)(\bar{x},\bar{v}) \, \mathrm{d}\bar{x} \, \mathrm{d}\bar{v} }{ \int_{U \times \mathbb{R}^3} g_0(\bar{x},\bar{v})  \mathbbm{1}_{t}^\varepsilon(\Phi)(\bar{x},\bar{v}) \, \mathrm{d}\bar{x} \, \mathrm{d}\bar{v}} \\
&  =  \frac{ \int_{U \times \mathbb{R}^3} g_0(\bar{x},\bar{v}) \mathbbm{1}_{W_{t,h}^\varepsilon(\Phi)}(\bar{x},\bar{v})  \, \mathrm{d}\bar{x} \, \mathrm{d}\bar{v}}{ \int_{U \times \mathbb{R}^3} g_0(\bar{x},\bar{v})  \mathbbm{1}_{t}^\varepsilon(\Phi)(\bar{x},\bar{v}) \, \mathrm{d}\bar{x} \, \mathrm{d}\bar{v}} - \frac{\int_{B_{t,h}^\varepsilon(\Phi)} g_0(\bar{x},\bar{v})   \, \mathrm{d}\bar{x} \, \mathrm{d}\bar{v}}{ \int_{U \times \mathbb{R}^3} g_0(\bar{x},\bar{v})  \mathbbm{1}_{t}^\varepsilon(\Phi)(\bar{x},\bar{v}) \, \mathrm{d}\bar{x} \, \mathrm{d}\bar{v}} .
\end{align*}
For the first term we note that since $t>\tau$, for any $\bar{v} \in \mathbb{R}^3$
\begin{align*}
\int_{U} \mathbbm{1}_{W_{t,h}^\varepsilon(\Phi)}(\bar{x},\bar{v})   \, \mathrm{d}\bar{x} \,   = \pi \varepsilon^2 \int_t^{t+h}|v(s) - \bar{v}| \, \mathrm{d}s  = \pi \varepsilon^2 h|v(\tau) - \bar{v}|.
\end{align*}
and for $\bar{v} \in \mathbb{R}^3$ and $t>\tau$,
\[ \{ (\bar{x},\bar{v}) \in W_{t,0}^\varepsilon(\Phi) \}= \{ (x(t)+\varepsilon\nu - t \bar{v},\bar{v}) : \nu \in \mathbb{S}^2 \textrm{ and } (v(\tau) - \bar{v})\cdot\nu> 0   \}.  \]
Hence for almost all $\Phi \in \mathcal{G}(\varepsilon)$,
\begin{align}
&\lim_{h \downarrow 0}  \frac{1}{h}(N-n)  \int_{U \times \mathbb{R}^3} g_0(\bar{x},\bar{v}) \mathbbm{1}_{W_{t,h}^\varepsilon(\Phi)}(\bar{x},\bar{v})  \, \mathrm{d}\bar{x} \, \mathrm{d}\bar{v}  = \lim_{h \downarrow 0} \frac{1}{h}(N-n)\int_{W_{t,h}^\varepsilon(\Phi)} g_0(\bar{x},\bar{v}) \, \mathrm{d}\bar{x} \, \mathrm{d}\bar{v}  \nonumber \\
 =& (N-n) \pi \varepsilon^2  \int_{\mathbb{R}^3} \int_{\mathbb{S}^2 } g_0(x(t)+\varepsilon\nu-t\bar{v},\bar{v})|v(\tau) - \bar{v}| \, \mathrm{d}\nu \, \mathrm{d}\bar{v} \nonumber \\
  =& (1-\gamma^\varepsilon(t))\int_{\mathbb{S}^2 } \int_{\mathbb{R}^3}  g_t(x(t)+\varepsilon\nu,\bar{v})[(v(\tau) - \bar{v})\cdot \nu]_+ \,  \mathrm{d}\bar{v} \, \mathrm{d}\nu.\label{na-eq-ihtlim1}
\end{align}
For the second term we have by lemma~\ref{na-lem-recoll},
\begin{align} \label{na-eq-ihtlim2}
\lim_{h \downarrow 0} \frac{1}{h} &  (N-n)   \int_{B_{t,h}^\varepsilon(\Phi)} g_0(\bar{x},\bar{v})   \, \mathrm{d}\bar{x} \, \mathrm{d}\bar{v}  = (N-n)   \varepsilon^2 \hat{C}(\varepsilon)   = (1-\gamma^\varepsilon(t))  \hat{C}(\varepsilon).
\end{align}
Subtracting  \eqref{na-eq-ihtlim2} from \eqref{na-eq-ihtlim1} and then dividing by $\int_{U \times \mathbb{R}^3} g_0(\bar{x},\bar{v})  \mathbbm{1}_{t}^\varepsilon(\Phi)(\bar{x},\bar{v}) \, \mathrm{d}\bar{x} \, \mathrm{d}\bar{v}$ and substituting into \eqref{na-eq-pth1coll} gives,
\begin{align*}
\lim_{h \downarrow 0} \frac{1}{h}\hat{P}_t^\varepsilon  ( \# & (\omega \cap W_{t,h}^\varepsilon(\Phi) )% \\&
= (1-\gamma^\varepsilon(t)) \frac{\int_{\mathbb{S}^{2}}  \int_{\mathbb{R}^3}   g_t(x(t)+\varepsilon\nu,\bar{v})[(v(\tau)-\bar{v})\cdot \nu]_+ \, \mathrm{d}\bar{v} \, \mathrm{d}\nu - \hat{C}(\varepsilon)}{\int_{ U \times \mathbb{R}^3}  g_0(\bar{x},\bar{v}) \mathbbm{1}_t^\varepsilon[\Phi] (\bar{x},\bar{v}) \, \mathrm{d}\bar{x} \, \mathrm{d}\bar{v}  },
\end{align*}
as required. For \eqref{na-eq-0cneg} we use \eqref{na-eq-2cneg} and take $h>0$ sufficiently small so that $t-h>\tau$ and repeat the same argument.
\end{proof}
\begin{lem} \label{na-lem-hatpcts}
For $\varepsilon > 0$ sufficiently small and $\Phi \in \mathcal{G}(\varepsilon)$,  $\hat{P}^\varepsilon(\Phi): (\tau,T] \to [0,\infty)$ is continuous with respect to $t$. \end{lem}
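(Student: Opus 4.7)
The plan is to exploit the Markov/independence structure underlying the empirical dynamics. For $\Phi\in\mathcal{G}(\varepsilon)$ and $t>\tau$ the tree is completely specified up to time $\tau$, so for any $t_1,t_2\in(\tau,T]$ with $t_1<t_2$ the density satisfies
\begin{equation*}
\hat{P}^\varepsilon_{t_2}(\Phi)=\hat{P}^\varepsilon_{t_1}(\Phi)\cdot\hat{P}^\varepsilon\bigl(\#(\omega\cap W_{t_1,t_2-t_1}^\varepsilon(\Phi))=0\,\bigm|\,\Phi\bigr),
\end{equation*}
since no additional collisions are recorded in $\Phi$, and the only way the tree remains valid on $[\tau,t]$ is that none of the remaining $N-n$ background particles enters the collision tube $W^\varepsilon_{t_1,t_2-t_1}(\Phi)$. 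This factorisation will be established by the same type of computation as in Lemma~\ref{na-lem-empgain}, conditioning successively on the initial positions and velocities of the background particles not involved in $\Phi$ (using their mutual independence) and integrating out those that avoid the tube.

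Granted this representation, continuity reduces to showing
\begin{equation*}
\lim_{h\downarrow 0}\hat{P}^\varepsilon\bigl(\#(\omega\cap W_{t,h}^\varepsilon(\Phi))\geq 1\,\bigm|\,\Phi\bigr)=0
\qquad\textrm{and}\qquad
\lim_{h\downarrow 0}\hat{P}^\varepsilon\bigl(\#(\omega\cap W_{t-h,h}^\varepsilon(\Phi))\geq 1\,\bigm|\,\Phi\bigr)=0.
\end{equation*}
Both estimates are already implicit in the preceding lemmas: the bound \eqref{na-eq-ihtbound} inside the proof of Lemma~\ref{na-lem-2coll} gives $I_{t,h}^\varepsilon(\Phi)\leq 2h\pi\varepsilon^2 M_g(V(\varepsilon)+1)$, whence, by the union bound and $N\varepsilon^2=1$,
\begin{equation*}
\hat{P}^\varepsilon\bigl(\#(\omega\cap W_{t,h}^\varepsilon(\Phi))\geq 1\,\bigm|\,\Phi\bigr)\leq (N-n)I_{t,h}^\varepsilon(\Phi)\leq 2h\pi M_g(V(\varepsilon)+1),
\end{equation*}
which tends to zero as $h\downarrow 0$ for fixed $\varepsilon$. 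The same argument, together with the requirement $h<t-\tau$, handles the left-hand limit.

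Combining these two ingredients yields $\hat{P}^\varepsilon_{t\pm h}(\Phi)=\hat{P}^\varepsilon_t(\Phi)\bigl(1+o(1)\bigr)$ as $h\downarrow 0$, which is the asserted continuity on $(\tau,T]$. The only delicate point is justifying the factorisation rigorously for the \emph{density} $\hat{P}^\varepsilon_t(\Phi)$ rather than the measure: this is where the absolute continuity of $\hat{P}^\varepsilon_t$ on a neighbourhood of $\Phi\in\mathcal{G}(\varepsilon)$ provided by Lemma~\ref{na-lem-phatabscts} is essential, as it allows us to identify the conditional probability above as a genuine ratio of densities rather than a formal expression, and hence to pass to limits.
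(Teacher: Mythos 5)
Your argument is correct and is essentially the paper's proof: both rest on the factorisation $\hat{P}^\varepsilon_{t+h}(\Phi)=\bigl(1-\hat{P}^\varepsilon_t(\#(\omega\cap W^\varepsilon_{t,h}(\Phi))>0\mid\Phi)\bigr)\hat{P}^\varepsilon_t(\Phi)$ (and its left-sided analogue with $h<t-\tau$), together with the observation that the conditional probability of entering the collision tube in a window of length $h$ is $O(h)$. The only cosmetic difference is that you bound this probability directly via \eqref{na-eq-ihtbound} and a union bound over the $N-n$ uninvolved particles, whereas the paper invokes the limits \eqref{na-eq-0cpos} and \eqref{na-eq-0cneg} already established in the preceding lemma.
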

\begin{proof}
Let $t \in (\tau,T]$. Then for $h>0$,
\begin{equation} \label{na-eq-phatthdecay}
\hat{P}_{t+h}^\varepsilon(\Phi) = (1-\hat{P}_{t}^\varepsilon( \#( \omega \cap W_{t,h}^\varepsilon(\Phi) ) >0 \, |\, \Phi ) )\hat{P}_t^\varepsilon(\Phi).
\end{equation}
Hence by \eqref{na-eq-0cpos},
\begin{align*}
|\hat{P}_{t+h}^\varepsilon(\Phi) - \hat{P}_t^\varepsilon(\Phi)|  & = \hat{P}_{t}^\varepsilon( \#( \omega \cap W_{t,h}^\varepsilon(\Phi) ) >0 \, |\, \Phi ) )\hat{P}_t^\varepsilon(\Phi) \rightarrow 0 \textrm{ as } h \to 0.
\end{align*}
Now let $h >0$ be sufficiently small so that $t-h>\tau$. Then,
\begin{equation}\label{na-eq-phatt-hd}
\hat{P}_{t}^\varepsilon(\Phi) = (1-\hat{P}_{t-h}^\varepsilon( \#( \omega \cap W_{t-h,h}^\varepsilon(\Phi) ) >0 \, |\, \Phi ) )\hat{P}_{t-h}^\varepsilon(\Phi).
\end{equation}
Further by the properties of the particle dynamics we have, since there is a non-negative probability that the tagged particle experiences a collision in the time $(\tau,t-h]$
\begin{equation*}
\hat{P}_{t-h}^\varepsilon(\Phi) \leq \hat{P}_\tau^\varepsilon(\Phi).
\end{equation*}
Hence by \eqref{na-eq-0cneg}
\begin{align*}
|\hat{P}_{t}^\varepsilon(\Phi) - \hat{P}_{t-h}^\varepsilon(\Phi)|  & = \hat{P}_{t-h}^\varepsilon( \#( \omega \cap W_{t-h,h}^\varepsilon(\Phi) ) >0 \, |\, \Phi ) )\hat{P}_{t-h}^\varepsilon(\Phi) \\
& \leq \hat{P}_{t-h}^\varepsilon( \#( \omega \cap W_{t-h,h}^\varepsilon(\Phi) ) >0 \, |\, \Phi ) )\hat{P}_{\tau}^\varepsilon(\Phi) \rightarrow 0 \textrm{ as } h \to 0.
\end{align*}
\end{proof}
We can now prove that the loss term of \eqref{na-eq-emp} holds.
\begin{lem} \label{na-lem-emploss}
For $\varepsilon >0 $ sufficiently small and $\Phi \in \mathcal{G}(\varepsilon)$,  $\hat{P}^\varepsilon(\Phi) : (\tau,T] \to [0,\infty)$ is differentiable and
\[ \partial_t \hat{P}_t^\varepsilon(\Phi) = (1-\gamma^\varepsilon(t))\hat{Q}_t^\varepsilon[\hat{P}_t^\varepsilon](\Phi).  \]
\end{lem}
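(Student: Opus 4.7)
The plan is to derive the differential equation from the recursive identities \eqref{na-eq-phatthdecay} and \eqref{na-eq-phatt-hd} which were established just above in the proof of lemma~\ref{na-lem-hatpcts}. These identities express $\hat{P}_{t+h}^\varepsilon(\Phi)$ and $\hat{P}_t^\varepsilon(\Phi)$ in terms of the probability that at least one background particle (not already in $\Phi$) enters the collision cylinder of the tagged particle during the relevant time window. Note that since $t>\tau$, the gain term $\hat{\mathcal{Q}}_t^{\varepsilon,+}$ vanishes, so it suffices to match the right-hand side with $-(1-\gamma^\varepsilon(t))\hat{\mathcal{Q}}_t^{\varepsilon,-}[\hat{P}_t^\varepsilon](\Phi)$.

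For the right derivative I would rearrange \eqref{na-eq-phatthdecay} as
\[
\frac{\hat{P}_{t+h}^\varepsilon(\Phi)-\hat{P}_t^\varepsilon(\Phi)}{h} = -\,\frac{1}{h}\,\hat{P}_t^\varepsilon\bigl(\#(\omega\cap W_{t,h}^\varepsilon(\Phi))>0 \mid \Phi\bigr)\,\hat{P}_t^\varepsilon(\Phi),
\]
and apply \eqref{na-eq-0cpos} to pass to the limit $h\downarrow 0$. This immediately produces
\[
\partial_t^+\hat{P}_t^\varepsilon(\Phi)
=-(1-\gamma^\varepsilon(t))\,\hat{P}_t^\varepsilon(\Phi)\,\frac{\displaystyle\int_{\mathbb{S}^2}\!\int_{\mathbb{R}^3} g_t(x(t)+\varepsilon\nu,\bar v)[(v(\tau)-\bar v)\cdot\nu]_+\,\mathrm{d}\bar v\,\mathrm{d}\nu-\hat C(\varepsilon)}{\displaystyle\int_{U\times\mathbb{R}^3} g_0(\bar x,\bar v)\,\mathbbm{1}_t^\varepsilon[\Phi](\bar x,\bar v)\,\mathrm{d}\bar x\,\mathrm{d}\bar v},
\]
which is precisely $(1-\gamma^\varepsilon(t))\hat{\mathcal{Q}}_t^\varepsilon[\hat{P}_t^\varepsilon](\Phi)$ by the definition of $\hat{\mathcal{Q}}_t^{\varepsilon,-}$ and the fact that $\hat{\mathcal{Q}}_t^{\varepsilon,+}[\hat{P}_t^\varepsilon](\Phi)=0$ for $t>\tau$.

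For the left derivative I use \eqref{na-eq-phatt-hd} with $h>0$ so small that $t-h>\tau$, which yields
\[
\frac{\hat{P}_t^\varepsilon(\Phi)-\hat{P}_{t-h}^\varepsilon(\Phi)}{h} = -\,\frac{1}{h}\,\hat{P}_{t-h}^\varepsilon\bigl(\#(\omega\cap W_{t-h,h}^\varepsilon(\Phi))>0 \mid \Phi\bigr)\,\hat{P}_{t-h}^\varepsilon(\Phi).
\]
Applying \eqref{na-eq-0cneg} gives the limit of the conditional-probability factor divided by $h$, and by lemma~\ref{na-lem-hatpcts} we have $\hat{P}_{t-h}^\varepsilon(\Phi)\to\hat{P}_t^\varepsilon(\Phi)$. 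Multiplying these two limits yields exactly the same expression as for the right derivative, so $\partial_t^-\hat{P}_t^\varepsilon(\Phi)=\partial_t^+\hat{P}_t^\varepsilon(\Phi)$ and differentiability follows.

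The only mildly delicate point is that \eqref{na-eq-0cpos} and \eqref{na-eq-0cneg} provide the limits of the ratios $\hat{P}_s^\varepsilon(\#(\omega\cap W_{s,h}^\varepsilon)>0\mid\Phi)/h$ rather than the ratios themselves multiplied by $\hat{P}_s^\varepsilon(\Phi)$; but since both factors have finite limits (the second using continuity from lemma~\ref{na-lem-hatpcts}), the product of limits equals the limit of the product and no extra work is needed. I do not anticipate a genuine obstacle — essentially every technical estimate has already been extracted in the preceding lemmas, so this lemma is just a bookkeeping step that combines them.
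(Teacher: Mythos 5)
Your proposal is correct and follows essentially the same route as the paper: both derivatives are obtained from \eqref{na-eq-phatthdecay} and \eqref{na-eq-phatt-hd} combined with the limits \eqref{na-eq-0cpos}, \eqref{na-eq-0cneg} and the continuity from lemma~\ref{na-lem-hatpcts}, and the identification with $(1-\gamma^\varepsilon(t))\hat{\mathcal{Q}}_t^\varepsilon$ uses that the gain term vanishes for $t>\tau$. No gaps.
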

\begin{proof}
By \eqref{na-eq-phatthdecay} and \eqref{na-eq-0cpos}
\begin{align} \label{na-eq-hatpdiff1}
\lim_{h \downarrow 0 } & \frac{1}{h} \bigg( \hat{P}_{t+h}^\varepsilon(\Phi) - \hat{P}_t^\varepsilon(\Phi) \bigg) = \lim_{h \downarrow 0 }  \frac{1}{h}\bigg( \hat{P}_{t}^\varepsilon( \#( \omega \cap W_{t,h}^\varepsilon(\Phi) ) >0 \, |\, \Phi )  )\hat{P}_t^\varepsilon(\Phi)\bigg) \nonumber \\
& = (1-\gamma^\varepsilon(t)) \frac{\int_{\mathbb{S}^{2}}  \int_{\mathbb{R}^3}   g_t(x(t)+\varepsilon\nu,\bar{v})[(v(\tau)-\bar{v})\cdot \nu]_+ \, \mathrm{d}\bar{v} \, \mathrm{d}\nu - \hat{C}(\varepsilon)}{\int_{ U \times \mathbb{R}^3}  g_0(\bar{x},\bar{v}) \mathbbm{1}_t^\varepsilon[\Phi] (\bar{x},\bar{v}) \, \mathrm{d}\bar{x} \, \mathrm{d}\bar{v}  } \hat{P}_t^\varepsilon(\Phi) \nonumber \\
& =  (1-\gamma^\varepsilon(t))\hat{Q}_t^\varepsilon[\hat{P}_t^\varepsilon](\Phi).
\end{align}
Further by \eqref{na-eq-phatt-hd}, \eqref{na-eq-0cneg} and lemma~\ref{na-lem-hatpcts} we have,
\begin{align} \label{na-eq-hatpdiff2}
\lim_{h \downarrow 0 } & \frac{1}{h} \bigg( \hat{P}_{t}^\varepsilon(\Phi) - \hat{P}_{t-h}^\varepsilon(\Phi) \bigg) = \lim_{h \downarrow 0 }  \frac{1}{h} \bigg( \hat{P}_{t-h}^\varepsilon( \#( \omega \cap W_{t-h,h}^\varepsilon(\Phi) ) >0 \, |\, \Phi ) )\hat{P}_{t-h}^\varepsilon(\Phi) \bigg) \nonumber \\
& = (1-\gamma^\varepsilon(t)) \frac{\int_{\mathbb{S}^{2}}  \int_{\mathbb{R}^3}   g_t(x(t)+\varepsilon\nu,\bar{v})[(v(\tau)-\bar{v})\cdot \nu]_+ \, \mathrm{d}\bar{v} \, \mathrm{d}\nu - \hat{C}(\varepsilon)}{\int_{ U \times \mathbb{R}^3}  g_0(\bar{x},\bar{v}) \mathbbm{1}_t^\varepsilon[\Phi] (\bar{x},\bar{v}) \, \mathrm{d}\bar{x} \, \mathrm{d}\bar{v}  } \hat{P}_t^\varepsilon(\Phi) \nonumber \\
& =  (1-\gamma^\varepsilon(t))\hat{Q}_t^\varepsilon[\hat{P}_t^\varepsilon](\Phi).
\end{align}
Combining \eqref{na-eq-hatpdiff1} and \eqref{na-eq-hatpdiff2} proves the result.
\end{proof}

\begin{proof}[Proof of theorem~\ref{na-thm-emp}]
The result now follows by lemmas~\ref{na-lem-phatinitial}, \ref{na-lem-empgain} and \ref{na-lem-emploss}.
\end{proof}

\section{Convergence} \label{na-sec-conv}

We have proved that there exists a solution $P_t^\varepsilon$ to the idealised equation in theorem~\ref{na-thm-id} and we have shown in theorem~\ref{na-thm-emp} that the empirical distribution $\hat{P}_t^\varepsilon$ solves the empirical equation, at least for good trees. We now prove the convergence between $P_t^0$ and $\hat{P}_t^\varepsilon$, which will enable the proof of theorem~\ref{na-thm-main}. This section closely follows \cite[section 5]{matt16} the main difference being that because the initial distribution $g_0$ is now spatially inhomogeneous we take an extra step by comparing $P_t^\varepsilon$ and $\hat{P}_t^\varepsilon$. While there is clear difference in the model with background particles changing velocity when they collide with the tagged particle, this makes only a minor difference to the proof by introducing a further higher order error term.

\subsection{Comparing $P_t^\varepsilon$ and $\hat{P}_t^\varepsilon$}
We now introduce new notation. Recall \eqref{na-eq-deff1phi} and \eqref{na-eq-zetadeff}. For $\varepsilon >0$, $t \in [0,T]$ and $\Phi \in \mathcal{G}(\varepsilon)$,
\begin{align}
\eta_t^\varepsilon(\Phi) & := \int_{U \times \mathbb{R}^3} g_0(\bar{x},\bar{v})(1- \mathbbm{1}_t^\varepsilon[\Phi](\bar{x},\bar{v})) \, \mathrm{d}\bar{x} \, \mathrm{d}\bar{v}, \nonumber \\
 R_t^\varepsilon(\Phi) &: = \zeta^\varepsilon(\Phi)P_t^\varepsilon(\Phi), \nonumber \\
C^\varepsilon(\Phi)& := 2 \sup_{t \in [0,T]} \left\{ \int_{\mathbb{S}^{2}} \int_{\mathbb{R}^3} g_t(x(t)+\varepsilon\nu,\bar{v})[(v(t)-\bar{v})\cdot \nu ]_+ \, \mathrm{d}\bar{v} \, \mathrm{d}\nu \right\}, \label{na-eq-cdeff}  \\
 \rho^{\varepsilon,0}_t (\Phi) &: = \eta_t^\varepsilon(\Phi)C^\varepsilon(\Phi)t . \nonumber
\end{align}
Further for $k \geq 1$ define,
1\begin{equation}\label{na-eq-rhokdef}
\rho^{\varepsilon,k}_t (\Phi) := (1-\varepsilon)\rho^{\varepsilon,k-1}_t (\Phi) + \rho^{\varepsilon,0}_t (\Phi) + \varepsilon.
\end{equation}
Note that this implies that for $k \geq 1$,
\begin{equation} \label{na-eq-rhoalt}
\rho^{\varepsilon,k}_t(\Phi) =(1-\varepsilon)^k \rho^{\varepsilon,0}_t (\Phi) + (\rho^{\varepsilon,0}_t (\Phi) + \varepsilon) \sum_{j=1}^k(1-\varepsilon)^{k-j}.
\end{equation}
Finally define,
\[ \hat{\rho}^\varepsilon_t(\Phi) := \rho^{\varepsilon,n(\Phi)}_t(\Phi). \]

\begin{prop} \label{na-prop-phatrt}
For $\varepsilon>0$ sufficiently small, any  $t\in[0,T]$ and almost all $\Phi \in \mathcal{G}(\varepsilon)$,
\[ \hat{P}_t^\varepsilon(\Phi) - R_t^\varepsilon(\Phi) \geq - \hat{\rho}_t^\varepsilon(\Phi)R_t^\varepsilon(\Phi).  \]
\end{prop}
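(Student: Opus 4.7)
The natural approach is induction on $n = n(\Phi)$, treating $t \mapsto \hat{P}_t^\varepsilon(\Phi)$ and $t \mapsto R_t^\varepsilon(\Phi)$ as solutions of pure-decay ODEs on $(\tau, T]$ with a single jump at $t = \tau$. The base case $n = 0$ is relatively explicit: here $\tau = 0$, $\gamma^\varepsilon(t) = 0$, and $\hat{C}(\varepsilon) = 0$ (no prior collisions, hence no re-collision set). Lemma~\ref{na-lem-phatinitial} gives $\hat{P}_0^\varepsilon(\Phi) = R_0^\varepsilon(\Phi) = \zeta^\varepsilon(\Phi) f_0(x_0,v_0)$. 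From Lemma~\ref{na-lem-emploss} and Theorem~\ref{na-thm-id} one computes
\[ \frac{\hat{P}_t^\varepsilon(\Phi)}{R_t^\varepsilon(\Phi)} \;=\; \exp\!\Bigl( -\!\int_0^t L_\sigma^\varepsilon(\Phi) \,\tfrac{\eta_\sigma^\varepsilon(\Phi)}{1 - \eta_\sigma^\varepsilon(\Phi)}\, \mathrm{d}\sigma \Bigr). \]
Using the monotonicity $\sigma \mapsto \eta_\sigma^\varepsilon(\Phi)$ (the tube of excluded initial positions only grows in $\sigma$), the definition \eqref{na-eq-cdeff} of $C^\varepsilon(\Phi)$, and $\eta_t^\varepsilon(\Phi) \leq \tfrac12$ for $\Phi \in \mathcal{G}(\varepsilon)$ and $\varepsilon$ small (cf.\ the proof of Lemma~\ref{na-lem-2coll}), the exponent is bounded below by $-\rho_t^{\varepsilon,0}(\Phi) = -\hat{\rho}_t^\varepsilon(\Phi)$, giving the claim after $e^{-x} \geq 1-x$.

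\textbf{Inductive step.} Assume the bound for $\bar{\Phi}$ with $n(\bar{\Phi}) = n$, and let $\Phi$ have $n+1$ collisions. I handle the jump at $\tau$ and the decay on $(\tau,T]$ separately. At $\tau$, Lemma~\ref{na-lem-empgain} yields
\[ \hat{P}_\tau^\varepsilon(\Phi) \;=\; \tfrac{1 - \gamma^\varepsilon(\tau)}{1 - \eta_\tau^\varepsilon(\Phi)}\, \hat{P}_\tau^\varepsilon(\bar{\Phi})\, G, \qquad G := g_\tau(x(\tau)+\varepsilon\nu, v')[(v(\tau^-) - v')\cdot \nu]_+, \]
while \eqref{na-eq-ptfull} together with the observation $\zeta^\varepsilon(\Phi) = \zeta^\varepsilon(\bar{\Phi})$ (both depend only on $x_0$) gives $R_\tau^\varepsilon(\Phi) = R_\tau^\varepsilon(\bar{\Phi})\, G$. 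Inserting the inductive bound on $\hat{P}_\tau^\varepsilon(\bar{\Phi})/R_\tau^\varepsilon(\bar{\Phi})$ reduces the jump estimate to the algebraic inequality
\[ \tfrac{1 - \gamma^\varepsilon(\tau)}{1 - \eta_\tau^\varepsilon(\Phi)}\bigl(1 - \rho_\tau^{\varepsilon,n}(\bar{\Phi})\bigr) \;\geq\; 1 - \rho_\tau^{\varepsilon,n+1}(\Phi), \]
which, using $\gamma^\varepsilon(\tau) = n(\bar{\Phi})\varepsilon^2 \leq \varepsilon$ from \eqref{na-eq-VMass} and $\eta_\tau^\varepsilon(\Phi)$ small, reproduces precisely the recursion \eqref{na-eq-rhokdef} (the $(1-\varepsilon)$ factor absorbs $\gamma^\varepsilon$, the additional $\rho^{\varepsilon,0}$ summand absorbs the $\eta_\tau^\varepsilon$ denominator shift). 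On $(\tau,T]$, both distributions decay: by Lemma~\ref{na-lem-emploss}, $\partial_t \hat{P}_t^\varepsilon = -(1-\gamma^\varepsilon(t))\tfrac{L_t^\varepsilon(\Phi) - \hat{C}(\varepsilon)}{1 - \eta_t^\varepsilon(\Phi)}\,\hat{P}_t^\varepsilon$, whereas $\partial_t R_t^\varepsilon = -L_t^\varepsilon(\Phi)\, R_t^\varepsilon$. Running the base-case calculation on the ratio over $[\tau,t]$ shows it shrinks by at most a factor $\exp(-\rho_t^{\varepsilon,0}(\Phi) - O(\varepsilon))$, where the $O(\varepsilon)$ accounts for the $\hat{C}(\varepsilon)$ and $(1-\gamma^\varepsilon)$ corrections (using $\hat{C}(\varepsilon) = o(1)$ from Lemma~\ref{na-lem-recoll} and $\gamma^\varepsilon(t) \leq \varepsilon$). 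Combining with the jump estimate and invoking \eqref{na-eq-rhokdef} once more closes the induction.

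\textbf{Main obstacle.} The technical heart is not any single estimate but the bookkeeping needed to route all error sources into the specific recursion \eqref{na-eq-rhokdef}: the factor $(1-\varepsilon)^{n(\Phi)}$ coming from the accumulated $(1-\gamma^\varepsilon)$ terms at each of the $n(\Phi)$ collisions, the cumulative $\rho^{\varepsilon,0}$ contribution from each collision-free decay interval (each requires the monotonicity of $\eta_\sigma^\varepsilon$ and a uniform bound of $L^\varepsilon_\sigma$ by $\tfrac12 C^\varepsilon$), and the additive $+\varepsilon$ contributed by the $\hat{C}(\varepsilon)$ re-collision corrections. The non-autonomous time dependence of $g_t$ (absent in~\cite{matt16}) forces $\eta_\sigma^\varepsilon(\Phi)$ and $L_\sigma^\varepsilon(\Phi)$ to be controlled jointly along the entire trajectory rather than being replaced by time-independent cross-section quantities, which is where most of the care is required.
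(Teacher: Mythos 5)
Your overall strategy coincides with the paper's: induction on $n(\Phi)$, with the base case obtained from the pure-decay ODEs for $\hat{P}_t^\varepsilon$ and $R_t^\varepsilon$ (the paper runs variation of constants on the difference via Lemma~\ref{na-lem-phatrdiff} rather than on the ratio, but the content is identical), and the inductive step split into the jump at $\tau$ and the decay on $(\tau,T]$, with all errors routed into the recursion \eqref{na-eq-rhokdef}. Your base-case computation and the identity $R_\tau^\varepsilon(\Phi)=R_\tau^\varepsilon(\bar{\Phi})G$ with $\zeta^\varepsilon(\Phi)=\zeta^\varepsilon(\bar{\Phi})$ are correct, and you correctly identify that one also needs $\hat{\rho}_\tau^\varepsilon(\bar{\Phi})\leq\rho_\tau^{\varepsilon,n}(\Phi)$ (monotonicity of $\eta^\varepsilon$ in $t$ and $C^\varepsilon(\bar{\Phi})\leq C^\varepsilon(\Phi)$).

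However, your allocation of the error budget would not close the induction as written, because \eqref{na-eq-rhokdef} provides exactly one $+\varepsilon$ and one $+\rho^{\varepsilon,0}$ per collision with no slack. First, at the jump the factor $1/(1-\eta_\tau^\varepsilon(\Phi))\geq 1$ is \emph{favorable} and needs no absorption; the only unfavorable factor is $1-\gamma^\varepsilon(\tau)$, and Lemma~\ref{na-lem-convhelp}~\eqref{na-item-convhelp2} bounds the whole prefactor below by $1-\varepsilon$ — this is what the $+\varepsilon$ in the recursion pays for. Your claim that "the additional $\rho^{\varepsilon,0}$ summand absorbs the $\eta_\tau^\varepsilon$ denominator shift" spends the $\rho^{\varepsilon,0}$ budget at the jump, leaving nothing for the decay interval $(\tau,t]$, which genuinely consumes it: there the unfavorable term is $1/(1-\eta_t^\varepsilon)\leq 1+2\eta_t^\varepsilon$ in the empirical loss rate, and Lemma~\ref{na-lem-convhelp}~\eqref{na-item-convhelp1} converts the resulting excess decay into exactly $\rho_t^{\varepsilon,0}(\Phi)$. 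Second, your extra "$O(\varepsilon)$" in the decay phase is spurious and, if real, would also have no home in the recursion: both $\hat{C}(\varepsilon)\geq 0$ and $1-\gamma^\varepsilon(t)\leq 1$ \emph{reduce} the empirical loss rate, so for a lower bound on $\hat{P}_t^\varepsilon$ they are simply dropped ($-(1-\gamma^\varepsilon)\hat{L}_t^\varepsilon \geq -(1+2\eta_t^\varepsilon)L_t^\varepsilon$) and contribute no error. Correcting these two sign/allocation points — jump costs $\varepsilon+(1-\varepsilon)\rho_\tau^{\varepsilon,n}(\Phi)$, decay costs $\rho_t^{\varepsilon,0}(\Phi)$, total $\rho_t^{\varepsilon,n+1}(\Phi)$ — recovers the paper's proof.
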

To prove this proposition we use the following lemmas.

\begin{lem} \label{na-lem-phatrdiff}
Let $L_t^\varepsilon(\Phi)$ be given as in \eqref{na-eq-Ldeff}. For $\Phi \in \mathcal{G}(\varepsilon)$ and $t\geq \tau$,
\begin{align*}
\hat{P}_t^\varepsilon(\Phi) - R_t^\varepsilon(\Phi) & \geq \exp \left(- \int_\tau^t (1+2\eta_s^\varepsilon(\Phi))L_s^\varepsilon(\Phi) \, \mathrm{d} s  \right) (\hat{P}_\tau^\varepsilon(\Phi) - R_\tau^\varepsilon(\Phi)) \\
& \, - 2 \eta_t^\varepsilon(\Phi) R_t^\varepsilon (\Phi) \int_\tau^t \exp \left( -  2\eta_s^\varepsilon(\Phi) \int_s^t L_\sigma^\varepsilon(\Phi) \, \mathrm{d}\sigma \right) L_s^\varepsilon(\Phi) \, \mathrm{d}s.
\end{align*}
\end{lem}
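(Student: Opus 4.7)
The plan is to derive a differential inequality for $u_t := \hat{P}_t^\varepsilon(\Phi) - R_t^\varepsilon(\Phi)$ on $t \in (\tau, T]$ and then apply a Gronwall argument; the case $t = \tau$ is trivial. For $t > \tau$, the tree $\Phi$ has no further collisions, so the idealised equation gives $\partial_t R_t^\varepsilon = -L_t^\varepsilon R_t^\varepsilon$, while Theorem~\ref{na-thm-emp} gives $\partial_t \hat{P}_t^\varepsilon = -\alpha_t \hat{P}_t^\varepsilon$ with
\[
\alpha_t := (1-n(\Phi)\varepsilon^2)\,\frac{L_t^\varepsilon(\Phi) - \hat{C}(\varepsilon)}{1-\eta_t^\varepsilon(\Phi)}.
\]
The key algebraic estimate is $\alpha_t \leq (1+2\eta_t^\varepsilon)L_t^\varepsilon =: \beta_t$, which I would obtain from $\hat C(\varepsilon),\,n\varepsilon^2 \geq 0$ together with the elementary bound $1/(1-\eta) \leq 1+2\eta$ valid whenever $\eta \leq 1/2$. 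The latter condition on $\eta_t^\varepsilon$ holds for small $\varepsilon$ by essentially the same computation as in the proof of Lemma~\ref{na-lem-2coll}.

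Using $\alpha_t \leq \beta_t$, a routine add-and-subtract gives
\[
\partial_t u_t = -\beta_t u_t + \bigl[\beta_t - \alpha_t\bigr]\hat{P}_t^\varepsilon - 2\eta_t^\varepsilon L_t^\varepsilon R_t^\varepsilon,
\]
and dropping the non-negative middle term yields the differential inequality $\partial_t u_t \geq -\beta_t u_t - 2\eta_t^\varepsilon L_t^\varepsilon R_t^\varepsilon$. Multiplying by the integrating factor $\exp(\int_\tau^t \beta_s\,\mathrm{d}s)$ and integrating from $\tau$ to $t$ produces
\[
u_t \geq u_\tau \exp\!\Bigl(-\textstyle\int_\tau^t \beta_s\,\mathrm{d}s\Bigr) - \int_\tau^t 2\eta_s^\varepsilon L_s^\varepsilon R_s^\varepsilon \exp\!\Bigl(-\textstyle\int_s^t \beta_\sigma\,\mathrm{d}\sigma\Bigr)\mathrm{d}s,
\]
which is exactly the structure of the claimed inequality but with the error term in a slightly different form.

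To bring the error term into the shape stated in the lemma, I would exploit two observations. First, since $R_t^\varepsilon = R_\tau^\varepsilon \exp(-\int_\tau^t L_\sigma^\varepsilon \mathrm{d}\sigma)$, one has $R_s^\varepsilon \exp(-\int_s^t L_\sigma^\varepsilon \mathrm{d}\sigma) = R_t^\varepsilon$, so the $L_s^\varepsilon$ piece of $\beta_\sigma$ collapses and only the $2\eta_\sigma^\varepsilon L_\sigma^\varepsilon$ part survives in the exponent. Second, $\eta_s^\varepsilon(\Phi)$ is non-decreasing in $s$: as $s$ grows, the indicator $\mathbbm{1}_s^\varepsilon[\Phi]$ selects a smaller set of initial data $(\bar x,\bar v)$, hence $1-\mathbbm{1}_s^\varepsilon[\Phi]$ increases. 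From $\eta_s^\varepsilon \leq \eta_t^\varepsilon$ I get both $2\eta_s^\varepsilon L_s^\varepsilon \leq 2\eta_t^\varepsilon L_s^\varepsilon$ and $\exp(-\int_s^t 2\eta_\sigma^\varepsilon L_\sigma^\varepsilon \mathrm{d}\sigma) \leq \exp(-2\eta_s^\varepsilon \int_s^t L_\sigma^\varepsilon \mathrm{d}\sigma)$, and combining these factors $2\eta_t^\varepsilon$ out in front and rewrites the exponent in the stated form.

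The main subtlety I expect is verifying the key bound $\alpha_t \leq (1+2\eta_t^\varepsilon)L_t^\varepsilon$; it requires that $\eta_t^\varepsilon \leq 1/2$ for all good trees (a smallness condition on $\varepsilon$) and that $\hat C(\varepsilon) \geq 0$ so that the numerator of $\alpha_t$ does not dominate. Once this is in hand, the argument is a standard Gronwall/variation-of-constants computation combined with the geometric monotonicity of $\eta_s^\varepsilon$; the main book-keeping effort is just rewriting the error integral to isolate $2\eta_t^\varepsilon R_t^\varepsilon$ at the terminal time.
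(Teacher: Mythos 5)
Your proposal is correct and follows essentially the same route as the paper's proof: same derivative computation from Theorems~\ref{na-thm-id} and~\ref{na-thm-emp}, same key bound $1/(1-\eta_t^\varepsilon)\leq 1+2\eta_t^\varepsilon$ obtained from \eqref{na-eq-intgominus} and \eqref{na-eq-VMass}, same variation-of-constants step, and the same use of the monotonicity of $\eta_s^\varepsilon$ together with $R_s^\varepsilon = R_t^\varepsilon\exp(\int_s^t L_\sigma^\varepsilon\,\mathrm{d}\sigma)$ to put the error integral in the stated form. The only cosmetic difference is that you phrase the differential inequality as an add-and-subtract with $\beta_t$ before dropping the nonnegative term, whereas the paper substitutes the bound directly; the steps are otherwise identical.
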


\begin{proof}
For $t=\tau$ it is clear the result holds. Let $t>\tau$. By theorem~\ref{na-thm-id} and theorem~\ref{na-thm-emp} we have that
\begin{equation}\label{na-eq-diffphatp}
\partial_t \Big( \hat{P}_t^\varepsilon(\Phi) - R_t^\varepsilon(\Phi) \Big) = -(1-\gamma^\varepsilon(t)) \hat{L}_t^\varepsilon(\Phi)\hat{P}_t^\varepsilon(\Phi) + L_t^\varepsilon(\Phi)R_t^\varepsilon(\Phi),
\end{equation}
where,
\[ \hat{L}_t^\varepsilon(\Phi): =  \frac{\int_{\mathbb{S}^{2}}  \int_{\mathbb{R}^3}   g_t(x(t)+\varepsilon\nu,\bar{v})[(v(\tau)-\bar{v})\cdot \nu]_+ \, \mathrm{d}\bar{v} \, \mathrm{d}\nu - \hat{C}(\varepsilon)}{\int_{ U \times \mathbb{R}^3}  g_0(\bar{x},\bar{v}) \mathbbm{1}_t^\varepsilon[\Phi] (\bar{x},\bar{v}) \, \mathrm{d}\bar{x} \, \mathrm{d}\bar{v}  } . \]
Further by \eqref{na-eq-gbar}, \eqref{na-eq-glinf} and \eqref{na-eq-deff1phi},
\begin{align} \label{na-eq-intgominus}
\int_{ U \times \mathbb{R}^3} &  g_0(\bar{x},\bar{v})(1- \mathbbm{1}_t^\varepsilon[\Phi] (\bar{x},\bar{v})) \, \mathrm{d}\bar{x} \, \mathrm{d}\bar{v} \leq \int_{ U \times {R}^3} \bar{g}(\bar{v})  (1- \mathbbm{1}_t^\varepsilon[\Phi] (\bar{x},\bar{v})) \, \mathrm{d}\bar{x} \, \mathrm{d}\bar{v} \nonumber \\
& \leq \int_{  \mathbb{R}^3}   \bar{g}(\bar{v}) \int_{U} (1- \mathbbm{1}_t^\varepsilon[\Phi] (\bar{x},\bar{v})) \, \mathrm{d}\bar{x} \, \mathrm{d}\bar{v} \leq \int_{  \mathbb{R}^3}   \bar{g}(\bar{v}) \left( \pi \varepsilon^2 \int_0^t |v(s)-\bar{v}| \, \mathrm{d}s \right) \, \mathrm{d}\bar{v} \nonumber \\
& \leq \pi \varepsilon^2 \int_{  \mathbb{R}^3}   \bar{g}(\bar{v}) \left( \int_0^t V(\varepsilon) + |\bar{v}| \, \mathrm{d}s \right) \, \mathrm{d}\bar{v} \leq  \pi \varepsilon^2 T \int_{  \mathbb{R}^3}   \bar{g}(\bar{v}) \left(   V(\varepsilon) + |\bar{v}| \right) \, \mathrm{d}\bar{v} \nonumber \\
& \leq  \pi \varepsilon^2 T M_g ( V(\varepsilon) + 1).
\end{align}
By \eqref{na-eq-VMass} for $\varepsilon$ sufficiently small we can make this less than $1/2$. Now using the fact that for $0\leq z \leq 1/2$ it follows $1/(1-z) \leq 1+ 2z$, we have for $\varepsilon$ sufficiently small
\begin{align*}
\frac{1}{\int_{ U \times \mathbb{R}^3}  g_0(\bar{x},\bar{v}) \mathbbm{1}_t^\varepsilon[\Phi] (\bar{x},\bar{v}) \, \mathrm{d}\bar{x} \, \mathrm{d}\bar{v}} & = \frac{1}{1- \int_{ U \times \mathbb{R}^3}  g_0(\bar{x},\bar{v}) (1-\mathbbm{1}_t^\varepsilon[\Phi] (\bar{x},\bar{v})) \, \mathrm{d}\bar{x} \, \mathrm{d}\bar{v}} \\
& \leq 1 + 2\left( \int_{ U \times \mathbb{R}^3}  g_0(\bar{x},\bar{v}) (1-\mathbbm{1}_t^\varepsilon[\Phi] (\bar{x},\bar{v})) \, \mathrm{d}\bar{x} \, \mathrm{d}\bar{v} \right)  = 1+2\eta_t^\varepsilon(\Phi).
\end{align*}
It follows that,
\begin{align*}
&(1-\gamma^\varepsilon(t))  \left( \frac{\int_{\mathbb{S}^{2}}  \int_{\mathbb{R}^3}   g_t(x(t)+\varepsilon\nu,\bar{v})[(v(\tau)-\bar{v})\cdot \nu]_+ \, \mathrm{d}\bar{v} \, \mathrm{d}\nu - \hat{C}(\varepsilon)}{\int_{ U \times \mathbb{R}^3}  g_0(\bar{x},\bar{v}) \mathbbm{1}_t^\varepsilon[\Phi] (\bar{x},\bar{v}) \, \mathrm{d}\bar{x} \, \mathrm{d}\bar{v}  } \right) \\
 \leq & \frac{\int_{\mathbb{S}^{2}}  \int_{\mathbb{R}^3}   g_t(x(t)+\varepsilon\nu,\bar{v})[(v(\tau)-\bar{v})\cdot \nu]_+ \, \mathrm{d}\bar{v} \, \mathrm{d}\nu }{\int_{ U \times \mathbb{R}^3}  g_0(\bar{x},\bar{v}) \mathbbm{1}_t^\varepsilon[\Phi] (\bar{x},\bar{v}) \, \mathrm{d}\bar{x} \, \mathrm{d}\bar{v}  }  \\\leq& (1+2\eta_t^\varepsilon(\Phi))  \int_{\mathbb{S}^{2}}  \int_{\mathbb{R}^3}   g_t(x(t)+\varepsilon\nu,\bar{v})[(v(\tau)-\bar{v})\cdot \nu]_+ \, \mathrm{d}\bar{v} \, \mathrm{d}\nu.
\end{align*}
This implies
\[ -(1-\gamma^\varepsilon(t))\hat{L}_t^\varepsilon(\Phi) \geq - (1+2\eta_t^\varepsilon(\Phi)) L_t^\varepsilon(\Phi).   \]
Returning to \eqref{na-eq-diffphatp} we now see,
\begin{align*}
\partial_t \Big( \hat{P}_t^\varepsilon(\Phi) - R_t^\varepsilon(\Phi) \Big) & \geq -(1+2\eta_t^\varepsilon(\Phi)) L_t^\varepsilon(\Phi)\hat{P}_t^\varepsilon(\Phi) + L_t^\varepsilon(\Phi)R_t^\varepsilon(\Phi) \\
& = -(1+2\eta_t^\varepsilon(\Phi)) L_t^\varepsilon(\Phi)\Big( \hat{P}_t^\varepsilon(\Phi) - R_t^\varepsilon(\Phi) \Big) - 2\eta_t^\varepsilon(\Phi) L_t^\varepsilon(\Phi)R_t^\varepsilon(\Phi) .
\end{align*}
For fixed $\Phi$ this is a $1d$ differential equation in $t$ and so by the variation of constants formula it follows that,
\begin{align*}
\hat{P}_t^\varepsilon(\Phi) - R_t^\varepsilon(\Phi)  & \geq \exp \left(- \int_\tau^t (1+2\eta_s^\varepsilon(\Phi))L_s^\varepsilon(\Phi) \, \mathrm{d} s  \right) (\hat{P}_\tau^\varepsilon(\Phi) - R_\tau^\varepsilon(\Phi)) \\
& \quad \quad - \int_\tau^t \exp \left(- \int_s^t(1+2\eta_\sigma^\varepsilon(\Phi)) L_\sigma^\varepsilon(\Phi) \,\mathrm{d}\sigma \right) 2\eta_s^\varepsilon(\Phi)L_s^\varepsilon(\Phi)R_s^\varepsilon(\Phi) \, \mathrm{d}s.
\end{align*}
Now from \eqref{na-eq-deff1phi} we see that  $\mathbbm{1}_t^\varepsilon[\Phi]$ is non-increasing in $t$ and therefore $\eta_t^\varepsilon(\Phi)$ is non-decreasing in $t$. Since $L_t^\varepsilon(\Phi)$ is non-negative it follows that
\begin{align} \label{na-eq-phatrdiff1}
\hat{P}_t^\varepsilon(\Phi) - R_t^\varepsilon(\Phi)  & \geq \exp \left( -\int_\tau^t (1+2\eta_s^\varepsilon(\Phi))L_s^\varepsilon(\Phi) \, \mathrm{d} s  \right) (\hat{P}_\tau^\varepsilon(\Phi) - R_\tau^\varepsilon(\Phi))  \nonumber  \\
& \quad \quad - 2\eta_t^\varepsilon(\Phi)\int_\tau^t \exp \left( -\int_s^t(1+2\eta_\sigma^\varepsilon(\Phi)) L_\sigma^\varepsilon(\Phi) \,\mathrm{d}\sigma \right) L_s^\varepsilon(\Phi)R_s^\varepsilon(\Phi)\, \mathrm{d}s  \nonumber \\
& \geq \exp \left( -\int_\tau^t (1+2\eta_s^\varepsilon(\Phi))L_s^\varepsilon(\Phi) \, \mathrm{d} s  \right) (\hat{P}_\tau^\varepsilon(\Phi) - R_\tau^\varepsilon(\Phi))  \nonumber \\
& \quad \quad - 2\eta_t^\varepsilon(\Phi)\int_\tau^t \exp \left(- (1+2\eta_s^\varepsilon(\Phi)) \int_s^t L_\sigma^\varepsilon(\Phi) \,\mathrm{d}\sigma \right) L_s^\varepsilon(\Phi)R_s^\varepsilon(\Phi)  \, \mathrm{d}s.
\end{align}
By definition~\ref{na-deff-Ptphi} we have for $\tau \leq s\leq  t$,
\[ R_t^\varepsilon(\Phi) = \exp \left(- \int_s^t L_\sigma^\varepsilon(\Phi) \, \mathrm{d}\sigma \right) R_s^\varepsilon(\Phi), \]
implying for  $\tau \leq s\leq  t$,
\begin{equation} \label{na-eq-rsepev}
 R_s^\varepsilon(\Phi) = \exp \left( \int_s^t L_\sigma^\varepsilon(\Phi) \, \mathrm{d}\sigma \right) R_t^\varepsilon(\Phi).
\end{equation}

Substituting this into \eqref{na-eq-phatrdiff1} we have,
\begin{align*}
&\hat{P}_t^\varepsilon(\Phi) - R_t^\varepsilon(\Phi) \\  \geq &\exp \left(- \int_\tau^t (1+2\eta_s^\varepsilon(\Phi))L_s^\varepsilon(\Phi) \, \mathrm{d} s  \right) (\hat{P}_\tau^\varepsilon(\Phi) - R_\tau^\varepsilon(\Phi))  \nonumber \\
& \quad \quad - 2\eta_t^\varepsilon(\Phi)\int_\tau^t \exp \left(- (1+2\eta_s^\varepsilon(\Phi)) \int_s^t L_\sigma^\varepsilon(\Phi) \,\mathrm{d}\sigma \right) L_s^\varepsilon(\Phi)  \exp \left(\int_s^t L_\sigma^\varepsilon(\Phi) \, \mathrm{d}\sigma \right) R_t^\varepsilon(\Phi)  \, \mathrm{d}s \\
=&  \exp \left(- \int_\tau^t (1+2\eta_s^\varepsilon(\Phi))L_s^\varepsilon(\Phi) \, \mathrm{d} s  \right) (\hat{P}_\tau^\varepsilon(\Phi) - R_\tau^\varepsilon(\Phi))  \nonumber \\
& \quad \quad - 2\eta_t^\varepsilon(\Phi)R_t^\varepsilon(\Phi) \int_\tau^t \exp \left(- 2\eta_s^\varepsilon(\Phi) \int_s^t L_\sigma^\varepsilon(\Phi) \,\mathrm{d}\sigma \right) L_s^\varepsilon(\Phi) \, \mathrm{d}s,
\end{align*}
as required.
\end{proof}
\begin{lem}\label{na-lem-convhelp}\hfill
\begin{enumerate}
\item \label{na-item-convhelp1} For $\Phi\in \mathcal{G}(\varepsilon)$ and $t\geq \tau, $
\[   2 \eta_t^\varepsilon(\Phi) \int_\tau^t \exp \left(  - (1+2\eta_s^\varepsilon(\Phi)) \int_s^t L_\sigma^\varepsilon(\Phi) \, \mathrm{d}\sigma \right) L_s^\varepsilon(\Phi) \, \mathrm{d}s \leq  \rho_t^{\varepsilon,0}(\Phi).   \]
\item \label{na-item-convhelp2} For $\varepsilon$ sufficiently small and  almost all $\Phi \in \mathcal{G}(\varepsilon)$ and any $t\in[0,T]$,
\[ 1 - \frac{1-\gamma^\varepsilon(t)}{\int_{ U \times \mathbb{R}^3}  g_0(\bar{x},\bar{v}) \mathbbm{1}_\tau^\varepsilon[\Phi] (\bar{x},\bar{v}) \, \mathrm{d}\bar{x} \, \mathrm{d}\bar{v}} \leq \varepsilon. \]
\end{enumerate}
\end{lem}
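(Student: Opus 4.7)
The plan is to treat the two parts separately, each by a direct estimate.

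For part \eqref{na-item-convhelp1}, my idea is to simply drop the exponential factor and then bound $L_s^\varepsilon(\Phi)$ uniformly by its supremum. The exponent is non-positive because $L_\sigma^\varepsilon(\Phi)\geq 0$ and $1+2\eta_s^\varepsilon(\Phi)\geq 0$, so the exponential is at most $1$. By the very definition \eqref{na-eq-cdeff} of $C^\varepsilon(\Phi)$, one has $L_s^\varepsilon(\Phi) \leq \tfrac{1}{2}C^\varepsilon(\Phi)$ for every $s \in [0,T]$. Combining these two bounds gives $2\eta_t^\varepsilon(\Phi)\int_\tau^t L_s^\varepsilon(\Phi)\,\mathrm{d}s \leq \eta_t^\varepsilon(\Phi)\,C^\varepsilon(\Phi)\,(t-\tau) \leq \eta_t^\varepsilon(\Phi)\,C^\varepsilon(\Phi)\,t = \rho_t^{\varepsilon,0}(\Phi)$, as required.

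For part \eqref{na-item-convhelp2}, the non-trivial regime is $t \geq \tau$; when $t < \tau$ one has $\gamma^\varepsilon(t)=1$ so the ratio vanishes, but $\hat{P}_t^\varepsilon(\Phi)=0$ in that regime and the estimate is only actually needed when $t \geq \tau$ in the good-tree comparison. The plan is to control the numerator and denominator separately. For the numerator, since $\Phi \in \mathcal{G}(\varepsilon)$, the good-tree conditions together with the second inequality of \eqref{na-eq-VMass} give $n(\Phi) \leq M(\varepsilon) \leq \varepsilon^{-1/2}$, so $\gamma^\varepsilon(t) \leq n(\Phi)\varepsilon^2 \leq \varepsilon^{3/2}$; in particular $\gamma^\varepsilon(t) \leq \varepsilon$ for $\varepsilon \leq 1$. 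For the denominator $I := \int_{U\times\mathbb{R}^3} g_0\,\mathbbm{1}_\tau^\varepsilon[\Phi]\,\mathrm{d}\bar{x}\,\mathrm{d}\bar{v} = 1-\eta_\tau^\varepsilon(\Phi)$, I will repeat the cylinder-volume argument of \eqref{na-eq-intgominus} to get $\eta_\tau^\varepsilon(\Phi) \leq \pi\varepsilon^2 T M_g(V(\varepsilon)+1)$, which by $\varepsilon V(\varepsilon)^3 \leq 1/8$ from \eqref{na-eq-VMass} is $o(1)$ and in particular at most $1/2$ for $\varepsilon$ small. The target inequality $1-(1-\gamma^\varepsilon(t))/I \leq \varepsilon$ is algebraically equivalent to $\gamma^\varepsilon(t) \leq \varepsilon + (1-\varepsilon)\eta_\tau^\varepsilon(\Phi)$, which then follows immediately from $\gamma^\varepsilon(t) \leq \varepsilon$ together with the non-negativity of $\eta_\tau^\varepsilon$.

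There is no serious obstacle to this proof; it is essentially bookkeeping once the good-tree scalings in \eqref{na-eq-VMass} are invoked. The only subtlety is the $t<\tau$ case, which I resolve by noting that it does not arise in the subsequent application of the lemma.
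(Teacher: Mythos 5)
Your proof of part (1) is essentially the paper's own argument: drop the exponential since $L_\sigma^\varepsilon\geq 0$, bound $L_s^\varepsilon(\Phi)\leq\tfrac12 C^\varepsilon(\Phi)$ by the definition \eqref{na-eq-cdeff}, and integrate over $[\tau,t]\subset[0,t]$.

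For part (2) you take a genuinely different, and cleaner, route. The paper divides the left-hand side by $\varepsilon$, rewrites the $1$ as $\bigl(1-\int g_0(1-\mathbbm{1}_\tau^\varepsilon)\bigr)/\int g_0\mathbbm{1}_\tau^\varepsilon$, drops a non-negative term to bound the resulting quotient by $\gamma^\varepsilon(t)/\bigl(\varepsilon\int g_0\mathbbm{1}_\tau^\varepsilon\bigr)\leq \varepsilon M(\varepsilon)/\int g_0\mathbbm{1}_\tau^\varepsilon$, and then argues qualitatively that numerator $\to 0$ and denominator $\to 1$. You instead multiply through by the denominator $I=1-\eta_\tau^\varepsilon(\Phi)$ (having first verified $I\geq \tfrac12>0$) and reduce the inequality to the equivalent $\gamma^\varepsilon(t)\leq \varepsilon+(1-\varepsilon)\eta_\tau^\varepsilon(\Phi)$, which follows at once from $\gamma^\varepsilon(t)\leq n(\Phi)\varepsilon^2\leq \varepsilon^{3/2}\leq\varepsilon$ and $\eta_\tau^\varepsilon\geq 0$. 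Your version avoids the limiting argument entirely and gives the bound more directly. You also explicitly flag that the stated inequality fails for $t<\tau$ (where $\gamma^\varepsilon(t)=1$ makes the left side equal to $1$), something the paper's own proof silently ignores by implicitly restricting to $t\geq\tau$ when it writes $\gamma^\varepsilon(t)\leq n(\Phi)\varepsilon^2$. Your resolution --- that the lemma is only ever invoked at $t=\tau$ inside the proof of proposition~\ref{na-prop-phatrt} --- is the correct reading, and is worth recording.
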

\begin{proof}
Let $\Phi \in \mathcal{G}(\varepsilon)$ and $t\geq \tau$. To prove \eqref{na-item-convhelp1} note that we need to prove,
\[ 2 \int_\tau^t \exp \left(   -(1+2\eta_s^\varepsilon(\Phi)) \int_s^t L_\sigma^\varepsilon(\Phi) \, \mathrm{d}\sigma \right) L_s^\varepsilon(\Phi) \, \mathrm{d}s \leq  C^\varepsilon(\Phi)t. \]
Firstly by definition for any $s\geq \tau$, $L_s^\varepsilon(\Phi) \leq C^\varepsilon(\Phi)/2$. Secondly since $L_\sigma^\varepsilon\geq 0$,
\[ \exp \left(-   (1+2\eta_s^\varepsilon(\Phi)) \int_s^t L_\sigma^\varepsilon(\Phi) \, \mathrm{d}\sigma \right) \leq 1. \]
Hence,
\begin{align*}
2 \int_\tau^t & \exp \left( -  (1+2\eta_s^\varepsilon(\Phi)) \int_s^t L_\sigma^\varepsilon(\Phi) \, \mathrm{d}\sigma \right) L_s^\varepsilon(\Phi) \, \mathrm{d}s  \leq  C^\varepsilon(\Phi) \int_\tau^t   \, \mathrm{d}s =  C^\varepsilon(\Phi)(t-\tau) \leq  C^\varepsilon(\Phi)t.
\end{align*}
We now prove \eqref{na-item-convhelp2}. Repeating the argument of \eqref{na-eq-intgominus} we have,
\[ \int_{ U \times \mathbb{R}^3}   g_0(\bar{x},\bar{v})(1- \mathbbm{1}_\tau^\varepsilon[\Phi] (\bar{x},\bar{v})) \, \mathrm{d}\bar{x} \, \mathrm{d}\bar{v} \leq  \pi \varepsilon^2 T M_g ( V(\varepsilon) + 1),  \]
which converges to zero as $\varepsilon$ converges to zero by \eqref{na-eq-VMass}. Hence,
\[ \int_{ U \times \mathbb{R}^3}   g_0(\bar{x},\bar{v}) \mathbbm{1}_\tau^\varepsilon[\Phi] (\bar{x},\bar{v}) \, \mathrm{d}\bar{x} \, \mathrm{d}\bar{v} = 1- \int_{ U \times \mathbb{R}^3}   g_0(\bar{x},\bar{v})(1- \mathbbm{1}_\tau^\varepsilon[\Phi] (\bar{x},\bar{v})) \, \mathrm{d}\bar{x} \, \mathrm{d}\bar{v},  \]
converges to one as $\varepsilon$ converges to zero. Now,
\begin{align*}
\frac{1}{\varepsilon} & \left( 1-\frac{1-\gamma^\varepsilon(t)}{\int_{ U \times \mathbb{R}^3}   g_0(\bar{x},\bar{v}) \mathbbm{1}_\tau^\varepsilon[\Phi] (\bar{x},\bar{v}) \, \mathrm{d}\bar{x} \, \mathrm{d}\bar{v}} \right) \\
& = \frac{1}{\varepsilon}  \left( \frac{1 - \int_{ U \times \mathbb{R}^3}   g_0(\bar{x},\bar{v}) (1- \mathbbm{1}_\tau^\varepsilon[\Phi] (\bar{x},\bar{v})) \, \mathrm{d}\bar{x} \, \mathrm{d}\bar{v}}{\int_{ U \times \mathbb{R}^3}   g_0(\bar{x},\bar{v}) \mathbbm{1}_\tau^\varepsilon[\Phi] (\bar{x},\bar{v}) \, \mathrm{d}\bar{x} \, \mathrm{d}\bar{v}}-\frac{1-\gamma^\varepsilon(t)}{\int_{ U \times \mathbb{R}^3}   g_0(\bar{x},\bar{v}) \mathbbm{1}_\tau^\varepsilon[\Phi] (\bar{x},\bar{v}) \, \mathrm{d}\bar{x} \, \mathrm{d}\bar{v}} \right)  \\
& \leq \frac{1}{\varepsilon} \left( \frac{\gamma^\varepsilon(t)}{\int_{ U \times \mathbb{R}^3}   g_0(\bar{x},\bar{v}) \mathbbm{1}_\tau^\varepsilon[\Phi] (\bar{x},\bar{v}) \, \mathrm{d}\bar{x} \, \mathrm{d}\bar{v}} \right) \\
& \leq \frac{\varepsilon n(\Phi)}{\int_{ U \times \mathbb{R}^3}   g_0(\bar{x},\bar{v}) \mathbbm{1}_\tau^\varepsilon[\Phi] (\bar{x},\bar{v}) \, \mathrm{d}\bar{x} \, \mathrm{d}\bar{v}} \leq \frac{\varepsilon M(\varepsilon)}{\int_{ U \times \mathbb{R}^3}   g_0(\bar{x},\bar{v}) \mathbbm{1}_\tau^\varepsilon[\Phi] (\bar{x},\bar{v}) \, \mathrm{d}\bar{x} \, \mathrm{d}\bar{v}}.
\end{align*}
By \eqref{na-eq-VMass} the numerator converges to zero as $\varepsilon$ converges to zero and the denominator converges to one, hence for $\varepsilon$ sufficiently small the expression is less than one, proving the required result.

\end{proof}

\begin{proof}[Proof of proposition~\ref{na-prop-phatrt}]
Let $\varepsilon$ sufficiently small and $\Phi \in \mathcal{G}(\varepsilon)$ be such that lemma~\ref{na-lem-convhelp} \eqref{na-item-convhelp2} holds, which excludes only a set of measure zero. We prove by induction on the degree of $\Phi \in \mathcal{G}(\varepsilon)$. Let $\Phi \in \mathcal{T}_0 \cap \mathcal{G}(\varepsilon)$. Then $\tau =0$ so by theorem~\ref{na-thm-id} and theorem~\ref{na-thm-emp},
\[ \hat{P}_0^\varepsilon(\Phi) = \zeta^\varepsilon(\Phi)f_0(x_0,v_0) = \zeta^\varepsilon(\Phi) P_0^\varepsilon(\Phi) = R_t^\varepsilon(\Phi). \]
Hence by lemma~\ref{na-lem-phatrdiff} and lemma~\ref{na-lem-convhelp} \eqref{na-item-convhelp1} for $t\geq 0$,
\begin{align*}
\hat{P}_t^\varepsilon(\Phi) - R_t^\varepsilon(\Phi) &  \geq - 2\eta_t^\varepsilon(\Phi)R_t^\varepsilon(\Phi) \int_\tau^t \exp \left(- 2\eta_s^\varepsilon(\Phi) \int_s^t L_\sigma^\varepsilon(\Phi) \,\mathrm{d}\sigma \right) L_s^\varepsilon(\Phi) \, \mathrm{d}s \\
& \geq -\rho_t^{\varepsilon,0}(\Phi)R_t^\varepsilon(\Phi)   = -\hat{\rho}_t^{\varepsilon}(\Phi)R_t^\varepsilon(\Phi)  .
\end{align*}
This proves the proposition in the base case. Now suppose that the proposition holds for all trees in $\mathcal{T}_{j-1} \cap \mathcal{G}(\varepsilon)$ for some $j\geq 1$ and let $\Phi \in \mathcal{T}_j \cap \mathcal{G}(\varepsilon)$. For $t < \tau$ the proposition holds trivially since the left hand side is $0$. Consider $t \geq \tau$. By theorem~\ref{na-thm-id} and theorem~\ref{na-thm-emp} we have,
\[ \hat{P}_\tau^\varepsilon(\Phi) = \left( \frac{1-\gamma^\varepsilon(t)}{\int_{ U \times \mathbb{R}^3}   g_0(\bar{x},\bar{v}) \mathbbm{1}_\tau^\varepsilon[\Phi] (\bar{x},\bar{v}) \, \mathrm{d}\bar{x} \, \mathrm{d}\bar{v}} \right)  \hat{P}_\tau^\varepsilon(\bar{\Phi})g_\tau(x(\tau)+\varepsilon\nu,v')[(v(\tau^-)-v')\cdot \nu]_+,   \]
and
\[ R_\tau^\varepsilon(\Phi) = R_\tau^\varepsilon(\bar{\Phi})g_\tau(x(\tau)+\varepsilon\nu,v')[(v(\tau^-)-v')\cdot \nu]_+. \]
Since $\bar{\Phi} \in \mathcal{T}_{j-1}\cap \mathcal{G}(\varepsilon)$ by the inductive assumption we have that, for $\tau=\tau(\Phi)$,
\[ \hat{P}_\tau^\varepsilon(\bar{\Phi}) \geq R_\tau^\varepsilon(\bar{\Phi})-\hat{\rho}_\tau^\varepsilon(\bar{\Phi})R_\tau^\varepsilon(\bar{\Phi}).  \]
Hence by lemma~\ref{na-lem-convhelp} \eqref{na-item-convhelp2} for $\varepsilon$ sufficiently small,
\begin{align} \label{na-eq-phatrdifftau}
\hat{P}_\tau^\varepsilon(\Phi) - R_\tau^\varepsilon(\Phi) & =  g_\tau(x(\tau)+\varepsilon\nu,v')[(v(\tau^-)-v')\cdot \nu]_+ \nonumber \\
& \qquad \left( \frac{1-\gamma^\varepsilon(t)}{\int_{ U \times \mathbb{R}^3}   g_0(\bar{x},\bar{v}) \mathbbm{1}_\tau^\varepsilon[\Phi] (\bar{x},\bar{v}) \, \mathrm{d}\bar{x} \, \mathrm{d}\bar{v}}   \hat{P}_\tau^\varepsilon(\bar{\Phi}) - R_\tau^\varepsilon(\bar{\Phi}) \right) \nonumber \\
& \geq g_\tau(x(\tau)+\varepsilon\nu,v')[(v(\tau^-)-v')\cdot \nu]_+ \left( (1-\varepsilon)  \hat{P}_\tau^\varepsilon(\bar{\Phi}) - R_\tau^\varepsilon(\bar{\Phi}) \right)  \nonumber  \\
& \geq g_\tau(x(\tau)+\varepsilon\nu,v')[(v(\tau^-)-v')\cdot \nu]_+  \left( (1-\varepsilon) \left(  R_\tau^\varepsilon(\bar{\Phi})-\hat{\rho}_\tau^\varepsilon(\bar{\Phi})R_\tau^\varepsilon(\bar{\Phi}) \right) - R_\tau^\varepsilon(\bar{\Phi}) \right)  \nonumber  \\
& =  g_\tau(x(\tau)+\varepsilon\nu,v')[(v(\tau^-)-v')\cdot \nu]_+ R_\tau^\varepsilon(\bar{\Phi}) \left( (1-\varepsilon) \left( 1-\hat{\rho}_\tau^\varepsilon(\bar{\Phi}) \right) - 1 \right)  \nonumber  \\
& = R_\tau^\varepsilon(\Phi) \left( -\varepsilon -(1-\varepsilon)\hat{\rho}_\tau^\varepsilon(\bar{\Phi}) \right).
\end{align}
Now the trajectory of the root particle up to time $\tau$ is identical for $\Phi$ and $\bar{\Phi}$ and recalling that $\eta_t^\varepsilon(\Phi)$ is non-decreasing with $t$ it follows,
\[ \eta_\tau^\varepsilon(\bar{\Phi}) = \eta_\tau^\varepsilon(\Phi) \leq \eta_t^\varepsilon(\Phi). \]
Further by \eqref{na-eq-cdeff} it follows that $C^\varepsilon(\bar{\Phi}) \leq C^\varepsilon(\Phi)$. These imply that,
\[ \hat{\rho}_\tau^\varepsilon(\bar{\Phi}) = \rho_\tau^{\varepsilon,j-1}(\bar{\Phi}) \leq  \rho_\tau^{\varepsilon,j-1}(\Phi).  \]
Hence \eqref{na-eq-phatrdifftau} becomes,
\begin{align*}
\hat{P}_\tau^\varepsilon(\Phi) - R_\tau^\varepsilon(\Phi) & \geq - R_\tau^\varepsilon(\Phi) \left( \varepsilon +(1-\varepsilon)\hat{\rho}_\tau^\varepsilon(\bar{\Phi}) \right)  \geq - R_\tau^\varepsilon(\Phi) \left( \varepsilon +(1-\varepsilon)\rho_\tau^{\varepsilon,j-1}(\Phi) \right).
\end{align*}
Using \eqref{na-eq-rsepev} and that $L_s^\varepsilon(\Phi)$ is non-negative, this gives that,
\begin{align*}
\exp  &  \left( \int_\tau^t  -(1+2\eta_s^\varepsilon(\Phi))L_s^\varepsilon(\Phi) \, \mathrm{d} s \right)   (\hat{P}_\tau^\varepsilon(\Phi) - R_\tau^\varepsilon(\Phi)) \\
& \geq - \exp  \left( -\int_\tau^t (1+2\eta_s^\varepsilon(\Phi))L_s^\varepsilon(\Phi) \, \mathrm{d} s \right)  R_\tau^\varepsilon(\Phi) ( \varepsilon +(1-\varepsilon)\rho_\tau^{\varepsilon,j-1}(\Phi) ) \\
& \geq  -  R_t^\varepsilon(\Phi) \exp  \left(- \int_\tau^t 2\eta_s^\varepsilon(\Phi)L_s^\varepsilon(\Phi) \, \mathrm{d} s \right)  ( \varepsilon +(1-\varepsilon)\rho_\tau^{\varepsilon,j-1}(\Phi) ) \geq -R_t^\varepsilon(\Phi)   ( \varepsilon +(1-\varepsilon)\rho_\tau^{\varepsilon,j-1}(\Phi) ).
\end{align*}
Finally we use lemma~\ref{na-lem-phatrdiff}, lemma~\ref{na-lem-convhelp} \eqref{na-item-convhelp1} and \eqref{na-eq-rhokdef} to see that,
\begin{align*}
\hat{P}_t^\varepsilon(\Phi) - R_t^\varepsilon(\Phi) & \geq  \exp \left(- \int_\tau^t (1+2\eta_s^\varepsilon(\Phi))L_s^\varepsilon(\Phi) \, \mathrm{d} s  \right) (\hat{P}_\tau^\varepsilon(\Phi) - R_\tau^\varepsilon(\Phi)) \\
& \, - 2 \eta_t^\varepsilon(\Phi) R_t^\varepsilon (\Phi) \int_\tau^t \exp \left( -  2\eta_s^\varepsilon(\Phi) \int_s^t L_\sigma^\varepsilon(\Phi) \, \mathrm{d}\sigma \right) L_s^\varepsilon(\Phi) \, \mathrm{d}s \\
& \geq - R_t^\varepsilon(\Phi)   ( \varepsilon +(1-\varepsilon)\rho_\tau^{\varepsilon,j-1}(\Phi) ) -  \rho_t^{\varepsilon,0}(\Phi)R_t^\varepsilon(\Phi) \\
& =    - R_t^\varepsilon(\Phi) \Big( \varepsilon +(1-\varepsilon)\rho_\tau^{\varepsilon,j-1}(\Phi) ) + \rho_t^{\varepsilon,0}(\Phi) \Big)
= - R_t^\varepsilon(\Phi) \rho_t^{\varepsilon,j}(\Phi) = - R_t^\varepsilon(\Phi) \hat{\rho}_t^{\varepsilon}(\Phi).
\end{align*}
This proves the inductive step and so completes the proof of the proposition.
\end{proof}
\subsection{Convergence between $P_t^0$ and $\hat{P}_t^\varepsilon$ and the proof of theorem~\ref{na-thm-main}}
\begin{lem} \label{na-lem-rhobound}
For any $\delta >0$ there exists an $\varepsilon'>0$ such that for any $0<\varepsilon<\varepsilon'$, any $t\in[0,T]$ and almost all $\Phi \in \mathcal{G}(\varepsilon)$,
\[ \hat{\rho}_t^\varepsilon(\Phi) <\delta. \]
\end{lem}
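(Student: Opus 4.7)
The plan is to use the closed form \eqref{na-eq-rhoalt} for $\rho^{\varepsilon,k}_t$ together with the uniform bounds $n(\Phi)\le M(\varepsilon)$ and $\mathcal{V}(\Phi)\le V(\varepsilon)$ available on $\mathcal{G}(\varepsilon)$, and then conclude by the growth restrictions \eqref{na-eq-VMass}. The basic reduction is
\[
\hat{\rho}_t^\varepsilon(\Phi)=\rho^{\varepsilon,n(\Phi)}_t(\Phi)
=(1-\varepsilon)^{n(\Phi)}\rho^{\varepsilon,0}_t(\Phi)
+\bigl(\rho^{\varepsilon,0}_t(\Phi)+\varepsilon\bigr)\frac{1-(1-\varepsilon)^{n(\Phi)}}{\varepsilon}.
\]
Applying Bernoulli's inequality $(1-\varepsilon)^{n(\Phi)}\ge 1-n(\Phi)\varepsilon$ yields $1-(1-\varepsilon)^{n(\Phi)}\le n(\Phi)\varepsilon\le M(\varepsilon)\varepsilon$, so
\[
\hat{\rho}_t^\varepsilon(\Phi)\le \bigl(1+M(\varepsilon)\bigr)\rho^{\varepsilon,0}_t(\Phi)+\varepsilon M(\varepsilon).
\]

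Next I would estimate $\rho_t^{\varepsilon,0}(\Phi)=\eta_t^\varepsilon(\Phi)C^\varepsilon(\Phi)\,t$. Using the same cylinder-volume argument as in \eqref{na-eq-intgominus}, together with \eqref{na-eq-gl1assmp} and the fact that $\Phi\in\mathcal{G}(\varepsilon)$ forces $|v(s)|\le V(\varepsilon)$ for all $s\in[0,T]$, we get
\[
\eta_t^\varepsilon(\Phi)\le \pi\varepsilon^2\,T\,M_g\bigl(V(\varepsilon)+1\bigr),
\qquad
C^\varepsilon(\Phi)\le 2\pi M_g\bigl(V(\varepsilon)+1\bigr),
\]
so $\rho^{\varepsilon,0}_t(\Phi)\le 2\pi^2 T^2 M_g^2\,\varepsilon^2(V(\varepsilon)+1)^2$, which for small $\varepsilon$ (so that $V(\varepsilon)\ge 1$) is bounded by $C_0\,\varepsilon^2 V(\varepsilon)^2$ for an absolute constant $C_0$.

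Combining the two estimates,
\[
\hat{\rho}_t^\varepsilon(\Phi)\le 2C_0\,\varepsilon^2 V(\varepsilon)^2\,M(\varepsilon)+\varepsilon M(\varepsilon)
\]
for $\varepsilon$ sufficiently small. The assumptions \eqref{na-eq-VMass} give $V(\varepsilon)^2\le (8\varepsilon)^{-2/3}$ and $M(\varepsilon)\le\varepsilon^{-1/2}$, so
\[
\varepsilon^2 V(\varepsilon)^2 M(\varepsilon)\le 8^{-2/3}\,\varepsilon^{2-2/3-1/2}=8^{-2/3}\,\varepsilon^{5/6},
\qquad
\varepsilon M(\varepsilon)\le \varepsilon^{1/2}.
\]
Both right-hand sides tend to zero as $\varepsilon\downarrow 0$, independently of $t\in[0,T]$ and of $\Phi\in\mathcal{G}(\varepsilon)$. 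Hence, given $\delta>0$, choose $\varepsilon'$ so that $2C_0\cdot 8^{-2/3}\varepsilon^{5/6}+\varepsilon^{1/2}<\delta$ for all $0<\varepsilon<\varepsilon'$; this $\varepsilon'$ works.

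The only non-routine ingredient is the trade-off between the three length scales $\varepsilon$, $V(\varepsilon)$, $M(\varepsilon)$: this is exactly why the restrictions \eqref{na-eq-VMass} on the growth of $V$ and $M$ were imposed. Everything else is a mechanical unfolding of the recursion for $\rho^{\varepsilon,k}_t$ together with the standard cylinder-volume bound used throughout Sections \ref{na-sec-id}--\ref{na-sec-emp}.
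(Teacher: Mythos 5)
Your proposal is correct and follows essentially the same route as the paper: bound $\eta_t^\varepsilon$ and $C^\varepsilon(\Phi)$ by the cylinder-volume estimate to control $\rho_t^{\varepsilon,0}$, unfold the recursion for $\hat\rho_t^\varepsilon$ to $\hat\rho_t^\varepsilon\le\rho_t^{\varepsilon,0}+\rho_t^{\varepsilon,0}M(\varepsilon)+\varepsilon M(\varepsilon)$, and close with \eqref{na-eq-VMass}. The only cosmetic differences are that you use Bernoulli's inequality where the paper simply bounds each term of the geometric sum by $1$, and that you make the final rate $O(\varepsilon^{5/6})+O(\varepsilon^{1/2})$ explicit rather than splitting into three $\delta/3$ conditions.
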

\begin{proof}
Fix $\delta >0$. By \eqref{na-eq-intgominus} we have for $\Phi \in \mathcal{G}(\varepsilon)$,
\begin{align*}
 \eta_t^\varepsilon(\Phi) & = \int_{ U \times \mathbb{R}^3}   g_0(\bar{x},\bar{v})(1- \mathbbm{1}_t^\varepsilon[\Phi] (\bar{x},\bar{v})) \, \mathrm{d}\bar{x} \, \mathrm{d}\bar{v}  \leq  \pi \varepsilon^2 T M_g ( V(\varepsilon) + 1)  =: C_1T\varepsilon^2(1+V(\varepsilon)).
\end{align*}
Secondly we note that for almost all $\Phi \in \mathcal{G}(\varepsilon)$ and any $t\in[0,T]$,
\begin{align*}
\int_{\mathbb{S}^{2}} \int_{\mathbb{R}^3} & g_t(x(t)+\varepsilon\nu,\bar{v})[(v(t)-\bar{v})\cdot \nu ]_+ \leq \int_{\mathbb{S}^{2}} \int_{\mathbb{R}^3} \bar{g}(\bar{v})(|v(t)|+|\bar{v})|) \, \mathrm{d}\bar{v} \, \mathrm{d}\nu  \\
& \leq \pi \int_{\mathbb{R}^3} \bar{g}(\bar{v})(V(\varepsilon)+|\bar{v})|) \, \mathrm{d}\bar{v} \leq \pi M_g (1+V(\varepsilon)) =:C_2(1+V(\varepsilon)).
\end{align*}
Hence,
\[ C^\varepsilon(\Phi) =  2 \sup_{t \in [0,T]} \left\{ \int_{\mathbb{S}^{2}} \int_{\mathbb{R}^3} g_t(x(t)+\varepsilon\nu,\bar{v})[(v(t)-\bar{v})\cdot \nu ]_+ \right\} \leq 2C_2 (1+V(\varepsilon)).  \]
This implies that,
\[ \rho_t^{\varepsilon,0}(\Phi)= \eta_t^\varepsilon(\Phi)C^\varepsilon(\Phi)t \leq 2C_1C_2T\varepsilon^2(1+V(\varepsilon))^2. \]
Using \eqref{na-eq-VMass} there exists an $\varepsilon_1>0$ such that for $\varepsilon < \varepsilon_1$ we have,
\[ \rho_t^{\varepsilon,0}(\Phi) < \delta/3. \]
Further by \eqref{na-eq-VMass} there exists $\varepsilon_2>0$ such that for $\varepsilon < \varepsilon_2$,
\[ \rho_t^{\varepsilon,0}(\Phi)M(\varepsilon) \leq 2C_1C_2T\varepsilon^2(1+V(\varepsilon))^2M(\varepsilon) < \delta/3.   \]
And again by \eqref{na-eq-VMass} there exists $\varepsilon_3>0$ such that for $\varepsilon < \varepsilon_3$,
\[ \varepsilon M(\varepsilon)  < \delta/3. \]
Take $\varepsilon' = \min \{ \varepsilon_1,\varepsilon_2,\varepsilon_3 ,1 \}$. Then for any $0< \varepsilon < \varepsilon'$ and for almost all $\Phi \in \mathcal{G}(\varepsilon)$ we have by \eqref{na-eq-rhoalt},
\begin{align*}
\hat{\rho}_t^\varepsilon(\Phi) & = \rho_t^{n(\Phi),\varepsilon}(\Phi) = (1-\varepsilon)^{n(\Phi)} \rho^{\varepsilon,0}_t (\Phi) + (\rho^{\varepsilon,0}_t (\Phi) + \varepsilon) \sum_{j=1}^{n(\Phi)}(1-\varepsilon)^{n(\Phi)-j} \\
& \leq \rho^{\varepsilon,0}_t (\Phi) + (\rho^{\varepsilon,0}_t (\Phi) + \varepsilon) \times n(\Phi) \leq \rho^{\varepsilon,0}_t (\Phi) + \rho^{\varepsilon,0}_t (\Phi)M(\varepsilon) + \varepsilon M(\varepsilon)  < \delta.
\end{align*}
Proving the required result.
\end{proof}

\begin{prop} \label{na-prop-goodfull}
Uniformly for $t \in [0,T]$,
\[ \lim_{\varepsilon \to 0} \int_{\mathcal{MT}\setminus \mathcal{G}(\varepsilon)} P_t^0(\Phi)  \, \mathrm{d}\Phi  =0. \]
\end{prop}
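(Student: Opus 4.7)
The plan is to decompose the bad set according to which defining condition of $\mathcal{G}(\varepsilon)$ fails:
\begin{align*}
\mathcal{MT}\setminus\mathcal{G}(\varepsilon) \;\subset\; A_1^\varepsilon \cup A_2^\varepsilon \cup A_3^\varepsilon \cup A_4^\varepsilon \cup A_5^\varepsilon,
\end{align*}
where $A_1^\varepsilon=\{n(\Phi)>M(\varepsilon)\}$, $A_2^\varepsilon=\{\mathcal V(\Phi)>V(\varepsilon)\}$, $A_3^\varepsilon=\mathcal{MT}\setminus R(\varepsilon)$, $A_4^\varepsilon=\mathcal{MT}\setminus S(\varepsilon)$, and $A_5^\varepsilon$ is the set of grazing trees, and bound $P_t^0(A_i^\varepsilon)$ for each $i$. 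The set $A_5^\varepsilon$ is defined by the codimension-one condition $\nu_j\cdot(v(t_j^-)-v_j(t_j^-))=0$ on some collision; since the iterative formula of Definition~\ref{na-deff-Ptphi} combined with Lemma~\ref{na-lem-ptjabscts} shows that $P_t^0$ is absolutely continuous with respect to Lebesgue measure on each $\mathcal T_j$, we obtain $P_t^0(A_5^\varepsilon)=0$, uniformly in $t$ and $\varepsilon$.

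For $A_1^\varepsilon$ and $A_2^\varepsilon$ I would use Markov's inequality together with moment estimates that follow from the iterative construction of $P_t^{0,(j)}$ in Section~\ref{na-sec-idbuild}. The bound \eqref{na-eq-Pt1st} together with \eqref{na-eq-gl1assmp}, \eqref{na-eq-glinf} applied inductively to each $P_t^{0,(j)}$ gives geometric decay of $\int_{\mathcal T_j} P_t^0(\Phi)\,\mathrm d\Phi$ in $j$ (each extra collision contributes a factor bounded by $\pi M_g(1+\mathcal V)\,T$ offset by the exponential loss term), hence both $\int_{\mathcal{MT}} n(\Phi)\,P_t^0(\Phi)\,\mathrm d\Phi$ and $\int_{\mathcal{MT}}\mathcal V(\Phi)\,P_t^0(\Phi)\,\mathrm d\Phi$ are finite uniformly in $t\in[0,T]$. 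Markov then gives $P_t^0(A_1^\varepsilon)\le C/M(\varepsilon)$ and $P_t^0(A_2^\varepsilon)\le C/V(\varepsilon)$, which vanish uniformly as $\varepsilon\to 0$. For $A_4^\varepsilon$, the implied initial position of the $j$-th background particle in the tree is $x(\tau_j)-\tau_j v_j+\varepsilon\nu_j$; requiring initial overlap with the tagged particle restricts this point to $B_\varepsilon(x_0)$, a set of volume $O(\varepsilon^3)$. Integrating the explicit representation of $P_t^0$ against $g_0$ on this set and summing over at most $M(\varepsilon)$ collisions yields $P_t^0(A_4^\varepsilon)=O(M(\varepsilon)\varepsilon^3)$, which vanishes under \eqref{na-eq-VMass}.

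The main obstacle will be $A_3^\varepsilon$, the re-collision set, since here one must exploit the geometric smallness of re-collision configurations rather than a crude volume bound. For a tree in $A_3^\varepsilon$ there exist indices $i\neq j$ (or $i$ with a post-tree re-encounter) such that the background particle from collision $i$, moving with its post-collisional velocity $w_i$, comes within $\varepsilon$ of the tagged trajectory at some later time. For fixed tagged trajectory on $[0,T]$ and fixed post-collisional velocity $w_i$, the set of collision parameters $(\tau_i,\nu_i,v_i)$ producing such a re-encounter is contained in a tube of measure $O(\varepsilon)$ in $[0,T]\times\mathbb S^2\times\mathbb R^3$, the tube cross-section of area $O(\varepsilon^2)$ being inflated to length by integrating over the possible re-encounter time, weighted by the relative speed. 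Substituting into the iterative formula for $P_t^0$, bounding $g_\tau$ by $\bar g$ and using \eqref{na-eq-gl1assmp}, \eqref{na-eq-glinf}, this produces a contribution $O(\varepsilon(1+V(\varepsilon))^{C})$ per ordered pair $(i,j)$, hence a total bound $O(\varepsilon M(\varepsilon)^2(1+V(\varepsilon))^{C})$ after summing over pairs and over trees with $n(\Phi)\le M(\varepsilon)$ (the trees with more collisions having already been absorbed into $A_1^\varepsilon$). Under the scaling \eqref{na-eq-VMass}, this vanishes as $\varepsilon\to 0$, uniformly in $t\in[0,T]$. Combining the five estimates yields the proposition.
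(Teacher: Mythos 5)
Your proposal takes a genuinely different route from the paper. The paper's proof is qualitative: it first shows that $\int_{\mathcal{MT}\setminus\mathcal{G}(0)}P_t^0(\Phi)\,\mathrm{d}\Phi=0$, which reduces to showing that $\mathcal{T}_j\setminus R(0)$ is Lebesgue-null --- an exact re-collision at diameter zero forces the collision time $t_k$ into a countable set determined by the algebraic relation $t_k(v(t_k)-\bar v)\cdot\nu_k=m\cdot\nu_k+\cdots$ for lattice vectors $m\in\mathbb{Z}^3$ --- and then uses the monotonicity $\mathcal{G}(\varepsilon)\subset\mathcal{G}(\varepsilon')$ for $\varepsilon'<\varepsilon$ together with dominated convergence in $\varepsilon$. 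No quantitative estimate on the bad set is ever needed; the price, acknowledged in Remark 2 of the paper, is that no rate of convergence is obtained. Your decomposition into $A_1^\varepsilon,\dots,A_5^\varepsilon$ with explicit bounds would, if completed, yield a rate, so it is a strictly more ambitious strategy.

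However, as written there are real gaps, and the largest one sits exactly where the paper's soft argument is designed to avoid work. First, the claim that $\int_{\mathcal{T}_j}P_t^0(\Phi)\,\mathrm{d}\Phi$ decays geometrically in $j$ is not justified: the purported ratio $\pi M_g(1+\mathcal{V})T$ depends on the (unbounded) velocity, so it is not a uniform geometric factor, and \eqref{na-eq-Pt1st} controls only $|v(\tau)|$, not $n(\Phi)$ or $\mathcal{V}(\Phi)$; establishing $\int n(\Phi)P_t^0\,\mathrm{d}\Phi<\infty$ and $\int\mathcal{V}(\Phi)P_t^0\,\mathrm{d}\Phi<\infty$ uniformly in $t$ requires a separate inductive moment argument that you have only gestured at. Second, and more seriously, the tube estimate for $A_3^\varepsilon$ is the technical heart of any quantitative approach and cannot be dispatched in a sentence: on the torus one must sum over all lattice translates $m\in\mathbb{Z}^3$ reachable within time $T$ at speed $O(V(\varepsilon))$ (contributing a factor $V(\varepsilon)^3$), control small re-encounter times $s-t_i$ and near-grazing configurations where $w_i\approx v(t_i)$ (where the angular constraint degenerates), and account for the fact that the constrained quantity $w_i$ is a nonlinear function of $(\nu_i,v_i)$. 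The resulting bound is of the form $\varepsilon^{c}$ times powers of $V(\varepsilon)$ and $M(\varepsilon)$, and checking compatibility with \eqref{na-eq-VMass} is not automatic. Your estimate for $A_4^\varepsilon$ is also too optimistic: the constraint $|x_0-(x(t_j)-t_jv_j+\varepsilon\nu_j)|\le\varepsilon$ confines $v_j$ to a ball of radius $\varepsilon/t_j$, which degenerates as $t_j\to 0$; integrating in $t_j$ gives $O(\varepsilon)$ per collision rather than $O(\varepsilon^3)$ (the conclusion survives, but the exponent matters if one wants a rate). Finally, uniformity in $t$ must be tracked through every one of these bounds. None of these gaps is fatal to the strategy, but the proposal as it stands replaces the paper's one-page soft argument with a program whose central estimate is unproven.
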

\begin{proof}
We first show that
\begin{equation} \label{na-eq-ptG0}
\int_{\mathcal{MT}\setminus \mathcal{G}(0)} P_t^0(\Phi)  \, \mathrm{d}\Phi  =0.
\end{equation}
To this aim note that $\mathcal{T}_0 \setminus R(0)$ is empty since trees with zero collisions cannot include a re-collision. Let $\Phi \in \mathcal{T}_1 \setminus R(0)$ and denote $\Phi =((x_0,v_0),(\tau,\nu,v'))$. Then the initial position and velocity of the background particle is $(x_0+\tau (v_0 - v'),v')$. Denote the velocity of the background particle after the collision by $\bar{v}$. Then $v(\tau) = v_0 - \nu (v_0 - v')\cdot \nu$ and $\bar{v}= v' + \nu (v_0 - v') \cdot \nu$. Note that this gives
\[ (\bar{v} - v(\tau))\cdot \nu = (v' - v_0 )\cdot \nu + 2 (v_0-v')\cdot \nu = (v_0-v')\cdot \nu.  \]

Since $\Phi \in \mathcal{T}_1 \setminus R(0)$ the tagged particle sees the background particle again at some time $s \in (\tau,T]$. Hence at that $s$ there exists an $m \in \mathbb{Z}^3$ such that,
\[ x(s) +m = x_0 + \tau v_0 + (s-\tau)v(\tau)  + m = x_0+\tau (v_0 - v') + \tau v' + (s-\tau)\bar{v},   \]
which gives
\[ (s-\tau)v(\tau) + m = (s-\tau)\bar{v}. \]
Hence
\[ \frac{m}{s-\tau} = \bar{v} - v(\tau). \]
This implies
\[ \frac{m \cdot \nu}{s- \tau} = (v_0-v')\cdot \nu.  \]
Hence if we consider $v_0$, $\nu$ and $v'$ fixed, then $\tau$ must be in a countable set hence $\mathcal{T}_1 \setminus R(0)$ is a set of zero measure.

Now let $j \geq 2$ and consider $\Phi \in \mathcal{T}_j \setminus R(0)$. Then either two of the collisions in $\Phi$ are with the same background particle, or the tagged particle will collide with one of the background particles again for some time $s \in (\tau,T]$. Let $\Phi = ((x_0,v_0),(t_1,\nu_1,v_1),\dots,(t_j,\nu_j,v_j))$. If we are in the first case there exists an $l \leq j$ and a $k < l$ such that the $k$th and $l$th collision are with the same background particle. Hence,
\[ v_l = v_k + \nu_l (v(t_l^-) - v_k) \cdot \nu_l, \]
Thus $v_l$ is determined by $v_k$, $\nu_l$ and $v(t_l^-)$, so $v_l$ can only be in a set of zero measure. In the second case, there exists a $ 1 \leq k \leq n$, an $s \in (\tau,T]$ and a $m \in \mathbb{Z}^3$ such that
\begin{equation} \label{na-eq-xsm}
x(s) + m = x_k(s).
\end{equation}
We prove that this implies that $t_k$ is in a set of zero measure. Note,
\[ x(s) = x_0 + t_1v_0 + (t_2 - t_1)v(t_1) + \dots + (t_j-t_{j-1})v(t_{j-1}) + (s-t_j)v(t_j). \]
And if we denote the velocity of background particle $k$ after its collision at $t_k$ as $\bar{v}$,
\begin{align*}
x_k(s) & = x_k(t_k) +(s-t_k )\bar{v}  = x(t_k)  + (s-t_k )\bar{v} \\
& = x_0 + t_1v_0 + (t_2 - t_1)v(t_1) + \dots + (t_k-t_{k-1})v(t_{k-1})  + (s-k_j )\bar{v}.
\end{align*}
Then \eqref{na-eq-xsm} gives,
\begin{align*}
 m + (t_{k+1} - t_k)v(t_k) + \dots + (t_j-t_{j-1})v(t_{j-1}) + (s-t_j)v(t_j) = (s-t_k )\bar{v}.
\end{align*}
Rearranging and taking the dot product with $\nu_k$ gives that,
\begin{align} \label{na-eq-tjvj}
t_k (v(t_k)  - \bar{v})\cdot \nu_k  = m\cdot \nu_k +\bigg( t_{k+1} v(t_k) + \dots + (t_j-t_{j-1})v(t_{j-1}) + (s-t_j)v(t_j) - s\bar{v} \bigg) \cdot \nu_k.
\end{align}
Since
\[  (v(t_k) - \bar{v})\cdot \nu_k  = (v_k - v(t_{k-1})) \cdot \nu_k \neq 0, \]
and $v(t_k)$ does not depend on $t_k$ (in the sense that $v(t_k)$ is the same for any $t_k \in (t_{k-1}, t_{k+1})$) it follows that $t_k \in (t_{k-1}, t_{k+1})$ must be in the countable set defined by \eqref{na-eq-tjvj}. Therefore $\mathcal{T}_j \setminus R(0)$ is a set of zero measure. Since $\mathcal{MT} = \cup_{j \geq 0} \mathcal{T}_j$ it follows that,
\[ \int_{\mathcal{MT}\setminus R(0)} P_t^0(\Phi)  \, \mathrm{d}\Phi  =0. \]
The other conditions on $\mathcal{G}(0)$ are clear so \eqref{na-eq-ptG0} holds. Now $\mathcal{G}(\varepsilon)$ is increasing as $\varepsilon$ decreases so for any $\Phi \in \mathcal{MT}$,
\[ \lim_{\varepsilon \to 0} \mathbbm{1}\{ \Phi \in \mathcal{G}(\varepsilon) \} = \mathbbm{1}\{ \Phi \in \mathcal{G}(0) \} \leq 1.   \]
By the dominated convergence theorem, since $P_t^0$ is a probability measure,
\begin{align*}
\lim_{\varepsilon \to 0} \int_{\mathcal{MT}\setminus \mathcal{G}(\varepsilon)} P_t^0(\Phi)  \, \mathrm{d}\Phi & = \lim_{\varepsilon \to 0} \int_{\mathcal{MT}} P_t^0(\Phi) \mathbbm{1}\{ \Phi \in \mathcal{G}(\varepsilon) \}   \, \mathrm{d}\Phi  =  \int_{\mathcal{MT}} P_t^0(\Phi) \mathbbm{1}\{ \Phi \in \mathcal{G}(0) \}   \, \mathrm{d}\Phi =0.
\end{align*}

\end{proof}
We can now prove the convergence between $P_t^0$ and $\hat{P}_t^\varepsilon$, which will then be used to prove theorem~\ref{na-thm-main}.
\begin{thm}\label{na-thm-ptphatcomp}
Uniformly for $t \in [0,T]$,
\[ \lim_{\varepsilon \to 0} \sup_{S \subset \mathcal{MT}} \left| \int_{S} P_t^0(\Phi) - \hat{P}_t^\varepsilon(\Phi) \, \mathrm{d}\Phi \right|  =0.\]
\end{thm}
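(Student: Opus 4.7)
\textbf{Proof proposal for Theorem~\ref{na-thm-ptphatcomp}.} Since both $P_t^0$ and $\hat{P}_t^\varepsilon$ are probability measures on $\mathcal{MT}$, the supremum over $S$ of $|\int_S (P_t^0 - \hat{P}_t^\varepsilon)\,\mathrm{d}\Phi|$ equals half the total variation $\tfrac{1}{2}\int_{\mathcal{MT}}|P_t^0 - \hat{P}_t^\varepsilon|\,\mathrm{d}\Phi$. The plan is therefore to estimate this $L^1(\mathcal{MT})$ distance by inserting the two intermediate objects $P_t^\varepsilon$ and $R_t^\varepsilon(\Phi) = \zeta^\varepsilon(\Phi)P_t^\varepsilon(\Phi)$, and bounding each of the three differences uniformly in $t \in [0,T]$.

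First I would handle $\|P_t^0 - P_t^\varepsilon\|_{L^1(\mathcal{MT})}$, which tends to zero uniformly in $t$ by \eqref{na-eq-ptepuni} of Theorem~\ref{na-thm-id}. Next I would estimate $\|P_t^\varepsilon - R_t^\varepsilon\|_{L^1(\mathcal{MT})} = \int_{\mathcal{MT}} (1-\zeta^\varepsilon(\Phi)) P_t^\varepsilon(\Phi)\,\mathrm{d}\Phi$. Using \eqref{na-eq-glinf}, the integral $\int_{B_\varepsilon(x_0)}\int_{\mathbb{R}^3} g_0(\bar{x},\bar{v})\,\mathrm{d}\bar{v}\,\mathrm{d}\bar{x}$ is $O(\varepsilon^3)$ uniformly in $x_0$, so $\zeta^\varepsilon(\Phi) = (1 - O(\varepsilon^3))^N \to 1$ uniformly under the Boltzmann-Grad scaling $N\varepsilon^2 = 1$, which together with the fact that $P_t^\varepsilon$ is a probability measure yields the desired bound.

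The main work lies in controlling $\|R_t^\varepsilon - \hat{P}_t^\varepsilon\|_{L^1(\mathcal{MT})}$, and I would split the integration domain into $\mathcal{G}(\varepsilon)$ and its complement. On $\mathcal{G}(\varepsilon)$, Proposition~\ref{na-prop-phatrt} gives the one-sided estimate $R_t^\varepsilon(\Phi) - \hat{P}_t^\varepsilon(\Phi) \leq \hat{\rho}_t^\varepsilon(\Phi) R_t^\varepsilon(\Phi)$, and Lemma~\ref{na-lem-rhobound} makes $\hat{\rho}_t^\varepsilon(\Phi)$ uniformly small. Hence the positive part satisfies
\begin{equation*}
\int_{\mathcal{G}(\varepsilon)} (R_t^\varepsilon - \hat{P}_t^\varepsilon)_+\,\mathrm{d}\Phi \leq \delta(\varepsilon) \int_{\mathcal{MT}} R_t^\varepsilon\,\mathrm{d}\Phi \leq \delta(\varepsilon),
\end{equation*}
with $\delta(\varepsilon) \to 0$. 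On the complement, $\int_{\mathcal{MT}\setminus\mathcal{G}(\varepsilon)} R_t^\varepsilon \leq \int_{\mathcal{MT}\setminus\mathcal{G}(\varepsilon)} P_t^\varepsilon$ and, combining Proposition~\ref{na-prop-goodfull} with the $L^1$ convergence $P_t^\varepsilon \to P_t^0$, this tends to zero uniformly. Therefore $\int_{\mathcal{MT}}(R_t^\varepsilon - \hat{P}_t^\varepsilon)_+\,\mathrm{d}\Phi \to 0$ uniformly in $t$.

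To upgrade the one-sided estimate to $L^1$, I would use mass conservation: since $\int_{\mathcal{MT}} \hat{P}_t^\varepsilon\,\mathrm{d}\Phi = 1$ and $\int_{\mathcal{MT}} R_t^\varepsilon\,\mathrm{d}\Phi \to 1$ (by the $\zeta^\varepsilon \to 1$ argument above), we obtain
\begin{equation*}
\int_{\mathcal{MT}}|R_t^\varepsilon - \hat{P}_t^\varepsilon|\,\mathrm{d}\Phi = 2\int_{\mathcal{MT}}(R_t^\varepsilon - \hat{P}_t^\varepsilon)_+\,\mathrm{d}\Phi - \Bigl(\int_{\mathcal{MT}}R_t^\varepsilon\,\mathrm{d}\Phi - 1\Bigr),
\end{equation*}
and both terms on the right vanish uniformly. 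Combining the three triangle-inequality pieces yields the claim. The main obstacle is the interplay on $\mathcal{MT}\setminus\mathcal{G}(\varepsilon)$: one has direct access only to the one-sided bound from Proposition~\ref{na-prop-phatrt} on good trees, and must rely on the probability-measure structure to infer that $\hat{P}_t^\varepsilon$ assigns negligible mass to $\mathcal{MT}\setminus\mathcal{G}(\varepsilon)$, which is why the Scheffé-style mass-conservation trick is essential.
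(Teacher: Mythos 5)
Your proposal is correct and follows essentially the same route as the paper: both arguments rest on the one-sided bound of Proposition~\ref{na-prop-phatrt} combined with Lemma~\ref{na-lem-rhobound}, the negligibility of $\mathcal{MT}\setminus\mathcal{G}(\varepsilon)$ from Proposition~\ref{na-prop-goodfull}, the uniform convergences $P_t^\varepsilon\to P_t^0$ and $\zeta^\varepsilon\to 1$, and the fact that $P_t^0$ and $\hat{P}_t^\varepsilon$ are probability measures in order to upgrade the one-sided estimate to two-sided control. The only cosmetic difference is that you perform this upgrade via the Scheff\'e identity $\int|f-g| = 2\int(f-g)_+ - \int(f-g)$ in $L^1(\mathcal{MT})$, whereas the paper obtains the reverse inequality by passing to the complement $\mathcal{MT}\setminus S$ for each $S$; the two devices are equivalent.
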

\begin{proof}
Let $\delta >0$ and $S \subset \mathcal{MT}$. By proposition~\ref{na-prop-goodfull}, for $\varepsilon$ sufficiently small,
\[ \int_{S \setminus \mathcal{G}(\varepsilon) } P_t^0(\Phi)  \, \mathrm{d}\Phi  \leq \int_{\mathcal{MT} \setminus \mathcal{G}(\varepsilon) } P_t^0(\Phi)  \, \mathrm{d}\Phi < \frac{\delta}{4}.\]

By theorem~\ref{na-thm-id} for $\varepsilon$ sufficiently small,
\[ \int_{S \cap \mathcal{G}(\varepsilon)} | P_t^0(\Phi) - P_t^\varepsilon(\Phi) |  \, \mathrm{d}\Phi \leq \int_{ \mathcal{MT}} | P_t^0(\Phi) - P_t^\varepsilon(\Phi) |  \, \mathrm{d}\Phi  <  \frac{\delta}{4}. \]

Hence,
\begin{align} \label{na-eq-pt0sdiff}
\int_{S} P_t^0(\Phi) - \hat{P}_t^\varepsilon(\Phi) \, \mathrm{d}\Phi &= \int_{S \cap \mathcal{G}(\varepsilon)} P_t^0(\Phi) - \hat{P}_t^\varepsilon(\Phi) \, \mathrm{d}\Phi + \int_{S \setminus \mathcal{G}(\varepsilon)} P_t^0(\Phi) - \hat{P}_t^\varepsilon(\Phi) \, \mathrm{d}\Phi \nonumber \\
& < \int_{S  \cap \mathcal{G}(\varepsilon)} P_t^0(\Phi) - \hat{P}_t^\varepsilon(\Phi) \, \mathrm{d}\Phi +\frac{\delta}{4} \nonumber \\
& =  \int_{S \cap \mathcal{G}(\varepsilon)} P_t^0(\Phi) - P_t^\varepsilon(\Phi) \, \mathrm{d}\Phi \int_{S \cap \mathcal{G}(\varepsilon)} P_t^\varepsilon(\Phi) - \hat{P}_t^\varepsilon(\Phi) \, \mathrm{d}\Phi
+\frac{\delta}{4}\nonumber \\
& <  \frac{\delta}{2} + \int_{S \cap \mathcal{G}(\varepsilon)} P_t^\varepsilon(\Phi) - \hat{P}_t^\varepsilon(\Phi) \, \mathrm{d}\Phi.
\end{align}
Now by the definition of $\zeta^\varepsilon(\Phi)$ \eqref{na-eq-zetadeff} we see that since $g_0$ is a probability measure $\zeta^\varepsilon(\Phi) \leq 1$. Hence by lemma~\ref{na-lem-rhobound} for $\varepsilon$ sufficiently small and almost all $\Phi \in \mathcal{G}(\varepsilon)$,
\begin{equation} \label{na-eq-zetarhobound}
\zeta^\varepsilon(\Phi) \hat{\rho}_t^\varepsilon(\Phi) < \frac{\delta}{4}.
\end{equation}
Also by \eqref{na-eq-gl1assmp},
\[ \int_{B_\varepsilon(x_0)} \int_{\mathbb{R}^3} g_0(\bar{x},\bar{v}) \, \mathrm{d}\bar{v} \,\mathrm{d}\bar{x} \leq \int_{B_\varepsilon(x_0)} \int_{\mathbb{R}^3} \bar{g}(\bar{v}) \, \mathrm{d}\bar{v} \,\mathrm{d}\bar{x} \leq \frac{4}{3}M_g\pi \varepsilon^3. \]
Hence, recalling that in the Boltzmann-Grad scaling $N\varepsilon^2=1$, we have by the binomial inequality,
\begin{align*}
\zeta^\varepsilon(\Phi) &  = \left( 1- \int_{B_\varepsilon(x_0)} \int_{\mathbb{R}^3} g_0(\bar{x},\bar{v}) \, \mathrm{d}\bar{v} \,\mathrm{d}\bar{x}\right)^N
 \geq 1- N\int_{B_\varepsilon(x_0)} \int_{\mathbb{R}^3} g_0(\bar{x},\bar{v}) \, \mathrm{d}\bar{v} \,\mathrm{d}\bar{x}  \geq 1-  \frac{4}{3}M_g\pi \varepsilon.
\end{align*}
So for $\varepsilon$ sufficiently small we have
\[ 1- \zeta^\varepsilon(\Phi) <  \frac{\delta}{4}. \]
Hence by proposition~\ref{na-prop-phatrt} and \eqref{na-eq-zetarhobound} we have, for $\varepsilon$ sufficiently small and almost all $\Phi \in \mathcal{G}(\varepsilon)$
\begin{align*}
 P_t^\varepsilon(\Phi) - \hat{P}_t^\varepsilon(\Phi) & \leq  P_t^\varepsilon(\Phi) -R_t^\varepsilon(\Phi) + \hat{\rho}_t^\varepsilon(\Phi) R_t^\varepsilon(\Phi) = P_t^\varepsilon(\Phi) - \zeta^\varepsilon(\Phi)P_t^\varepsilon(\Phi)+ \zeta^\varepsilon(\Phi) \hat{\rho}_t^\varepsilon(\Phi)  P_t^\varepsilon(\Phi) \\
& = (1-\zeta^\varepsilon(\Phi)) P_t^\varepsilon(\Phi) + \zeta^\varepsilon(\Phi)\hat{\rho}_t^\varepsilon(\Phi)  P_t^\varepsilon(\Phi) < \frac{\delta}{4}  P_t^\varepsilon(\Phi)  + \frac{\delta}{4} P_t^\varepsilon(\Phi)  = \frac{\delta}{2} P_t^\varepsilon(\Phi) .
\end{align*}
Hence for $\varepsilon$ sufficiently small,
\[ \int_{S \cap \mathcal{G}(\varepsilon)} P_t^\varepsilon(\Phi) - \hat{P}_t^\varepsilon(\Phi) \, \mathrm{d}\Phi < \frac{\delta}{2}\int_{S \cap \mathcal{G}(\varepsilon)} P_t^\varepsilon(\Phi) \, \mathrm{d}\Phi \leq \frac{\delta}{2}\int_{\mathcal{MT}} P_t^\varepsilon(\Phi)  \, \mathrm{d}\Phi = \frac{\delta}{2}. \]
Substituting this into \eqref{na-eq-pt0sdiff} we see that for $\varepsilon$ sufficiently small,
\begin{equation}\label{na-eq-ptpthat1}
\int_{S} P_t^0(\Phi) - \hat{P}^\varepsilon_t(\Phi) \, \mathrm{d} \Phi < \delta.
\end{equation}
This holds for all $S \subset \mathcal{MT}$ and hence for any $S' \subset \mathcal{MT}$, since $P_t^0$ and $\hat{P}_t^\varepsilon$ are probability measures,
\begin{align*}
\int_{S'} \hat{P}_t^\varepsilon(\Phi)- P_t^0(\Phi)  \, \mathrm{d}\Phi = \int_{\mathcal{MT} \setminus S'} \hat{P}_t^\varepsilon(\Phi)- P_t^0(\Phi)  \, \mathrm{d}\Phi< \delta.
\end{align*}
Together with \eqref{na-eq-ptpthat1} this gives that  for $\varepsilon$ sufficiently small, for any $S \subset \mathcal{MT}$ we have,
\[ \left|  \int_{S} \hat{P}_t^\varepsilon(\Phi)- P_t^0(\Phi)  \, \mathrm{d}\Phi \right| < \delta, \]
which completes the proof of the theorem. \end{proof}

This now allows us to prove the main  theorem~\ref{na-thm-main}.
\begin{proof}[Proof of theorem~\ref{na-thm-main}.]
Let $t \in [0,T]$ and $\Omega \subset U \times \mathbb{R}^3$. By theorem~\ref{na-thm-id},
\[ \int_\Omega f_t^0(x,v) \, \mathrm{d}x \, \mathrm{d}v = \int_{S_t(\Omega)} P_t^0(\Phi) \, \mathrm{d}\Phi.\]
By definition $\hat{P}_t^\varepsilon$ satisfies,
\[ \int_\Omega \hat{f}^N_t(x,v) \, \mathrm{d}x \, \mathrm{d}v = \int_{S_t(\Omega)} \hat{P}_t^\varepsilon(\Phi) \, \mathrm{d}\Phi  .\]
Let $\delta > 0$. By theorem~\ref{na-thm-ptphatcomp}, for $\varepsilon$ sufficiently small (or equivalently by the Boltzmann-Grad scaling, $N\varepsilon^2 =1$, for $N$ sufficiently large) and independent of $t$,
\[\sup_{S \subset \mathcal{MT}} \left| \int_{S} P_t^0(\Phi) - \hat{P}_t^\varepsilon(\Phi) \, \mathrm{d}\Phi \right|  < \delta. \]
Hence,
\begin{align*}
 \bigg| \int_\Omega \hat{f}^N_t(x,v)  & - f_t^0(x,v) \, \mathrm{d}x \, \mathrm{d}v  \bigg|
  = \left| \int_{S_t(\Omega)} P_t^0(\Phi) - \hat{P}_t^\varepsilon(\Phi) \, \mathrm{d}\Phi \right| \leq  \sup_{S \subset \mathcal{MT}} \left| \int_{S} P_t^0(\Phi) - \hat{P}_t^\varepsilon(\Phi) \, \mathrm{d}\Phi \right| < \delta.
\end{align*}
Then using that $\hat f^N_t$ and $f^0_t$ are in $L^1(U \times \mathbb{R}^3)$ by Proposition \ref{na-prop-Pj0} and Lemma \ref{lem-abscts} and the inequality
\[ |\hat f^N_t -f^0_t|_{L^1}=2 \int_{ f^N_t \geq f^0_t} f^N_t -f^0_t \, \mathrm{d}x \, \mathrm{d}v \leq 2|\hat f^N_t -f^0_t|_{TV} \]
we obtain the result.
\end{proof}

\section{Auxiliary Results} \label{na-sec-aux}
\begin{lem} \label{na-lem-g0diffbound}
There exists a $C>0$ such that for any $\varepsilon \geq 0$, $R\geq 1$, $t,s\geq 0$, $ v \in \mathbb{R}^3$ and almost all $x,y \in U$,
\[ \int_{\mathbb{S}^2} \int_{\mathbb{R}^3} |g_t(x+\varepsilon\nu,\bar{v})-g_s(y+\varepsilon\nu,\bar{v})|[(v-\bar{v})\cdot \nu]_+ \, \mathrm{d}\bar{v} \, \mathrm{d}\nu \leq C(1+|v|) \Big(\frac{1}{R} +R^5 \Big(|x-y|^\alpha + |t-s|^\alpha \Big) \Big). \]
\end{lem}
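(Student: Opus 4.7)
The key observation is that $g_t(x+\varepsilon\nu,\bar v)=g_0(x+\varepsilon\nu-t\bar v,\bar v)$, so that the quantity in the absolute value is
\[
\bigl|g_0(x+\varepsilon\nu-t\bar v,\bar v)-g_0(y+\varepsilon\nu-s\bar v,\bar v)\bigr|,
\]
which involves only differences in the spatial argument of $g_0$, a quantity controlled by the H\"older assumption \eqref{na-eq-ghldassmp}. The $\varepsilon \nu$ terms cancel inside the difference of arguments, so the estimate is in fact independent of $\varepsilon$.

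My plan is to split the $\bar v$ integral according to $|\bar v|\le R$ and $|\bar v|>R$. On the ball $|\bar v|\le R$, I would apply \eqref{na-eq-ghldassmp} directly: since $\alpha\le 1$ and $(a+b)^\alpha\le a^\alpha+b^\alpha$, the difference is bounded by $M(|x-y|^\alpha+|s-t|^\alpha|\bar v|^\alpha)\le M(|x-y|^\alpha+R^\alpha |s-t|^\alpha)$. Combining with $[(v-\bar v)\cdot\nu]_+\le |v|+|\bar v|\le |v|+R$, integrating over $\mathbb{S}^2$ (factor $4\pi$) and over the ball (volume $\tfrac{4}{3}\pi R^3$), one obtains a contribution bounded by
\[
C\,R^3(|v|+R)\bigl(|x-y|^\alpha+R^\alpha|s-t|^\alpha\bigr)\le C\,R^5(1+|v|)\bigl(|x-y|^\alpha+|s-t|^\alpha\bigr),
\]
using $R\ge 1$ and $\alpha\le 1$.

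On the complementary region $|\bar v|>R$, I would use the trivial bound $|g_t-g_s|\le g_t+g_s\le 2\bar g(\bar v)$ together with $[(v-\bar v)\cdot\nu]_+\le |v|+|\bar v|$, giving an integrand majorised by $8\pi\bar g(\bar v)(|v|+|\bar v|)$. The moment assumption \eqref{na-eq-gl1assmp} then yields, via $1\le |\bar v|^2/R^2$ and $|\bar v|\le |\bar v|^2/R$ on this set,
\[
\int_{|\bar v|>R}\bar g(\bar v)(|v|+|\bar v|)\,\mathrm{d}\bar v\le \frac{|v|M_g}{R^2}+\frac{M_g}{R}\le \frac{C\,M_g(1+|v|)}{R}.
\]
Adding the two contributions gives precisely the right-hand side claimed. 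No serious obstacle is anticipated; the only care needed is in choosing the right exponent of $R$ in the ball estimate (the crude bound $(|v|+R)R^3\le R^4(1+|v|)$ together with a further factor $R^\alpha\le R$ from the time term is what produces the power $R^5$).
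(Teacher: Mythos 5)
Your proposal is correct and follows essentially the same route as the paper's proof: the same decomposition of the $\bar v$-integral at $|\bar v|=R$, the H\"older bound $M(|x-y|^\alpha+R^\alpha|t-s|^\alpha)\le MR(|x-y|^\alpha+|t-s|^\alpha)$ on the ball, and the second-moment bound \eqref{na-eq-gl1assmp} on the tail (the paper absorbs the factor $(1+|v|)$ slightly differently, but this is only a bookkeeping difference). No gaps.
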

\begin{proof}
Let $\varepsilon \geq 0$. Firstly by \eqref{na-eq-gl1assmp},
\begin{align*}
\int_{\mathbb{R}^3 \setminus B_R(0)} &  \bar{g}(\bar{v})(1+|\bar{v}|) \, \mathrm{d}\bar{v}  \leq \int_{\mathbb{R}^3 \setminus B_R(0)} \bar{g}(\bar{v})(\frac{|\bar{v}|}{R}+\frac{|\bar{v}|^2}{R}) \, \mathrm{d}\bar{v} \leq \frac{1}{R} \int_{\mathbb{R}^3} \bar{g}(\bar{v})(|\bar{v}|+ |\bar{v}|^2) \, \mathrm{d}\bar{v} \leq \frac{2M_g}{R},
\end{align*}
where $B_R(0)$ denotes the ball of radius $R$ around $0$ in $\mathbb{R}^3$. Hence
\begin{align}\label{na-eq-g0diffintbound-part1}
 \int_{\mathbb{S}^2} \int_{\mathbb{R}^3\setminus B_R(0)} & |g_t(x+\varepsilon\nu,\bar{v})-g_s(y+\varepsilon\nu,\bar{v})|[(v-\bar{v})\cdot \nu]_+ \, \mathrm{d}\bar{v} \, \mathrm{d}\nu \nonumber \\
& \leq  \int_{\mathbb{S}^2} \int_{\mathbb{R}^3\setminus B_R(0)} (g_t(x+\varepsilon\nu,\bar{v})+g_s(y+\varepsilon\nu,\bar{v}))(|v|+|\bar{v}|) \, \mathrm{d}\bar{v} \, \mathrm{d}\nu \nonumber \\
 & \leq \int_{\mathbb{S}^2} \int_{\mathbb{R}^3\setminus B_R(0)} 2\bar{g}(\bar{v})(|v|+|\bar{v}|) \, \mathrm{d}\bar{v} \, \mathrm{d}\nu \leq 4\pi \frac{M_g}{R} (1+|v|).
\end{align}
Further by \eqref{na-eq-ghldassmp}, for any $\nu \in \mathbb{S}^2$, for almost all $\bar{v} \in B_R(0)$ and almost all $x,y\in U$ we have, since $0<\alpha \leq 1$,
\begin{align*}
|g_t(x+\varepsilon\nu,\bar{v})-g_s(y+\varepsilon\nu,\bar{v})| & \leq M|x-y-(t-s)\bar{v}|^\alpha  \leq  M\Big(|x-y|^\alpha + |(t-s)\bar{v}|^\alpha \Big)  \\
& \leq M\Big(|x-y|^\alpha + R^\alpha |t-s|^\alpha \Big)  \leq MR \Big(|x-y|^\alpha + |t-s|^\alpha \Big).
\end{align*}
Hence,
\begin{align}\label{na-eq-g0diffintbound-part2}
&\int_{\mathbb{S}^2} \int_{B_R(0)}  |g_t(x+\varepsilon\nu,\bar{v})-g_s(y+\varepsilon\nu,\bar{v})|[(v-\bar{v})\cdot \nu]_+ \, \mathrm{d}\bar{v} \, \mathrm{d}\nu \nonumber \\
 \leq& \int_{\mathbb{S}^2} \int_{B_R(0)}   MR \Big(|x-y|^\alpha + |t-s|^\alpha \Big) (|v|+|\bar{v}|) \, \mathrm{d}\bar{v} \, \mathrm{d}\nu  \nonumber \\&
 \leq \int_{\mathbb{S}^2} \int_{B_R(0)}  MR \Big(|x-y|^\alpha + |t-s|^\alpha \Big) (|v|+R)\, \mathrm{d}\bar{v} \, \mathrm{d}\nu \nonumber \\
\leq&  \frac{4}{3}\pi R^3 \times 2\pi \times  MR \Big(|x-y|^\alpha + |t-s|^\alpha \Big) \times  R(1+|v|)
%\nonumber \\&
= \frac{8}{3}\pi^2 M R^5(1+|v|) \Big(|x-y|^\alpha + |t-s|^\alpha \Big).
\end{align}
Together \eqref{na-eq-g0diffintbound-part1} and \eqref{na-eq-g0diffintbound-part2} give,
\begin{align*}
 &\int_{\mathbb{S}^2} \int_{\mathbb{R}^3}  |g_t(x+\varepsilon\nu,\bar{v})-g_s(y+\varepsilon\nu,\bar{v})|[(v-\bar{v})\cdot \nu]_+ \, \mathrm{d}\bar{v} \, \mathrm{d}\nu \\
 =& \int_{\mathbb{S}^2} \int_{B_R(0)}   |g_t(x+\varepsilon\nu,\bar{v})-g_s(y+\varepsilon\nu,\bar{v})|[(v-\bar{v})\cdot \nu]_+ \, \mathrm{d}\bar{v} \, \mathrm{d}\nu \\
& \quad  + \int_{\mathbb{S}^2} \int_{\mathbb{R}^3\setminus B_R(0)}   |g_t(x+\varepsilon\nu,\bar{v})-g_s(y+\varepsilon\nu,\bar{v})|[(v-\bar{v})\cdot \nu]_+ \, \mathrm{d}\bar{v} \, \mathrm{d}\nu \\
 \leq&  4\pi \frac{M_g}{R}(1+|v|) + \frac{8}{3}\pi^2 R^5  M \Big(|x-y|^\alpha + |t-s|^\alpha \Big)(1+|v|)
 \leq C(1+|v|)\Big(\frac{1}{R} +R^5 \Big(|x-y|^\alpha + |t-s|^\alpha \Big) \Big),
\end{align*}
where,
\[ C:= \max \left\{ 4\pi M_g, \frac{8}{3}\pi^2 M \right\}. \]
\end{proof}
\begin{lem} \label{na-lem-LLepcomp}
For $\varepsilon > 0$ sufficiently small,  almost all $\Phi \in \mathcal{MT}$ and any $t \in [0,T]$,
\[ |L_t^0(\Phi) - L_t^\varepsilon(\Phi) | \leq 2C(1+|v(\tau)|)\varepsilon^{\alpha/6}, \]
where $C$ is as in lemma~\ref{na-lem-g0diffbound}.
\end{lem}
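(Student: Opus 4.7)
The plan is to follow the structure of the proof of lemma~\ref{na-lem-g0diffbound} essentially verbatim, the only novelty being that the two copies of $g_t$ now differ by the $\nu$-dependent shift $\varepsilon\nu$ (rather than by the same shift that cancels). By definition
\[
|L_t^0(\Phi) - L_t^\varepsilon(\Phi)|
\leq \int_{\mathbb{S}^2}\int_{\mathbb{R}^3} \bigl|g_t(x(t),\bar v)-g_t(x(t)+\varepsilon\nu,\bar v)\bigr|\,[(v(t)-\bar v)\cdot\nu]_+\,\mathrm{d}\bar v\,\mathrm{d}\nu.
\]
I would fix a parameter $R\ge 1$ (to be optimised below) and split the $\bar v$-integral into $B_R(0)$ and $\mathbb{R}^3\setminus B_R(0)$.

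For the tail piece $\bar v\in\mathbb{R}^3\setminus B_R(0)$, I would bound $g_t(\,\cdot\,,\bar v)\le\bar g(\bar v)$ via \eqref{na-eq-gbar} and repeat the estimate \eqref{na-eq-g0diffintbound-part1}, which gives an upper bound of $4\pi M_g(1+|v(t)|)/R$, exactly as in the proof of lemma~\ref{na-lem-g0diffbound}.

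For the bulk piece $\bar v\in B_R(0)$, I would use the identity $g_t(x,\bar v)=g_0(x-t\bar v,\bar v)$ together with the spatial Hölder continuity \eqref{na-eq-ghldassmp} (valid for almost every $\bar v$, which is the source of the ``almost all $\Phi$'' qualifier) to obtain, for almost every $\bar v\in B_R(0)$ and all $\nu\in\mathbb{S}^2$,
\[
\bigl|g_t(x(t),\bar v)-g_t(x(t)+\varepsilon\nu,\bar v)\bigr|
=\bigl|g_0(x(t)-t\bar v,\bar v)-g_0(x(t)+\varepsilon\nu-t\bar v,\bar v)\bigr|
\le M\varepsilon^\alpha \le MR\varepsilon^\alpha,
\]
where the inflation by the factor $R\ge 1$ is harmless and makes the resulting bound match the $R^5$ book-keeping of lemma~\ref{na-lem-g0diffbound}. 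Integrating over $\mathbb{S}^2\times B_R(0)$ exactly as in \eqref{na-eq-g0diffintbound-part2} yields a bound of $\tfrac{16\pi^2 M}{3}R^5\varepsilon^\alpha(1+|v(t)|)$.

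Adding the two contributions produces $|L_t^0(\Phi)-L_t^\varepsilon(\Phi)|\le C(1+|v(t)|)(1/R+R^5\varepsilon^\alpha)$, with $C$ identical to the constant appearing in lemma~\ref{na-lem-g0diffbound}. Optimising by choosing $R=\varepsilon^{-\alpha/6}$, which is $\ge 1$ for every $\varepsilon\le 1$, gives $1/R=R^5\varepsilon^\alpha=\varepsilon^{\alpha/6}$, so the total bound becomes $2C(1+|v(t)|)\varepsilon^{\alpha/6}$. Finally I would note that in any tree $\Phi$ the velocity $v(t)$ is piecewise constant and equal to $v(\tau)$ for $t\ge\tau$ (the regime in which the loss operator $\mathcal{Q}_t^{\varepsilon,-}$ is actually used, since $P_t^\varepsilon(\Phi)=0$ for $t<\tau$), so the bound may be written in terms of $|v(\tau)|$ as claimed. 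There is really no serious obstacle here; the only point requiring a little care is that the key Hölder step is applied with a $\nu$-dependent displacement of size $\varepsilon$, which is handled trivially since $|\varepsilon\nu|=\varepsilon$ is independent of $\nu$.
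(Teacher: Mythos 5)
Your proposal is correct and follows essentially the same argument as the paper: the same splitting of the $\bar v$-integral at radius $R$, the tail controlled by $\bar g$ and \eqref{na-eq-gl1assmp}, the bulk controlled by the H\"older condition \eqref{na-eq-ghldassmp} applied to the displacement $\varepsilon\nu$, and the same choice $R=\varepsilon^{-\alpha/6}$. Your closing remark that $v(t)=v(\tau)$ in the regime $t\ge\tau$ where the lemma is actually used is a point the paper glosses over by writing $v(\tau)$ directly in the integrand, so no gap there.
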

\begin{proof}
Let $R=\varepsilon^{-\alpha/6}$ and $\varepsilon$ sufficiently small so that $R\geq 1$. Let $\Phi \in \mathcal{MT}$ be such that for all $ t\in [0,T]$, $\nu \in \mathbb{S}^2$, and almost all $\bar{v} \in \mathbb{R}^3$,
\begin{align*}
 g_t(x(t),\bar{v})+g_t(x(t)+\varepsilon\nu,\bar{v}) & \leq 2\bar{g}(\bar{v}) \textrm{ and, } \\
|g_t(x(t),\bar{v})-g_t(x(t)+\varepsilon\nu,\bar{v})| & \leq M\varepsilon^\alpha.
\end{align*}
Indeed by \eqref{na-eq-gbar} and \eqref{na-eq-ghldassmp} this only excludes a set of zero measure. As in the previous lemma we have,
\begin{align}\label{na-eq-LL-part1}
 \int_{\mathbb{S}^2} \int_{\mathbb{R}^3\setminus B_R(0)} & |g_t(x(t),\bar{v})-g_t(x(t)+\varepsilon\nu,\bar{v})|[(v(\tau)-\bar{v})\cdot \nu]_+ \, \mathrm{d}\bar{v} \, \mathrm{d}\nu \nonumber \\
& \leq  \int_{\mathbb{S}^2} \int_{\mathbb{R}^3\setminus B_R(0)} (g_t(x(t),\bar{v})+g_t(x(t)+\varepsilon\nu,\bar{v}))(|v(\tau)|+|\bar{v}|) \, \mathrm{d}\bar{v} \, \mathrm{d}\nu \nonumber \\
 & \leq \int_{\mathbb{S}^2} \int_{\mathbb{R}^3\setminus B_R(0)} 2\bar{g}(\bar{v})(|v(\tau)|+|\bar{v}|) \, \mathrm{d}\bar{v} \, \mathrm{d}\nu \leq 4\pi \frac{M_g}{R} (1+|v(\tau)|).
\end{align}
And
\begin{align}\label{na-eq-LL-part2}
\int_{\mathbb{S}^2} \int_{B_R(0)} & |g_t(x(t),\bar{v})-g_t(x(t)+\varepsilon\nu,\bar{v})|[(v(\tau)-\bar{v})\cdot \nu]_+ \, \mathrm{d}\bar{v} \, \mathrm{d}\nu \nonumber \\
& \leq \int_{\mathbb{S}^2} \int_{B_R(0)}   M \varepsilon^\alpha (|v(\tau)|+|\bar{v}|) \, \mathrm{d}\bar{v} \, \mathrm{d}\nu
\leq \int_{\mathbb{S}^2} \int_{B_R(0)}  M\varepsilon^\alpha (|v(\tau)|+R)\, \mathrm{d}\bar{v} \, \mathrm{d}\nu \nonumber \\
& \leq \frac{4}{3}\pi R^3 \times 2\pi \times  M\varepsilon^\alpha \times  R(1+|v(\tau)|) \leq \frac{8}{3}\pi^2 M \varepsilon^\alpha R^5(1+|v(\tau)|) .
\end{align}
Combining \eqref{na-eq-LL-part1} and \eqref{na-eq-LL-part2} we have for $C$ as in the previous lemma,
\begin{align*}
|L_t^0(\Phi) - L_t^\varepsilon(\Phi) | & \leq \int_{\mathbb{S}^2} \int_{\mathbb{R}^3}  |g_t(x(t),\bar{v})-g_t(x(t)+\varepsilon\nu,\bar{v})|[(v(\tau)-\bar{v})\cdot \nu]_+ \, \mathrm{d}\bar{v} \, \mathrm{d}\nu \\
& \leq 4\pi \frac{M_g}{R} (1+|v(\tau)|) + \frac{8}{3}\pi^2 M\varepsilon^\alpha R^5(1+|v(\tau)|) \leq C (1+|v(\tau)|)\left( \frac{1}{R} + \varepsilon^\alpha R^5 \right).
\end{align*}
Substituting $R=\varepsilon^{-\alpha/6}$ gives the required result.
\end{proof}

%\newpage
\bibliographystyle{plain}
\bibliography{bib}

% \printbibliography

\end{document}